\theoremstyle{definition}
\newtheorem{define}{Definition}[section]
\theoremstyle{plain}
\newtheorem{problem}{Problem}
\theoremstyle{plain}
\newtheorem{theorem}{Theorem}[section]
\theoremstyle{plain}
\newtheorem{lemma}[theorem]{Lemma}
\theoremstyle{plain}
\newtheorem{proposition}[theorem]{Proposition}
\theoremstyle{plain}
\newtheorem{corollary}[theorem]{Corollary}
\theoremstyle{remark}
\newtheorem{remark}[theorem]{Remark}
\theoremstyle{plain}
\newtheorem{fact}[define]{Fact}
\theoremstyle{definition}
\newtheorem{step}{Step}
\theoremstyle{definition}
\newtheorem{case}{Case}
\numberwithin{equation}{section}
\numberwithin{figure}{section}
\numberwithin{table}{section}
\renewcommand{\theenumi}{\arabic{enumi}}
\title[Visible actions and multiplicity-free criteria]
{Visible actions 
and criteria for multiplicity-freeness of representations of Heisenberg groups}
\author[A. Baklouti, A. Sasaki]{Ali Baklouti, Atsumu Sasaki}
\thanks{
	The first author was supported by the DGRST, Research Laboratory LR 11E S52, 
	and the second author was partially supported by 
	JSPS KAKENHI Grant Number JP19K03453. 
}
\subjclass[2010]{Primary: 22E25, Secondary: 22E27}
\keywords{visible action, slice, Heisenberg group, Heisenberg homogeneous space, 
multiplicity-free representation}
\address[A. Baklouti]{Department of Mathematics, Faculty of Science of Sfax, 
Route de Soukra, 3038, Sfax, Tunisia. }
\email{ali.baklouti@fss.usf.tn}
\address[A. Sasaki]{Department of Mathematics, Faculty of Science, Tokai University, 
4-1-1, Kitakaname, Hiratsuka, Kanagawa, 259-1292, Japan 
and Department of Mathematics, FAU Erlangen-N\"urnberg, 
Cauerstrasse 11, 91058, Erlangen, Germany}
\email{atsumu@tokai-u.jp}
\date{\today}
\begin{document}

\begin{abstract}
	A visible action on a complex manifold is a holomorphic action 
	that admits a $J$-transversal totally real submanifold $S$. 
	It is said to be strongly visible if there exists an orbit-preserving anti-holomorphic 
	diffeomorphism $\sigma $ such that $\sigma |_S = \operatorname{id}_S$. 
	Let $G$ be the Heisenberg group and $H$ a non-trivial connected closed subgroup of $G$. 
	We prove that any complex homogeneous space
	$D = G^{\mathbb{C}}/H^{\mathbb{C}}$ admits a strongly visible $L$-action, 
	where $L$ stands for a connected closed subgroup of $G$ explicitly 
	constructed through a co-exponential basis of $H$ in $G$. 
	This leads in turn that $G$ itself acts strongly visibly on $D$.
	The proof is carried out by finding explicitly an orbit-preserving anti-holomorphic diffeomorphism 
	and a totally real submanifold $S$, 
	for which the dimension depends upon  the dimensions of $G$ and $H$. 
	As a direct application, 
	our geometric results provide a proof of various multiplicity-free theorems 
	on continuous representations on the space of holomorphic sections on $D$. 
	Moreover, we also generate as a consequence, 
	a geometric criterion for a quasi-regular representation of $G$ to be multiplicity-free. 
\end{abstract}

\maketitle

\tableofcontents

%%%%%%%%%%%%%%%%%%%%%%%%%%%%%%%%%%%%%%%%%%%%%%%%%%%%%%%%%%%%%%%%%%%%%%%%%%%%%%%%%%%%%%%%%%%

\section{Introduction}
\label{sec:intro}

This paper investigates a classification problem on (strongly) visible actions 
on complex homogeneous spaces of Heisenberg groups. 

The notion of (strongly) visible actions on complex manifolds 
has been introduced by T. Kobayashi \cite{ko04,ko05}. 
A holomorphic action of a Lie group $L$ on a connected complex manifold $D$ 
is called {\it strongly visible} (\cite[Definition 3.3.1]{ko05}) 
if there exist a real submanifold $S$ in $D$ and an anti-holomorphic diffeomorphism $\sigma$ on $D$ 
such that the conditions (\ref{v0})--(\ref{s2}) are satisfied: 
\begin{gather}
\tag{V.0}
\label{v0}
\mbox{$D':=L\cdot S$ is a non-empty open set in $D$}, \\
\tag{S.1}
\label{s1}
\sigma |_S=\operatorname{id}_S,\\
\tag{S.2}
\label{s2}
\mbox{$\sigma$ preserves each $L$-orbit in $D'$. }
\end{gather}

A real submanifold $S$ satisfying (\ref{v0}) and (\ref{s1}) is called a {\it slice} 
for the strongly visible $L$-action on $D$. 
A slice $S$ satisfying (\ref{s1}) is totally real, 
namely, $T_xS\cap J_x(T_xS)=\{ 0\} $ for any $x\in S$ 
where $J$ is a complex structure on $D$ (see \cite[Remark 3.3.2]{ko05}). 
Further, a strongly visible action is visible \cite[Definition 3.1.1]{ko05} 
in the sense that (\ref{v0}) with $J_x(T_xS)\subset T_x(L\cdot x)$ for any $x\in S$ 
(see \cite[Theorem 4]{ko05}). 
We allow that a slice $S$ meets every $L$-orbit in $D'$ twice or more.
Namely, $S$ is not necessary a cross-section of $L$-orbits in $D'$. 

Recently, 
strongly visible actions has been studied in various settings, 
such as Hermitian symmetric spaces \cite{ko07}, 
flag varieties \cite{ko07-japan,tanaka12}, 
complex linear spaces \cite{sa09}, 
nilpotent orbits in complex simple Lie algebras \cite{sa16}, 
and reductive non-symmetric spherical homogeneous spaces (for example, \cite{sa10a}), 
which are in connection with multiplicity-free representations of reductive Lie groups. 
Under these circumstances, 
a kind of decomposition theorems of reductive Lie groups and reductive homogeneous spaces, 
Cartan decomposition and Iwasawa decomposition for instance, 
play a crucial role to explicitly build the corresponding slices. 
However, strongly visible actions of nilpotent and solvable Lie groups are not well-known. 
Indeed, in contrast to the case of reductive Lie groups, 
there is no analogue of decomposition theorems for nilpotent and solvable Lie groups. 

Let us bring up our problems as follows. 
Let $G$ be a connected and simply connected nilpotent (resp. solvable) Lie group and 
$H$ a non-trivial connected closed subgroup of $G$. 
We denote by $G^{\mathbb{C}}$ and $H^{\mathbb{C}}$ the complexifications of $G$ and $H$, 
respectively. 
We say that $G^{\mathbb{C}}/H^{\mathbb{C}}$ is a complex nilpotent (resp. solvable) homogeneous space. 
Then, $G$ naturally acts on the complex manifold $G^{\mathbb{C}}/H^{\mathbb{C}}$ holomorphically.  We consider the following problems:

\begin{problem}
\label{p:visible}
Find pairs $(G,H)$ 
such that the $G$-action on $G^{\mathbb{C}}/H^{\mathbb{C}}$ is strongly visible. 
\end{problem}

For a strongly visible $G$-action on $G^{\mathbb{C}}/H^{\mathbb{C}}$, 
we can take a slice $S$ and an anti-holomorphic diffeomorphism $\sigma$ satisfying (\ref{v0})--(\ref{s2}). 
Then, the dimension of $S$ is not greater than $\dim G/H$ because $S$ is a totally real submanifold 
in $G^{\mathbb{C}}/H^{\mathbb{C}}$, 
whereas, 
this is at least the codimension 
of generic $G$-orbits in $G^{\mathbb{C}}/H^{\mathbb{C}}$ (cf. \cite[Lemma 3.2.1]{ko05}). 
In this sense, 
we say that a slice $S$ is smallest if $\dim S$ coincides with the codimension of generic orbits. 
Then, we have: 

\begin{problem}
\label{p:slice}
Construct explicitly a small slice $S$ for a strongly visible $G$-action on $G^{\mathbb{C}}/H^{\mathbb{C}}$. 
\end{problem}

The present paper solves Problems \ref{p:visible} and \ref{p:slice} in the setting of the Heisenberg group. 
First, we prove: 

\begin{theorem}
\label{thm:main}
Let $G$ be the Heisenberg group 
and $H$ a non-trivial connected closed subgroup of $G$. 
Then, there exists a connected closed subgroup $L$ of $G$ such that 
the $L$-action on $G^{\mathbb{C}}/H^{\mathbb{C}}$ is strongly visible. 
\end{theorem}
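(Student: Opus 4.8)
The plan is to reduce, by means of automorphisms of $G$, to a short list of normal forms for $H$, and then to build the data $(L,S,\sigma)$ explicitly in each case, working in global holomorphic coordinates on $D:=G^{\mathbb{C}}/H^{\mathbb{C}}$ coming from a co-exponential basis. For the reduction, write $\mathfrak{g}=\operatorname{span}_{\mathbb{R}}\{X_1,\dots,X_n,Y_1,\dots,Y_n,Z\}$ with the only non-zero brackets $[X_j,Y_j]=Z$. If $\varphi\in\operatorname{Aut}(G)$, then its complexification descends to a biholomorphism $G^{\mathbb{C}}/H^{\mathbb{C}}\to G^{\mathbb{C}}/\varphi(H)^{\mathbb{C}}$ intertwining the $L$-action with the $\varphi(L)$-action, and the three conditions (V.0), (S.1), (S.2) defining strong visibility are transported along such biholomorphisms; so it suffices to treat one representative of $\mathfrak{h}$ in each $\operatorname{Aut}(G)$-orbit of subalgebras. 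Since $\operatorname{Aut}(G)$ induces on $\mathfrak{g}/\mathbb{R}Z$ the full symplectic group $Sp(n,\mathbb{R})$ (together with dilations and the shears $V\mapsto V+\lambda(V)Z$), symplectic linear algebra gives the classification: either $Z\notin\mathfrak{h}$, in which case $[\mathfrak{h},\mathfrak{h}]\subseteq\mathfrak{h}\cap\mathbb{R}Z=\{0\}$ forces $\mathfrak{h}$ to project isomorphically onto an isotropic subspace, so up to automorphism $\mathfrak{h}=\operatorname{span}\{X_1,\dots,X_p\}$; or $Z\in\mathfrak{h}$, in which case $\mathfrak{h}=\mathbb{R}Z\oplus\bar{\mathfrak{h}}$ for an arbitrary subspace $\bar{\mathfrak{h}}$ of $\mathfrak{g}/\mathbb{R}Z$, classified up to $Sp(n,\mathbb{R})$ by its dimension and the dimension of its symplectic radical, hence up to automorphism $\bar{\mathfrak{h}}=\operatorname{span}\{X_1,Y_1,\dots,X_p,Y_p\}\oplus\operatorname{span}\{X_{p+1},\dots,X_{p+q}\}$. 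This is a finite list of cases.

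Next, for each normal form I would take a co-exponential basis $\{W_1,\dots,W_k\}$ of $\mathfrak{h}$ in $\mathfrak{g}$, with $k=\dim G/H$, consisting of standard basis vectors, and set $\mathfrak{l}$ equal to its linear span when that span is already a subalgebra, and to $\operatorname{span}_{\mathbb{R}}\{W_1,\dots,W_k\}+\mathbb{R}Z$ otherwise --- the latter automatically a subalgebra, since adjoining the one-dimensional centre $\mathbb{R}Z$ to any subspace of $\mathfrak{g}$ yields a subalgebra. Put $L:=\exp\mathfrak{l}$; this is a connected closed subgroup because the exponential of a subalgebra of a simply connected nilpotent Lie group is a closed subgroup. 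By two-step nilpotency the second-kind coordinate map $(w_1,\dots,w_k)\mapsto\exp(w_1W_1)\cdots\exp(w_kW_k)H^{\mathbb{C}}$ is a biholomorphism $\Phi\colon\mathbb{C}^k\to D$ carrying $\mathbb{R}^k$ onto $G/H$, and, transported through $\Phi$, the $L^{\mathbb{C}}$- and $L$-actions become explicit self-maps of $\mathbb{C}^k$ that are polynomial of degree at most $2$ in the coordinates, the Campbell--Hausdorff series terminating after the quadratic term. Since $\mathfrak{l}+\mathfrak{h}=\mathfrak{g}$, a dimension count gives $\dim_{\mathbb{C}}L^{\mathbb{C}}-\dim(\mathfrak{l}\cap\mathfrak{h})=k$, so $L^{\mathbb{C}}$ already has an open orbit through the base point $o=eH^{\mathbb{C}}$.

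Finally I would construct $\sigma$ and $S$. The key point --- already visible for $\mathfrak{h}=\mathbb{R}Z$, where $D\cong\mathbb{C}^{2n}$ and $G$ acts by real translations of $\mathbb{R}^{2n}$ --- is that plain complex conjugation of $\mathbb{C}^k$ need not preserve $L$-orbits; one must use a twisted conjugation. I would choose an involutive automorphism $A\in\operatorname{Aut}(\mathfrak{g})$ adapted to the normal form, with $A(\mathfrak{h})=\mathfrak{h}$ and $A(\mathfrak{l})=\mathfrak{l}$ (for instance $A=-\operatorname{id}$ on $\operatorname{span}\{X_j,Y_j\}$ and $A=\operatorname{id}$ on $\mathbb{R}Z$, suitably modified when $\bar{\mathfrak{h}}$ has a symplectic part), and let $\sigma$ be the anti-holomorphic diffeomorphism of $D$ induced by $A^{\mathbb{C}}$ composed with complex conjugation relative to the real form $G$; in the $\Phi$-coordinates this is $\sigma(w)=\varepsilon\,\overline{w}$ for a real diagonal sign pattern $\varepsilon$ dictated by the $\pm1$-eigenspace decomposition of $A$, so that $\operatorname{Fix}(\sigma)$ is a totally real $k$-dimensional affine subspace of $\mathbb{C}^k$. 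I would take $S:=\operatorname{Fix}(\sigma)$ (or, for the refinement of Problem~\ref{p:slice}, the subset of it meeting each orbit of the open set). Then (S.1) holds by construction; (V.0), that $L\cdot S$ is open, follows from the co-exponential property together with transversality of $\operatorname{Fix}(\sigma)$ to the generic $L$-orbit, by a dimension count; and (S.2) follows because $A(\mathfrak{l})=\mathfrak{l}$ yields $\sigma\circ a(\ell)\circ\sigma^{-1}=a(\beta(\ell))$ for all $\ell\in L$ with $\beta$ a bijection of $L$, whence $\sigma(L\cdot s)=L\cdot\sigma(s)=L\cdot s$ for $s\in S$ and therefore $\sigma(L\cdot x)=L\cdot x$ for every $x\in D'=L\cdot S$. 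I expect the main obstacle to be precisely this three-way compatibility in the last step: the co-exponential basis, the subalgebra $\mathfrak{l}$, and the involution $A$ must be chosen together so that $\sigma$ is simultaneously anti-holomorphic, preserves both $H^{\mathbb{C}}$ and $L$, and has fixed set transverse to the $L$-orbits --- note that conjugation relative to $G$ itself preserves $H^{\mathbb{C}}$ and $L$ trivially but has fixed set the real orbit $G/H$, which is \emph{not} transverse, so a genuinely different real form of $\mathfrak{g}^{\mathbb{C}}$ is forced, and in the mixed normal forms $\mathfrak{l}$ may be non-abelian, so the quadratic term of the action must be tracked carefully. Once the $L$-action is strongly visible, the $G$-action on $D$ is too, with the same $S$ and $\sigma$, because each $G$-orbit is a union of $L$-orbits and $\sigma$ preserves each of the latter.
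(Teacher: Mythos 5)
Your overall architecture coincides with the paper's: reduce to normal forms of $\mathfrak{h}$ via $\operatorname{Aut}(G)$ (the paper quotes this classification from \cite{bky} as Lemma \ref{lem:subalgebra}), pass to explicit exponential coordinates on $G^{\mathbb{C}}/H^{\mathbb{C}}$, and use an anti-holomorphic involution obtained by twisting conjugation with an automorphism of $\mathfrak{g}$ (the paper's $\widetilde{\sigma}_1,\widetilde{\sigma}_2$ are exactly of your form $\sigma(w)=\varepsilon\overline{w}$). Your observation that plain conjugation relative to $G$ cannot work is also the right one.

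However, there is a genuine gap in the hardest case, $\mathfrak{h}\cong\operatorname{span}_{\mathbb{R}}\{X_1,\ldots,X_m\}$ with $1\le m<n$ (abelian, not containing the center, not of maximal dimension). Here the span of the standard co-exponential basis, $\operatorname{span}_{\mathbb{R}}\{X_{m+1},\ldots,X_n,Y_1,\ldots,Y_n,Z\}$, is already a subalgebra containing $Z$, so your recipe outputs $L=Q_{(m,0,0)}$ --- and the paper proves (Theorem \ref{thm:(m,0,0)-action}, via the Corwin--Greenleaf multiplicity formula and the heat-kernel transform) that the $Q_{(m,0,0)}$-action on $D_{(m,0,0)}$ is \emph{not} strongly visible for any choice of $S$ and $\sigma$. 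The transversality you invoke for (\ref{v0}) fails concretely: sweeping the fixed set of $\widetilde{\sigma}_1$ by $Q_{(m,0,0)}$ only reaches the real hypersurface where $\operatorname{Im}z=-(\operatorname{Re}\boldsymbol{y}'\,|\,\operatorname{Im}\boldsymbol{x}')$, because on each orbit the imaginary part of the central coordinate is pinned to the real parts; the fixed set of $\widetilde{\sigma}_2$ fares no better. The paper's fix is not a different $\sigma$ but a \emph{larger} $L$: it adjoins the one-parameter group $\exp(\mathbb{R}X_m)$ --- a direction lying inside $H$ itself, so $L\cap H\neq\{e\}$ --- whose induced action on the coset space shifts $z$ by $s_m(t_m+y_m)$ (Lemma \ref{lem:action-(m,0,0)}) and thus can absorb $\operatorname{Im}z$ precisely where $\operatorname{Im}y_m\neq 0$; accordingly the slice must be cut down to the locus $\boldsymbol{y}_I\in(\mathbb{R}^{\times})^n$ and $L\cdot S$ is only a dense open subset of $D$, not all of it. Your proposal does not anticipate either move. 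A secondary, repairable point: in the final step, for $x=g\cdot s$ with $g\in G\setminus L$ the statement ``$\sigma$ preserves each $L$-orbit'' does not apply to $x$; one should instead use the identity $\sigma(x)=\widetilde{\sigma}(g)g^{-1}\cdot x$, valid because $\widetilde{\sigma}$ is a globally defined automorphism of $G^{\mathbb{C}}$ stabilizing $G$, as in the paper's proof of Corollary \ref{cor:main}.
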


Indeed, we specify `smallest' $L$ 
and give concrete descriptions of a `smallest' slice $S$ 
and an anti-holomorphic diffeomorphism $\sigma $, respectively, 
for the $L$-action on $G^{\mathbb{C}}/H^{\mathbb{C}}$ in the proof of Theorem \ref{thm:main} 
(see also Table \ref{table:choice} for our choice of $L$, $S$ and $\sigma$ for $H$). 

The strongly visible action of a small group ensures that of a large one, 
namely, we obtain: 

\begin{corollary}
\label{cor:main}
For any non-trivial connected closed subgroup $H$, 
the $G$-action on $G^{\mathbb{C}}/H^{\mathbb{C}}$ is strongly visible. 
\end{corollary}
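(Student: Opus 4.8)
The plan is to verify that the same slice $S$ and anti-holomorphic diffeomorphism $\sigma$ manufactured in the proof of Theorem~\ref{thm:main} for the strongly visible $L$-action already certify that the $G$-action on $D=G^{\mathbb{C}}/H^{\mathbb{C}}$ is strongly visible, with no modification. Write $D'':=G\cdot S$. Since $L\subseteq G$ we have $L\cdot S\subseteq D''$, and Theorem~\ref{thm:main} guarantees that $L\cdot S$ is a non-empty open subset of $D$; hence $D''$ is a non-empty open subset of $D$, which is condition~(\ref{v0}) for $G$. Condition~(\ref{s1}), namely $\sigma|_{S}=\operatorname{id}_{S}$, is the identical statement and needs nothing further. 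So the whole matter reduces to condition~(\ref{s2}): that $\sigma$ preserve every $G$-orbit contained in $D''$.

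For this I would invoke the explicit form of $\sigma$ recorded in the proof of Theorem~\ref{thm:main}. There $\sigma$ is not an arbitrary anti-holomorphic diffeomorphism but one that is equivariant for the $G$-action up to a Lie group automorphism: there is an automorphism $\tau$ of $G$ (extending to $G^{\mathbb{C}}$ and stabilizing $H^{\mathbb{C}}$) with $\sigma(g\cdot z)=\tau(g)\cdot\sigma(z)$ for all $g\in G$ and $z\in D$. Because $\tau(G)=G$, it follows at once that $\sigma$ carries each $G$-orbit in $D$ onto a $G$-orbit. Now fix $z\in D''=G\cdot S$ and choose $s\in S$ with $z\in G\cdot s$. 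Then $\sigma(G\cdot s)$ is a $G$-orbit containing $\sigma(s)=s$, hence equals $G\cdot s$; in particular $\sigma(z)\in G\cdot s=G\cdot z$. This is precisely~(\ref{s2}) for $G$, and the corollary follows.

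The step that carries the content is the last use of the $G$-equivariance of $\sigma$: a generic anti-holomorphic diffeomorphism preserving only the $L$-orbits in $L\cdot S$ is in no way forced to preserve the larger $G$-orbits, so what makes the passage from $L$ to $G$ work is exactly that the $\sigma$ built in Theorem~\ref{thm:main} is the concrete, automorphism-twisted map rather than an abstract slice datum. Once this is acknowledged the corollary is immediate, and it may be read as an instance of the following monotonicity principle, which I would state once and for all: if $L\le G$ act holomorphically on a connected complex manifold $D$ and $(S,\sigma)$ satisfies (\ref{v0})--(\ref{s2}) for $L$ with $\sigma$ mapping $G$-orbits to $G$-orbits, then $(S,\sigma)$ satisfies (\ref{v0})--(\ref{s2}) for $G$; our $\sigma$ meets the hypothesis by its very construction.
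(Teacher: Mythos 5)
Your argument is correct and is essentially the paper's own proof: you reuse the same pair $(S,\sigma)$ from Theorem~\ref{thm:main}, and you verify (\ref{s2}) for $G$ exactly as the paper does, via the compatibility $\sigma (g\cdot z)=\widetilde{\sigma }(g)\cdot \sigma (z)$ with an automorphism $\widetilde{\sigma }$ of $G^{\mathbb{C}}$ preserving $G$ (equivalently, $\sigma (v)=\widetilde{\sigma }(g)g^{-1}\cdot v\in G\cdot v$ for $v=g\cdot s$). One small repair in your (\ref{v0}) step: a set containing a non-empty open subset need not itself be open, so instead of deducing openness of $G\cdot S$ from $L\cdot S\subseteq G\cdot S$ you should write $G\cdot S=\bigcup _{g\in G}g\cdot (L\cdot S)$ and note that each translate $g\cdot (L\cdot S)$ is open, which is precisely the paper's argument.
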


Hence, Corollary \ref{cor:main} gives an answer to Problem \ref{p:visible} 
in the case of the Heisenberg group. 

We also focus attention on the relationship 
between strongly visible actions on nilpotent (resp. solvable) homogeneous spaces 
and the multiplicity-freeness of some related representations. 
Originally, the notion of (strongly) visible actions has been introduced as a geometric condition 
of the propagation theory of multiplicity-freeness property \cite{ko13} 
(see also \cite{ko04,ko05} and Fact \ref{fact:propagation}). 
This allows to generate a unified explanation of multiplicity-freeness property for representations 
which have been found independently. 
In fact, 
this new line of investigation was initiated in \cite{ko08} 
for unitary highest weight representations of scalar type. 
Moreover, if we find a strongly visible action on a complex manifold, 
we expect to get various multiplicity-free representations. 

Indeed, our main results given in Theorem \ref{thm:main} and Corollary \ref{cor:main} yield 
a geometric explanation of multiplicity-freeness property of the continuous representation 
on the space of holomorphic functions due to the propagation theory, 
which we will explain in Theorem \ref{thm:multiplicity-free}. 

We also seek another geometric criterion for the quasi-regular representation $\pi _H$ 
(see (\ref{eq:quasi-regular}) for definition) to be multiplicity-free. 
To state our geometric criterion, we give a setup as follows: 
Let $\mathfrak{g},\mathfrak{h}$ be the Lie algebras of the Heisenberg group $G$ 
and a proper connected closed subgroup $H$, respectively. 
We put $d:=\dim G-\dim H$ (then $0<d<\dim G$) 
and take a co-exponential basis $\{ X_1,\ldots ,X_d\} $ to $\mathfrak{h}$ in $\mathfrak{g}$. 
We set a complementary subspace $\mathfrak{q}$ of $\mathfrak{h}$ in $\mathfrak{g}$ 
as $\mathfrak{q}=\operatorname{span}_{\mathbb{R}}\{ X_1,\ldots ,X_d\} $ and 
$Q:=\langle \exp \mathfrak{q}\rangle$ as a closed subgroup of $G$ generated by $\exp \mathfrak{q}$. 
Then, we prove: 

\begin{theorem}[see Theorem \ref{thm:criterion} for detail]
\label{thm:main-rep}
For a non-trivial connected closed subgroup $H$ of the Heisenberg group $G$, 
the following conditions are equivalent: 
\begin{enumerate}
	\renewcommand{\theenumi}{\roman{enumi}}
	\item The quasi-regular representation $\pi _H$ of $G$ is multiplicity-free. 
	\item The $Q$-action on $G^{\mathbb{C}}/H^{\mathbb{C}}$ is strongly visible. 
\end{enumerate}
\end{theorem}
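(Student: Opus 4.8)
The plan is to prove the equivalence (i)$\Leftrightarrow$(ii) by sandwiching both conditions between a common combinatorial description of the pair $(G,H)$. Since $G$ is the Heisenberg group of dimension $2n+1$, a non-trivial connected closed subgroup $H$ is (up to automorphism of $G$) one of a short explicit list, distinguished by whether $H$ contains the centre $\mathfrak z(\mathfrak g)$, by $\dim H$, and by the rank of the restriction of the symplectic form to $\mathfrak h$. The first step is therefore to recall (or record) this classification of subgroups $H$, together with the associated co-exponential basis $\{X_1,\dots,X_d\}$ and the group $Q=\langle\exp\mathfrak q\rangle$; this is exactly the bookkeeping already set up before the statement, and I would organise it through the same table referenced for Theorem~\ref{thm:main}.

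Next I would settle the representation-theoretic side, (i). Using the orbit method for the Heisenberg group, the irreducible unitary representations are the characters of $G/Z$ together with the infinite-dimensional $\rho_\lambda$ ($\lambda\neq 0$), and $\pi_H=\operatorname{Ind}_H^G\mathbf 1$ decomposes by restricting coadjoint orbits. Concretely, $\pi_H$ is multiplicity-free precisely when the generic $H$-saturation in $\mathfrak g^*$ meets each relevant coadjoint orbit in a single $H$-orbit; in Heisenberg-group terms this translates into a simple condition on $\dim H$ relative to $n$ and on whether $Z\subset H$ (the classical dichotomy: if $Z\not\subset H$ then $\pi_H$ is multiplicity-free iff $H$ is ``large enough'', i.e. $\dim H\ge n$, which is the polarisation/Pukanszky condition; if $Z\subset H$ one gets a direct integral of characters and multiplicity-freeness is automatic, or fails in a controlled way). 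I would extract from Corwin--Greenleaf-type results the exact list of $H$ for which $\pi_H$ is multiplicity-free, phrased uniformly with the classification from Step~1.

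Then I would handle the geometric side, (ii), by invoking Theorem~\ref{thm:main} and its proof. That proof produces, for each $H$, an explicit connected closed subgroup $L\subset G$ acting strongly visibly on $D=G^{\mathbb C}/H^{\mathbb C}$, together with a slice $S$ and an anti-holomorphic $\sigma$; moreover $L$ is built from a co-exponential basis and is ``smallest'' in the sense made precise in Problem~\ref{p:slice}. The point is that $Q$ is exactly such an $L$ (built from the chosen co-exponential basis $\{X_1,\dots,X_d\}$), so one half is nearly immediate: whenever the construction in Theorem~\ref{thm:main} can be taken with $L=Q$, the $Q$-action is strongly visible. For the converse I would argue dimension-theoretically: a strongly visible $Q$-action forces $\dim S\le \dim_{\mathbb R} D - (\dim_{\mathbb R}D - \text{(generic $Q$-orbit codim)})$ and, because $S$ is totally real, $\dim S\le d$; combining this with the visibility condition $J_x(T_xS)\subset T_x(Q\cdot x)$ pins down when $Q$ can have orbits large enough to sweep out an open set, and this is precisely the same numerical constraint on $(\dim H, n, Z\subset H)$ that appeared in Step~2.

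Finally I would assemble the three steps: run through the finite list of $H$ from Step~1, and for each one check that the answer to ``(i) $\pi_H$ multiplicity-free?'' coincides with the answer to ``(ii) $Q$-action strongly visible?'', using Steps~2 and~3 respectively. The main obstacle I anticipate is Step~3's converse direction: showing that when $\pi_H$ is \emph{not} multiplicity-free the $Q$-action genuinely \emph{fails} to be strongly visible. Failure of strong visibility is a non-existence statement (no slice and no $\sigma$ work), so one cannot just exhibit something; instead I would need an obstruction, most naturally the inequality relating $\dim S$, the generic $Q$-orbit dimension, and the codimension of generic $G$-orbits, refined by the totally-real condition on $S$ and the constraint $J_x(T_xS)\subseteq T_x(Q\cdot x)$ at slice points. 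Getting that obstruction sharp enough to cover every non-multiplicity-free $H$ on the list — rather than merely the ``obvious'' cases — is where the real work lies, and it is likely easiest to do case by case using the explicit coordinates on $G^{\mathbb C}/H^{\mathbb C}$ furnished by the co-exponential basis.
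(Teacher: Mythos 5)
Your Steps 1 and 2 and the forward half of Step 3 follow the paper's route: classify $H$ up to automorphism, compute the Corwin--Greenleaf multiplicity function (the answer is that $\pi_H$ is multiplicity-free iff $H$ is not conjugate to $H_{(m,0,0)}$ with $m<n$, i.e.\ unless $Z\not\subset H$ and $\dim H<n$), and read off strong visibility of the $Q$-action from the explicit slices of Theorem \ref{thm:main} in the multiplicity-free cases. That part is sound.

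The genuine gap is your proposed obstruction for the converse: showing that for $H=H_{(m,0,0)}$ with $m<n$ the $Q_{(m,0,0)}$-action on $D_{(m,0,0)}$ is \emph{not} strongly visible. A dimension count refined by the totally-real condition and $J_x(T_xS)\subset T_x(Q\cdot x)$ does not close: here $D_{(m,0,0)}\simeq\mathbb{C}^m\times{\bf H}_{n-m}^{\mathbb{C}}$ has complex dimension $2n-m+1$, which equals $\dim Q_{(m,0,0)}$, so one may take $\dim S=2n-m+1$ (the maximal totally real dimension), full-dimensional $Q$-orbits, and $T_xD=T_xS\oplus T_x(Q\cdot x)$ with $J_x(T_xS)=T_x(Q\cdot x)$ --- all numerically consistent. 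In other words, the failure of strong visibility is not visible at the level of tangent-space dimensions; it is a representation-theoretic phenomenon (the left regular representation of ${\bf H}_{n-m}$ has infinite multiplicities because the group is non-abelian). The paper's actual argument contraposes the propagation theorem (Fact \ref{fact:propagation}): if the $Q_{(m,0,0)}$-action were strongly visible, every unitary representation realized in $\mathcal{O}(D_{(m,0,0)})$ would be multiplicity-free; it then builds, via a product of the Segal--Bargmann transform on $\mathbb{R}^m$ and the heat kernel transform on ${\bf H}_{n-m}$, a continuous injective intertwiner $B:L^2(\mathbb{R}^m\times{\bf H}_{n-m})\to\mathcal{O}(\mathbb{C}^m\times{\bf H}_{n-m}^{\mathbb{C}})$ (Theorem \ref{thm:B}), realizing the infinite-multiplicity regular representation inside $\mathcal{O}(D_{(m,0,0)})$ --- a contradiction. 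Without an ingredient of this kind (or some other analytic obstruction), your case-by-case coordinate analysis in Step 3 will not rule out strong visibility in the remaining cases.
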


In order to verify the multiplicity-freeness of $\pi _H$, 
we shall apply Corwin--Greenleaf formula \cite{corwin-greenleaf}  
and calculate the multiplicity of each associated isotypic component of $G$ 
occurring in the irreducible decomposition of $\pi _H$ (see Fact \ref{fact:corwin-greenleaf}). 

Concerning the relationship between visible actions, the Corwin--Greenleaf multiplicity function 
and the Kirillov orbit method, 
we refer to \cite{kobayashi-nasrin} in a case 
where $(G,H)$ is a semisimple symmetric pair of holomorphic type. 

In view of our proof of Theorem \ref{thm:main-rep}, 
we find out a deep relationship between the strongly visible $Q$-action on $G^{\mathbb{C}}/H^{\mathbb{C}}$ 
and the fact that $\pi _H$ is multiplicity-free. 

\begin{theorem}[cf. Theorem \ref{thm:support}]
\label{thm:main-support}
One can construct a slice $S$ for the strongly visible $Q$-action on $G^{\mathbb{C}}/H^{\mathbb{C}}$ 
of dimension equals the rank of the support for multiplicity-free $\pi _H$ 
(see (\ref{eq:rank}) for definition). 
\end{theorem}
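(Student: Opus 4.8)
The plan is to deduce Theorem~\ref{thm:main-support} by lining up two explicit computations: the dimension of the slice produced in the proof of Theorem~\ref{thm:main}, and the rank of $\operatorname{supp}\pi_H$ obtained from the Kirillov orbit method. Since the statement concerns the multiplicity-free situation, by Theorem~\ref{thm:main-rep} we are exactly in the case where the $Q$-action on $G^{\mathbb{C}}/H^{\mathbb{C}}$ is strongly visible, so the slice $S$ constructed in the proof of Theorem~\ref{thm:main} (applied with $L=Q$) is available, together with the explicit formula given there for $\dim S$ in terms of $\dim G$, $\dim H$, and the position of $\mathfrak{h}$ relative to the centre $\mathfrak{z}$ of $\mathfrak{g}$. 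The whole task is to show that this number equals $\operatorname{rank}(\operatorname{supp}\pi_H)$ as defined in (\ref{eq:rank}).

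Second, I would describe $\operatorname{supp}\pi_H\subset\widehat{G}$ by hand. Recall that $\widehat{G}$ consists of the infinite-dimensional representations indexed by the non-zero central characters $\lambda\in\mathbb{R}^{\times}$, together with the unitary characters parametrised by $(\mathfrak{g}/[\mathfrak{g},\mathfrak{g}])^{*}$. By the Corwin--Greenleaf formula (Fact~\ref{fact:corwin-greenleaf}), the representations occurring in $\pi_H$ are those whose coadjoint orbit meets $\mathfrak{h}^{\perp}$, and the multiplicity is computed from the set of $H$-orbits in that intersection. I would split into the two cases used throughout: if $\mathfrak{z}\subset\mathfrak{h}$, then $\pi_H$ is a direct integral of characters and $\operatorname{supp}\pi_H$ is an affine subspace of $(\mathfrak{g}/[\mathfrak{g},\mathfrak{g}])^{*}$ of dimension $\dim G-\dim H$; if $\mathfrak{z}\not\subset\mathfrak{h}$, then $\operatorname{supp}\pi_H$ fibres over a range of central characters, each fibre contributing the dimension of the relevant set of orbits, and the rank is read off from this fibration.

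Third, I would match the two counts following the same subdivision of the non-trivial connected closed subgroups $H$ of $G$ used in the proof of Theorem~\ref{thm:main} and recorded in Table~\ref{table:choice}. In each line the co-exponential basis $\{X_1,\dots,X_d\}$ fixes $\mathfrak{q}$, hence $Q$ and the slice $S$, and I would verify that the number of free real parameters describing $S$, i.e.\ $\dim S$, coincides with the rank from the previous paragraph. Concretely this means checking that the directions in $\mathfrak{q}$ transverse to the generic $Q$-orbits in $G^{\mathbb{C}}/H^{\mathbb{C}}$ are in bijection with the continuous parameters of $\operatorname{supp}\pi_H$.

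I expect the main obstacle to lie in the ``mixed'' cases, where $\mathfrak{h}$ neither contains $\mathfrak{z}$ nor is isotropic for the symplectic form on $\mathfrak{g}/\mathfrak{z}$: there $\operatorname{supp}\pi_H$ carries simultaneously a continuous central-character direction and character directions, and one must be careful that the normalisation in the Corwin--Greenleaf formula neither collapses nor double-counts any of these, and that the co-exponential basis can be chosen compatibly with this splitting. Once the parametrisations on the geometric and the representation-theoretic sides are set up in a compatible way, the equality of dimensions reduces to a finite verification over the cases of Table~\ref{table:choice}.
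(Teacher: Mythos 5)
Your proposal is correct and follows essentially the same route as the paper: restrict to the multiplicity-free subgroups (those with $\mathfrak{z}(\mathfrak{g})\subset\mathfrak{h}$, plus $H_{(n,0,0)}$), compute the cross-section $R(\mathfrak{q}^*)$ of $(G\cdot\mathfrak{q}^*)/G$ from the coadjoint-orbit description, observe that it spans $\mathfrak{q}^*$ so that the rank equals $\dim\mathfrak{q}^*=\dim G/H$, and match this against the slice dimensions recorded in Table \ref{table:choice}. The only superfluous element is your worry about ``mixed'' cases: by Lemma \ref{lem:subalgebra} every subalgebra of $\mathfrak{g}$ not containing the centre is one of the abelian isotropic $\mathfrak{h}_{(m,0,0)}$, so no such case arises.
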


It is noteworthy to mention here that 
Theorem \ref{thm:main-support} gives an evidence of \cite[Conjecture 3.2]{ko06} 
affirmatively in the complex Heisenberg homogeneous spaces 
as same as linear actions and nilpotent orbits in complex simple Lie algebras (see \cite{sa09,sa18}). 

This paper is organized as follows. 
In Section \ref{sec:preliminaries}, 
we fix a general setup for the study of strongly visible actions on complex Heisenberg homogeneous spaces 
and prepare two anti-holomorphic diffeomorphisms on them. 
We next explain our strategy of the proof for Theorem \ref{thm:main}, 
which is based on Lemma \ref{lem:reduction}. 
Accordingly, 
we give a proof of Theorem \ref{thm:main} for each closed subgroup $H$ 
in Sections \ref{sec:(0,0,1)}--\ref{sec:(m,0,0)}. 
In particular, we provide a closed subgroup $L$ and a slice $S$ for the $L$-action 
on $G^{\mathbb{C}}/H^{\mathbb{C}}$ satisfying (\ref{v0})--(\ref{s2}) explicitly. 
In Section \ref{sec:proof}, 
we accomplish the proofs of Theorem \ref{thm:main} and Corollary \ref{cor:main}. 
Section \ref{sec:representation} is devoted to present new results on multiplicity-free representations 
as applications of our main theorems.
In Section \ref{subsec:propagation}, 
we record a brief summary on the propagation theory of multiplicity-freeness property \cite{ko13} 
and find multiplicity-free representations as results of Theorem \ref{thm:main} and Corollary \ref{cor:main} 
via this theory. 
In Section \ref{subsec:criterion}, 
we present Theorem \ref{thm:criterion} which covers the results of Theorem \ref{thm:main-rep}. 
In Sections \ref{subsec:mf-h}--\ref{subsec:visible-h}, 
we give a proof of Theorem \ref{thm:criterion}. 
In Section \ref{subsec:support}, 
we establish a result concerning the construction of specific slices 
for strongly visible actions on Heiseberg homogeneous spaces (cf. Theorem \ref{thm:support}). 

The authors would like to thank an anonymous referee for careful comments and suggestions. 

%%%%%%%%%%%%%%%%%%%%%%%%%%%%%%%%%%%%%%%%%%%%%%%%%%%%%%%%%%%%%%%%%%%%%%%%%%%%%%%%%%%%%%%%%%

\section{Preliminaries}
\label{sec:preliminaries}

\subsection{Heisenberg Lie algebra and its subalgebras}
\label{subsec:subalgebra}

Let $\mathfrak{g}$ be the $(2n+1)$-dimensional Heisenberg Lie algebra. 
One can take a basis 
\begin{align}
\label{eq:basis}
\mathcal{B}\equiv \mathcal{B}_{(n,n,1)}:=\{ X_1,\ldots ,X_n,Y_1,\ldots ,Y_n,Z\} 
\end{align}
of $\mathfrak{g}$ such that the following relations hold for $1\leq i,j\leq n$: 
\begin{align}
\label{eq:bracket}
[X_i,Y_j]=\delta _{ij}Z,\quad 
[X_i,X_j]=[Y_i,Y_j]=[X_i,Z]=[Y_j,Z]=0. 
\end{align}
Here, $\delta _{ij}$ equals one if $i=j$ and otherwise zero. 
Then, the center $\mathfrak{z}(\mathfrak{g})$ of $\mathfrak{g}$ coincides with $\mathbb{R}Z$. 
Further, the bracket relations (\ref{eq:bracket}) show that 
$[\mathfrak{g},[\mathfrak{g},\mathfrak{g}]]=\{ 0\}$, 
from which $\mathfrak{g}$ is a two-step nilpotent Lie algebra. 

We define subsets $\mathcal{B}_{(p,q,\varepsilon )}$ and $\mathcal{B}_{(m,0,\varepsilon )}$ 
of the basis $\mathcal{B}$ for $1\leq p,q\leq n$, $\varepsilon =0,1$ and $1\leq m\leq n$ by 
\begin{align*}
\mathcal{B}_{(p,q,\varepsilon )}&:=\{ X_1,\ldots ,X_p,Y_1,\ldots ,Y_q,\varepsilon Z\} ,\\
\mathcal{B}_{(m,0,\varepsilon )}&:=\{ X_1,\ldots ,X_m,\varepsilon Z\}  
\end{align*}
and $\mathcal{B}_{(0,0,1)}:=\{ Z\} $. 
For a subset $\mathcal{B}_{(k,\ell ,\varepsilon )}\subset \mathcal{B}$, 
we denote by $\operatorname{span}_{\mathbb{R}}\mathcal{B}_{(k,\ell ,\varepsilon )}$ 
the subspace of $\mathfrak{g}$ with basis $\mathcal{B}_{(k,\ell ,\varepsilon )}$ and set 
\begin{align}
\label{eq:hpqz}
\mathfrak{h}_{(k,\ell ,\varepsilon )}&:=\operatorname{span}_{\mathbb{R}}\mathcal{B}_{(k,\ell ,\varepsilon )}. 
\end{align}
By definition, we also write $\mathfrak{h}_{(m,0,1)}=\mathfrak{h}_{(m,0,0)}+\mathfrak{z}(\mathfrak{g})$. 

The Lie algebra 
$\mathfrak{h}_{(p,q,1)}$ is isomorphic to the direct sum of the Heisenberg Lie algebra 
of dimension $2p+1$ and an abelian subalgebra of dimension $q-p$ if $p<q$, 
and $\mathfrak{h}_{(p,p,1)}$ is itself the Heisenberg Lie algebra of dimension $2p+1$. 
On the other hand, $\mathfrak{h}_{(m,0,1)}$ and $\mathfrak{h}_{(m,0,0)}$ are abelian subalgebras 
for any $1\leq m\leq n$. 

We know from \cite[Proposition 3.1]{bky} that 
a subalgebra of the Heisenberg Lie algebra is characterized as follows: 

\begin{lemma}
\label{lem:subalgebra}
A non-trivial subalgebra of the Heisenberg Lie algebra $\mathfrak{g}$ is isomorphic to 
one of the following subalgebras: 
\begin{enumerate}
	\renewcommand{\theenumi}{\roman{enumi}}
	\item $\mathfrak{h}_{(0,0,1)}=\mathfrak{z}(\mathfrak{g})=\mathbb{R}Z$, 
	\label{alg:(0,0,1)}
	\item $\mathfrak{h}_{(p,q,1)}$ 
	for some $1\leq p\leq q\leq n$, $(p,q)\neq (n,n)$, 
	\label{alg:(p,q,1)}
	\item $\mathfrak{h}_{(m,0,1)}$ for some $1\leq m\leq n$, 
	\label{alg:(m,0,1)}
	\item $\mathfrak{h}_{(m,0,0)}$ for some $1\leq m\leq n$. 
	\label{alg:(m,0,0)}
\end{enumerate}
\end{lemma}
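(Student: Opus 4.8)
The plan is to classify non-trivial subalgebras $\mathfrak{a}$ of $\mathfrak{g}$ by tracking their interaction with the center $\mathfrak{z}(\mathfrak{g})=\mathbb{R}Z$ and with the symplectic form $\omega$ on $\mathfrak{g}/\mathfrak{z}(\mathfrak{g})$ defined by $\omega(\bar u,\bar v)Z=[u,v]$. First I would split into two cases according to whether $Z\in\mathfrak{a}$. If $\mathfrak{a}\cap[\mathfrak{g},\mathfrak{g}]=\{0\}$, then the bracket of any two elements of $\mathfrak{a}$ lies in $\mathbb{R}Z\cap\mathfrak{a}=\{0\}$, so $\mathfrak{a}$ is abelian and projects injectively into $\mathfrak{g}/\mathfrak{z}(\mathfrak{g})$ onto an isotropic subspace $\bar{\mathfrak{a}}$ for $\omega$; conversely any isotropic subspace lifts to an abelian subalgebra. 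If $Z\in\mathfrak{a}$, set $\mathfrak{a}_0:=\mathfrak{a}/\mathbb{R}Z\subset\mathfrak{g}/\mathfrak{z}(\mathfrak{g})$; then $\mathfrak{a}$ is recovered as the preimage of $\mathfrak{a}_0$, and $\mathfrak{a}$ has its own center equal to the radical of $\omega|_{\mathfrak{a}_0}$ together with $\mathbb{R}Z$, so $\mathfrak{a}$ is a direct sum of a Heisenberg algebra (coming from a symplectic complement) and an abelian ideal (coming from the radical).

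Next I would translate these two structural descriptions into normal forms using a symplectic-basis (Williamson-type) argument for $\omega$. In the centered case, choose a basis of $\mathfrak{a}_0$ adapted to $\omega|_{\mathfrak{a}_0}$: a symplectic part spanned by $\bar X_1,\dots,\bar X_p,\bar Y_1,\dots,\bar Y_p$ and a radical part; after extending to a symplectic basis of $\mathfrak{g}/\mathfrak{z}(\mathfrak{g})$ and relabeling, the radical vectors can be taken among $\bar X_{p+1},\dots,\bar X_n$ (Witt's theorem / transitivity of $\mathrm{Sp}(\mathfrak{g}/\mathfrak{z}(\mathfrak{g}))$ on isotropic flags of a fixed type), which realizes $\mathfrak{a}$ as $\mathfrak{h}_{(p,q,1)}$ with $q-p$ the dimension of the radical; this gives cases (i) (when $p=q=0$) and (ii). In the non-centered abelian case, $\bar{\mathfrak{a}}$ is totally isotropic of some dimension $m$, hence conjugate under $\mathrm{Sp}$ to $\operatorname{span}_{\mathbb{R}}\{\bar X_1,\dots,\bar X_m\}$; lifting, $\mathfrak{a}$ is spanned by $X_1+c_1 Z,\dots,X_m+c_m Z$, and since $Z\notin\mathfrak{a}$ a further translation by an inner automorphism $\exp(\operatorname{ad}\,t Y_i)$ (which sends $X_i\mapsto X_i+tZ$) clears the constants $c_i$, yielding $\mathfrak{h}_{(m,0,0)}$, case (iv); the same analysis with $Z$ adjoined gives $\mathfrak{h}_{(m,0,1)}$, case (iii).

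Finally I would record that the four families are genuinely non-isomorphic as abstract Lie algebras where the statement requires (abelian versus Heisenberg-type is detected by the dimension of the derived algebra, and among the abelian ones by dimension), so the list is irredundant, and note that the constraint $(p,q)\neq(n,n)$ in (ii) is automatic because $\mathfrak{a}$ is a \emph{proper} subalgebra when we are not in the trivial or full case — the full Heisenberg algebra $\mathfrak{h}_{(n,n,1)}=\mathfrak{g}$ is excluded by hypothesis. The main obstacle I anticipate is the bookkeeping in the symplectic normal-form step: making precise that an isotropic subspace together with a complementary symplectic block can be simultaneously brought to the standard coordinates $\{X_i\}$, $\{Y_j\}$ by an automorphism of $\mathfrak{g}$, i.e.\ that the relevant $\mathrm{Sp}(2n,\mathbb{R})$-orbit data is exactly captured by the pair $(p,q)$ (or $m$); everything else is a direct verification using (\ref{eq:bracket}). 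Since this classification is precisely \cite[Proposition 3.1]{bky}, I would either cite it outright or reproduce the short argument above.
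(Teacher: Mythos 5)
Your argument is correct in substance, but note that the paper does not actually prove this lemma: it is quoted verbatim from \cite[Proposition 3.1]{bky}, so there is no internal proof to compare against. Your symplectic reduction is the standard route and it works: the dichotomy $Z\in\mathfrak{a}$ versus $\mathfrak{a}\cap\mathbb{R}Z=\{0\}$, the observation that in the latter case $\mathfrak{a}$ is abelian with isotropic image in $\mathfrak{g}/\mathfrak{z}(\mathfrak{g})$, the rank/radical normal form for $\omega|_{\mathfrak{a}_0}$ in the former case, Witt's theorem to put everything in standard position via an element of $\mathrm{Sp}(2n,\mathbb{R})\subset\operatorname{Aut}(\mathfrak{g})$, and the inner automorphisms $\exp(\operatorname{ad}\,tY_i)$ to kill the $Z$-components of the lifts. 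Importantly, what you prove is the statement the paper actually \emph{needs}: not merely abstract isomorphism but conjugacy under $\operatorname{Aut}(\mathfrak{g})$ to one of the standard models, which is exactly what Lemma \ref{lem:automorphism} later extracts from \cite{bky}. (Your placement of the radical among the $\bar X_{p+1},\dots,\bar X_n$ rather than the $\bar Y_{p+1},\dots,\bar Y_q$ of the paper's normal form is a harmless relabeling by the symplectic map $X_j\mapsto Y_j$, $Y_j\mapsto -X_j$ on those indices.)

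One small slip in your closing paragraph: the list is \emph{not} irredundant up to abstract Lie algebra isomorphism, since $\mathfrak{h}_{(m,0,1)}$ and $\mathfrak{h}_{(m+1,0,0)}$ are both abelian of dimension $m+1$, hence abstractly isomorphic; they are distinguished only by whether they contain $\mathfrak{z}(\mathfrak{g})$, which is an $\operatorname{Aut}(\mathfrak{g})$-invariant but not an abstract isomorphism invariant. This does not affect the validity of the classification (the lemma only claims membership in the list), but it is the reason ``isomorphic'' in the statement must be read as ``carried to one of the models by an automorphism of $\mathfrak{g}$'' for the paper's subsequent use to be legitimate --- and your proof, unlike the bare statement, establishes precisely that.
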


Next, 
let us take a non-trivial subalgebra $\mathfrak{h}$ of the Heisenberg Lie algebra $\mathfrak{g}$ 
with basis $\mathcal{B}_{\mathfrak{h}}$ as a subset of $\mathcal{B}$. 
We set $(\mathcal{B}_{\mathfrak{h}})^c:=\mathcal{B}-\mathcal{B}_{\mathfrak{h}}$ and 
$\mathfrak{q}\equiv C(\mathfrak{h}):=\operatorname{span}_{\mathbb{R}}(\mathcal{B}_{\mathfrak{h}})^c$. 
Then, $\mathfrak{q}$ is a complementary subspace of $\mathfrak{h}$ in $\mathfrak{g}$, 
namely, $\mathfrak{h}\cap \mathfrak{q}=\{ 0\}$ and $\mathfrak{g}=\mathfrak{h}+\mathfrak{q}$. 
In particular, we set 
\begin{align}
\label{eq:q}
\mathfrak{q}_{(k,\ell ,\varepsilon )}
=\operatorname{span}_{\mathbb{R}}(\mathcal{B}_{(k,\ell ,\varepsilon )})^c. 
\end{align}

For each subalgebra given in Lemma \ref{lem:subalgebra}, 
we have 

\begin{lemma}
\label{lem:q}
\begin{enumerate}
	\renewcommand{\theenumi}{\roman{enumi}}
	\item $(\mathcal{B}_{(0,0,1)})^c=\{ X_1,\ldots ,X_n,Y_1,\ldots ,Y_n\}$. 
	\item If $1\leq p\leq q<n$,
	then $(\mathcal{B}_{(p,q,1)})^c=\{ X_{p+1},\ldots ,X_n,Y_{q+1},\ldots ,Y_n\} $, 
	and if $1\leq p<q=n$, then $(\mathcal{B}_{(p,n,1)})^c=\{ X_{p+1},\ldots ,X_n\} $. 
	\item If $1\leq m<n$, 
	then $(\mathcal{B}_{(m,0,1)})^c=\{ X_{m+1},\ldots ,X_n,Y_1,\ldots ,Y_n\} $, 
	and If $m=n$, 
	then $(\mathcal{B}_{(n,0,1)})^c=\{ Y_1,\ldots ,Y_n\} $. 
\end{enumerate}
\end{lemma}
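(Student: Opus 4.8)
The plan is to argue directly from the definitions, since each of (i)--(iii) is merely the computation of a set difference inside the finite set $\mathcal{B}=\{X_1,\ldots,X_n,Y_1,\ldots,Y_n,Z\}$. Recall that for a subalgebra $\mathfrak{h}$ with basis $\mathcal{B}_{\mathfrak{h}}\subset\mathcal{B}$ we have set $(\mathcal{B}_{\mathfrak{h}})^c:=\mathcal{B}-\mathcal{B}_{\mathfrak{h}}$, so in each case it suffices to substitute the explicit list describing $\mathcal{B}_{(k,\ell,\varepsilon)}$ recorded above and delete those elements from $\mathcal{B}$.

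Concretely, for (i) we use $\mathcal{B}_{(0,0,1)}=\{Z\}$, whence $(\mathcal{B}_{(0,0,1)})^c=\{X_1,\ldots,X_n,Y_1,\ldots,Y_n\}$. For (ii) we use $\mathcal{B}_{(p,q,1)}=\{X_1,\ldots,X_p,Y_1,\ldots,Y_q,Z\}$; when $q<n$, deleting these from $\mathcal{B}$ leaves exactly $\{X_{p+1},\ldots,X_n,Y_{q+1},\ldots,Y_n\}$, and when $q=n$ (so $p<n$, since $(p,q)\neq(n,n)$) the entire $Y$-part of $\mathcal{B}$ is consumed and only $\{X_{p+1},\ldots,X_n\}$ remains. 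For (iii) we use $\mathcal{B}_{(m,0,1)}=\{X_1,\ldots,X_m,Z\}$; when $m<n$ the complement is $\{X_{m+1},\ldots,X_n,Y_1,\ldots,Y_n\}$, and when $m=n$ all of $X_1,\ldots,X_n$ together with $Z$ are removed, leaving $\{Y_1,\ldots,Y_n\}$. In each instance the associated complementary subspace is then $\mathfrak{q}_{(k,\ell,\varepsilon)}=\operatorname{span}_{\mathbb{R}}(\mathcal{B}_{(k,\ell,\varepsilon)})^c$ by \eqref{eq:q}, and one checks $\mathfrak{h}_{(k,\ell,\varepsilon)}\cap\mathfrak{q}_{(k,\ell,\varepsilon)}=\{0\}$, $\mathfrak{g}=\mathfrak{h}_{(k,\ell,\varepsilon)}+\mathfrak{q}_{(k,\ell,\varepsilon)}$ immediately, since the two bases partition $\mathcal{B}$.

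There is essentially no obstacle here: the statement is pure bookkeeping. The one feature worth flagging is that in all three families listed in Lemma \ref{lem:subalgebra} the central generator $Z$ lies in the subalgebra (the relevant $\varepsilon$ equals $1$), so $Z$ never appears in the complement; this is precisely why no formula is recorded in this lemma for the abelian family $\mathfrak{h}_{(m,0,0)}$, whose complement does contain $Z$. These explicit co-exponential bases $(\mathcal{B}_{(k,\ell,\varepsilon)})^c$ are exactly what will be fed into the constructions of $L$, $S$ and $\sigma$ in the later sections.
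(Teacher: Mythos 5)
Your proof is correct and proceeds exactly as the paper does (the paper in fact states Lemma \ref{lem:q} without proof, since it is an immediate set-difference computation from the definitions of $\mathcal{B}_{(k,\ell,\varepsilon)}$ and $(\mathcal{B}_{\mathfrak{h}})^c=\mathcal{B}-\mathcal{B}_{\mathfrak{h}}$). Your side remark about why the $\mathfrak{h}_{(m,0,0)}$ family is omitted here, namely that its complement contains $Z$, is also accurate and consistent with the explicit description of $\mathfrak{q}_{(m,0,0)}$ used later in Section \ref{sec:(m,0,0)}.
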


\subsection{The Heisenberg group and its closed subgroups}
\label{subsec:subgroup}

Throughout this paper, 
let $G={\bf{H}}_n$ denote the connected and simply connected Heisenberg group of dimension $2n+1$ 
with Heisenberg Lie algebra $\mathfrak{g}$. 
The exponential map $\exp :\mathfrak{g}\to G$ is a diffeomorphism, 
from which we obtain $G=\exp \mathfrak{g}$. 
In particular, $G$ has a structure which admits a coordinates system of exponential type. 

For convenience, 
we shall use the notation as follows. 
Let $\mathbb{K}=\mathbb{R}$ or $\mathbb{C}$ and $N$ be a positive integer. 
We write $\boldsymbol{x}:=(x_1,\ldots ,x_N),~\boldsymbol{y}:=(y_1,\ldots ,y_N)\in \mathbb{K}^N$. 
Now, we define a symmetric bilinear form on $\mathbb{K}^N$ by 
\begin{align}
\mathbb{K}^N\times \mathbb{K}^N\to \mathbb{K},\quad 
(\boldsymbol{x},\boldsymbol{y})\mapsto (\boldsymbol{x}|\boldsymbol{y}):=x_1y_1+\cdots +x_Ny_N. 
\end{align}

We set 
\begin{align}
\label{eq:g}
g(\boldsymbol{x},\boldsymbol{y},z)
:=e^{x_1X_1}\cdots e^{x_nX_n}e^{y_1Y_1}\cdots e^{y_nY_n}e^ {zZ}. 
\end{align}
Then, we have 
\begin{align*}
G=\{ g(\boldsymbol{x},\boldsymbol{y},z):\boldsymbol{x},\boldsymbol{y}\in \mathbb{R}^n,z\in \mathbb{R}\} . 
\end{align*}

We observe the multiple of $e^{x_iX_i}$ and $e^{y_jY_j}$ in order to express that of two elements of $G$. 

\begin{lemma}
\label{lem:exp}
$e^{y_jY_j}e^{x_iX_i}=e^{x_iX_i}e^{y_jY_j}e^{-\delta _{ij}x_iy_jZ}$ for $1\leq i,j\leq n$. 
\end{lemma}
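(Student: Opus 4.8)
The plan is to argue by cases according to whether $i\neq j$ or $i=j$. If $i\neq j$, the bracket relations \eqref{eq:bracket} give $[X_i,Y_j]=0$, and since $\mathfrak{g}$ is two-step nilpotent (so higher commutators vanish) the one-parameter subgroups $\exp(\mathbb{R}X_i)$ and $\exp(\mathbb{R}Y_j)$ commute elementwise; as $\delta_{ij}=0$ the asserted identity is then just $e^{y_jY_j}e^{x_iX_i}=e^{x_iX_i}e^{y_jY_j}$, which is exactly this commutativity.

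For the case $i=j$ I would work inside the three-dimensional subalgebra $\mathfrak{s}_i:=\operatorname{span}_{\mathbb{R}}\{X_i,Y_i,Z\}$, which by \eqref{eq:bracket} is a copy of the three-dimensional Heisenberg algebra with $[X_i,Y_i]=Z$ and $Z$ central. Because $[\mathfrak{g},[\mathfrak{g},\mathfrak{g}]]=\{0\}$, for any $A,B\in\mathfrak{g}$ with $[A,B]$ central the Baker--Campbell--Hausdorff series truncates to its quadratic term:
\[
e^{A}e^{B}=\exp\!\Big(A+B+\tfrac12[A,B]\Big).
\]
Applying this with $A=y_iY_i$, $B=x_iX_i$ (and $[y_iY_i,x_iX_i]=-x_iy_iZ$) gives $e^{y_iY_i}e^{x_iX_i}=\exp(x_iX_i+y_iY_i-\tfrac12 x_iy_iZ)$, while with $A=x_iX_i$, $B=y_iY_i$ it gives $e^{x_iX_i}e^{y_iY_i}=\exp(x_iX_i+y_iY_i+\tfrac12 x_iy_iZ)$. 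Since $Z$ is central, multiplying the latter on the right by $e^{-x_iy_iZ}$ yields $\exp(x_iX_i+y_iY_i-\tfrac12 x_iy_iZ)$, which coincides with $e^{y_iY_i}e^{x_iX_i}$, proving the identity.

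If one prefers to avoid the normalization of BCH, the same conclusion follows from the conjugation formula $e^{A}e^{B}e^{-A}=\exp\big(e^{\operatorname{ad}A}B\big)$: since $\operatorname{ad}(y_iY_i)^2=0$ on $\mathfrak{g}$ by two-step nilpotency, one gets $e^{y_iY_i}e^{x_iX_i}e^{-y_iY_i}=\exp\big(x_iX_i+[y_iY_i,x_iX_i]\big)=\exp(x_iX_i-x_iy_iZ)=e^{x_iX_i}e^{-x_iy_iZ}$, using $[X_i,Z]=0$ and centrality of $Z$ for the last step; multiplying on the right by $e^{y_iY_i}$ and sliding the central factor $e^{-x_iy_iZ}$ to the far right gives the claim. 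There is no genuine obstacle here beyond careful bookkeeping of the sign coming from $[Y_i,X_i]=-Z$ and of the factor $\tfrac12$; the only conceptual point is the observation that two-step nilpotency collapses the group multiplication to a quadratic formula on the relevant abelian-by-center piece.
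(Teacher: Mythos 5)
Your proposal is correct. The paper's own proof handles both cases $i=j$ and $i\neq j$ uniformly: it computes the conjugate $e^{-x_iX_i}e^{y_jY_j}e^{x_iX_i}=\exp\bigl(y_j\operatorname{Ad}(e^{-x_iX_i})Y_j\bigr)$, expands $\operatorname{Ad}(e^{-x_iX_i})Y_j=Y_j-\delta_{ij}x_iZ$ using two-step nilpotency, and then multiplies back by $e^{x_iX_i}$ on the left. Your second variant (conjugating $x_iX_i$ by $e^{y_iY_i}$ and using $e^{\operatorname{ad}(y_iY_i)}$) is essentially this same argument with the roles of $X_i$ and $Y_i$ interchanged, so there is no real difference there. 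Your primary argument via the truncated Baker--Campbell--Hausdorff formula $e^Ae^B=\exp\bigl(A+B+\tfrac12[A,B]\bigr)$ is a genuinely different (and equally valid) route: it buys a symmetric, closed-form expression for the product at the cost of invoking BCH and tracking the factor $\tfrac12$, whereas the paper's conjugation argument needs only the elementary identity $e^{-A}e^{B}e^{A}=\exp\bigl(\operatorname{Ad}(e^{-A})B\bigr)$ and a two-term $\operatorname{ad}$-expansion. The case split on $i\neq j$ versus $i=j$ is harmless but unnecessary, since $\delta_{ij}$ absorbs both cases in either computation.
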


\begin{proof}
First, let us compute 
\begin{align}
\label{eq:ad}
e^{-x_iX_i}e^{y_jY_j}e^{x_iX_i}=e^{y_j\operatorname{Ad}(e^{-x_iX_i})Y_j}. 
\end{align}
As $[X_i,Y_j]=\delta _{ij}Z$ and $[X_i,[X_i,Y_j]]=0$, 
$\operatorname{Ad}(e^{-x_iX_i})Y_j$ is given by 
\begin{align*}
\operatorname{Ad}(e^{-x_iX_i})Y_j
&=e^{\operatorname{ad}(-x_iX_i)}Y_j\\
&=\sum _{k=0}^{\infty }\frac{(-x_i)^k}{k!}(\operatorname{ad}X_i)^kY_j\\
&=Y_j-\delta _{ij}x_iZ. 
\end{align*}
Thus, the right-hand side of (\ref{eq:ad}) equals $e^{y_j(Y_j-\delta _{ij}x_iZ)}$. 
Moreover, this is $e^{y_jY_j}e^{-\delta _{ij}x_iy_jZ}$ because $[Y_i,Z]=0$. 
Hence, we obtain 
\begin{align*}
e^{y_jY_j}e^{x_iX_i}
&=e^{x_iX_i}(e^{-x_iX_i}e^{y_jY_j}e^{x_iX_i})
=e^{x_iX_i}e^{y_jY_j}e^{-\delta _{ij}x_iy_jZ}. 
\end{align*}
Hence, Lemma \ref{lem:exp} has been proved. 
\end{proof}

\begin{lemma}
\label{lem:multiple}
For $\boldsymbol{x},\boldsymbol{y},\boldsymbol{s},\boldsymbol{t}\in \mathbb{K}^n$ 
and $z,u\in \mathbb{K}$, one has 
\begin{align*}
g(\boldsymbol{x},\boldsymbol{y},z)\cdot g(\boldsymbol{s},\boldsymbol{t},u)
=g(\boldsymbol{x}+\boldsymbol{s},\boldsymbol{y}+\boldsymbol{t},z+u-(\boldsymbol{y}|\boldsymbol{s})), 
\end{align*}
and $g(\boldsymbol{x},\boldsymbol{y},z)^{-1}
=g(-\boldsymbol{x},-\boldsymbol{y},-z-(\boldsymbol{y}|\boldsymbol{x}))$. 
\end{lemma}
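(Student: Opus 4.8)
The plan is to establish the product formula by a direct induction-free computation, reducing everything to the single commutation identity of Lemma \ref{lem:exp}. Recall that by definition
\[
g(\boldsymbol{x},\boldsymbol{y},z)=e^{x_1X_1}\cdots e^{x_nX_n}\,e^{y_1Y_1}\cdots e^{y_nY_n}\,e^{zZ},
\]
so the product $g(\boldsymbol{x},\boldsymbol{y},z)\cdot g(\boldsymbol{s},\boldsymbol{t},u)$ is a word in which the block $e^{s_1X_1}\cdots e^{s_nX_n}$ sits sandwiched between the $Y$-block of the first factor and the central element $e^{zZ}$. First I would use that $Z$ is central to slide $e^{zZ}$ harmlessly to the right past $e^{s_1X_1}\cdots e^{s_nX_n}$. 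The heart of the matter is then to move the whole $X$-block $e^{s_1X_1}\cdots e^{s_nX_n}$ to the left past the whole $Y$-block $e^{y_1Y_1}\cdots e^{y_nY_n}$.

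To carry this out, note that the $X_i$'s commute among themselves and the $Y_j$'s commute among themselves (by \eqref{eq:bracket}), so I may freely reorder within each block; the only nontrivial interchanges are between an $e^{y_jY_j}$ and an $e^{s_iX_i}$, governed by Lemma \ref{lem:exp}, which produces the central correction $e^{-\delta_{ij}y_j s_i Z}$. Pushing $e^{s_iX_i}$ leftward past $e^{y_1Y_1}\cdots e^{y_nY_n}$ therefore costs exactly $e^{-y_i s_i Z}$, and doing this for each $i=1,\dots,n$ and collecting all the central factors (which again commute and simply add in the exponent) yields the total correction $e^{-(\boldsymbol{y}|\boldsymbol{s})Z}$. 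After the $X$-block has been merged with $e^{x_1X_1}\cdots e^{x_nX_n}$, giving $e^{(x_1+s_1)X_1}\cdots e^{(x_n+s_n)X_n}$, and the $Y$-block has been merged with $e^{t_1Y_1}\cdots e^{t_nY_n}$, giving $e^{(y_1+t_1)Y_1}\cdots e^{(y_n+t_n)Y_n}$, the word is back in the normal form \eqref{eq:g} with parameters $\boldsymbol{x}+\boldsymbol{s}$, $\boldsymbol{y}+\boldsymbol{t}$, and central coordinate $z+u-(\boldsymbol{y}|\boldsymbol{s})$, as claimed.

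For the inverse formula, I would simply verify that $g(-\boldsymbol{x},-\boldsymbol{y},-z-(\boldsymbol{y}|\boldsymbol{x}))$ is a right (hence two-sided) inverse by plugging it into the product formula just proved: with $\boldsymbol{s}=-\boldsymbol{x}$, $\boldsymbol{t}=-\boldsymbol{y}$, $u=-z-(\boldsymbol{y}|\boldsymbol{x})$, the first two coordinates become $\boldsymbol{0}$, and the central coordinate becomes $z+\bigl(-z-(\boldsymbol{y}|\boldsymbol{x})\bigr)-(\boldsymbol{y}|(-\boldsymbol{x}))=0$, so the product is $g(\boldsymbol{0},\boldsymbol{0},0)=e$. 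The only genuine obstacle is bookkeeping: making sure that when $e^{s_iX_i}$ is commuted past the entire $Y$-block one does not inadvertently disturb the other $e^{s_{i'}X_{i'}}$ factors, and that the accumulated central terms are tallied correctly; since all central terms lie in $\mathbb{R}Z$ (or $\mathbb{C}Z$) and all $X$'s commute with each other, the reordering is unambiguous and the exponents add, so no subtlety beyond careful indexing arises. I should also note the computation is purely formal in the entries of $\boldsymbol{x},\boldsymbol{y},\boldsymbol{s},\boldsymbol{t},z,u$, hence valid over $\mathbb{K}=\mathbb{R}$ and $\mathbb{K}=\mathbb{C}$ alike, which is the reason the lemma is stated for $\mathbb{K}$.
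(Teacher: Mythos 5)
Your proof is correct and follows essentially the same route as the paper: both reduce the computation to Lemma \ref{lem:exp} by commuting the $X$-block of the second factor past the $Y$-block of the first (using that the only nontrivial interchange is between $e^{y_iY_i}$ and $e^{s_iX_i}$, each contributing a central factor $e^{-y_is_iZ}$), and the inverse formula then follows by substitution into the product formula. No gaps.
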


\begin{proof}
By Lemma \ref{lem:exp}, we observe 
\begin{align*}
&(e^{y_1Y_1}\cdots e^{y_nY_n})(e^{s_1X_1}\cdots e^{s_nX_n})\\
&=(e^{y_1Y_1}e^{s_1X_1})(e^{y_2Y_2}e^{s_2X_2})\cdots (e^{y_nY_n}e^{s_nX_n})\\
&=(e^{s_1X_1}e^{y_1Y_1}e^{-y_1s_1Z})(e^{s_2X_2}e^{y_2Y_2}e^{-y_2s_2Z})\cdots 
	(e^{s_nX_n}e^{y_nY_n}e^{-y_ns_nZ})\\
&=(e^{s_1X_1}\cdots e^{s_nX_n})(e^{y_1Y_1}\cdots e^{y_nY_n})e^{-(y_1s_1+\cdots +y_ns_n)Z}. 
\end{align*}
Hence, Lemma \ref{lem:multiple} follows from the above equality. 
\end{proof}

Lemma \ref{lem:multiple} implies that we have a Lie group isomorphism 
\begin{align}
\label{eq:isom}
\iota :G\to \mathbb{R}^{2n}\ltimes \mathbb{R},\quad 
g(\boldsymbol{x},\boldsymbol{y},z)\mapsto (\boldsymbol{x},\boldsymbol{y},z). 
\end{align}

Next, 
any connected closed subgroup $H$ of $G$ is realized as $H=\exp \mathfrak{h}$ 
of some subalgebra $\mathfrak{h}$ of $\mathfrak{g}$. 
Hence, the following lemma is an immediate consequence of Lemma \ref{lem:subalgebra}. 

\begin{lemma}
\label{lem:subgroup}
A non-trivial connected closed subgroup of the Heisenberg  group $G$ 
is isomorphic to one of the following closed subgroups: 
\begin{enumerate}
	\renewcommand{\theenumi}{\roman{enumi}}
	\item $H_{(0,0,1)}:=Z(G)=\exp (\mathbb{R}Z)$ of $G$, 
	\label{gp:(0,0,1)}
	\item $H_{(p,q,1)}:=\exp \mathfrak{h}_{(p,q,1)}$ for some $1\leq p\leq q\leq n$, $(p,q)\neq (n,n)$, 
	\label{gp:(p,q,1)}
	\item $H_{(m,0,1)}:=\exp \mathfrak{h}_{(m,0,1)}$ for some $1\leq m\leq n$, 
	\label{gp:(m,0,1)}
	\item $H_{(m,0,0)}:=\exp \mathfrak{h}_{(m,0,0)}$ for some $1\leq m\leq n$. 
	\label{gp:(m,0,0)}
\end{enumerate}
\end{lemma}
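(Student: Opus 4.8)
The plan is to derive Lemma~\ref{lem:subgroup} directly from Lemma~\ref{lem:subalgebra} by invoking the standard correspondence between connected closed subgroups of a simply connected nilpotent Lie group and subalgebras of its Lie algebra. Concretely, first I would recall that since $G$ is connected and simply connected nilpotent, the exponential map $\exp\colon\mathfrak{g}\to G$ is a diffeomorphism, and under this map the assignment $\mathfrak{h}\mapsto\exp\mathfrak{h}$ is a bijection between subalgebras of $\mathfrak{g}$ and connected closed subgroups of $G$; moreover $\exp\mathfrak{h}$ is itself connected, simply connected, and closed, with Lie algebra $\mathfrak{h}$. Consequently every non-trivial connected closed subgroup $H$ is of the form $H=\exp\mathfrak{h}$ for a unique non-trivial subalgebra $\mathfrak{h}$.

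Next I would feed this through Lemma~\ref{lem:subalgebra}: given a non-trivial connected closed subgroup $H=\exp\mathfrak{h}$, there is a Lie algebra isomorphism carrying $\mathfrak{h}$ onto one of $\mathfrak{h}_{(0,0,1)}$, $\mathfrak{h}_{(p,q,1)}$, $\mathfrak{h}_{(m,0,1)}$, or $\mathfrak{h}_{(m,0,0)}$. The only genuine point to check is that a Lie algebra isomorphism $\mathfrak{h}\xrightarrow{\sim}\mathfrak{h}'$ between two subalgebras lifts to a Lie group isomorphism $\exp\mathfrak{h}\xrightarrow{\sim}\exp\mathfrak{h}'$; this holds because both $\exp\mathfrak{h}$ and $\exp\mathfrak{h}'$ are connected and simply connected (being closed subgroups of the simply connected nilpotent group $G$, hence themselves simply connected nilpotent), so the functoriality of $\exp$ together with the fact that a Lie algebra morphism between simply connected groups integrates uniquely to a group morphism gives the desired isomorphism, with inverse coming from the inverse algebra isomorphism. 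Matching $\mathfrak{h}_{(0,0,1)}$ with $H_{(0,0,1)}=Z(G)=\exp(\mathbb{R}Z)$, $\mathfrak{h}_{(p,q,1)}$ with $H_{(p,q,1)}=\exp\mathfrak{h}_{(p,q,1)}$, and similarly for the remaining two families then yields exactly the list in the statement.

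There is essentially no obstacle here: the lemma is a purely formal transcription of the subalgebra classification via the exponential diffeomorphism, and the argument is routine once one records the simply-connectedness of closed subgroups of a simply connected nilpotent group. If anything, the only mild subtlety worth a sentence is verifying that $H_{(0,0,1)}$ really equals the center $Z(G)$: since $\mathfrak{z}(\mathfrak{g})=\mathbb{R}Z$ and $\exp$ is a diffeomorphism intertwining the center of $\mathfrak{g}$ with the identity component of the center of $G$ (which is all of $Z(G)$ as $G$ is connected), we get $Z(G)=\exp(\mathbb{R}Z)=H_{(0,0,1)}$. Thus the proof reduces to the single line ``apply $\exp$ to Lemma~\ref{lem:subalgebra}'', which is exactly what the excerpt's preamble to the lemma already anticipates.
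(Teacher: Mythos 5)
Your proposal is correct and follows the same route as the paper, which simply notes that every connected closed subgroup of the simply connected nilpotent group $G$ is $\exp\mathfrak{h}$ for a subalgebra $\mathfrak{h}$ and then cites Lemma \ref{lem:subalgebra}. The extra details you supply (integrating the algebra isomorphism to a group isomorphism via simple connectedness, and identifying $Z(G)=\exp(\mathbb{R}Z)$) are accurate elaborations of what the paper leaves implicit.
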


We note that the closed subgroup $H_{(m,0,1)}$ in (\ref{gp:(m,0,1)}) is written by 
\begin{align*}
H_{(m,0,1)}=Z(G)H_{(m,0,0)}=H_{(m,0,0)}Z(G). 
\end{align*}

We put 
$\boldsymbol{0}_k=(0,\ldots ,0)\in \mathbb{R}^k$ $(1\leq k\leq n)$. 
Then, each closed subgroup given in Lemma \ref{lem:subgroup} forms as follows. 
\begin{align*}
H_{(0,0,1)}&=\{ g(\boldsymbol{0}_n,\boldsymbol{0}_n,z):z\in \mathbb{R}\} ,\\
H_{(p,q,1)}&=\{ g((\boldsymbol{w}_1,\boldsymbol{0}_{n-p}),
	(\boldsymbol{w}_2,\boldsymbol{0}_{n-q}),z):\boldsymbol{w}_1\in \mathbb{R}^p,
	\boldsymbol{w}_2\in \mathbb{R}^q,z\in \mathbb{R}\} ,\\
H_{(m,0,1)}&=\{ g((\boldsymbol{w},\boldsymbol{0}_{n-m}),
	\boldsymbol{0}_{n},z):\boldsymbol{w}\in \mathbb{R}^m,z\in \mathbb{R}\} ,\\
H_{(m,0,0)}&=\{ g((\boldsymbol{w},\boldsymbol{0}_{n-m}),
	\boldsymbol{0}_{n},0):\boldsymbol{w}\in \mathbb{R}^m\} .
\end{align*}

\subsection{The Lie group generated by $\exp \mathfrak{q}$}
\label{subsec:L}

Retain the notation as in Sections \ref{subsec:subalgebra} and \ref{subsec:subgroup}. 
For a subalgebra $\mathfrak{h}$ of $\mathfrak{g}$ and $\mathfrak{q}=C(\mathfrak{h})$, 
we denote by $\langle \exp \mathfrak{q}\rangle $ by the Lie group generated by $\exp \mathfrak{q}$. 
We set 
\begin{enumerate}
	\renewcommand{\theenumi}{\roman{enumi}}
	\item $Q_{(0,0,1)}:=\langle \exp \mathfrak{q}_{(0,0,1)}\rangle$. 
	\item $Q_{(p,q,1)}:=\langle \exp \mathfrak{q}_{(p,q,1)}\rangle$. 
	\item $Q_{(m,0,1)}:=\langle \exp \mathfrak{q}_{(m,0,1)}\rangle$. 
	\item $Q_{(m,0,0)}:=\langle \exp \mathfrak{q}_{(m,0,0)}\rangle$. 
\end{enumerate}

\begin{lemma}
\label{lem:Q}
The subgroup $Q=\langle \exp \mathfrak{q}\rangle$ of $G$ forms: 
\begin{enumerate}
	\renewcommand{\theenumi}{\roman{enumi}}
	\item $Q_{(0,0,1)}=G$. 
	\item 
	\begin{enumerate}
		\item If $1\leq p\leq q<n$ then $Q_{(p,q,1)}=\exp (\mathfrak{q}_{(p,q,1)}+\mathbb{R}Z)$. 
		\item If $1\leq p<q=n$ then $Q_{(p,n,1)}=\exp \mathfrak{q}_{(p,n,1)}$. 
	\end{enumerate}
	\item 
	\begin{enumerate}
		\item If $1\leq m<n$ then $Q_{(m,0,1)}=\exp (\mathfrak{q}_{(m,0,1)}+\mathbb{R}Z)$. 
		\item If $m=n$ then $Q_{(n,0,1)}=\exp \mathfrak{q}_{(n,0,1)}$. 
	\end{enumerate}
	\item $Q_{(m,0,0)}=\exp \mathfrak{q}_{(m,0,0)}$ for $1\leq m\leq n$. 
\end{enumerate}
\end{lemma}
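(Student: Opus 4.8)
The plan is to determine the group $Q = \langle \exp\mathfrak{q}\rangle$ case by case according to the list in Lemma \ref{lem:subalgebra}, using the explicit multiplication law from Lemma \ref{lem:multiple} together with the description of $(\mathcal{B}_{\mathfrak{h}})^c$ provided in Lemma \ref{lem:q}. The guiding principle is simple: $\mathfrak{q}$ is spanned by a subset of $\mathcal{B}$, hence by various $X_i$'s, $Y_j$'s, and possibly $Z$; the subgroup generated by the corresponding one-parameter subgroups is at least $\exp\mathfrak{q}$ as a set (by exponential coordinates), and it will be exactly $\exp\mathfrak{q}$ unless brackets $[X_i,Y_j]=Z$ force the central direction $Z$ to appear. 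So the only real content is to decide, for each $\mathfrak{q}$, whether $Z\in\mathfrak{q}$ already, or whether $Z$ is generated by commutators of elements of $\exp\mathfrak{q}$, or whether $Z$ genuinely lies outside the generated group.

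First I would handle case (i): $\mathfrak{h}=\mathbb{R}Z$, so $(\mathcal{B}_{(0,0,1)})^c=\{X_1,\dots,X_n,Y_1,\dots,Y_n\}$, and since $[X_i,Y_i]=Z$, the generated group contains $Z(G)$ and hence all of $G$; thus $Q_{(0,0,1)}=G$. Next, for case (ii) with $\mathfrak{h}=\mathfrak{h}_{(p,q,1)}$: by Lemma \ref{lem:q}(2), if $p\le q<n$ then $(\mathcal{B}_{(p,q,1)})^c=\{X_{p+1},\dots,X_n,Y_{q+1},\dots,Y_n\}$. Since $p<q+1\le n$ means the index sets $\{p+1,\dots,n\}$ and $\{q+1,\dots,n\}$ overlap (both contain $n$, and more precisely any $i$ with $q+1\le i\le n$ appears among both the $X$'s and the $Y$'s), we get $[X_i,Y_i]=Z$ for such $i$, so $Z$ is in the generated group; hence $Q_{(p,q,1)}=\exp(\mathfrak{q}_{(p,q,1)}+\mathbb{R}Z)$. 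If instead $p<q=n$, then $(\mathcal{B}_{(p,n,1)})^c=\{X_{p+1},\dots,X_n\}$ consists only of $X$'s, which commute among themselves and with nothing producing $Z$; so the generated group is abelian and equals $\exp\mathfrak{q}_{(p,n,1)}$. The same dichotomy governs case (iii): by Lemma \ref{lem:q}(3), for $m<n$ the complement contains $X_{m+1},\dots,X_n$ and all of $Y_1,\dots,Y_n$, so again $[X_i,Y_i]=Z$ for $m+1\le i\le n$ puts $Z$ inside; for $m=n$ the complement is just $\{Y_1,\dots,Y_n\}$, an abelian family, giving $Q_{(n,0,1)}=\exp\mathfrak{q}_{(n,0,1)}$. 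Finally, case (iv): $\mathfrak{h}_{(m,0,0)}$ differs from $\mathfrak{h}_{(m,0,1)}$ only by the central line, so $(\mathcal{B}_{(m,0,0)})^c=(\mathcal{B}_{(m,0,1)})^c\cup\{Z\}$; thus $Z\in\mathfrak{q}_{(m,0,0)}$ outright and $Q_{(m,0,0)}=\exp\mathfrak{q}_{(m,0,0)}$ directly, with no commutator argument needed.

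In each case the verification has two halves: (a) the generated group is contained in $\exp$ of the relevant subspace — immediate since that subspace is a subalgebra (each $\mathfrak{q}$ or $\mathfrak{q}+\mathbb{R}Z$ is closed under the bracket, being spanned by basis vectors whose brackets land in $\mathbb{R}Z$), and $\exp$ of a subalgebra is the corresponding connected closed subgroup because $G$ is simply connected nilpotent; (b) the reverse containment — the one-parameter subgroups $\exp(tX_i)$, $\exp(tY_j)$ lie in $Q$ by definition, and when needed $Z$ is recovered from a commutator $\exp(X_i)\exp(Y_i)\exp(-X_i)\exp(-Y_i)$, whose value is $g(\boldsymbol{0},\boldsymbol{0},-1)=\exp(-Z)$ by Lemma \ref{lem:multiple}. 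Once $Z$ is available, together with the $X_i,Y_j$'s one generates all of $\exp(\mathfrak{q}+\mathbb{R}Z)$ since that subalgebra is spanned by exactly those elements.

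The main obstacle — really the only subtle point — is the $m=n$ resp.\ $q=n$ boundary cases, where one must be careful NOT to over-claim: there $\mathfrak{q}$ contains no complementary pair $X_i,Y_i$, so no central element is produced, and the generated subgroup is strictly smaller than $\exp(\mathfrak{q}+\mathbb{R}Z)$; showing $Z\notin\langle\exp\mathfrak{q}\rangle$ amounts to observing that $\exp\mathfrak{q}$ is already a closed abelian subgroup (a product of commuting one-parameter groups), hence closed under multiplication and inversion, so it equals the group it generates. After assembling these four cases the lemma follows.
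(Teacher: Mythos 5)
Your proof is correct and follows essentially the same route as the paper's: in each case one decides whether $\mathfrak{q}$ is already a subalgebra or whether the brackets $[X_i,Y_i]=Z$ force the central direction into the generated group, and your version is in fact slightly more explicit than the paper about the two inclusions. The only blemish is the sign of the group commutator: with the paper's conventions (Lemma \ref{lem:exp}) one finds $e^{X_i}e^{Y_i}e^{-X_i}e^{-Y_i}=e^{Z}$ rather than $e^{-Z}$, which of course does not affect the conclusion.
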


\begin{proof}
We will prove this lemma using the bracket relations $[X_i,Y_i]=Z$ for $i=1,2,\ldots ,n$. 

First, $[\mathfrak{q}_{(0,0,1)},\mathfrak{q}_{(0,0,1)}]$ is contained in $\mathfrak{q}_{(0,0,1)}+\mathbb{R}Z$ 
which coincides with the Lie algebra $\mathfrak{g}$. 
Hence, $Q_{(0,0,1)}=\exp \mathfrak{g}=G$. 

Second, $\mathfrak{q}_{(p,q,1)}=\operatorname{span}_{\mathbb{R}}(\mathcal{B}_{(p,q,1)})^c$ 
contains $X_n$ and $Y_n$ if $q<n$. 
Then, the bracket $[\mathfrak{q}_{(p,q,1)},\mathfrak{q}_{(p,q,1)}]$ lies 
in $\mathfrak{q}_{(p,q,1)}+\mathbb{R}Z$, 
from which we obtain $Q_{(p,q,1)}=\exp (\mathfrak{q}_{(p,q,1)}+\mathbb{R}Z)$. 
On the other hand, 
if $p<q=n$, then the subspace 
$\mathfrak{q}_{(p,n,1)}=\operatorname{span}_{\mathbb{R}}\{ X_{p+1},\ldots ,X_n\} $ 
is an abelian subalgebra of $\mathfrak{g}$. 
Thus, we obtain $Q_{(p,n,1)}=\exp \mathfrak{q}_{(p,n,1)}$. 

Third, $\mathfrak{q}_{(m,0,1)}$ contains $X_n$ and $Y_n$. 
Then, we have 
$[\mathfrak{q}_{(m,0,1)},\mathfrak{q}_{(m,0,1)}]\subset \mathfrak{q}_{(m,0,1)}+\mathbb{R}Z$. 
Hence, we obtain $Q_{(m,0,1)}=\exp (\mathfrak{q}_{(m,0,1)}+\mathbb{R}Z)$. 
As $\mathfrak{q}_{(n,0,1)}$ is an abelian subalgebra, we have $Q_{(n,0,1)}=\exp \mathfrak{q}_{(n,0,1)}$. 

Finally, $\mathfrak{q}_{(m,0,0)}$ contains an element $Z$. 
Then, it is a subalgebra of $\mathfrak{g}$, from which $Q_{(m,0,0)}=\exp \mathfrak{q}_{(m,0,0)}$. 
\end{proof}

\subsection{Complex Heisenberg homogeneous spaces}
\label{subsec:homogeneous space}

Let $H$ be a connected closed subgroup of the Heisenberg group $G$ with Lie algebra $\mathfrak{h}$ 
which is one of (\ref{gp:(0,0,1)})--(\ref{gp:(m,0,0)}) given in Lemma \ref{lem:subgroup}. 
We denote by $\mathfrak{g}^{\mathbb{C}}$, $\mathfrak{h}^{\mathbb{C}}$ and $\mathfrak{q}^{\mathbb{C}}$ 
the complexifications of $\mathfrak{g}$, $\mathfrak{h}$ and $\mathfrak{q}$, respectively. 
Then, $\mathcal{B},\mathcal{B}_{\mathfrak{h}}$ and $(\mathcal{B}_{\mathfrak{h}})^c$ are 
$\mathbb{C}$-bases of 
$\mathfrak{g}^{\mathbb{C}}$, $\mathfrak{h}^{\mathbb{C}}$ and $\mathfrak{q}^{\mathbb{C}}$, respectively, 
and we have 
$\mathfrak{g}^{\mathbb{C}}=\mathfrak{h}^{\mathbb{C}}+\mathfrak{q}^{\mathbb{C}}$ 
and $\mathfrak{h}^{\mathbb{C}}\cap \mathfrak{q}^{\mathbb{C}}=\{ 0\} $. 

We set $G^{\mathbb{C}}:=\exp \mathfrak{g}^{\mathbb{C}}$ and $H^{\mathbb{C}}:=\exp \mathfrak{h}^{\mathbb{C}}$. 
Clearly, $G^{\mathbb{C}}$ is given by 
\begin{align*}
G^{\mathbb{C}}=\{ g(\boldsymbol{x},\boldsymbol{y},z):\boldsymbol{x},\boldsymbol{y}\in \mathbb{C}^n,
	z\in \mathbb{C}\} , 
\end{align*}
and hence it is isomorphic to $\mathbb{C}^{2n}\ltimes \mathbb{C}$ as a complex Lie group 
through $\iota :G^{\mathbb{C}}\to \mathbb{C}^{2n}\ltimes \mathbb{C}$ given by (\ref{eq:isom}). 

We say that $G^{\mathbb{C}}/H^{\mathbb{C}}$ is a {\it complex Heisenberg homogeneous space}. 
Clearly, this is of the form: 

\begin{lemma}
\label{lem:homogeneous}
The complex homogeneous space $G^{\mathbb{C}}/H^{\mathbb{C}}$ is given by 
\begin{align*}
G^{\mathbb{C}}/H^{\mathbb{C}}
=\{ (\exp X)H^{\mathbb{C}}:X\in \mathfrak{q}^{\mathbb{C}}\} 
\end{align*}
and biholomorphic to $\exp \mathfrak{q}^{\mathbb{C}}$. 
\end{lemma}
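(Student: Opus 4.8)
The plan is to first establish the set-theoretic factorization $G^{\mathbb{C}}=(\exp\mathfrak{q}^{\mathbb{C}})\,H^{\mathbb{C}}$ together with uniqueness, and then to upgrade the resulting bijection $\mathfrak{q}^{\mathbb{C}}\to G^{\mathbb{C}}/H^{\mathbb{C}}$ to a biholomorphism. Throughout I use that $\exp:\mathfrak{g}^{\mathbb{C}}\to G^{\mathbb{C}}$ is a biholomorphism, that $\mathfrak{g}^{\mathbb{C}}=\mathfrak{h}^{\mathbb{C}}\oplus\mathfrak{q}^{\mathbb{C}}$, and that, since $\mathfrak{g}$ is two-step nilpotent, the Baker--Campbell--Hausdorff formula reduces to $\exp A\,\exp B=\exp\!\bigl(A+B+\tfrac12[A,B]\bigr)$ with $[A,B]\in[\mathfrak{g}^{\mathbb{C}},\mathfrak{g}^{\mathbb{C}}]=\mathbb{C}Z=\mathfrak{z}(\mathfrak{g}^{\mathbb{C}})$. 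For surjectivity, every coset is $\exp(W)H^{\mathbb{C}}$ for some $W\in\mathfrak{g}^{\mathbb{C}}$; write $W=X+V$ with $X\in\mathfrak{q}^{\mathbb{C}}$, $V\in\mathfrak{h}^{\mathbb{C}}$. Seeking $X'\in\mathfrak{q}^{\mathbb{C}}$, $V'\in\mathfrak{h}^{\mathbb{C}}$ with $\exp X'\,\exp V'=\exp W$ and $X'-X,\,V'-V\in\mathbb{C}Z$, two-step nilpotency gives $[X',V']=[X,V]=:cZ$, so the requirement becomes $(X'-X)+(V'-V)=-\tfrac12 cZ$. By Lemma~\ref{lem:subgroup}, $Z$ lies either in $\mathcal{B}_{\mathfrak{h}}$ or in $(\mathcal{B}_{\mathfrak{h}})^c$, hence $\mathbb{C}Z\subseteq\mathfrak{h}^{\mathbb{C}}$ or $\mathbb{C}Z\subseteq\mathfrak{q}^{\mathbb{C}}$, and in either case the central correction $-\tfrac12 cZ$ can be absorbed entirely into $\mathfrak{h}^{\mathbb{C}}$, resp. $\mathfrak{q}^{\mathbb{C}}$. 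Thus $G^{\mathbb{C}}=(\exp\mathfrak{q}^{\mathbb{C}})H^{\mathbb{C}}$, which is the first assertion.

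For injectivity, suppose $\exp X_1\cdot H^{\mathbb{C}}=\exp X_2\cdot H^{\mathbb{C}}$ with $X_i\in\mathfrak{q}^{\mathbb{C}}$. Then $\exp(-X_1)\exp(X_2)\in H^{\mathbb{C}}$, and the same formula together with injectivity of $\exp$ yields $X_2-X_1-\tfrac12[X_1,X_2]\in\mathfrak{h}^{\mathbb{C}}$. Using $X_2-X_1\in\mathfrak{q}^{\mathbb{C}}$, $[X_1,X_2]\in\mathbb{C}Z$ and the dichotomy above one obtains $X_1=X_2$ in both cases (when $\mathbb{C}Z\subseteq\mathfrak{q}^{\mathbb{C}}$ one first deduces $X_2-X_1\in\mathbb{C}Z$, which is central, hence $[X_1,X_2]=0$). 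Therefore $X\mapsto\exp X\cdot H^{\mathbb{C}}$ is a bijection from $\mathfrak{q}^{\mathbb{C}}$ onto $G^{\mathbb{C}}/H^{\mathbb{C}}$.

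To get the biholomorphism, consider $\Phi:\exp\mathfrak{q}^{\mathbb{C}}\to G^{\mathbb{C}}/H^{\mathbb{C}}$, $p\mapsto pH^{\mathbb{C}}$, the restriction of the holomorphic projection to the closed complex submanifold $\exp\mathfrak{q}^{\mathbb{C}}$ (itself biholomorphic to $\mathfrak{q}^{\mathbb{C}}$); it is bijective by the previous steps, and both sides have the same complex dimension $d=\dim_{\mathbb{C}}\mathfrak{q}^{\mathbb{C}}$, so it suffices to check that $d\Phi$ is everywhere invertible. Using the two-step identity $(d\exp)_X=(dL_{\exp X})_e\circ(\operatorname{id}-\tfrac12\operatorname{ad}X)$, the tangent space to $\exp\mathfrak{q}^{\mathbb{C}}$ at $\exp X$ is $(dL_{\exp X})_e\bigl(\{\,X'-\tfrac12[X,X']:X'\in\mathfrak{q}^{\mathbb{C}}\,\}\bigr)$; since $X'\mapsto X'-\tfrac12[X,X']$ is a unipotent linear automorphism of $\mathfrak{g}^{\mathbb{C}}$ and $[X,\,\cdot\,]$ lands in $\mathbb{C}Z$, this is a linear complement to $(dL_{\exp X})_e(\mathfrak{h}^{\mathbb{C}})$, again by the same location-of-$Z$ dichotomy. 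Hence $\Phi$ is a bijective local biholomorphism, thus a biholomorphism. (Alternatively, in the exponential coordinates of Lemma~\ref{lem:multiple} the multiplication map $\exp\mathfrak{q}^{\mathbb{C}}\times H^{\mathbb{C}}\to G^{\mathbb{C}}$ is polynomial with constant nonzero Jacobian, giving the same conclusion.)

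The only genuinely delicate point is the central correction term $\tfrac12[\,\cdot\,,\,\cdot\,]\in\mathbb{C}Z$ present in every product: once one notes that everything hinges on whether $\mathbb{C}Z\subseteq\mathfrak{h}^{\mathbb{C}}$ (cases (i)--(iii) of Lemma~\ref{lem:subgroup}, i.e. $H\supseteq Z(G)$) or $\mathbb{C}Z\subseteq\mathfrak{q}^{\mathbb{C}}$ (case (iv)), surjectivity, injectivity and the tangent-space computation all become routine, and the passage to a biholomorphism is soft.
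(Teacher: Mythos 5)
Your proof is correct, and it is considerably more detailed than what the paper offers: the paper states this lemma with no proof at all (it is introduced by ``Clearly, this is of the form:''), the intended justification being that in the exponential coordinates $g(\boldsymbol{x},\boldsymbol{y},z)$ of Lemma \ref{lem:multiple} the subgroup $H^{\mathbb{C}}$ consists of the elements with a prescribed block of coordinates, and the explicit polynomial multiplication law makes the factorization $G^{\mathbb{C}}=(\exp\mathfrak{q}^{\mathbb{C}})H^{\mathbb{C}}$ and its uniqueness visible by inspection --- exactly the alternative you mention in your final parenthetical. Your main line of argument instead runs through the two-step BCH formula and the observation that the only obstruction to splitting $\exp W$ as $\exp X'\exp V'$ is a central term in $\mathbb{C}Z$, which necessarily lands in exactly one of $\mathfrak{h}^{\mathbb{C}}$ or $\mathfrak{q}^{\mathbb{C}}$; the same dichotomy handles injectivity and the tangent-space computation for the biholomorphism. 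This is a clean, coordinate-free route, and correctly isolates the one point that is not formally trivial (the central correction and the case $Z\in(\mathcal{B}_{\mathfrak{h}})^c$, i.e.\ $H\simeq H_{(m,0,0)}$); what the paper's implicit coordinate argument buys is brevity, while yours buys a proof that works verbatim for any two-step nilpotent group with a complementary subspace adapted to the center.
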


\subsection{Equivalent holomorphic actions}
\label{subsec:equivalent}

In this subsection, 
we will introduce the notion of equivalent holomorphic actions. 

Let $D_1,D_2$ be connected complex manifolds and $K_1,K_2$ be some Lie groups. 
Suppose that $K_1$ acts on $D_1$ and $K_2$ on $D_2$ holomorphically. 

\begin{define}
\label{def:equivalent}
We say that the holomorphic $K_1$-action on $D_1$ is {\it equivalent} 
to the holomorphic $K_2$-action on $D_2$ by $(\Phi ,\varphi)$ 
if there exists a biholomorphic diffeomorphism $\Phi :D_1\to D_2$ 
and a Lie group isomorphism $\varphi :K_1\to K_2$ such that 
\begin{align*}
\Phi (g\cdot v)=\varphi (g)\cdot \Phi (v)\quad (v\in D_1,g\in K_1). 
\end{align*}
\end{define}

Let us explain that 
the property of the strong visibility keeps among equivalent holomorphic actions. 
Namely, we prove: 

\begin{proposition}
\label{prop:equivalent}
Suppose that the holomorphic $K_1$-action on $D_1$ is equivalent 
to the holomorphic $K_2$-action on $D_2$ by $(\Phi ,\varphi )$. 
If the $K_1$-action on $D_1$ is strongly visible, 
then so is the $K_2$-action on $D_2$ and hence vice versa. 
\end{proposition}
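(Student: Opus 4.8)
The strategy is to transport the slice and the anti-holomorphic diffeomorphism from $D_1$ to $D_2$ via the biholomorphism $\Phi$. Suppose the $K_1$-action on $D_1$ is strongly visible, with a real submanifold $S_1 \subset D_1$ and an anti-holomorphic diffeomorphism $\sigma_1$ on $D_1$ satisfying (\ref{v0})--(\ref{s2}). I would set $S_2 := \Phi(S_1)$ and $\sigma_2 := \Phi \circ \sigma_1 \circ \Phi^{-1}$, and verify that these data witness the strong visibility of the $K_2$-action on $D_2$.

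First, since $\Phi$ is a diffeomorphism, $S_2 = \Phi(S_1)$ is a real submanifold of $D_2$. For (\ref{v0}): using the intertwining relation $\Phi(g\cdot v) = \varphi(g)\cdot \Phi(v)$ and the surjectivity of $\varphi$, one has $K_2 \cdot S_2 = \varphi(K_1)\cdot \Phi(S_1) = \Phi(K_1 \cdot S_1) = \Phi(D_1')$, which is a non-empty open subset of $D_2$ because $\Phi$ is a biholomorphism and $D_1'$ is non-empty open in $D_1$. Next, $\sigma_2$ is anti-holomorphic as a composition of the holomorphic map $\Phi^{-1}$, the anti-holomorphic map $\sigma_1$, and the holomorphic map $\Phi$ (holomorphic $\circ$ anti-holomorphic $\circ$ holomorphic is anti-holomorphic), and it is a diffeomorphism of $D_2$. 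For (\ref{s1}): for $w = \Phi(v) \in S_2$ with $v \in S_1$, we get $\sigma_2(w) = \Phi(\sigma_1(v)) = \Phi(v) = w$ since $\sigma_1|_{S_1} = \operatorname{id}_{S_1}$; hence $\sigma_2|_{S_2} = \operatorname{id}_{S_2}$.

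For (\ref{s2}): let $w \in \Phi(D_1')$, write $w = \Phi(v)$ with $v \in D_1'$, and let $\varphi(g)\cdot w$ be an arbitrary point of the $K_2$-orbit through $w$. Then
\begin{align*}
\sigma_2(\varphi(g)\cdot w)
&= \Phi\bigl(\sigma_1(\Phi^{-1}(\varphi(g)\cdot \Phi(v)))\bigr)
= \Phi\bigl(\sigma_1(g\cdot v)\bigr),
\end{align*}
and since $\sigma_1$ preserves each $K_1$-orbit in $D_1'$, we have $\sigma_1(g\cdot v) = g'\cdot v$ for some $g' \in K_1$, whence $\sigma_2(\varphi(g)\cdot w) = \Phi(g'\cdot v) = \varphi(g')\cdot \Phi(v) = \varphi(g')\cdot w$ lies in the same $K_2$-orbit. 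This establishes all of (\ref{v0})--(\ref{s2}) for $(S_2,\sigma_2)$, so the $K_2$-action on $D_2$ is strongly visible. The converse follows by applying the same argument to the inverse equivalence $(\Phi^{-1},\varphi^{-1})$.

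This proof is essentially a bookkeeping exercise, so I do not expect a genuine obstacle; the only point requiring a moment's care is confirming that conjugating an anti-holomorphic map by a biholomorphism again yields an anti-holomorphic map, which is immediate from the chain rule on the underlying real manifolds together with the Cauchy--Riemann conditions. One should also note that $\Phi$ need not be orbit-preserving in any naive sense — it is the \emph{twisted} equivariance through $\varphi$ that makes the orbit correspondence work — but this is already built into Definition \ref{def:equivalent}.
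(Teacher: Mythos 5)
Your proposal is correct and follows essentially the same route as the paper: transport the slice by $S_2=\Phi(S_1)$, conjugate the anti-holomorphic diffeomorphism to $\sigma_2=\Phi\circ\sigma_1\circ\Phi^{-1}$, and verify (\ref{v0})--(\ref{s2}) using the twisted equivariance $\Phi(g\cdot v)=\varphi(g)\cdot\Phi(v)$. The only cosmetic difference is that in checking (\ref{s2}) you test points of the form $\varphi(g)\cdot w$, which is redundant since every point of $K_2\cdot S_2=\Phi(D_1')$ is already of the form $\Phi(v)$, but this does not affect correctness.
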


\begin{proof}
By the assumption, 
one can take a real submanifold $S_1$ in $D_1$ such that $D_1':=K_1\cdot S_1$ is an open set in $D_1$, 
and take an anti-holomorphic diffeomorphism $\sigma _1$ on $D_1'$ 
such that $\sigma _1|_{S_1}=\operatorname{id}_{S_1}$ and 
$\sigma _1$ preserves each $K_1$-orbit in $D_1'$. 

We set $S_2:=\Phi (S_1)$ and $D_2':=K_2\cdot S_2$. 
Since $\Phi$ is a diffeomorphism as smooth manifolds, $S_2$ is a real submanifold in $D_2$. 
Further, 
the set $D_2'$ coincides with the image $\Phi (D_1')$ because 
\begin{align*}
\Phi (D_1')=\Phi (K_1\cdot S_1)=\varphi (K_1)\cdot \Phi (S_1)=K_2\cdot S_2. 
\end{align*}
As $D_1'$ is open in $D_1$, the image $\Phi (D_1')=D_2'$ is also open in $D_2$. 
Hence, $S_2$ satisfies the condition (\ref{v0}). 

Next, we define a diffeomorphism $\sigma _2 $ on $D$ by $\sigma _2 =\Phi \circ \sigma _1\circ \Phi ^{-1}$. 
This is an anti-holomorphic map since $\Phi$ is holomorphic. 

Here, we take an element $s_2\in S_2$. 
As $S_2=\Phi (S_1)$, we write $s_2=\Phi (s_1)$ for some $s_1\in S_1$. 
Since $\sigma _1(s_1)=s_1$, we have 
\begin{align*}
\sigma _2(s_2)
=\Phi \circ \sigma _1\circ \Phi ^{-1}(\Phi (s_1))
=\Phi \circ \sigma _1(s_1)
=\Phi (s_1)
=s_2. 
\end{align*}
This implies $\sigma _2$ is the identity map on $S_2$, 
from which the condition (\ref{s1}) has been verified. 

Finally, let us show that $\sigma _2$ satisfies the condition (\ref{s2}). 
For this, let $v_2$ be an element of $D_2'$. 
As $D_2'=\Phi (D_1')$, we write $v_2=\Phi (v_1)$ for some $v_1\in D_1'$. 
We recall that $\sigma _1(v_1)$ forms $\sigma _1(v_1)=g_1\cdot v_1$ for some $g_1\in K_1$. 
When we set $g_2:=\varphi (g_1)\in K_2$, we have 
\begin{align*}
\sigma _2(v_2)
=\Phi \circ \sigma _1\circ \Phi ^{-1}(\Phi (v_1))
=\Phi \circ \sigma _1(v_1)
=\Phi (g_1\cdot v_1)
=g_2\cdot v_2. 
\end{align*}
This means that $\sigma _2(v_2)=g_2\cdot v_2\in K_2\cdot v_2$. 

Therefore, Proposition \ref{prop:equivalent} has been proved. 
\end{proof}

Thanks to Proposition \ref{prop:equivalent}, 
we do not distinguish equivalent holomorphic actions in this paper. 

\subsection{Classification of actions on Heisenberg homogeneous spaces}
\label{subsec:classification}

In this subsection, 
we explain that it suffices to provide a proof of Theorem \ref{thm:main} 
in the case 
where a closed subgroup $H$ is taken to be one of (\ref{gp:(0,0,1)})--(\ref{gp:(m,0,0)}) 
in Lemma \ref{lem:subgroup}. 

First, let $\mathfrak{h}_1,\mathfrak{h}_2$ be some subalgebras of the Heisenberg Lie algebra $\mathfrak{g}$ 
satisfying $\mathfrak{h}_1\simeq \mathfrak{h}_2$. 

\begin{lemma}
\label{lem:automorphism}
If $\mathfrak{h}_1$ is isomorphic to $\mathfrak{h}_2$, 
then there exists an automorphism $\varphi $ on $\mathfrak{g}$ such that 
the restriction of $\varphi $ to $\mathfrak{h}_1$ gives rise to an isomorphism 
from $\mathfrak{h}_1$ to $\mathfrak{h}_2$. 
\end{lemma}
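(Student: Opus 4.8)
The plan is to deduce the statement from the explicit classification in Lemma~\ref{lem:subalgebra} together with a description of $\operatorname{Aut}(\mathfrak{g})$. It suffices to prove the normalized form: for every subalgebra $\mathfrak{h}$ of $\mathfrak{g}$ there is $\alpha\in\operatorname{Aut}(\mathfrak{g})$ carrying $\mathfrak{h}$ onto the model $\mathfrak{h}_{(k,\ell,\varepsilon)}$ supplied by Lemma~\ref{lem:subalgebra}. Indeed, if $\mathfrak{h}_1\simeq\mathfrak{h}_2$ then both are carried onto a common model $\mathfrak{h}_0$ by automorphisms $\alpha_1,\alpha_2$, and $\varphi:=\alpha_2^{-1}\circ\alpha_1\in\operatorname{Aut}(\mathfrak{g})$ satisfies $\varphi(\mathfrak{h}_1)=\alpha_2^{-1}(\mathfrak{h}_0)=\mathfrak{h}_2$, so $\varphi|_{\mathfrak{h}_1}$ is an isomorphism onto $\mathfrak{h}_2$.

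Next I would record the shape of $\operatorname{Aut}(\mathfrak{g})$. Since $\mathfrak{z}(\mathfrak{g})=[\mathfrak{g},\mathfrak{g}]=\mathbb{R}Z$ is characteristic, any $\alpha\in\operatorname{Aut}(\mathfrak{g})$ satisfies $\alpha(Z)=\lambda Z$ for some $\lambda\in\mathbb{R}\setminus\{0\}$ and induces on $V:=\operatorname{span}_{\mathbb{R}}\{X_1,\ldots,X_n,Y_1,\ldots,Y_n\}$, identified with $\mathfrak{g}/\mathbb{R}Z$, a linear map $A$ which is \emph{conformal symplectic}, i.e. $[Av,Aw]=\lambda[v,w]$ for all $v,w\in V$; conversely every such pair $(A,\lambda)$, every \emph{shear} $v\mapsto v+\ell(v)Z$ with $\ell$ a linear functional on $V$, and the identity on $\mathbb{R}Z$, lift to automorphisms of $\mathfrak{g}$. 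Two building blocks suffice: the symplectic group $\mathrm{Sp}$ acting on $V$ (the case $\lambda=1$), used to normalize the position of a subalgebra modulo the center, and the shears, used to eliminate central components.

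Then I would analyze a given $\mathfrak{h}$ as follows. Let $\overline{\mathfrak{h}}\subset V$ be its image modulo $\mathbb{R}Z$. The restriction of the bracket form to $\overline{\mathfrak{h}}$ has some rank $2p$ and radical of dimension $q-p$; a relative Darboux (Witt extension) argument produces a symplectic frame in which $\overline{\mathfrak{h}}=\operatorname{span}_{\mathbb{R}}\{X_1,\ldots,X_p,Y_1,\ldots,Y_q\}$ or $\operatorname{span}_{\mathbb{R}}\{X_1,\ldots,X_m\}$ according to the type. If $\mathfrak{h}\supseteq\mathbb{R}Z$ this already places $\mathfrak{h}$ on $\mathfrak{h}_{(p,q,1)}$, $\mathfrak{h}_{(m,0,1)}$, or $\mathfrak{h}_{(0,0,1)}$. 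If $\mathfrak{h}\cap\mathbb{R}Z=\{0\}$, then from $[\mathfrak{h},\mathfrak{h}]\subseteq[\mathfrak{g},\mathfrak{g}]=\mathbb{R}Z$ we get $[\mathfrak{h},\mathfrak{h}]\subseteq\mathfrak{h}\cap\mathbb{R}Z=\{0\}$, so $\mathfrak{h}$ is abelian and $\overline{\mathfrak{h}}$ is isotropic; after the symplectic normalization $\mathfrak{h}$ has a basis $\{X_i+c_iZ\}_{i=1}^m$, and the shear with $\ell(X_i)=-c_i$ (extended arbitrarily) carries it onto $\mathfrak{h}_{(m,0,0)}$.

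The step I expect to be the main obstacle is precisely this last, non-central, case: one must check that the symplectic normalization of $\overline{\mathfrak{h}}$ extends to a genuine automorphism — the conformal condition $[Av,Aw]=\lambda[v,w]$ is what has to be verified for the frame change — and then that the residual central components of the basis vectors can all be cleared by a single shear, which works only because a shear realizes an arbitrary functional on $\overline{\mathfrak{h}}$. A secondary subtlety, needed for the reduction in the first paragraph, is the bookkeeping that an abstract isomorphism $\mathfrak{h}_1\simeq\mathfrak{h}_2$ forces $\mathfrak{h}_1$ and $\mathfrak{h}_2$ to the same model: one uses that $\dim\mathfrak{h}$, $\dim[\mathfrak{h},\mathfrak{h}]$ and the position of $\mathfrak{h}$ relative to $\mathfrak{z}(\mathfrak{g})$ are compatible with the classification, treating the one-dimensional and abelian subcases directly.
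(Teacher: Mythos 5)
Your construction of the normalizing automorphism --- factoring $\operatorname{Aut}(\mathfrak{g})$ into conformal symplectic maps on $V=\mathfrak{g}/\mathbb{R}Z$ and shears $v\mapsto v+\ell(v)Z$, normalizing $\overline{\mathfrak{h}}$ by a relative Darboux argument, and clearing the residual central components by a single shear in the abelian non-central case --- is correct, and it amounts to a self-contained proof of the normalization that the paper simply imports from \cite[Proposition 3.1]{bky}: the paper's proof takes from that reference an adapted Heisenberg basis $\mathcal{B}'=\{X_1',\ldots,X_n',Y_1',\ldots,Y_n',Z'\}$ with $[X_i',Y_i']=Z'$ such that $\mathfrak{h}_2$ is spanned by the appropriate subset of $\mathcal{B}'$, and defines $\varphi$ as the change of basis $\mathcal{B}\to\mathcal{B}'$. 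For that part your route is more elementary and more explicit, at the cost of the symplectic linear algebra you rightly flag as the main technical step.

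The genuine gap is in your reduction, precisely at the point you call a ``secondary subtlety.'' It is not true that an abstract Lie algebra isomorphism $\mathfrak{h}_1\simeq\mathfrak{h}_2$ forces the two subalgebras onto the same model: $\mathfrak{h}_{(m,0,1)}=\operatorname{span}_{\mathbb{R}}\{X_1,\ldots,X_m,Z\}$ and $\mathfrak{h}_{(m+1,0,0)}=\operatorname{span}_{\mathbb{R}}\{X_1,\ldots,X_{m+1}\}$ are both abelian of dimension $m+1$, hence abstractly isomorphic, yet they are distinct models, and no $\varphi\in\operatorname{Aut}(\mathfrak{g})$ carries one onto the other because exactly one of them contains the characteristic ideal $\mathbb{R}Z=[\mathfrak{g},\mathfrak{g}]=\mathfrak{z}(\mathfrak{g})$ (the starkest instance being $\mathbb{R}Z$ versus $\mathbb{R}X_1$). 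The ``position of $\mathfrak{h}$ relative to $\mathfrak{z}(\mathfrak{g})$'' you invoke is an extrinsic datum, not an invariant of the abstract isomorphism class, so it cannot complete the bookkeeping; under the literal reading of ``isomorphic'' the statement itself fails and no argument can close this gap. What saves the lemma --- and what the paper's proof implicitly does --- is to read ``$\mathfrak{h}_1\simeq\mathfrak{h}_2$'' as ``$\mathfrak{h}_1$ and $\mathfrak{h}_2$ are equivalent to the same model of Lemma \ref{lem:subalgebra} in the sense of \cite[Proposition 3.1]{bky}''; this is also the only way the lemma is used later, namely to replace an arbitrary $H$ by a model $H_{(k,\ell,\varepsilon)}$. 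Under that reading your composition $\varphi=\alpha_2^{-1}\circ\alpha_1$ from the first paragraph is exactly right and no bookkeeping is needed, because the common model is part of the hypothesis rather than something to be derived.
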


\begin{proof}
According to \cite[Proposition 3.1]{bky} (see also Lemma \ref{lem:subalgebra}), 
we may and do assume that a subalgebra $\mathfrak{h}_1$ is either 
$\mathfrak{h}_{(0,0,1)}=\mathbb{R}Z$, 
$\mathfrak{h}_{(p,q,1)}=\operatorname{span}_{\mathbb{R}}\mathcal{B}_{(p,q,1)}$, 
$\mathfrak{h}_{(m,0,1)}=\operatorname{span}_{\mathbb{R}}\mathcal{B}_{(m,0,1)}$ 
or $\mathfrak{h}_{(m,0,0)}=\operatorname{span}_{\mathbb{R}}\mathcal{B}_{(m,0,0)}$, 
and $\mathfrak{h}_2$ is isomorphic to $\mathfrak{h}_{(0,0,1)}$, 
$\mathfrak{h}_{(p,q,1)}$, $\mathfrak{h}_{(m,0,1)}$ or $\mathfrak{h}_{(m,0,0)}$. 

Let us treat the case $\mathfrak{h}_2\simeq \mathfrak{h}_{(p,q,1)}$. 
By \cite[Proposition 3.1]{bky}, 
there exists another basis 
$\mathcal{B}'=\{ X_1',\ldots ,X_n',Y_1',\ldots ,Y_n',Z'\} $ 
of $\mathfrak{g}$ with relations $[X_i',Y_i']=Z'$ ($1\leq i\leq n$) 
such that a basis of $\mathfrak{h}_2$ is taken to be the subset 
$\mathcal{B}_{\mathfrak{h}_2}=\mathcal{B}_{(p,q,1)}'
:=\{ X_1',\ldots ,X_p',Y_1',\ldots ,Y_q',Z'\} $ in $\mathcal{B}'$. 
Here, we define a linear transformation $\varphi $ on $\mathfrak{g}$ by 
$\varphi (X_i)=X_i',~\varphi (Y_i)=Y_i'$ ($1\leq i\leq n$) and $\varphi (Z)=Z'$. 
Then, $\varphi $ is an automorphism on $\mathfrak{g}$ and 
$\varphi :\mathfrak{h}_{(p,q,1)}\to \mathfrak{h}_2$ becomes a Lie algebra isomorphism. 

Similarly, we can also prove this lemma for other cases. 
Thus, we omit the proof. 
\end{proof}

Next, let $\varphi $ be an automorphism on $\mathfrak{g}$ such that 
$\varphi |_{\mathfrak{h}_1}:\mathfrak{h}_1\to \mathfrak{h}_2$ is a Lie algebra isomorphism. 
We extend $\varphi \in \operatorname{Aut}\mathfrak{g}$ to a $\mathbb{C}$-linear map 
on the complexification $\mathfrak{g}^{\mathbb{C}}$, namely, 
$\varphi (X+\sqrt{-1}Y)=\varphi (X)+\sqrt{-1}\varphi (Y)$ ($X,Y\in \mathfrak{g}$). 
Then, this is an automorphism on $\mathfrak{g}^{\mathbb{C}}$. 
Further, we can lift it to an automorphism 
on the complexified Heisenberg  group $G^{\mathbb{C}}=\exp \mathfrak{g}^{\mathbb{C}}$, 
for which we still denote by the  letter $\varphi $. 
Obviously, it is holomorphic. 

By Lemma \ref{lem:automorphism}, the image $\varphi (H_1^{\mathbb{C}})$ of 
$H_1^{\mathbb{C}}=\exp \mathfrak{h}_1^{\mathbb{C}}$ coincides 
with $H_2^{\mathbb{C}}=\exp \mathfrak{h}_2^{\mathbb{C}}$, from which 
it gives rise to a holomorphic diffeomorphism 
\begin{align}
\label{eq:Phi}
\Phi :G^{\mathbb{C}}/H_1^{\mathbb{C}}\to G^{\mathbb{C}}/H_2^{\mathbb{C}},\quad 
gH_1^{\mathbb{C}}\mapsto \Phi (gH_1^{\mathbb{C}})=\varphi (g)H_2^{\mathbb{C}}. 
\end{align}

Retain the notation as in the proof of Lemma \ref{lem:automorphism}. 
Let $\mathcal{B}_{\mathfrak{h}_1}\subset \mathcal{B}$ be a basis of $\mathfrak{h}_1$ 
and $\mathcal{B}_{\mathfrak{h}_2}\subset \mathcal{B}'$ a basis of $\mathfrak{h}_2$. 
We set $\mathfrak{q}_1=\operatorname{span}_{\mathbb{R}}(\mathcal{B}-\mathcal{B}_{\mathfrak{h}_1})$, 
$\mathfrak{q}_2=\operatorname{span}_{\mathbb{R}}(\mathcal{B}'-\mathcal{B}_{\mathfrak{h}_2})$, 
and $Q_1=\langle \exp \mathfrak{q}_1\rangle$, $Q_2=\langle \exp \mathfrak{q}_2\rangle$. 

\begin{lemma}
\label{lem:phi}
The Lie group $Q_1$ is isomorphic to $Q_2$ through $\varphi $. 
\end{lemma}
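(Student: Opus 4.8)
The plan is to show that the automorphism $\varphi \in \operatorname{Aut}\mathfrak{g}$ constructed in Lemma~\ref{lem:automorphism} carries the complementary subspace $\mathfrak{q}_1$ onto $\mathfrak{q}_2$, and then to transport this to the group level. Concretely, in the notation of the proof of Lemma~\ref{lem:automorphism}, $\varphi$ sends the basis $\mathcal{B}$ of $\mathfrak{g}$ to the basis $\mathcal{B}'$, sending $\mathcal{B}_{\mathfrak{h}_1}$ bijectively onto $\mathcal{B}_{\mathfrak{h}_2}$; hence it sends the complementary set $\mathcal{B}-\mathcal{B}_{\mathfrak{h}_1}$ bijectively onto $\mathcal{B}'-\mathcal{B}_{\mathfrak{h}_2}$, and therefore $\varphi(\mathfrak{q}_1)=\mathfrak{q}_2$ by $\mathbb{R}$-linearity. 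The first step is just to record this.

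Next I would pass from the Lie algebra to the group. Since $\varphi$ is an automorphism of $\mathfrak{g}$, it exponentiates to an automorphism of $G=\exp\mathfrak{g}$ (still denoted $\varphi$), with $\varphi(\exp X)=\exp(\varphi X)$ for $X\in\mathfrak{g}$; this is the same lift already used to define $\Phi$ in~(\ref{eq:Phi}). Then $\varphi(\exp\mathfrak{q}_1)=\exp(\varphi\mathfrak{q}_1)=\exp\mathfrak{q}_2$. Taking the subgroup generated by both sides and using that a Lie group automorphism carries the subgroup generated by a set onto the subgroup generated by its image, we get
\begin{align*}
\varphi(Q_1)=\varphi(\langle\exp\mathfrak{q}_1\rangle)=\langle\varphi(\exp\mathfrak{q}_1)\rangle=\langle\exp\mathfrak{q}_2\rangle=Q_2.
\end{align*}
Restricting $\varphi$ to $Q_1$ gives an isomorphism $Q_1\xrightarrow{\ \sim\ }Q_2$, which is the assertion.

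There is really no hard part here; the statement is essentially a bookkeeping consequence of how $\varphi$ was built in Lemma~\ref{lem:automorphism}. The one point deserving a sentence of care is that $\varphi$, being an automorphism (hence a homeomorphism) of $G$, maps the \emph{closed} subgroup generated by $\exp\mathfrak{q}_1$ onto the closed subgroup generated by $\exp\mathfrak{q}_2$, so that $Q_1$ and $Q_2$ as defined in Section~\ref{subsec:L} correspond under $\varphi$; this is immediate since $\varphi$ and $\varphi^{-1}$ are continuous homomorphisms. If one wanted to be fully explicit one could instead invoke Lemma~\ref{lem:Q}, which gives $Q_1$ and $Q_2$ as images under $\exp$ of explicit subspaces ($\mathfrak{q}_i$ or $\mathfrak{q}_i+\mathbb{R}Z$), and note that $\varphi(\mathbb{R}Z)=\mathbb{R}Z'=\mathbb{R}Z$ since $Z'$ is a nonzero central element; but the generator-based argument above avoids even a case division and is the route I would take.
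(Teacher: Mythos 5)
Your proposal is correct and follows essentially the same route as the paper: first observe that $\varphi(\mathfrak{q}_1)=\mathfrak{q}_2$, then transport this to the generated subgroups. The only cosmetic difference is that the paper verifies $\varphi(Q_1)=Q_2$ by an explicit double inclusion on finite products $e^{A_1}\cdots e^{A_k}$, whereas you invoke the general fact that an automorphism carries $\langle \exp\mathfrak{q}_1\rangle$ onto $\langle \varphi(\exp\mathfrak{q}_1)\rangle$ — the same argument, stated once in the abstract.
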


\begin{proof}
Clearly, 
$\varphi \in \operatorname{Aut}\mathfrak{g}$ 
gives a linear isomorphism from $\mathfrak{q}_1$ to $\mathfrak{q}_2$. 

Let us see that $\varphi (Q_1)$ is contained in $Q_2$ as follows. 
Let $A_1,\ldots ,A_k$ be elements of $\mathfrak{q}_1$. 
Then, $e^{A_1}\cdots e^{A_k}$ lies in $Q_1$. 
Since $\varphi $ is an automorphism on $G^{\mathbb{C}}$, 
we have $\varphi (e^{A_1}\cdots e^{A_k})=e^{\varphi (A_1)}\cdots e^{\varphi (A_k)}$. 
As $\varphi (A_1),\ldots ,\varphi (A_k)$ are in $\mathfrak{q}_2$, 
$\varphi (e^{A_1}\cdots e^{A_k})$ is an element of $Q_2$. 
Hence, we obtain $\varphi (Q_1)\subset Q_2$. 

On the other hand, we take an arbitrary $e^{B_1}\cdots e^{B_{\ell }}\in Q_2$ 
with $B_1,\ldots ,B_{\ell }\in \mathfrak{q}_2$. 
For each $1\leq i\leq \ell $, we set $A_i:=\varphi ^{-1}(B_i)$, which is an element of $\mathfrak{q}_1$. 
Then, we have $\varphi (e^{A_1}\cdots e^{A_{\ell }})=e^{B_1}\cdots e^{B_{\ell }}$. 
This means $\varphi (Q_1)\supset Q_2$. 

Consequently, we obtain $\varphi (Q_1)=Q_2$. 
Since $\varphi $ is an automorphism on $G^{\mathbb{C}}$, 
$\varphi :Q_1\to Q_2$ is a Lie group isomorphism. 
\end{proof}

\begin{proposition}
\label{prop:classification}
If $\mathfrak{h}_1$ is isomorphic to $\mathfrak{h}_2$ as a Lie algebra, 
then the holomorphic $Q_1$-action on $G^{\mathbb{C}}/H_1^{\mathbb{C}}$ is equivalent to 
the holomorphic $Q_2$-action on $G^{\mathbb{C}}/H_2^{\mathbb{C}}$. 
\end{proposition}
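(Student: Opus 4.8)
The plan is to assemble the proof directly from the ingredients already prepared in this section. We are given that $\mathfrak{h}_1 \simeq \mathfrak{h}_2$ as Lie algebras. First I would invoke Lemma \ref{lem:automorphism} to produce an automorphism $\varphi \in \operatorname{Aut}\mathfrak{g}$ whose restriction $\varphi|_{\mathfrak{h}_1} : \mathfrak{h}_1 \to \mathfrak{h}_2$ is a Lie algebra isomorphism. As explained in the paragraph preceding Lemma \ref{lem:phi}, this $\varphi$ extends $\mathbb{C}$-linearly to an automorphism of $\mathfrak{g}^{\mathbb{C}}$ and lifts to a holomorphic automorphism of $G^{\mathbb{C}} = \exp \mathfrak{g}^{\mathbb{C}}$, again denoted $\varphi$. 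Since $\varphi(\mathfrak{h}_1^{\mathbb{C}}) = \mathfrak{h}_2^{\mathbb{C}}$ we get $\varphi(H_1^{\mathbb{C}}) = H_2^{\mathbb{C}}$, so the biholomorphic map
\[
\Phi : G^{\mathbb{C}}/H_1^{\mathbb{C}} \to G^{\mathbb{C}}/H_2^{\mathbb{C}}, \qquad gH_1^{\mathbb{C}} \mapsto \varphi(g)H_2^{\mathbb{C}}
\]
of (\ref{eq:Phi}) is well-defined.

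Next I would recall from Lemma \ref{lem:phi} that $\varphi$ restricts to a Lie group isomorphism $\varphi : Q_1 \to Q_2$, where $Q_i = \langle \exp \mathfrak{q}_i \rangle$. So the candidate pair realizing the equivalence in the sense of Definition \ref{def:equivalent} is $(\Phi, \varphi|_{Q_1})$. It then only remains to check the intertwining identity $\Phi(q \cdot v) = \varphi(q) \cdot \Phi(v)$ for $q \in Q_1$ and $v \in G^{\mathbb{C}}/H_1^{\mathbb{C}}$. Writing $v = gH_1^{\mathbb{C}}$, the $Q_1$-action is $q \cdot v = (qg)H_1^{\mathbb{C}}$, so
\[
\Phi(q \cdot v) = \varphi(qg)H_2^{\mathbb{C}} = \varphi(q)\varphi(g)H_2^{\mathbb{C}} = \varphi(q) \cdot \Phi(v),
\]
using that $\varphi$ is a group homomorphism on $G^{\mathbb{C}}$. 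This is a one-line verification.

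There is essentially no serious obstacle here: the proposition is a formal consequence of Lemmas \ref{lem:automorphism} and \ref{lem:phi} together with the construction of $\Phi$. The only point that deserves a word of care is well-definedness of $\Phi$ on cosets — i.e. that $gH_1^{\mathbb{C}} = g'H_1^{\mathbb{C}}$ forces $\varphi(g)H_2^{\mathbb{C}} = \varphi(g')H_2^{\mathbb{C}}$ — and this follows immediately from $\varphi(H_1^{\mathbb{C}}) = H_2^{\mathbb{C}}$. Thus the proof consists of: (1) quote Lemma \ref{lem:automorphism} for $\varphi$; (2) quote Lemma \ref{lem:phi} for $\varphi : Q_1 \xrightarrow{\sim} Q_2$; (3) observe $\Phi$ is biholomorphic with well-definedness from $\varphi(H_1^{\mathbb{C}}) = H_2^{\mathbb{C}}$; (4) verify the one-line intertwining relation and conclude via Definition \ref{def:equivalent}.
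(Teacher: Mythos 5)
Your proposal is correct and follows essentially the same route as the paper: it uses Lemma \ref{lem:automorphism} to produce the automorphism $\varphi$, the map $\Phi$ from (\ref{eq:Phi}), Lemma \ref{lem:phi} to identify $\varphi(Q_1)=Q_2$, and the one-line intertwining computation $\Phi(g\cdot xH_1^{\mathbb{C}})=\varphi(g)\cdot\Phi(xH_1^{\mathbb{C}})$. The extra remark on well-definedness of $\Phi$ on cosets is a sensible (if implicit in the paper) addition.
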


\begin{proof}
It follows from (\ref{eq:Phi}) that the following equality holds 
for $g\in Q_1$ and $x\in G^{\mathbb{C}}$: 
\begin{align*}
\Phi (g\cdot xH_1^{\mathbb{C}})
=\varphi (gx)H_2^{\mathbb{C}}
=\varphi (g)\cdot \varphi (x)H_2^{\mathbb{C}}
=\varphi (g)\cdot \Phi (xH_1^{\mathbb{C}}). 
\end{align*}
By Lemma \ref{lem:phi}, $\varphi (g)$ is an element of $Q_2$ for any $g\in Q_1$. 
Hence, the $Q_1$-action on $G^{\mathbb{C}}/H_1^{\mathbb{C}}$ is equivalent to 
the $Q_2$-action on $G^{\mathbb{C}}/H_2^{\mathbb{C}}$ by $(\Phi ,\varphi )$. 
\end{proof}

Retain the setting as in Proposition \ref{prop:classification} and its proof. 
Let $L_1$ be a closed subgroup of $G$ containing $Q_1$ and $L_2:=\varphi (L_1)\supset \varphi (Q_1)=Q_2$. 
The following corollary can be proved 
in the same argument as the proof of Proposition \ref{prop:classification}. 

\begin{corollary}
\label{cor:classification}
The holomorphic $L_1$-action on $G^{\mathbb{C}}/H_1^{\mathbb{C}}$ is equivalent to 
the holomorphic $L_2$-action on $G^{\mathbb{C}}/H_2^{\mathbb{C}}$ via $(\Phi ,\varphi )$. 
\end{corollary}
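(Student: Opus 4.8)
The plan is to mimic verbatim the argument used in the proof of Proposition~\ref{prop:classification}, replacing the acting group $Q_i$ by the larger group $L_i$ while keeping the same pair $(\Phi,\varphi)$. First I would recall that, by Lemma~\ref{lem:automorphism}, the automorphism $\varphi\in\operatorname{Aut}\mathfrak{g}$ extends $\mathbb{C}$-linearly to an automorphism of $\mathfrak{g}^{\mathbb{C}}$ and lifts to a holomorphic automorphism of $G^{\mathbb{C}}$ carrying $H_1^{\mathbb{C}}$ onto $H_2^{\mathbb{C}}$, so that the induced map $\Phi:G^{\mathbb{C}}/H_1^{\mathbb{C}}\to G^{\mathbb{C}}/H_2^{\mathbb{C}}$, $gH_1^{\mathbb{C}}\mapsto \varphi(g)H_2^{\mathbb{C}}$, of (\ref{eq:Phi}) is a biholomorphic diffeomorphism.

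Next I would observe that $\varphi$ restricts to a Lie group isomorphism $L_1\to L_2$: by hypothesis $L_2:=\varphi(L_1)$, and since $\varphi$ is an automorphism of $G^{\mathbb{C}}$ (in particular injective and smooth with smooth inverse), the corestriction $\varphi|_{L_1}:L_1\to L_2$ is a Lie group isomorphism. One also checks $Q_2=\varphi(Q_1)\subset\varphi(L_1)=L_2$, which is consistent with the stated setup but not actually needed for the equivalence assertion. The intertwining identity is then a one-line computation identical to that in Proposition~\ref{prop:classification}: for $g\in L_1$ and $x\in G^{\mathbb{C}}$,
\begin{align*}
\Phi(g\cdot xH_1^{\mathbb{C}})
=\varphi(gx)H_2^{\mathbb{C}}
=\varphi(g)\cdot\varphi(x)H_2^{\mathbb{C}}
=\varphi(g)\cdot\Phi(xH_1^{\mathbb{C}}),
\end{align*}
using only that $\varphi$ is a group homomorphism and the definition of $\Phi$. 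Since $\varphi(g)\in L_2$ for every $g\in L_1$, Definition~\ref{def:equivalent} is satisfied by the pair $(\Phi,\varphi|_{L_1})$, giving the equivalence of the holomorphic $L_1$-action on $G^{\mathbb{C}}/H_1^{\mathbb{C}}$ with the holomorphic $L_2$-action on $G^{\mathbb{C}}/H_2^{\mathbb{C}}$.

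There is essentially no obstacle here: the corollary is a formal consequence of Proposition~\ref{prop:classification} together with the trivial remark that an automorphism of $G^{\mathbb{C}}$ restricts to an isomorphism between any subgroup and its image, so the only thing to verify is that enlarging $Q_i$ to $L_i$ does not disturb either the well-definedness of $\Phi$ (which depends only on $\varphi(H_1^{\mathbb{C}})=H_2^{\mathbb{C}}$, hence is untouched) or the intertwining relation (which is valid for all of $G^{\mathbb{C}}$, a fortiori for $L_1$). The mildly delicate point, if any, is simply to state clearly that $\varphi|_{L_1}$ is a Lie group isomorphism onto $L_2$ — this uses that $L_1$ is a closed (hence embedded Lie) subgroup and that $\varphi$ is a diffeomorphism, so its image $L_2$ is again a closed subgroup and $\varphi|_{L_1}$ is a diffeomorphic homomorphism. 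With that remark in place, the proof reduces to quoting the displayed computation above.
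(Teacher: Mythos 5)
Your proposal is correct and follows exactly the route the paper intends: the paper gives no separate proof of Corollary \ref{cor:classification}, stating only that it follows by the same argument as Proposition \ref{prop:classification}, which is precisely what you carry out (with the additional, correct observation that $\varphi|_{L_1}:L_1\to L_2=\varphi(L_1)$ is a Lie group isomorphism).
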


Due to Corollary \ref{cor:classification}, 
we shall deal with closed subgroups $H_{(k,\ell ,\varepsilon )}$ as in Lemma \ref{lem:subgroup} 
and write the Heisenberg homogeneous space $D_{(k,\ell ,\varepsilon )}$ as 
\begin{align}
\label{eq:D}
D_{(k,\ell ,\varepsilon )}
:=G^{\mathbb{C}}/H_{(k,\ell ,\varepsilon )}^{\mathbb{C}}
=\{ (\exp X)H_{(k,\ell ,\varepsilon )}^{\mathbb{C}}:X\in \mathfrak{q}_{(k,\ell ,\varepsilon )}^{\mathbb{C}}\} . 
\end{align}

\subsection{Anti-holomorphic diffeomorphisms on complex Heisenberg homogeneous spaces}
\label{subsec:diffeomorphism}

In this subsection, 
we will prepare two anti-holomorphic diffeomorphisms $\sigma _1,\sigma _2$ on 
the complex Heisenberg homogeneous space $D_{(k,\ell ,\varepsilon )}$. 

For a complex number $\alpha =\alpha _R+\sqrt{-1}\alpha _I\in \mathbb{C}$ 
with $\alpha _R,\alpha _I\in \mathbb{R}$, 
we set $\overline{\alpha }:=\alpha _R-\sqrt{-1}\alpha _I$. 
Equivalently, 
$\overline{\cdot }:\mathbb{C}\to \mathbb{C}$ is the complex conjugation of $\mathbb{C}$ 
with respect to the real form $\mathbb{R}$. 
We extend it to the complex conjugation of $\mathbb{C}^n$ 
with respect to the real form $\mathbb{R}^n$, namely, 
$\mathbb{C}^n\to \mathbb{C}^n,~\boldsymbol{x}\mapsto 
\overline{\boldsymbol{x}}:=(\overline{x_1},\ldots ,\overline{x_n})$. 

Now, 
we define an anti-holomorphic diffeomorphism $\widetilde{\sigma }_{1}$ on $G^{\mathbb{C}}$ by 
\begin{align}
\label{eq:involution1}
\widetilde{\sigma }_{1}(g(\boldsymbol{x},\boldsymbol{y},z))
:=g(-\overline{\boldsymbol{x}},-\overline{\boldsymbol{y}},\overline{z})\quad 
(g(\boldsymbol{x},\boldsymbol{y},z)\in G^{\mathbb{C}}), 
\end{align}
and an anti-holomorphic diffeomorphism $\widetilde{\sigma }_{2}$ on $G^{\mathbb{C}}$ by 
\begin{align}
\label{eq:involution2}
\widetilde{\sigma }_{2}(g(\boldsymbol{x},\boldsymbol{y},z))
:=g(\overline{\boldsymbol{x}},-\overline{\boldsymbol{y}},-\overline{z})\quad 
(g(\boldsymbol{x},\boldsymbol{y},z)\in G^{\mathbb{C}}). 
\end{align}

\begin{lemma}
\label{lem:involutive}
$\widetilde{\sigma }_1,\widetilde{\sigma }_2$ are involutive automorphisms on $G^{\mathbb{C}}$. 
\end{lemma}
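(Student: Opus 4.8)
The plan is to verify directly from the formulas \eqref{eq:involution1} and \eqref{eq:involution2} that each $\widetilde{\sigma}_i$ is both an involution and a group homomorphism of $G^{\mathbb{C}}$; since each is visibly a bijection (it is its own inverse once the involutivity is checked), this establishes that they are automorphisms. First I would check involutivity: applying $\widetilde{\sigma}_1$ twice sends $g(\boldsymbol{x},\boldsymbol{y},z)$ to $g(-\overline{(-\overline{\boldsymbol{x}})},-\overline{(-\overline{\boldsymbol{y}})},\overline{\overline{z}}) = g(\boldsymbol{x},\boldsymbol{y},z)$ because complex conjugation on $\mathbb{C}^n$ and on $\mathbb{C}$ is involutive and the two sign changes cancel; the same computation with the sign pattern $(+,-,-)$ works for $\widetilde{\sigma}_2$. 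Hence $\widetilde{\sigma}_i^2 = \operatorname{id}_{G^{\mathbb{C}}}$.

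Next I would check the homomorphism property, which is where the multiplication rule of Lemma \ref{lem:multiple} does the real work. Using the group law
\begin{align*}
g(\boldsymbol{x},\boldsymbol{y},z)\cdot g(\boldsymbol{s},\boldsymbol{t},u)
=g(\boldsymbol{x}+\boldsymbol{s},\boldsymbol{y}+\boldsymbol{t},z+u-(\boldsymbol{y}|\boldsymbol{s})),
\end{align*}
I would compare $\widetilde{\sigma}_1\bigl(g(\boldsymbol{x},\boldsymbol{y},z)\cdot g(\boldsymbol{s},\boldsymbol{t},u)\bigr)$ with $\widetilde{\sigma}_1\bigl(g(\boldsymbol{x},\boldsymbol{y},z)\bigr)\cdot \widetilde{\sigma}_1\bigl(g(\boldsymbol{s},\boldsymbol{t},u)\bigr)$. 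The left side is $g(-\overline{\boldsymbol{x}}-\overline{\boldsymbol{s}},-\overline{\boldsymbol{y}}-\overline{\boldsymbol{t}},\overline{z}+\overline{u}-\overline{(\boldsymbol{y}|\boldsymbol{s})})$, while the right side is $g(-\overline{\boldsymbol{x}},-\overline{\boldsymbol{y}},\overline{z})\cdot g(-\overline{\boldsymbol{s}},-\overline{\boldsymbol{t}},\overline{u}) = g(-\overline{\boldsymbol{x}}-\overline{\boldsymbol{s}},-\overline{\boldsymbol{y}}-\overline{\boldsymbol{t}},\overline{z}+\overline{u}-(-\overline{\boldsymbol{y}}\mid -\overline{\boldsymbol{s}}))$. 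The two agree because the bilinear form is conjugate-compatible in the sense that $\overline{(\boldsymbol{y}|\boldsymbol{s})} = (\overline{\boldsymbol{y}}\mid\overline{\boldsymbol{s}})$ (each coordinate product conjugates termwise) and the two minus signs in $(-\overline{\boldsymbol{y}}\mid -\overline{\boldsymbol{s}})$ cancel. The computation for $\widetilde{\sigma}_2$ is identical in structure: with sign pattern $(+,-,-)$ one gets $(-\overline{\boldsymbol{y}}\mid \overline{\boldsymbol{s}}) = -(\overline{\boldsymbol{y}}\mid\overline{\boldsymbol{s}}) = -\overline{(\boldsymbol{y}|\boldsymbol{s})}$, and since the $z$-slot picks up an extra overall minus sign this again matches. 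I would also note that each $\widetilde{\sigma}_i$ is anti-holomorphic as a map on $G^{\mathbb{C}}\cong \mathbb{C}^{2n}\ltimes\mathbb{C}$ because in the coordinates of \eqref{eq:isom} it is a composition of complex conjugation with an $\mathbb{R}$-linear map, a fact already implicit in how the maps were introduced.

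I do not expect a serious obstacle here; the statement is essentially a bookkeeping verification. The only point requiring a little care is tracking the sign coming from the cocycle term $-(\boldsymbol{y}|\boldsymbol{s})$ through the conjugation and the coordinate sign flips, and making sure the sign flips on the $\boldsymbol{y}$-component and on the $z$-component interact correctly for $\widetilde{\sigma}_2$; this is the step I would write out most explicitly. Everything else follows mechanically from Lemma \ref{lem:multiple} and the elementary properties of complex conjugation.
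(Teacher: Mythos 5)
Your proposal is correct and follows essentially the same route as the paper: check $\widetilde{\sigma}_i^2=\operatorname{id}$ directly, then verify the homomorphism property by pushing the group law of Lemma \ref{lem:multiple} through complex conjugation, using $\overline{(\boldsymbol{y}|\boldsymbol{s})}=(\overline{\boldsymbol{y}}\mid\overline{\boldsymbol{s}})$ and the cancellation of sign flips in the cocycle term. Your sign-tracking for $\widetilde{\sigma}_2$ (which the paper leaves to the reader) is also correct.
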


\begin{proof}
Clearly, ${\widetilde{\sigma }_i}^2=\operatorname{id}$, from which $\widetilde{\sigma }_i$ is involutive 
for each $i=1,2$. 
In view of Lemma \ref{lem:multiple}, 
the direct computation shows 
\begin{align*}
&
\widetilde{\sigma}_1(g(\boldsymbol{x}_1,\boldsymbol{y}_1,z_1)\cdot g(\boldsymbol{x}_2,\boldsymbol{y}_2,z_2))
\\
&=g(-(\overline{\boldsymbol{x}_1+\boldsymbol{x}_2}),-(\overline{\boldsymbol{y}_1+\boldsymbol{y}_2}),
	\overline{z_1+z_2-(\boldsymbol{y}_1|\boldsymbol{x}_2)})\\
&=g(-\overline{\boldsymbol{x}_1}-\overline{\boldsymbol{x}_2},
	-\overline{\boldsymbol{y}_1}-\overline{\boldsymbol{y}_2},
	\overline{z_1}+\overline{z_2}-(\overline{\boldsymbol{y}_1}|\overline{\boldsymbol{x}_2}))\\
	&=g(-\overline{\boldsymbol{x}_1}-\overline{\boldsymbol{x}_2},
	-\overline{\boldsymbol{y}_1}-\overline{\boldsymbol{y}_2}, 
	\overline{z_1}+\overline{z_2}-(-\overline{\boldsymbol{y}_1}|-\overline{\boldsymbol{x}_2})\\ 
	&=g(-\overline{\boldsymbol{x}_1},-\overline{\boldsymbol{y}_1},\overline{z_1})\cdot 
	g(-\overline{\boldsymbol{x}_2},-\overline{\boldsymbol{y}_2},\overline{z_2})\\
	&=\widetilde{\sigma}_1(g(\boldsymbol{x}_1,\boldsymbol{y}_1,z_1))\cdot 
	\widetilde{\sigma}_1(g(\boldsymbol{x}_2,\boldsymbol{y}_2,z_2)).
\end{align*}
for any $g(\boldsymbol{s},\boldsymbol{t},u),g(\boldsymbol{x},\boldsymbol{y},z)\in G^{\mathbb{C}}$. 
This implies that $\widetilde{\sigma }_1$ is an automorphism on $G^{\mathbb{C}}$. 
Similarly, one can show that $\widetilde{\sigma }_2$ is also an automorphism on $G^{\mathbb{C}}$. 
\end{proof}

Let $H_{(k,\ell ,\varepsilon )}$ be a closed subgroup of $G$ 
which is one of (\ref{gp:(0,0,1)})--(\ref{gp:(m,0,0)}) in Lemma \ref{lem:subgroup} 
and $H_{(k,\ell ,\varepsilon )}^{\mathbb{C}}$ 
the complexification of $H_{(k,\ell ,\varepsilon )}$. 
Clearly, $\widetilde{\sigma }_i$ stablizes $H^{\mathbb{C}}_{(k,\ell ,\varepsilon )}$ for each $i=1,2$, 
which gives rise to an anti-holomorphic diffeomorphism $\sigma _i$ 
on the complex Heisenberg homogeneous space 
$D_{(k,\ell ,\varepsilon )}$, namely, 
\begin{align}
\label{eq:sigma}
\sigma _i((\exp X)H^{\mathbb{C}}_{(k,\ell ,\varepsilon )})
:=\widetilde{\sigma }_i(\exp X)H^{\mathbb{C}}_{(k,\ell ,\varepsilon )}\quad 
(X\in \mathfrak{q}_{(k,\ell ,\varepsilon )}^{\mathbb{C}}). 
\end{align}

\begin{remark}
\label{rem:compatible}
Concerning equality (\ref{eq:sigma}), 
the following equality holds: 
\begin{align*}
\sigma _i(g(\boldsymbol{s},\boldsymbol{t},u)\cdot (\exp X)H^{\mathbb{C}}_{(k,\ell ,\varepsilon )})
&=\widetilde{\sigma }_i(g(\boldsymbol{s},\boldsymbol{t},u)\cdot \exp X)
	H^{\mathbb{C}}_{(k,\ell ,\varepsilon )}\\
&=\widetilde{\sigma }_i(g(\boldsymbol{s},\boldsymbol{t},u))\cdot 
	\widetilde{\sigma }_i(\exp X)H^{\mathbb{C}}_{(k,\ell ,\varepsilon )}\\
&=\widetilde{\sigma }_i(g(\boldsymbol{s},\boldsymbol{t},u))\cdot 
	\sigma _i((\exp X)H^{\mathbb{C}}_{(k,\ell ,\varepsilon )}). 
\end{align*}
Hence, $\widetilde{\sigma }_i$ is compatible with $\sigma _i$ in the sense of \cite[Section 4.2]{ko13}. 
\end{remark}

\subsection{Real form of complex Heisenberg homogeneous space}
\label{subsec:real form}

In the previous subsection, 
we have prepared the anti-holomorphic diffeomorphisms $\sigma _1,\sigma _2$ on $D_{(k,\ell ,\varepsilon )}$ 
defined by (\ref{eq:sigma}). 
Then, we have a real form $M_{(k,\ell ,\varepsilon )}^i$ of $D_{(k,\ell ,\varepsilon )}$ 
corresponding to $\sigma _i$, 
namely, 
\begin{align*}
M_{(k,\ell ,\varepsilon )}^i
:=D_{(k,\ell ,\varepsilon )}^{\sigma _i}
=\{ v\in D_{(k,\ell ,\varepsilon )}:\sigma _i(v)=v\} 
\end{align*}
for each $i=1,2$. 
To clarify $M_{(k,\ell ,\varepsilon )}^i$, we observe: 

\begin{lemma}
\label{lem:real form}
For an element $v=(\exp X)H_{(k,\ell ,\varepsilon )}^{\mathbb{C}}$ 
with $X\in \mathfrak{q}_{(k,\ell ,\varepsilon )}^{\mathbb{C}}$, 
the equality $\sigma _i(v)=v$ holds if and only if $\widetilde{\sigma }_i(\exp X)=\exp X$ holds. 
\end{lemma}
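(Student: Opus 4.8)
The plan is to unwind the definition of $\sigma_i$ as given in \eqref{eq:sigma} and reduce the fixed-point condition on $D_{(k,\ell,\varepsilon)}$ to a fixed-point condition on the coset $(\exp X)H_{(k,\ell,\varepsilon)}^{\mathbb{C}}$, which then translates to the stated condition $\widetilde{\sigma}_i(\exp X)=\exp X$. First I would recall from Lemma \ref{lem:homogeneous} and \eqref{eq:D} that every point $v$ of $D_{(k,\ell,\varepsilon)}$ is uniquely written as $v=(\exp X)H_{(k,\ell,\varepsilon)}^{\mathbb{C}}$ with $X\in\mathfrak{q}_{(k,\ell,\varepsilon)}^{\mathbb{C}}$, since $D_{(k,\ell,\varepsilon)}$ is biholomorphic to $\exp\mathfrak{q}_{(k,\ell,\varepsilon)}^{\mathbb{C}}$; this uniqueness is what makes the equivalence clean.

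For the "if" direction there is nothing to do: if $\widetilde{\sigma}_i(\exp X)=\exp X$, then directly from \eqref{eq:sigma} we get $\sigma_i(v)=\widetilde{\sigma}_i(\exp X)H_{(k,\ell,\varepsilon)}^{\mathbb{C}}=(\exp X)H_{(k,\ell,\varepsilon)}^{\mathbb{C}}=v$. For the "only if" direction, suppose $\sigma_i(v)=v$. By \eqref{eq:sigma} this reads $\widetilde{\sigma}_i(\exp X)H_{(k,\ell,\varepsilon)}^{\mathbb{C}}=(\exp X)H_{(k,\ell,\varepsilon)}^{\mathbb{C}}$, i.e.\ $(\exp X)^{-1}\widetilde{\sigma}_i(\exp X)\in H_{(k,\ell,\varepsilon)}^{\mathbb{C}}$. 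The step that needs a small argument is to upgrade this to an equality in $G^{\mathbb{C}}$: here I would use that $\widetilde{\sigma}_i$ is an automorphism of $G^{\mathbb{C}}$ stabilizing $H_{(k,\ell,\varepsilon)}^{\mathbb{C}}$ (Lemma \ref{lem:involutive} together with the remark preceding \eqref{eq:sigma}), so $\widetilde{\sigma}_i(\exp X)=(\exp X)h$ for some $h\in H_{(k,\ell,\varepsilon)}^{\mathbb{C}}$, and then observe that $\widetilde{\sigma}_i(\exp X)$ lies in $\exp\mathfrak{q}_{(k,\ell,\varepsilon)}^{\mathbb{C}}$ because $\widetilde{\sigma}_i$ preserves the $\mathbb{C}$-span decomposition $\mathfrak{g}^{\mathbb{C}}=\mathfrak{h}_{(k,\ell,\varepsilon)}^{\mathbb{C}}\oplus\mathfrak{q}_{(k,\ell,\varepsilon)}^{\mathbb{C}}$ (one checks on the explicit formulas \eqref{eq:involution1}, \eqref{eq:involution2} that $\widetilde{\sigma}_i$ sends $\mathcal{B}_{\mathfrak{h}}$ into $\mathbb{C}\text{-span of }\mathcal{B}_{\mathfrak{h}}$ and $(\mathcal{B}_{\mathfrak{h}})^c$ into its own span, for each of the subalgebras in Lemma \ref{lem:subgroup}).

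Combining these, $\widetilde{\sigma}_i(\exp X)=\exp X'$ for some $X'\in\mathfrak{q}_{(k,\ell,\varepsilon)}^{\mathbb{C}}$, while also $\widetilde{\sigma}_i(\exp X)=(\exp X)h\in(\exp X)H_{(k,\ell,\varepsilon)}^{\mathbb{C}}$; by the uniqueness in Lemma \ref{lem:homogeneous} applied in $\exp\mathfrak{q}_{(k,\ell,\varepsilon)}^{\mathbb{C}}$ this forces $h=e$ and $X'=X$, i.e.\ $\widetilde{\sigma}_i(\exp X)=\exp X$. The only mild obstacle is the verification that $\widetilde{\sigma}_i$ respects the splitting $\mathfrak{g}^{\mathbb{C}}=\mathfrak{h}^{\mathbb{C}}\oplus\mathfrak{q}^{\mathbb{C}}$ for each case of Lemma \ref{lem:subgroup}; this is a routine check against the basis \eqref{eq:basis} using the descriptions in Lemma \ref{lem:q} and the explicit action of $\widetilde{\sigma}_i$ on $X_j,Y_j,Z$ (negating or conjugating coordinates), so I would dispatch it case by case without belaboring it.
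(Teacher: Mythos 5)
Your proposal is correct and follows essentially the same route as the paper: the ``if'' direction is immediate from the definition of $\sigma_i$, and the ``only if'' direction reduces $\widetilde{\sigma}_i(\exp X)(\exp X)^{-1}\in H_{(k,\ell,\varepsilon)}^{\mathbb{C}}$ to the identity via the transversality $\mathfrak{h}_{(k,\ell,\varepsilon)}^{\mathbb{C}}\cap\mathfrak{q}_{(k,\ell,\varepsilon)}^{\mathbb{C}}=\{0\}$ and the uniqueness of the representative in $\exp\mathfrak{q}_{(k,\ell,\varepsilon)}^{\mathbb{C}}$. You are in fact slightly more explicit than the paper in verifying that $\widetilde{\sigma}_i$ preserves $\exp\mathfrak{q}_{(k,\ell,\varepsilon)}^{\mathbb{C}}$, a step the paper leaves implicit.
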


\begin{proof}
If $X\in \mathfrak{q}_{(k,\ell ,\varepsilon )}^{\mathbb{C}}$ satisfies 
$\widetilde{\sigma }_i(\exp X)=\exp X$, 
then we have 
\begin{align*}
\sigma _i((\exp X)H_{(k,\ell ,\varepsilon )}^{\mathbb{C}})
=\widetilde{\sigma }_i(\exp X)H_{(k,\ell ,\varepsilon )}^{\mathbb{C}}
=(\exp X)H_{(k,\ell ,\varepsilon )}^{\mathbb{C}}. 
\end{align*}
This means $\sigma _i(v)=v$. 

Conversely, suppose that $v$ satisfies $\sigma _i(v)=v$. 
Then, $\exp (-X)\widetilde{\sigma }_i(\exp X)$ lies in $H_{(k,\ell ,\varepsilon )}^{\mathbb{C}}$. 
Since 
$\mathfrak{h}_{(k,\ell ,\varepsilon )}^{\mathbb{C}}\cap \mathfrak{q}_{(k,\ell ,\varepsilon )}^{\mathbb{C}}
=\{ 0\} $, 
it has to be the unit element of $G^{\mathbb{C}}$. 
Thus, we get $\widetilde{\sigma }_i(\exp X)=\exp X$. 
\end{proof}

By Lemma \ref{lem:real form}, 
our choice of real form $M_{(k,\ell ,\varepsilon )}^i$ forms 
\begin{align}
\label{eq:real form}
M_{(k,\ell ,\varepsilon )}^i
=\{ (\exp X)H_{(k,\ell ,\varepsilon )}^{\mathbb{C}}:
X\in \mathfrak{q}_{(k,\ell ,\varepsilon )}^{\mathbb{C}},\widetilde{\sigma }_i(\exp X)=\exp X\} . 
\end{align}

\subsection{Strategy of proof of Theorem \ref{thm:main}}
\label{subsec:strategy}

In this subsection, 
we will explain how to prove Theorem \ref{thm:main}. 
More precisely, 
our proof is based on the following lemma. 

\begin{lemma}
\label{lem:reduction}
Suppose that for the anti-holomorphic involution $\widetilde{\sigma }_i$ on $G^{\mathbb{C}}$ 
there exist a $\widetilde{\sigma }_i$-stable connected closed subgroup 
$L_{(k,\ell ,\varepsilon )}$ of $G$ containing $Q_{(k,\ell ,\varepsilon )}$ and 
a submanifold $S_{(k,\ell ,\varepsilon )}$ in $M_{(k,\ell ,\varepsilon )}^i$ such that 
$D_{(k,\ell ,\varepsilon )}':=L_{(k,\ell ,\varepsilon )}\cdot S_{(k,\ell ,\varepsilon )}$ 
is an open set in $D_{(k,\ell ,\varepsilon )}$. 
Then, $S_{(k,\ell ,\varepsilon )}$ and the anti-holomorphic diffeomorphism $\sigma _i$ 
on $D_{(k,\ell ,\varepsilon )}$ 
satisfy the conditions (\ref{s1}) and (\ref{s2}) 
for the $L_{(k,\ell ,\varepsilon )}$-action on $D_{(k,\ell ,\varepsilon )}$. 
\end{lemma}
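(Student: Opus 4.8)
The plan is to verify the two conditions (\ref{s1}) and (\ref{s2}) directly from the definitions, using the hypothesis on $\widetilde{\sigma}_i$, $L_{(k,\ell,\varepsilon)}$ and $S_{(k,\ell,\varepsilon)}$, together with the compatibility relation recorded in Remark \ref{rem:compatible} and the description of the real form $M_{(k,\ell,\varepsilon)}^i$ in (\ref{eq:real form}). Observe first that condition (\ref{v0}) is part of the hypothesis of the lemma (namely that $D_{(k,\ell,\varepsilon)}':=L_{(k,\ell,\varepsilon)}\cdot S_{(k,\ell,\varepsilon)}$ is open in $D_{(k,\ell,\varepsilon)}$), so nothing needs to be shown there; the content of the lemma is that (\ref{s1}) and (\ref{s2}) come \emph{for free} from the structural assumptions. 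So the proof is really a short bookkeeping argument.

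For (\ref{s1}): let $v\in S_{(k,\ell,\varepsilon)}$. Since $S_{(k,\ell,\varepsilon)}$ is assumed to lie in $M_{(k,\ell,\varepsilon)}^i = D_{(k,\ell,\varepsilon)}^{\sigma_i}$, we have $\sigma_i(v)=v$ by the very definition of the real form. Hence $\sigma_i|_{S_{(k,\ell,\varepsilon)}}=\operatorname{id}_{S_{(k,\ell,\varepsilon)}}$, which is exactly (\ref{s1}). This step is immediate.

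For (\ref{s2}): let $v\in D_{(k,\ell,\varepsilon)}'$, so $v = g\cdot s$ for some $g\in L_{(k,\ell,\varepsilon)}$ and $s\in S_{(k,\ell,\varepsilon)}$. Using the compatibility identity from Remark \ref{rem:compatible} and the fact (from the first step) that $\sigma_i(s)=s$, we compute
\begin{align*}
\sigma_i(v)
&= \sigma_i(g\cdot s)
= \widetilde{\sigma}_i(g)\cdot \sigma_i(s)
= \widetilde{\sigma}_i(g)\cdot s.
\end{align*}
Here I must know that $\widetilde{\sigma}_i(g)$ again lies in $L_{(k,\ell,\varepsilon)}$, and this is exactly where the hypothesis that $L_{(k,\ell,\varepsilon)}$ is $\widetilde{\sigma}_i$-stable enters: it guarantees $\widetilde{\sigma}_i(g)\in L_{(k,\ell,\varepsilon)}$. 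Therefore $\sigma_i(v) = \widetilde{\sigma}_i(g)\cdot s$ lies in the $L_{(k,\ell,\varepsilon)}$-orbit $L_{(k,\ell,\varepsilon)}\cdot s$, which is the same orbit as the one through $v = g\cdot s$. Hence $\sigma_i$ preserves each $L_{(k,\ell,\varepsilon)}$-orbit in $D_{(k,\ell,\varepsilon)}'$, establishing (\ref{s2}).

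There is no serious obstacle here; the lemma is a clean reformulation that isolates precisely the two conditions one must arrange by hand in each concrete case ($\widetilde{\sigma}_i$-stability of $L_{(k,\ell,\varepsilon)}$, and $S_{(k,\ell,\varepsilon)}\subset M_{(k,\ell,\varepsilon)}^i$ with openness of the sweep). The only point requiring minor care is making sure the group element produced by $\widetilde{\sigma}_i$ acts through the \emph{same} action as $\sigma_i$ on the homogeneous space — but this is exactly the assertion of Remark \ref{rem:compatible}, which was set up in advance for this purpose. The real work in the paper — constructing such $L_{(k,\ell,\varepsilon)}$ and $S_{(k,\ell,\varepsilon)}$ for each type of $H$ in Lemma \ref{lem:subgroup}, and checking that $L_{(k,\ell,\varepsilon)}\cdot S_{(k,\ell,\varepsilon)}$ is open — is deferred to the case-by-case analysis in the later sections; Lemma \ref{lem:reduction} just tells us that once that is done, strong visibility follows.
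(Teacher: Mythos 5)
Your proof is correct and follows essentially the same route as the paper: (\ref{s1}) is immediate from $S_{(k,\ell,\varepsilon)}\subset M_{(k,\ell,\varepsilon)}^i=D_{(k,\ell,\varepsilon)}^{\sigma_i}$, and (\ref{s2}) from the compatibility $\sigma_i(g\cdot s)=\widetilde{\sigma}_i(g)\cdot s$ together with $\widetilde{\sigma}_i$-stability of $L_{(k,\ell,\varepsilon)}$. The paper merely phrases the last step as $\sigma_i(v)=\widetilde{\sigma}_i(g)g^{-1}\cdot v$ with $\widetilde{\sigma}_i(g)g^{-1}\in L_{(k,\ell,\varepsilon)}$, which is the same observation as your ``same orbit as $L_{(k,\ell,\varepsilon)}\cdot s$'' argument.
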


\begin{proof}
As $S_{(k,\ell ,\varepsilon )}\subset M_{(k,\ell ,\varepsilon )}^i=D_{(k,\ell ,\varepsilon )}^{\sigma _i}$, 
we have $\sigma _i|_{S_{(k,\ell ,\varepsilon )}}
=\operatorname{id}_{S_{(k,\ell ,\varepsilon )}}$, 
from which (\ref{s1}) has been verified. 

Next, let us take an element $v=g\cdot s\in D_{(k,\ell ,\varepsilon )}'$ 
with $g\in L_{(k,\ell ,\varepsilon )}$ and $s\in S_{(k,\ell ,\varepsilon )}$. 
Then, we have 
\begin{align}
\label{eq:s2}
\sigma _i(v)
=\widetilde{\sigma }_i(g)\cdot \sigma _i(s)
=\widetilde{\sigma }_i(g)\cdot s
=\widetilde{\sigma }_i(g)g^{-1}\cdot v. 
\end{align}
Since $L_{(k,\ell ,\varepsilon )}$ is $\widetilde{\sigma }_i$-stable, 
the element $\widetilde{\sigma }_i(g)g^{-1}$ lies in $L_{(k,\ell ,\varepsilon )}$. 
Hence, (\ref{eq:s2}) implies condition (\ref{s2}). 
\end{proof}

Thanks to Lemma \ref{lem:reduction}, 
it is sufficient for the proof of Theorem \ref{thm:main} 
to find suitable $L_{(k,\ell ,\varepsilon )}$ and $S_{(k,\ell ,\varepsilon )}$ 
such that $L_{(k,\ell ,\varepsilon )}\cdot S_{(k,\ell ,\varepsilon )}$ is open in $D_{(k,\ell ,\varepsilon )}$. 
Then, we will give an explicit description of such pair 
$(L_{(k,\ell ,\varepsilon )},S_{(k,\ell ,\varepsilon )})$ 
for $D_{(k,\ell ,\varepsilon )}=D_{(0,0,1)}$ in Section \ref{sec:(0,0,1)}; 
for $D_{(k,\ell ,\varepsilon )}=D_{(p,q,1)}$ in Section \ref{sec:(p,q,1)}; 
for $D_{(k,\ell ,\varepsilon )}=D_{(m,0,1)}$ in Section \ref{sec:(m,0,1)}; 
and for $D_{(k,\ell ,\varepsilon )}=D_{(m,0,0)}$ in Section \ref{sec:(m,0,0)}.

%%%%%%%%%%%%%%%%%%%%%%%%%%%%%%%%%%%%%%%%%%%%%%%%%%%%%%%%%%%%%%%%%%%%%%%%%%%%%%%%%%%%%%%%%%

\section{Visible actions on $G^{\mathbb{C}}/H_{(0,0,1)}^{\mathbb{C}}$}
\label{sec:(0,0,1)}

This section deals with the $Q_{(0,0,1)}$-action 
on $D_{(0,0,1)}=G^{\mathbb{C}}/H_{(0,0,1)}^{\mathbb{C}}$. 
Recall that  
$\mathfrak{h}_{(0,0,1)}=\mathfrak{z}(\mathfrak{g})=\mathbb{R}Z$ is the center of $\mathfrak{g}$. 
Then, $H_{(0,0,1)}^{\mathbb{C}}=\exp \mathfrak{h}_{(0,0,1)}^{\mathbb{C}}
=\exp \mathbb{C}Z$ and 
\begin{align*}
\mathfrak{q}_{(0,0,1)}=\operatorname{span}_{\mathbb{R}}\{ X_1,\ldots ,X_n,Y_1,\ldots ,Y_n\} . 
\end{align*}
Thus, it follows from Lemma \ref{lem:homogeneous} that 
$D_{(0,0,1)}:=G^{\mathbb{C}}/H^{\mathbb{C}}_{(0,0,1)}$ is written as follows: 
\begin{align*}
D_{(0,0,1)}
=\{ v_{(0,0,1)}(\boldsymbol{x},\boldsymbol{y})
	:=g(\boldsymbol{x},\boldsymbol{y},0)H^{\mathbb{C}}_{(0,0,1)}: 
	\boldsymbol{x},\boldsymbol{y}\in \mathbb{C}^n\} . 
\end{align*}

Now, we consider the action of $Q_{(0,0,1)}$ on $D_{(0,0,1)}$. 
By Lemma \ref{lem:Q}, $Q_{(0,0,1)}$ coincides with $G={\bf{H}}_n$. 
In view of Lemma \ref{lem:multiple}, 
the $Q_{(0,0,1)}$-action on $D_{(0,0,1)}$ is given by 
\begin{align*}
g(\boldsymbol{s},\boldsymbol{t},u)\cdot v_{(0,0,1)}(\boldsymbol{x},\boldsymbol{y})
&=(g(\boldsymbol{s},\boldsymbol{t},u)\cdot g(\boldsymbol{x},\boldsymbol{y},0))H^{\mathbb{C}}_{(0,0,1)}\\
&=g(\boldsymbol{s}+\boldsymbol{x},\boldsymbol{t}+\boldsymbol{y},u-(\boldsymbol{t}|\boldsymbol{x}))
	H_{(0,0,1)}^{\mathbb{C}}\\
&=g(\boldsymbol{s}+\boldsymbol{x},\boldsymbol{t}+\boldsymbol{y},0)H_{(0,0,1)}^{\mathbb{C}}\\
&=v_{(0,0,1)}(\boldsymbol{s}+\boldsymbol{x},\boldsymbol{t}+\boldsymbol{y}). 
\end{align*}

Now, we take an anti-holomorphic diffeomorphism on $D_{(0,0,1)}$ as $\sigma _1$ (see (\ref{eq:sigma})). 
By Lemma \ref{lem:real form}, 
our choice of the real form $M_{(0,0,1)}^1=D_{(0,0,1)}^{\sigma _1}$ is written by 
\begin{align}
\label{eq:slice-(0,0,1)}
M_{(0,0,1)}^1
=\{ v_{(0,0,1)}(\sqrt{-1}\boldsymbol{a},\sqrt{-1}\boldsymbol{b})
	:\boldsymbol{a},\boldsymbol{b}\in \mathbb{R}^{n}\} 
\end{align}
since $\widetilde{\sigma }_1(g(\boldsymbol{x},\boldsymbol{y},0))
=g(-\overline{\boldsymbol{x}},-\overline{\boldsymbol{y}},0)$ 
for $g(\boldsymbol{x},\boldsymbol{y},0)\in \exp \mathfrak{q}_{(0,0,1)}^{\mathbb{C}}$. 

\begin{proposition}
\label{prop:v0-(0,0,1)}
We take a connected closed subgroup $L_{(0,0,1)}$ as $Q_{(0,0,1)}$ and 
a submanifold $S_{(0,0,1)}$ as $M_{(0,0,1)}^1$. 
Then, $L_{(0,0,1)}\cdot S_{(0,0,1)}$ coincides with $D_{(0,0,1)}$. 
\end{proposition}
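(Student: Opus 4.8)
The plan is to show directly that every point $v_{(0,0,1)}(\boldsymbol{x},\boldsymbol{y})$ with $\boldsymbol{x},\boldsymbol{y}\in\mathbb{C}^n$ lies in the orbit $L_{(0,0,1)}\cdot S_{(0,0,1)}=Q_{(0,0,1)}\cdot M_{(0,0,1)}^1$. Write $\boldsymbol{x}=\boldsymbol{s}+\sqrt{-1}\boldsymbol{a}$ and $\boldsymbol{y}=\boldsymbol{t}+\sqrt{-1}\boldsymbol{b}$ with $\boldsymbol{s},\boldsymbol{a},\boldsymbol{t},\boldsymbol{b}\in\mathbb{R}^n$, so $\boldsymbol{s}=\operatorname{Re}\boldsymbol{x}$, $\boldsymbol{a}=\operatorname{Im}\boldsymbol{x}$, etc. The explicit formula for the action computed just above says $g(\boldsymbol{s},\boldsymbol{t},0)\cdot v_{(0,0,1)}(\boldsymbol{x}',\boldsymbol{y}')=v_{(0,0,1)}(\boldsymbol{s}+\boldsymbol{x}',\boldsymbol{t}+\boldsymbol{y}')$ for any $\boldsymbol{x}',\boldsymbol{y}'\in\mathbb{C}^n$ and any real $\boldsymbol{s},\boldsymbol{t}$. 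Taking $\boldsymbol{x}'=\sqrt{-1}\boldsymbol{a}$ and $\boldsymbol{y}'=\sqrt{-1}\boldsymbol{b}$, which by (\ref{eq:slice-(0,0,1)}) is a point of $S_{(0,0,1)}=M_{(0,0,1)}^1$, we get $g(\boldsymbol{s},\boldsymbol{t},0)\cdot v_{(0,0,1)}(\sqrt{-1}\boldsymbol{a},\sqrt{-1}\boldsymbol{b})=v_{(0,0,1)}(\boldsymbol{s}+\sqrt{-1}\boldsymbol{a},\boldsymbol{t}+\sqrt{-1}\boldsymbol{b})=v_{(0,0,1)}(\boldsymbol{x},\boldsymbol{y})$.

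Since $g(\boldsymbol{s},\boldsymbol{t},0)\in G=Q_{(0,0,1)}$ (by Lemma \ref{lem:Q}), this exhibits an arbitrary point of $D_{(0,0,1)}$ as an element of $Q_{(0,0,1)}\cdot M_{(0,0,1)}^1$, hence $L_{(0,0,1)}\cdot S_{(0,0,1)}\supseteq D_{(0,0,1)}$. The reverse inclusion is trivial because the action keeps us inside $D_{(0,0,1)}$. Therefore $L_{(0,0,1)}\cdot S_{(0,0,1)}=D_{(0,0,1)}$, which is the assertion; in particular $D_{(0,0,1)}'=D_{(0,0,1)}$ is certainly open, so (\ref{v0}) holds with room to spare.

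There is essentially no obstacle here: the only things to be careful about are that the central $z$-coordinate genuinely drops out (it does, because $H_{(0,0,1)}^{\mathbb{C}}=\exp\mathbb{C}Z$ absorbs any value of the last coordinate, as already used when the action was written down) and that the decomposition $\mathbb{C}^n=\mathbb{R}^n\oplus\sqrt{-1}\,\mathbb{R}^n$ is used correctly. One could equivalently phrase it as: the orbit map $Q_{(0,0,1)}\times S_{(0,0,1)}\to D_{(0,0,1)}$, $(g(\boldsymbol{s},\boldsymbol{t},0),v_{(0,0,1)}(\sqrt{-1}\boldsymbol{a},\sqrt{-1}\boldsymbol{b}))\mapsto v_{(0,0,1)}(\boldsymbol{s}+\sqrt{-1}\boldsymbol{a},\boldsymbol{t}+\sqrt{-1}\boldsymbol{b})$ is surjective, which under the identification $D_{(0,0,1)}\cong\mathbb{C}^{2n}$ is just the statement that $\mathbb{R}^{2n}+\sqrt{-1}\,\mathbb{R}^{2n}=\mathbb{C}^{2n}$. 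Note that $S_{(0,0,1)}$ meets each $Q_{(0,0,1)}$-orbit exactly once here, so in fact the whole space is a single orbit of $Q_{(0,0,1)}$ and $S_{(0,0,1)}$ is a genuine cross-section; combined with Lemma \ref{lem:reduction} and Lemma \ref{lem:involutive} (noting $Q_{(0,0,1)}=G$ is clearly $\widetilde{\sigma}_1$-stable), this already yields the strongly visible $Q_{(0,0,1)}$-action on $D_{(0,0,1)}$.
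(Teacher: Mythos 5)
Your proof is correct and follows essentially the same route as the paper: decompose $\boldsymbol{x}=\boldsymbol{s}+\sqrt{-1}\boldsymbol{a}$, $\boldsymbol{y}=\boldsymbol{t}+\sqrt{-1}\boldsymbol{b}$ and use the explicit action formula $g(\boldsymbol{s},\boldsymbol{t},0)\cdot v_{(0,0,1)}(\sqrt{-1}\boldsymbol{a},\sqrt{-1}\boldsymbol{b})=v_{(0,0,1)}(\boldsymbol{x},\boldsymbol{y})$ to exhibit every point of $D_{(0,0,1)}$ in $Q_{(0,0,1)}\cdot M_{(0,0,1)}^1$. The additional observations (the reverse inclusion, the cross-section property, the link to Lemma \ref{lem:reduction}) are accurate but not needed for the statement itself.
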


\begin{proof}
Let $v_{(0,0,1)}(\boldsymbol{x},\boldsymbol{y})$ be an element of $D_{(0,0,1)}$. 
According to the decomposition $\mathbb{C}^n=\mathbb{R}^n+\sqrt{-1}\mathbb{R}^n$, 
we write $\boldsymbol{x}=\boldsymbol{x}_R+\sqrt{-1}\boldsymbol{x}_I$ 
for some $\boldsymbol{x}_R,\boldsymbol{x}_I\in \mathbb{R}^{2n}$ 
and similarly $\boldsymbol{y}=\boldsymbol{y}_R+\sqrt{-1}\boldsymbol{y}_I$ 
for some $\boldsymbol{y}_R,\boldsymbol{y}_I\in \mathbb{R}^{2n}$. 
Then, we have 
\begin{align}
\label{eq:v0-(0,0,1)}
v_{(0,0,1)}(\boldsymbol{x},\boldsymbol{y})
=g(\boldsymbol{x}_R,\boldsymbol{y}_R,0)\cdot v(\sqrt{-1}\boldsymbol{x}_I,\sqrt{-1}\boldsymbol{y}_I). 
\end{align}
Since $g(\boldsymbol{x}_R,\boldsymbol{y}_R,0)\in Q_{(0,0,1)}$ and 
$v_{(0,0,1)}(\sqrt{-1}\boldsymbol{x}_I,\sqrt{-1}\boldsymbol{y}_I)\in S_{(0,0,1)}$, 
this implies $v_{(0,0,1)}(\boldsymbol{x},\boldsymbol{y})\in Q_{(0,0,1)}\cdot S_{(0,0,1)}$. 
Hence, Proposition \ref{prop:v0-(0,0,1)} has been verified. 
\end{proof}

Therefore, we conclude: 

\begin{theorem}
\label{thm:visible-(0,0,1)}
The $Q_{(0,0,1)}$-action on $D_{(0,0,1)}=G^{\mathbb{C}}/H_{(0,0,1)}^{\mathbb{C}}$ 
is strongly visible with $2n$-dimensional slice 
$M_{(0,0,1)}^1\simeq \exp \sqrt{-1}\mathfrak{q}_{(0,0,1)}$. 
\end{theorem}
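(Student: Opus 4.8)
The plan is to verify the three defining conditions \eqref{v0}, \eqref{s1}, \eqref{s2} for the triple $(L_{(0,0,1)}, S_{(0,0,1)}, \sigma_1)$ with $L_{(0,0,1)} = Q_{(0,0,1)} = G$ and $S_{(0,0,1)} = M_{(0,0,1)}^1$, and then record the dimension count. Condition \eqref{v0} is exactly the content of Proposition \ref{prop:v0-(0,0,1)}: the orbit map shows $L_{(0,0,1)}\cdot S_{(0,0,1)} = D_{(0,0,1)}$, which is certainly a non-empty open set. Conditions \eqref{s1} and \eqref{s2} will then follow immediately from Lemma \ref{lem:reduction}, provided I check its hypotheses: namely that $L_{(0,0,1)} = Q_{(0,0,1)}$ is a $\widetilde{\sigma}_1$-stable connected closed subgroup of $G$ containing $Q_{(0,0,1)}$ (trivially, since they are equal), and that $S_{(0,0,1)} = M_{(0,0,1)}^1 \subset M_{(0,0,1)}^1$ is a submanifold. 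The $\widetilde{\sigma}_1$-stability of $G$ is clear from the formula \eqref{eq:involution1}, since $\widetilde{\sigma}_1(g(\boldsymbol{x},\boldsymbol{y},z)) = g(-\overline{\boldsymbol{x}}, -\overline{\boldsymbol{y}}, \overline{z})$ stays in $G$ when $\boldsymbol{x},\boldsymbol{y},z$ are real.

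Next I would confirm that $S_{(0,0,1)}$ is genuinely a submanifold of dimension $2n$. From \eqref{eq:slice-(0,0,1)}, the real form is $M_{(0,0,1)}^1 = \{ v_{(0,0,1)}(\sqrt{-1}\boldsymbol{a}, \sqrt{-1}\boldsymbol{b}) : \boldsymbol{a},\boldsymbol{b}\in\mathbb{R}^n\}$, which is the image of $\mathbb{R}^n\times\mathbb{R}^n$ under the holomorphic coordinate chart identifying $D_{(0,0,1)}$ with $\exp\mathfrak{q}_{(0,0,1)}^{\mathbb{C}} \cong \mathbb{C}^{2n}$; it is thus a closed totally real submanifold, biholomorphic-image-wise identified with $\exp\sqrt{-1}\mathfrak{q}_{(0,0,1)}$, and has real dimension $2n = \dim_{\mathbb{R}}\mathfrak{q}_{(0,0,1)}$. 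This also matches the general principle cited in the introduction that a slice is totally real because \eqref{s1} holds. With \eqref{v0}, \eqref{s1}, \eqref{s2} all in hand, the $Q_{(0,0,1)}$-action on $D_{(0,0,1)}$ is strongly visible by definition, and the slice has the asserted dimension.

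There is essentially no hard obstacle here — the argument is a direct assembly of Proposition \ref{prop:v0-(0,0,1)} and Lemma \ref{lem:reduction}. The only point requiring a moment's care is making sure the hypotheses of Lemma \ref{lem:reduction} are literally met, in particular the $\widetilde{\sigma}_1$-stability of $L_{(0,0,1)} = G$, which I would dispatch in one line from \eqref{eq:involution1}. If anything, the ``main obstacle'' is purely bookkeeping: confirming that the choice $\sigma_1$ (rather than $\sigma_2$) is the right anti-holomorphic diffeomorphism to use, which is forced by the fact that $\mathfrak{h}_{(0,0,1)}^{\mathbb{C}} = \mathbb{C}Z$ is $\widetilde{\sigma}_1$-stable and the complementary coordinates $\boldsymbol{x},\boldsymbol{y}$ are precisely the ones sent to $-\overline{\boldsymbol{x}}, -\overline{\boldsymbol{y}}$, so that the fixed-point set is the totally real $\sqrt{-1}\mathbb{R}^{2n}$. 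I would therefore present the proof as: ``\eqref{v0} holds by Proposition \ref{prop:v0-(0,0,1)}; since $L_{(0,0,1)} = Q_{(0,0,1)} = G$ is $\widetilde{\sigma}_1$-stable and $S_{(0,0,1)} = M_{(0,0,1)}^1$ is a submanifold of dimension $2n$ contained in $M_{(0,0,1)}^1$, Lemma \ref{lem:reduction} gives \eqref{s1} and \eqref{s2}; hence the action is strongly visible with $2n$-dimensional slice $M_{(0,0,1)}^1 \simeq \exp\sqrt{-1}\mathfrak{q}_{(0,0,1)}$.''
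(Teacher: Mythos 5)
Your proposal is correct and follows exactly the route the paper takes: condition (\ref{v0}) is Proposition \ref{prop:v0-(0,0,1)}, and conditions (\ref{s1}) and (\ref{s2}) follow from Lemma \ref{lem:reduction} once one notes that $L_{(0,0,1)}=Q_{(0,0,1)}=G$ is $\widetilde{\sigma}_1$-stable and $S_{(0,0,1)}=M_{(0,0,1)}^1$ sits inside the real form; the dimension count $\dim M_{(0,0,1)}^1=2n$ is immediate from (\ref{eq:slice-(0,0,1)}). The paper leaves this assembly implicit, so your only addition is to spell out the hypothesis checks, which is fine.
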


%%%%%%%%%%%%%%%%%%%%%%%%%%%%%%%%%%%%%%%%%%%%%%%%%%%%%%%%%%%%%%%%%%%%%%%%%%%%%%%%%%%%%%%%%%%%%%%%%%%%%

\section{Visible actions on $G^{\mathbb{C}}/H_{(p,q,1)}^{\mathbb{C}}$}
\label{sec:(p,q,1)}

This section deals with the case where a closed subgroup of $G$ is $H_{(p,q,1)}$ 
for $1\leq p\leq q\leq n$, $(p,q)\neq (n,n)$. 
We set $D_{(p,q,1)}:=G^{\mathbb{C}}/H_{(p,q,1)}^{\mathbb{C}}$. 
From now, let us consider the $Q_{(p,q,1)}$-action on $D_{(p,q,1)}$. 

In view of Lemma \ref{lem:q}, 
we study separately the following dichotomous cases: a general case $1\leq p\leq q<n$; 
a special case $1\leq p<q=n$. 

\subsection{$Q_{(p,q,1)}$-action on $D_{(p,q,1)}$}
\label{subsec:action-(p,q,1)}

First, we consider a general case $1\leq p\leq q<n$. 
The complementary subspace $\mathfrak{q}_{(p,q,1)}$ of $\mathfrak{h}_{(p,q,1)}$ in $\mathfrak{g}$ 
is given by 
\begin{align*}
\mathfrak{q}_{(p,q,1)}
=\operatorname{span}_{\mathbb{R}}\{ X_{p+1},\ldots ,X_n,Y_{q+1},\ldots ,Y_n\} . 
\end{align*}
By Lemma \ref{lem:Q}, the Lie group $Q_{(p,q,1)}=\langle \exp \mathfrak{q}_{(p,q,1)}\rangle $ 
equals $\exp (\mathfrak{q}_{(p,q,1)}+\mathbb{R}Z)$, which is of the form 
\begin{align*}
Q_{(p,q,1)}=\{ g((\boldsymbol{0}_p,\boldsymbol{s}'),(\boldsymbol{0}_q,\boldsymbol{t}'),u):
	\boldsymbol{s}'
	\in \mathbb{R}^{n-p},
	\boldsymbol{t}'\in \mathbb{R}^{n-q},
	u\in \mathbb{R}\} . 
\end{align*}
We note that $Q_{(p,q,1)}$ is isomorphic to 
$\mathbb{R}^{q-p}\times {\bf{H}}_{n-q}$ as a Lie group if $p<q$ and 
${\bf{H}}_{n-q}$ if $p=q$. 
Further, the complex Heisenberg homogeneous space $D_{(p,q,1)}$ is expressed by 
\begin{align*}
D_{(p,q,1)}=\{ v_{(p,q,1)}(\boldsymbol{x}',\boldsymbol{y}'):
	\boldsymbol{x}'\in \mathbb{C}^{n-p},\boldsymbol{y}'\in \mathbb{C}^{n-q}\} . 
\end{align*}
where we write 
\begin{align*}
v_{(p,q,1)}(\boldsymbol{x}',\boldsymbol{y}')
:=g((\boldsymbol{0}_p,\boldsymbol{x}'),(\boldsymbol{0}_q,\boldsymbol{y}'),0)
	H_{(p,q,1)}^{\mathbb{C}}. 
\end{align*}
Then, it follows from Lemma \ref{lem:multiple} that 
the $Q_{(p,q,1)}$-action on $D_{(p,q,1)}$ is given by 
\begin{align*}
&g((\boldsymbol{0}_p,\boldsymbol{s}'),(\boldsymbol{0}_q,\boldsymbol{t}'),u)\cdot 
	v_{(p,q,1)}(\boldsymbol{x}',\boldsymbol{y}')\\
&=(g((\boldsymbol{0}_p,\boldsymbol{s}'),(\boldsymbol{0}_q,\boldsymbol{t}'),u)
	\cdot g((\boldsymbol{0}_p,\boldsymbol{x}'),(\boldsymbol{0}_q,\boldsymbol{y}'),0))
	H_{(p,q,1)}^{\mathbb{C}}\\
&=g((\boldsymbol{0}_p,\boldsymbol{s}'+\boldsymbol{x}'),
	(\boldsymbol{0}_q,\boldsymbol{t}'+\boldsymbol{y}'),
	u-((\boldsymbol{0}_q,\boldsymbol{t}')|(\boldsymbol{0}_p,\boldsymbol{x}')))H_{(p,q,1)}^{\mathbb{C}}\\
&=g((\boldsymbol{0}_p,\boldsymbol{s}'+\boldsymbol{x}'),
	(\boldsymbol{0}_q,\boldsymbol{t}'+\boldsymbol{y}'),0)H_{(p,q,1)}^{\mathbb{C}}\\
&=v_{(p,q,1)}(\boldsymbol{s}'+\boldsymbol{x}',\boldsymbol{t}'+\boldsymbol{y}') 
\end{align*}
for $g((\boldsymbol{0}_p,\boldsymbol{s}'),(\boldsymbol{0}_q,\boldsymbol{t}'),u)\in Q_{(p,q,1)}$ 
and $v_{(p,q,1)}(\boldsymbol{x}',\boldsymbol{y}')\in D_{(p,q,1)}$. 

In the case $1\leq p<q=n$, 
the group $Q_{(p,n,1)}$ is given by 
\begin{align*}
Q_{(p,n,1)}=\{ g((\boldsymbol{0}_p,\boldsymbol{s}'),\boldsymbol{0}_n,0):
	\boldsymbol{s}'\in \mathbb{R}^{n-p},u\in \mathbb{R}\} 
\end{align*}
and the Heisenberg homogeneous space $D_{(p,n,1)}$ by 
\begin{align*}
D_{(p,n,1)}=\{ v_{(p,n,1)}(\boldsymbol{x}')
	:=g((\boldsymbol{0}_p,\boldsymbol{x}'),\boldsymbol{0}_n,0)H_{(p,n,1)}^{\mathbb{C}}:
	\boldsymbol{x}'\in \mathbb{C}^{n-p}\} . 
\end{align*}
Then, the $Q_{(p,n,1)}$-action on $D_{(p,n,1)}$ is written by 
\begin{align*}
g((\boldsymbol{0}_p,\boldsymbol{s}'),\boldsymbol{0}_n,0)\cdot v_{(p,n,1)}(\boldsymbol{x}')
=v_{(p,n,1)}(\boldsymbol{s}'+\boldsymbol{x}')
\end{align*}
for $g((\boldsymbol{0}_p,\boldsymbol{s}'),\boldsymbol{0}_n,0)\in Q_{(p,n,1)}$ and 
$v_{(p,n,1)}(\boldsymbol{x}')\in D_{(p,n,1)}$. 

\subsection{Verification of (\ref{v0}) for $D_{(p,q,1)}$}
\label{subsec:v0-(p,q,1)}

Let us take $L_{(p,q,1)}$ as $Q_{(p,q,1)}$ and 
an anti-holomorphic diffeomorphism on $D_{(p,q,1)}$ as $\sigma _1$. 
Then, we prove: 

\begin{proposition}
\label{prop:v0-(p,q,1)}
We take a connected closed subgroup $L_{(p,q,1)}$ as $Q_{(p,q,1)}$ 
and a submanifold $S_{(p,q,1)}$ in $D_{(p,q,1)}$ as $M_{(p,q,1)}^1$. 
Then, $L_{(p,q,1)}\cdot S_{(p,q,1)}$ coincides with $D_{(p,q,1)}$. 
\end{proposition}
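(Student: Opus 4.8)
The plan is to follow verbatim the strategy of Proposition~\ref{prop:v0-(0,0,1)}: first pin down the real form $M_{(p,q,1)}^1$ in coordinates, then use the translation formula for the $Q_{(p,q,1)}$-action to peel off the real part of an arbitrary point, landing inside $M_{(p,q,1)}^1$. I would begin with the general case $1\leq p\leq q<n$. Applying $\widetilde{\sigma}_1$ to $g((\boldsymbol{0}_p,\boldsymbol{x}'),(\boldsymbol{0}_q,\boldsymbol{y}'),0)$ yields $g((\boldsymbol{0}_p,-\overline{\boldsymbol{x}'}),(\boldsymbol{0}_q,-\overline{\boldsymbol{y}'}),0)$, so by Lemma~\ref{lem:real form} (whose proof rests on $\mathfrak{h}_{(p,q,1)}^{\mathbb{C}}\cap \mathfrak{q}_{(p,q,1)}^{\mathbb{C}}=\{0\}$) the point $v_{(p,q,1)}(\boldsymbol{x}',\boldsymbol{y}')$ is $\sigma_1$-fixed if and only if $\boldsymbol{x}'$ and $\boldsymbol{y}'$ are purely imaginary; hence
\[
M_{(p,q,1)}^1=\{\, v_{(p,q,1)}(\sqrt{-1}\,\boldsymbol{a}',\sqrt{-1}\,\boldsymbol{b}'):\boldsymbol{a}'\in \mathbb{R}^{n-p},\ \boldsymbol{b}'\in \mathbb{R}^{n-q}\,\}.
\]

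Next I would take an arbitrary $v_{(p,q,1)}(\boldsymbol{x}',\boldsymbol{y}')\in D_{(p,q,1)}$ and split coordinates via $\mathbb{C}^{n-p}=\mathbb{R}^{n-p}+\sqrt{-1}\,\mathbb{R}^{n-p}$ and $\mathbb{C}^{n-q}=\mathbb{R}^{n-q}+\sqrt{-1}\,\mathbb{R}^{n-q}$, writing $\boldsymbol{x}'=\boldsymbol{x}'_R+\sqrt{-1}\,\boldsymbol{x}'_I$ and $\boldsymbol{y}'=\boldsymbol{y}'_R+\sqrt{-1}\,\boldsymbol{y}'_I$. The $Q_{(p,q,1)}$-action computed in Section~\ref{subsec:action-(p,q,1)} then gives
\[
v_{(p,q,1)}(\boldsymbol{x}',\boldsymbol{y}')=g((\boldsymbol{0}_p,\boldsymbol{x}'_R),(\boldsymbol{0}_q,\boldsymbol{y}'_R),0)\cdot v_{(p,q,1)}(\sqrt{-1}\,\boldsymbol{x}'_I,\sqrt{-1}\,\boldsymbol{y}'_I),
\]
where the first factor lies in $Q_{(p,q,1)}=L_{(p,q,1)}$ and the second in $M_{(p,q,1)}^1=S_{(p,q,1)}$. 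This gives $D_{(p,q,1)}\subseteq L_{(p,q,1)}\cdot S_{(p,q,1)}$; the reverse inclusion is immediate, so the two sets coincide.

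Finally I would dispatch the special case $1\leq p<q=n$ in the same way: here $D_{(p,n,1)}$ is parametrized by a single variable $\boldsymbol{x}'\in \mathbb{C}^{n-p}$, the real form is $M_{(p,n,1)}^1=\{\, v_{(p,n,1)}(\sqrt{-1}\,\boldsymbol{a}'):\boldsymbol{a}'\in \mathbb{R}^{n-p}\,\}$, and the action $g((\boldsymbol{0}_p,\boldsymbol{s}'),\boldsymbol{0}_n,0)\cdot v_{(p,n,1)}(\boldsymbol{x}')=v_{(p,n,1)}(\boldsymbol{s}'+\boldsymbol{x}')$ lets the real/imaginary splitting of $\boldsymbol{x}'$ conclude exactly as above. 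I do not expect a genuine obstacle: the argument is the same \emph{translate away the real part} trick as in the $(0,0,1)$ case. The only point demanding care is keeping the zero-padding blocks $(\boldsymbol{0}_p,\cdot)$ and $(\boldsymbol{0}_q,\cdot)$ consistent, so that the translating element indeed stays inside $Q_{(p,q,1)}$ and the bilinear correction term appearing in Lemma~\ref{lem:multiple} vanishes (which it does, since the first $p$, resp.\ $q$, slots of the relevant vectors are zero).
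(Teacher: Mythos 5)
Your proposal is essentially the paper's own proof: identify $M_{(p,q,1)}^1$ via Lemma \ref{lem:real form}, then split $\boldsymbol{x}',\boldsymbol{y}'$ into real and imaginary parts and translate by the real part, treating $1\leq p\leq q<n$ and $1\leq p<q=n$ separately. One remark is off, though it does not break the argument: the bilinear correction $((\boldsymbol{0}_q,\boldsymbol{t}')\,|\,(\boldsymbol{0}_p,\boldsymbol{x}'))$ from Lemma \ref{lem:multiple} does \emph{not} vanish in general (its nonzero contributions come from the slots $i>q$, e.g.\ $n=2$, $p=q=1$ gives $t_2'x_2'\neq 0$); the reason it is harmless is that the resulting central factor $e^{cZ}$ lies in $H_{(p,q,1)}^{\mathbb{C}}$ (since $\varepsilon=1$, i.e.\ $Z\in\mathfrak{h}_{(p,q,1)}$) and is absorbed when passing to the coset, which is exactly how the action formula in Section \ref{subsec:action-(p,q,1)} that you invoke is derived.
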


First, let us assume that $1\leq p\leq q<n$. 
We note that 
the restriction of the anti-holomorphic involution $\widetilde{\sigma }_1$ 
to $\exp \mathfrak{q}_{(p,q,1)}^{\mathbb{C}}$ is 
\begin{align*}
\widetilde{\sigma }_1(g((\boldsymbol{0}_p,\boldsymbol{x}'),(\boldsymbol{0}_q,\boldsymbol{y}'),0))
=g((\boldsymbol{0}_p,-\overline{\boldsymbol{x}'}),(\boldsymbol{0}_q,-\overline{\boldsymbol{y}'}),0) 
\end{align*}
for $g((\boldsymbol{0}_p,\boldsymbol{x}'),(\boldsymbol{0}_q,\boldsymbol{y}'),0)
\in \exp \mathfrak{q}_{(p,q,1)}^{\mathbb{C}}$. 
It follows from Lemma \ref{lem:real form} that 
the real form $M_{(p,q,1)}^1$ is given by 
\begin{align}
\label{eq:slice-(p,q,1)}
M_{(p,q,1)}^1=\{ v_{(p,q,1)}(\sqrt{-1}\boldsymbol{a}',\sqrt{-1}\boldsymbol{b}'):
	\boldsymbol{a}'\in \mathbb{R}^{n-p},\boldsymbol{b}\in \mathbb{R}^{n-q}\} . 
\end{align}

\begin{proof}[Proof of Proposition \ref{prop:v0-(p,q,1)} for $1\leq p\leq q<n$]
Let $v_{(p,q,1)}(\boldsymbol{x}',\boldsymbol{y}')$ be an element of $D_{(p,q,1)}$. 
We write $\boldsymbol{x}'=\boldsymbol{x}'_R+\sqrt{-1}\boldsymbol{x}'_I$ and 
$\boldsymbol{y}'=\boldsymbol{y}'_R+\sqrt{-1}\boldsymbol{y}'_I$ 
for some $\boldsymbol{x}'_R,\boldsymbol{y}'_R\in \mathbb{R}^{n-p}$ and 
$\boldsymbol{x}'_I,\boldsymbol{y}'_I\in \mathbb{R}^{n-q}$. 
Then, we have 
\begin{align*}
v_{(p,q,1)}(\boldsymbol{x}',\boldsymbol{y}')
=g((\boldsymbol{0}_p,\boldsymbol{x}'_R),(\boldsymbol{0}_q,\boldsymbol{y}'_R),0)\cdot 
	v_{(p,q,1)}(\sqrt{-1}\boldsymbol{x}'_I,\sqrt{-1}\boldsymbol{y}'_I). 
\end{align*}
As $g((\boldsymbol{0}_p,\boldsymbol{x}'_R),(\boldsymbol{0}_q,\boldsymbol{y}'_R),0)\in Q_{(p,q,1)}$ 
and $v_{(p,q,1)}(\sqrt{-1}\boldsymbol{x}'_I,\sqrt{-1}\boldsymbol{y}'_I)\in S_{(p,q,1)}$, 
this implies that $Q_{(p,q,1)}\cdot S_{(p,q,1)}=D_{(p,q,1)}$. 
\end{proof}

For a special case $1\leq p<q=n$, 
the real form $M_{(p,n,1)}^1$ is 
\begin{align}
\label{eq:slice-(p,n.1)}
M_{(p,n,1)}^1=\{ v_{(p,n,1)}(\sqrt{-1}\boldsymbol{a}'):\boldsymbol{a}'\in \mathbb{R}^{n-p}\} . 
\end{align}

\begin{proof}[Proof of Proposition \ref{prop:v0-(p,q,1)} for $1\leq p<q=n$]
An element $v_{(p,n,1)}(\boldsymbol{x}')$ of $D_{(p,n,1)}$ 
with $\boldsymbol{x}'=\boldsymbol{x}_R'+\sqrt{-1}\boldsymbol{x}_I'$ 
($\boldsymbol{x}_R',\boldsymbol{x}_I'\in \mathbb{R}^{n-p}$) is of the form 
\begin{align*}
v_{(p,n,1)}(\boldsymbol{x}')
=g((\boldsymbol{0}_p,\boldsymbol{x}'_R),\boldsymbol{0}_n,0)
\cdot v_{(p,n,1)}(\sqrt{-1}\boldsymbol{x}_I')\in Q_{(p,n,1)}\cdot S_{(p,n,1)}. 
\end{align*}
Hence, we have verified $D_{(p,n,1)}=Q_{(p,n,1)}\cdot S_{(p,n,1)}$. 
\end{proof}

Hence, we conclude: 

\begin{theorem}
\label{thm:visible-(p,q,1)}
For $1\leq p\leq q\leq n$, $(p,q)\neq (n,n)$, 
the $Q_{(p,q,1)}$-action on $D_{(p,q,1)}=G^{\mathbb{C}}/H_{(p,q,1)}^{\mathbb{C}}$ 
is strongly visible with $(2n-p-q)$-dimensional slice 
$M_{(p,q,1)}^1\simeq \exp \sqrt{-1}\mathfrak{q}_{(p,q,1)}$. 
\end{theorem}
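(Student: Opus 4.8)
The plan is to assemble Theorem~\ref{thm:visible-(p,q,1)} directly from the pieces already established in this section together with the reduction machinery of Section~\ref{subsec:strategy}. The three conditions (\ref{v0}), (\ref{s1}), (\ref{s2}) for the strongly visible $Q_{(p,q,1)}$-action are exactly what Lemma~\ref{lem:reduction} delivers once we exhibit a $\widetilde{\sigma}_1$-stable closed subgroup $L_{(p,q,1)}$ containing $Q_{(p,q,1)}$ and a submanifold $S_{(p,q,1)}\subset M^1_{(p,q,1)}$ with $L_{(p,q,1)}\cdot S_{(p,q,1)}$ open in $D_{(p,q,1)}$. First I would take $L_{(p,q,1)}:=Q_{(p,q,1)}$ and $S_{(p,q,1)}:=M^1_{(p,q,1)}$, and note that $\widetilde{\sigma}_1$ stabilizes $Q_{(p,q,1)}$: indeed $\widetilde{\sigma}_1$ acts on $\mathfrak{g}^{\mathbb{C}}$ by $X_i\mapsto -X_i$, $Y_j\mapsto -Y_j$, $Z\mapsto Z$ on the real basis, hence preserves $\mathfrak{q}_{(p,q,1)}^{\mathbb{C}}$ and $\mathfrak{q}_{(p,q,1)}^{\mathbb{C}}+\mathbb{C}Z$, so it preserves $Q_{(p,q,1)}=\exp(\mathfrak{q}_{(p,q,1)}+\mathbb{R}Z)$ (resp.\ $\exp\mathfrak{q}_{(p,n,1)}$). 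This verifies the stability hypothesis of Lemma~\ref{lem:reduction}.

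Next I would invoke Proposition~\ref{prop:v0-(p,q,1)}, which was proved above in both the general case $1\le p\le q<n$ and the special case $1\le p<q=n$, to conclude that in fact $L_{(p,q,1)}\cdot S_{(p,q,1)}=D_{(p,q,1)}$, which is trivially open; this is precisely condition (\ref{v0}) (the stronger statement $D'=D$ is of course allowed). Then Lemma~\ref{lem:reduction} hands us (\ref{s1}) and (\ref{s2}) for the $Q_{(p,q,1)}$-action with the anti-holomorphic diffeomorphism $\sigma_1$. Thus the action is strongly visible with slice $S_{(p,q,1)}=M^1_{(p,q,1)}$.

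It remains only to identify the slice and its dimension. From (\ref{eq:slice-(p,q,1)}), for $1\le p\le q<n$ we have $M^1_{(p,q,1)}=\{v_{(p,q,1)}(\sqrt{-1}\boldsymbol{a}',\sqrt{-1}\boldsymbol{b}'):\boldsymbol{a}'\in\mathbb{R}^{n-p},\boldsymbol{b}'\in\mathbb{R}^{n-q}\}$, which is the image of $\exp\sqrt{-1}\mathfrak{q}_{(p,q,1)}$ under the biholomorphism of Lemma~\ref{lem:homogeneous}, so $\dim M^1_{(p,q,1)}=(n-p)+(n-q)=2n-p-q$. From (\ref{eq:slice-(p,n.1)}), for $1\le p<q=n$ we get $M^1_{(p,n,1)}\simeq\exp\sqrt{-1}\mathfrak{q}_{(p,n,1)}$ of dimension $n-p=2n-p-q$, so the formula $2n-p-q$ is uniform across both cases. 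This completes the proof.

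I do not expect any serious obstacle here: the statement is a packaging of Proposition~\ref{prop:v0-(p,q,1)} and Lemma~\ref{lem:reduction}. The only point requiring a line of genuine verification — rather than citation — is the $\widetilde{\sigma}_1$-stability of $Q_{(p,q,1)}$, and even that is immediate from the explicit action of $\widetilde{\sigma}_1$ on the basis $\mathcal{B}$ recorded in (\ref{eq:involution1}). A minor bookkeeping subtlety worth stating explicitly is that totally-realness of $S_{(p,q,1)}$, hence the bound $\dim S\le\dim G/H$, is automatic from (\ref{s1}) by \cite[Remark 3.3.2]{ko05}; I would remark that here $\dim S=2n-p-q=\dim\mathfrak{q}_{(p,q,1)}=\dim G/H_{(p,q,1)}$, so the slice is in fact as large as totally-realness permits (consistent with $D'=D$, where generic $Q$-orbits are open and the generic-orbit codimension is zero only when $Q=G$, which is not the case for $(p,q)\ne(0,0)$ — the honest statement is simply that this slice realizes the maximal possible dimension).
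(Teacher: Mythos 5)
Your proposal is correct and follows essentially the same route as the paper: the paper proves Proposition \ref{prop:v0-(p,q,1)} (that $Q_{(p,q,1)}\cdot M^1_{(p,q,1)}=D_{(p,q,1)}$ in both cases $q<n$ and $q=n$) and then concludes the theorem via the reduction of Lemma \ref{lem:reduction}, exactly as you do. Your explicit verification that $\widetilde{\sigma}_1$ stabilizes $Q_{(p,q,1)}$ (left implicit in the paper) and the uniform dimension count $(n-p)+(n-q)=2n-p-q$ are both correct.
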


%%%%%%%%%%%%%%%%%%%%%%%%%%%%%%%%%%%%%%%%%%%%%%%%%%%%%%%%%%%%%%%%%%%%%%%%%%%%%%%%%%%%%%%%%%%%%%%%%%%%%

\section{Visible action on $G^{\mathbb{C}}/H_{(m,0,1)}^{\mathbb{C}}$}
\label{sec:(m,0,1)}

This section considers the Heisenberg homogeneous space 
$D_{(m,0,1)}=G^{\mathbb{C}}/H^{\mathbb{C}}_{(m,0,1)}$ with $1\leq m\leq n$. 

\subsection{$Q_{(m,0,1)}$-action on $D_{(m,0,1)}$}
\label{subsec:action-(m,0,1)}

Now, we let $m$ be $1\leq m<n$. 
The complementary subspace $\mathfrak{q}_{(m,0,1)}$ of $\mathfrak{h}_{(m,0,1)}$ is 
\begin{align*}
\mathfrak{q}_{(m,0,1)}
=\operatorname{span}_{\mathbb{R}}\{ X_{m+1},\ldots ,X_n,Y_1,\ldots ,Y_n\} . 
\end{align*}
Then, $D_{(m,0,1)}$ is of the form 
\begin{align*}
D_{(m,0,1)}=\{ g((\boldsymbol{0}_m,\boldsymbol{x}'),\boldsymbol{y},0)H_{(m,0,1)}^{\mathbb{C}}:
	\boldsymbol{x}'\in \mathbb{C}^{n-m},\boldsymbol{y}\in \mathbb{C}^n\}  
\end{align*}
where 
\begin{align*}
v_{(m,0,1)}(\boldsymbol{x}',\boldsymbol{y})
:=g((\boldsymbol{0}_m,\boldsymbol{x}'),\boldsymbol{y},0)H_{(m,0,1)}^{\mathbb{C}}. 
\end{align*}
Here, $\mathfrak{q}_{(m,0,1)}$ is not a subalgebra. 
It follows from Lemma \ref{lem:Q} that 
the subgroup $Q_{(m,0,1)}=\langle \exp \mathfrak{q}_{(m,0,1)}\rangle$ is given by 
\begin{align*}
Q_{(m,0,1)}=\{ g((\boldsymbol{0}_m,\boldsymbol{s}'),\boldsymbol{t},u):
	\boldsymbol{s}'\in \mathbb{R}^{n-m},\boldsymbol{t}\in \mathbb{R}^n,u\in \mathbb{R}\} . 
\end{align*}
Thus, $Q_{(m,0,1)}$ is isomorphic to $\mathbb{R}^m\times {\bf{H}}_{n-m}$ and acts on $D_{(m,0,1)}$ by 
\begin{align*}
&g((\boldsymbol{0}_m,\boldsymbol{s}'),\boldsymbol{t},u)\cdot v_{(m,0,1)}(\boldsymbol{x}',\boldsymbol{y})\\
&=g((\boldsymbol{0}_m,\boldsymbol{s}'+\boldsymbol{x}'),\boldsymbol{t}+\boldsymbol{y},
	u-(\boldsymbol{t}|(\boldsymbol{0}_m,\boldsymbol{x}'))H_{(m,0,1)}^{\mathbb{C}}\\
&=g((\boldsymbol{0}_m,\boldsymbol{s}'+\boldsymbol{x}'),\boldsymbol{t}+\boldsymbol{y})
	H_{(m,0,1)}^{\mathbb{C}}\\
&=v_{(m,0,1)}(\boldsymbol{s}'+\boldsymbol{x}',\boldsymbol{t}+\boldsymbol{y}). 
\end{align*}

For $m=n$, we have $\mathfrak{q}_{(n,0,1)}=\operatorname{span}_{\mathbb{R}}\{ Y_1,\ldots ,Y_n\} $. 
Then, 
we obtain $Q_{(n,0,1)}=\{ g(\boldsymbol{0}_n,\boldsymbol{t},0):\boldsymbol{t}\in \mathbb{R}^n\} 
\simeq \mathbb{R}^n$ 
and $D_{(n,0,1)}=\{ v_{(n,0,1)}(\boldsymbol{y}):=g(\boldsymbol{0}_n,\boldsymbol{y},0)H_{(n,0,1)}^{\mathbb{C}}:
\boldsymbol{y}\in \mathbb{C}^n\}$. 
Further, $Q_{(n,0,1)}$ acts on $D_{(n,0,1)}$ by 
\begin{align*}
g(\boldsymbol{0}_n,\boldsymbol{t},0)\cdot v_{(n,0,1)}(\boldsymbol{y})
=v_{(n,0,1)}(\boldsymbol{t}+\boldsymbol{y})
\end{align*}
for $g(\boldsymbol{0}_n,\boldsymbol{t},0)\in Q_{(n,0,1)}$ and 
$v_{(n,0,1)}(\boldsymbol{y})\in D_{(n,0,1)}$. 

\subsection{Verification of (\ref{v0}) for $D_{(m,0,1)}$}
\label{subsec:v0-(m,0,1)}

First, we consider the case $m<n$. 
Let us take an anti-holomorphic diffeomorphism on $D_{(m,0,1)}$ as $\sigma _1$ (see (\ref{eq:sigma})). 
We recall that the restriction of the anti-holomorphic involution 
$\widetilde{\sigma }_1$ on $G^{\mathbb{C}}$ (see (\ref{eq:involution1})) 
to $\exp \mathfrak{q}_{(m,0,1)}^{\mathbb{C}}$ is given by 
\begin{align*}
\widetilde{\sigma }_1(g((\boldsymbol{0}_m,\boldsymbol{x}'),\boldsymbol{y},0))
=g((\boldsymbol{0}_m,-\overline{\boldsymbol{x}'}),-\overline{\boldsymbol{y}},0). 
\end{align*}
By (\ref{eq:real form}), the real form $M_{(m,0,1)}^1=D_{(m,0,1)}^{\sigma _1}$ is given as follows: 
\begin{align}
\label{eq:slice-(m,0,1)}
M_{(m,0,1)}^1
&=\{ v_{(m,0,1)}(\sqrt{-1}\boldsymbol{a}',\sqrt{-1}\boldsymbol{b}):
	\boldsymbol{a}'\in \mathbb{R}^{n-m},\boldsymbol{b}\in \mathbb{R}^n\} . 
\end{align}

\begin{proposition}
\label{prop:v0-(m,0,1)}
We take a connected closed subgroup $L_{(m,0,1)}$ as $Q_{(m,0,1)}$ 
and a submanifold $S_{(m,0,1)}$ as $M_{(m,0,1)}^1$. 
Then, $L_{(m,0,1)}\cdot S_{(m,0,1)}$ coincides with $D_{(m,0,1)}$. 
\end{proposition}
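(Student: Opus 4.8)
The plan is to mimic exactly the argument already used for the cases $D_{(0,0,1)}$ and $D_{(p,q,1)}$, since the structure here is identical: we want to show that every point of $D_{(m,0,1)}$ lies in the orbit $Q_{(m,0,1)}\cdot S_{(m,0,1)}$, where $S_{(m,0,1)}=M_{(m,0,1)}^1$ as given in (\ref{eq:slice-(m,0,1)}). The point is that $Q_{(m,0,1)}$ acts on $D_{(m,0,1)}$ by real translations in the $(\boldsymbol{x}',\boldsymbol{y})$-coordinates (the twisting $Z$-term disappears when we project to the homogeneous space, exactly as computed in Section \ref{subsec:action-(m,0,1)}), while the slice $S_{(m,0,1)}$ consists of the points with purely imaginary coordinates.

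Concretely, first I would take an arbitrary element $v_{(m,0,1)}(\boldsymbol{x}',\boldsymbol{y})\in D_{(m,0,1)}$ with $\boldsymbol{x}'\in\mathbb{C}^{n-m}$ and $\boldsymbol{y}\in\mathbb{C}^n$, and split coordinates into real and imaginary parts: $\boldsymbol{x}'=\boldsymbol{x}'_R+\sqrt{-1}\boldsymbol{x}'_I$ with $\boldsymbol{x}'_R,\boldsymbol{x}'_I\in\mathbb{R}^{n-m}$, and $\boldsymbol{y}=\boldsymbol{y}_R+\sqrt{-1}\boldsymbol{y}_I$ with $\boldsymbol{y}_R,\boldsymbol{y}_I\in\mathbb{R}^n$. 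Next I would use the formula for the $Q_{(m,0,1)}$-action derived in Section \ref{subsec:action-(m,0,1)}, namely $g((\boldsymbol{0}_m,\boldsymbol{s}'),\boldsymbol{t},u)\cdot v_{(m,0,1)}(\boldsymbol{x}',\boldsymbol{y})=v_{(m,0,1)}(\boldsymbol{s}'+\boldsymbol{x}',\boldsymbol{t}+\boldsymbol{y})$, with the choice $\boldsymbol{s}'=\boldsymbol{x}'_R$, $\boldsymbol{t}=\boldsymbol{y}_R$, $u=0$, to write
\begin{align*}
v_{(m,0,1)}(\boldsymbol{x}',\boldsymbol{y})
=g((\boldsymbol{0}_m,\boldsymbol{x}'_R),\boldsymbol{y}_R,0)\cdot v_{(m,0,1)}(\sqrt{-1}\boldsymbol{x}'_I,\sqrt{-1}\boldsymbol{y}_I).
\end{align*}
Since $g((\boldsymbol{0}_m,\boldsymbol{x}'_R),\boldsymbol{y}_R,0)\in Q_{(m,0,1)}$ and $v_{(m,0,1)}(\sqrt{-1}\boldsymbol{x}'_I,\sqrt{-1}\boldsymbol{y}_I)\in M_{(m,0,1)}^1=S_{(m,0,1)}$ by (\ref{eq:slice-(m,0,1)}), this exhibits $v_{(m,0,1)}(\boldsymbol{x}',\boldsymbol{y})\in Q_{(m,0,1)}\cdot S_{(m,0,1)}$, and since the reverse inclusion is trivial we get $Q_{(m,0,1)}\cdot S_{(m,0,1)}=D_{(m,0,1)}$.

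For the case $m=n$ I would argue the same way, even more simply: $D_{(n,0,1)}=\{v_{(n,0,1)}(\boldsymbol{y}):\boldsymbol{y}\in\mathbb{C}^n\}$, the action is $g(\boldsymbol{0}_n,\boldsymbol{t},0)\cdot v_{(n,0,1)}(\boldsymbol{y})=v_{(n,0,1)}(\boldsymbol{t}+\boldsymbol{y})$, and (as computed in the parallel pattern) $M_{(n,0,1)}^1=\{v_{(n,0,1)}(\sqrt{-1}\boldsymbol{b}):\boldsymbol{b}\in\mathbb{R}^n\}$; splitting $\boldsymbol{y}=\boldsymbol{y}_R+\sqrt{-1}\boldsymbol{y}_I$ and translating by $g(\boldsymbol{0}_n,\boldsymbol{y}_R,0)$ gives the claim. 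Honestly there is no real obstacle here — the content has already been front-loaded into the computation of the action and the explicit description of the real form $M_{(m,0,1)}^1$. The only thing to be slightly careful about is that $S_{(m,0,1)}$ is genuinely a submanifold of $M_{(m,0,1)}^1$ (here it equals $M_{(m,0,1)}^1$, biholomorphic/diffeomorphic to $\exp\sqrt{-1}\mathfrak{q}_{(m,0,1)}$, so this is automatic), so that Lemma \ref{lem:reduction} applies and yields strong visibility of the $Q_{(m,0,1)}$-action with a $(2n-m)$-dimensional slice.
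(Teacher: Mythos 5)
Your argument is correct and coincides with the paper's own proof: both decompose $\boldsymbol{x}'=\boldsymbol{x}'_R+\sqrt{-1}\boldsymbol{x}'_I$, $\boldsymbol{y}=\boldsymbol{y}_R+\sqrt{-1}\boldsymbol{y}_I$ and write $v_{(m,0,1)}(\boldsymbol{x}',\boldsymbol{y})=g((\boldsymbol{0}_m,\boldsymbol{x}'_R),\boldsymbol{y}_R,0)\cdot v_{(m,0,1)}(\sqrt{-1}\boldsymbol{x}'_I,\sqrt{-1}\boldsymbol{y}_I)$ using the translation formula for the $Q_{(m,0,1)}$-action. The $m=n$ case you include is handled in the paper as a separate proposition ``by the same way,'' exactly as you do.
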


\begin{proof}
Clearly, $Q_{(m,0,1)}\cdot S_{(m,0,1)}$ is contained in $D_{(m,0,1)}$. 
Then, we see the opposite inclusion. 
Let us take an element $v_{(m,0,1)}(\boldsymbol{x}',\boldsymbol{y})$ in $D_{(m,0,1)}$ 
and write $\boldsymbol{x}'=\boldsymbol{x}_R'+\sqrt{-1}\boldsymbol{x}_I'$ for 
$\boldsymbol{x}_R',\boldsymbol{x}_I'\in \mathbb{R}^{n-m}$ 
and $\boldsymbol{y}=\boldsymbol{y}_R+\sqrt{-1}\boldsymbol{y}_I$ for 
$\boldsymbol{y}_R,\boldsymbol{y}_I\in \mathbb{R}^{n}$. 
Then, we have 
\begin{align*}
v_{(m,0,1)}(\boldsymbol{x}',\boldsymbol{y})
=g((\boldsymbol{0}_m,\boldsymbol{x}_R'),\boldsymbol{y}_R,0)\cdot 
	v_{(m,0,1)}(\sqrt{-1}\boldsymbol{x}_I',\sqrt{-1}\boldsymbol{y}_I). 
\end{align*}
This implies $v_{(m,0,1)}(\boldsymbol{x}',\boldsymbol{y})\in L_{(m,0,1)}\cdot S_{(m,0,1)}$. 
Hence, we have verified $D_{(m,0,1)}\subset Q_{(m,0,1)}\cdot S_{(m,0,1)}$. 
Since it is obvious that $Q_{(m,0,1)}\cdot S_{(m,0,1)}\subset D_{(m,0,1)}$, 
we have shown $Q_{(m,0,1)}\cdot S_{(m,0,1)}=D_{(m,0,1)}$. 
\end{proof}

For the special case $m=n$, we can show the following proposition 
by the same way as Proposition \ref{prop:v0-(m,0,1)}. 

\begin{proposition}
\label{prop:v0-(n,0,1)}
We take $L_{(p,n,1)}$ as $Q_{(p,n,1)}$ and 
$S_{(n,0,1)}$ as $M_{(n,0,1)}^1=\{ v_{(n,0,1)}(\sqrt{-1}\boldsymbol{b}):
\boldsymbol{b}\in \mathbb{R}^n\} $. 
Then, the set $L_{(n,0,1)}\cdot S_{(n,0,1)}$ coincides with $D_{(n,0,1)}$. 
\end{proposition}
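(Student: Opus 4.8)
The plan is to mimic exactly the proof of Proposition~\ref{prop:v0-(m,0,1)}, since the case $m=n$ is just a degenerate specialization in which the $\boldsymbol{x}'$-coordinates disappear. First I would record the explicit descriptions already computed in Section~\ref{subsec:action-(m,0,1)}: the homogeneous space is $D_{(n,0,1)}=\{v_{(n,0,1)}(\boldsymbol{y}):\boldsymbol{y}\in\mathbb{C}^n\}$, the group is $Q_{(n,0,1)}=\{g(\boldsymbol{0}_n,\boldsymbol{t},0):\boldsymbol{t}\in\mathbb{R}^n\}\simeq\mathbb{R}^n$, and the action is $g(\boldsymbol{0}_n,\boldsymbol{t},0)\cdot v_{(n,0,1)}(\boldsymbol{y})=v_{(n,0,1)}(\boldsymbol{t}+\boldsymbol{y})$, i.e.\ it is just translation by the real subspace $\mathbb{R}^n\subset\mathbb{C}^n$ in the $\boldsymbol{y}$-coordinate.

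Next I would identify the candidate slice. Applying Lemma~\ref{lem:real form} with $\sigma_1$: the restriction of $\widetilde{\sigma}_1$ to $\exp\mathfrak{q}_{(n,0,1)}^{\mathbb{C}}$ sends $g(\boldsymbol{0}_n,\boldsymbol{y},0)$ to $g(\boldsymbol{0}_n,-\overline{\boldsymbol{y}},0)$, so the fixed-point set forces $\boldsymbol{y}=-\overline{\boldsymbol{y}}$, i.e.\ $\boldsymbol{y}\in\sqrt{-1}\,\mathbb{R}^n$. Hence $M_{(n,0,1)}^1=\{v_{(n,0,1)}(\sqrt{-1}\boldsymbol{b}):\boldsymbol{b}\in\mathbb{R}^n\}$, which is exactly the $S_{(n,0,1)}$ proposed in the statement. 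Note $L_{(n,0,1)}=Q_{(n,0,1)}$ is $\widetilde{\sigma}_1$-stable since $\widetilde{\sigma}_1$ preserves the real $\boldsymbol{t}$-coordinate, so Lemma~\ref{lem:reduction} will apply once (V.0) is checked.

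The main (and only mildly nontrivial) step is verifying $L_{(n,0,1)}\cdot S_{(n,0,1)}=D_{(n,0,1)}$. Given an arbitrary $v_{(n,0,1)}(\boldsymbol{y})\in D_{(n,0,1)}$, write $\boldsymbol{y}=\boldsymbol{y}_R+\sqrt{-1}\boldsymbol{y}_I$ with $\boldsymbol{y}_R,\boldsymbol{y}_I\in\mathbb{R}^n$ via the decomposition $\mathbb{C}^n=\mathbb{R}^n+\sqrt{-1}\,\mathbb{R}^n$. Then $v_{(n,0,1)}(\boldsymbol{y})=g(\boldsymbol{0}_n,\boldsymbol{y}_R,0)\cdot v_{(n,0,1)}(\sqrt{-1}\boldsymbol{y}_I)$ with $g(\boldsymbol{0}_n,\boldsymbol{y}_R,0)\in Q_{(n,0,1)}$ and $v_{(n,0,1)}(\sqrt{-1}\boldsymbol{y}_I)\in S_{(n,0,1)}$, so $D_{(n,0,1)}\subset Q_{(n,0,1)}\cdot S_{(n,0,1)}$; the reverse inclusion is trivial. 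In particular $D'_{(n,0,1)}=D_{(n,0,1)}$ is open. I do not expect any genuine obstacle here — the action is a free abelian translation — and the only thing to be careful about is the bookkeeping in the Heisenberg multiplication law (Lemma~\ref{lem:multiple}), which produces a central correction term, but that term vanishes because both $\boldsymbol{y}$-slots of the two factors pair against $\boldsymbol{0}_n$ in the $\boldsymbol{x}$-slot. Combining this with Lemma~\ref{lem:reduction} (for $S_{(n,0,1)}$, $\sigma_1$, $L_{(n,0,1)}=Q_{(n,0,1)}$) yields conditions (S.1) and (S.2), completing the strong visibility of the $Q_{(n,0,1)}$-action, with slice of dimension $n=\dim_{\mathbb{R}}\mathfrak{q}_{(n,0,1)}$, which matches the generic-orbit codimension.
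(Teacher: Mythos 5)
Your argument is correct and is exactly the one the paper intends: the paper omits the proof of this proposition, stating only that it goes "by the same way as Proposition \ref{prop:v0-(m,0,1)}", namely the decomposition $\boldsymbol{y}=\boldsymbol{y}_R+\sqrt{-1}\boldsymbol{y}_I$ giving $v_{(n,0,1)}(\boldsymbol{y})=g(\boldsymbol{0}_n,\boldsymbol{y}_R,0)\cdot v_{(n,0,1)}(\sqrt{-1}\boldsymbol{y}_I)$, with the central correction in Lemma \ref{lem:multiple} vanishing since the $\boldsymbol{x}$-slot is $\boldsymbol{0}_n$. Your additional remarks on $\widetilde{\sigma}_1$-stability and Lemma \ref{lem:reduction} go beyond what the proposition asserts but are consistent with how the paper then derives Theorem \ref{thm:visible-(m,0,1)}.
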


Therefore, we conclude: 

\begin{theorem}
\label{thm:visible-(m,0,1)}
The $Q_{(m,0,1)}$-action on $D_{(m,0,1)}=G^{\mathbb{C}}/H_{(m,0,1)}^{\mathbb{C}}$ 
is strongly visible with $(2n-m)$-dimensional slice 
$M_{(m,0,1)}^1\simeq \exp \sqrt{-1}\mathfrak{q}_{(m,0,1)}$. 
\end{theorem}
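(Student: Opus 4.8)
The theorem collects the work of this section into the format of Lemma \ref{lem:reduction}, so the plan is to check that the hypotheses of that lemma hold for $L_{(m,0,1)} := Q_{(m,0,1)}$, $S_{(m,0,1)} := M_{(m,0,1)}^1$ and the anti-holomorphic involution $\widetilde{\sigma}_1$ of (\ref{eq:involution1}), and then to read off $\dim S_{(m,0,1)}$ from the explicit parametrization of the slice.

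First I would verify that $Q_{(m,0,1)}$ is $\widetilde{\sigma}_1$-stable. For $1\leq m<n$, Section \ref{subsec:action-(m,0,1)} describes $Q_{(m,0,1)}$ as the set of $g((\boldsymbol{0}_m,\boldsymbol{s}'),\boldsymbol{t},u)$ with $\boldsymbol{s}'\in\mathbb{R}^{n-m}$, $\boldsymbol{t}\in\mathbb{R}^n$, $u\in\mathbb{R}$; applying $\widetilde{\sigma}_1(g(\boldsymbol{x},\boldsymbol{y},z))=g(-\overline{\boldsymbol{x}},-\overline{\boldsymbol{y}},\overline{z})$ to such an element, all of whose entries are real, produces $g((\boldsymbol{0}_m,-\boldsymbol{s}'),-\boldsymbol{t},u)\in Q_{(m,0,1)}$, so $\widetilde{\sigma}_1(Q_{(m,0,1)})=Q_{(m,0,1)}$. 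The same one-line check covers the exceptional case $m=n$, where $Q_{(n,0,1)}=\{ g(\boldsymbol{0}_n,\boldsymbol{t},0):\boldsymbol{t}\in\mathbb{R}^n\}$. Moreover $Q_{(m,0,1)}$ is a connected closed subgroup of $G$ containing $Q_{(m,0,1)}$, and by (\ref{eq:real form}) and (\ref{eq:slice-(m,0,1)}) the set $M_{(m,0,1)}^1$ is a submanifold of $D_{(m,0,1)}$ --- it is the image of the linear space $\sqrt{-1}\mathfrak{q}_{(m,0,1)}$ under the diffeomorphism $X\mapsto(\exp X)H_{(m,0,1)}^{\mathbb{C}}$ coming from Lemma \ref{lem:homogeneous} --- so taking $S_{(m,0,1)}=M_{(m,0,1)}^1$ meets the structural requirements of Lemma \ref{lem:reduction}.

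The remaining hypothesis of Lemma \ref{lem:reduction}, that $D_{(m,0,1)}':=Q_{(m,0,1)}\cdot M_{(m,0,1)}^1$ be non-empty and open, is supplied by Proposition \ref{prop:v0-(m,0,1)} (for $1\leq m<n$) together with Proposition \ref{prop:v0-(n,0,1)} (for $m=n$), which in fact give $D_{(m,0,1)}'=D_{(m,0,1)}$. Lemma \ref{lem:reduction} then yields conditions (\ref{s1}) and (\ref{s2}) for the $Q_{(m,0,1)}$-action on $D_{(m,0,1)}$ with respect to $\sigma_1$, while $D_{(m,0,1)}'=D_{(m,0,1)}$ gives (\ref{v0}); hence the $Q_{(m,0,1)}$-action is strongly visible with slice $M_{(m,0,1)}^1$.

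Finally, the same diffeomorphism $\sqrt{-1}\mathfrak{q}_{(m,0,1)}\to M_{(m,0,1)}^1$ used above gives $M_{(m,0,1)}^1\simeq\exp\sqrt{-1}\mathfrak{q}_{(m,0,1)}$ and $\dim M_{(m,0,1)}^1=\dim_{\mathbb{R}}\mathfrak{q}_{(m,0,1)}=(n-m)+n=2n-m$ (for $m=n$ this reads $n$, matching $M_{(n,0,1)}^1=\{ v_{(n,0,1)}(\sqrt{-1}\boldsymbol{b}):\boldsymbol{b}\in\mathbb{R}^n\}$). I do not expect a genuine obstacle here: every step is either a direct appeal to a proposition proved just above or a short computation in the exponential coordinates. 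The only mild subtlety is the bookkeeping between the cases $m<n$ and $m=n$ --- in the latter $\mathfrak{q}_{(n,0,1)}$ is abelian, $Q_{(n,0,1)}=\exp\mathfrak{q}_{(n,0,1)}$, and one uses Proposition \ref{prop:v0-(n,0,1)} in place of Proposition \ref{prop:v0-(m,0,1)}.
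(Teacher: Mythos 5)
Your proposal is correct and follows the same route as the paper: the theorem is obtained by combining Propositions \ref{prop:v0-(m,0,1)} and \ref{prop:v0-(n,0,1)} (which give $Q_{(m,0,1)}\cdot M_{(m,0,1)}^1=D_{(m,0,1)}$, hence (\ref{v0})) with Lemma \ref{lem:reduction} for conditions (\ref{s1}) and (\ref{s2}), and the dimension count $\dim\mathfrak{q}_{(m,0,1)}=2n-m$ is exactly as you state. Your explicit verification that $Q_{(m,0,1)}$ is $\widetilde{\sigma}_1$-stable is a detail the paper leaves implicit, and it is carried out correctly.
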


%%%%%%%%%%%%%%%%%%%%%%%%%%%%%%%%%%%%%%%%%%%%%%%%%%%%%%%%%%%%%%%%%%%%%%%%%%%%%%%%%%%%%%%%%%%%%%%%%%%%%

\section{Visible action on $G^{\mathbb{C}}/H_{(m,0,0)}^{\mathbb{C}}$}
\label{sec:(m,0,0)}

This section deals with the case where a complex Heisenberg homogeneous space is 
$D_{(m,0,0)}:=G^{\mathbb{C}}/H_{(m,0,0)}^{\mathbb{C}}$ for $1\leq m\leq n$. 
According to Lemma \ref{lem:q} for $\mathfrak{q}_{(m,0,0)}$, 
we divide into two cases: $1\leq m<n$ and $m=n$. 

Let us consider a general case $1\leq m<n$. 
The complementary subspace $\mathfrak{q}_{(m,0,0)}$ is given by 
\begin{align*}
\mathfrak{q}_{(m,0,0)}=\operatorname{span}_{\mathbb{R}}\{ X_{m+1},\ldots ,X_n,Y_1,\ldots ,Y_n,Z\} , 
\end{align*}
and then 
\begin{align*}
Q_{(m,0,0)}
&=\{ g((\boldsymbol{0}_{m},\boldsymbol{s}'),\boldsymbol{t},u):
	\boldsymbol{s}'\in \mathbb{R}^{n-m},\boldsymbol{t}\in \mathbb{R}^n,u\in \mathbb{R}\} .
\end{align*}
Thus, $Q_{(m,0,0)}$ is isomorphic to $\mathbb{R}^m\times {\bf{H}}_{n-m}$ 
which coincides with $Q_{(m,0,1)}$ (see Section \ref{subsec:action-(m,0,1)}). 
It is noteworthy that a closed subgroup of $G$ which we will consider 
is not $Q_{(m,0,0)}$ but $Q_{(m-1,0,0)}$ which contains $Q_{(m,0,0)}$. 
On the other hand, 
the $Q_{(m,0,0)}$-action on $D_{(m,0,0)}$ will be discussed in Theorem \ref{thm:(m,0,0)-action}, 
separated from this section. 
If $m>1$, then $Q_{(m-1,0,0)}$ forms 
\begin{align*}
Q_{(m-1,0,0)}
=\{ g((\boldsymbol{0}_{m-1},s_m,\boldsymbol{s}'),\boldsymbol{t},u):
	\boldsymbol{s}'\in \mathbb{R}^{n-m},\boldsymbol{t}\in \mathbb{R}^n,s_m,u\in \mathbb{R}\} , 
\end{align*}
whereas, if $m=1$ then $Q_{(0,0,0)}$ coincides with $G$. 
Hence, the main object of this section is the $Q_{(m-1,0,0)}$-action on $D_{(m,0,0)}$. 

For a special case $m=n$, 
the Lie algebra $\mathfrak{q}_{(n,0,0)}$ is an abelian subalgebra of maximal dimension. 
Then, we will consider the $Q_{(n,0,0)}$-action on $D_{(n,0,0)}$. 
Here, 
$Q_{(n,0,0)}=\{ g(\boldsymbol{0},\boldsymbol{t},u):\boldsymbol{t}\in \mathbb{R}^n,u\in \mathbb{R}\} 
\simeq \mathbb{R}^{n+1}$. 

\subsection{$Q_{(m-1,0,0)}$-action on $D_{(m,0,0)}$ for $1\leq m<n$}
\label{subsec:action-(m,0,0)}

First, we consider the case $1\leq m<n$ 
and the $Q_{(m-1,0,0)}$-action on $D_{(m,0,0)}$. 
Now, we write 
\begin{align*}
v_{(m,0,0)}(\boldsymbol{x}',\boldsymbol{y},z)
	:=g((\boldsymbol{0}_{m},\boldsymbol{x}'),\boldsymbol{y},z)H_{(m,0,0)}^{\mathbb{C}} 
\end{align*}
for $\boldsymbol{x}'\in \mathbb{C}^{n-m},\boldsymbol{y}\in \mathbb{C}^n,z\in \mathbb{C}$. 
Then, the complex Heisenberg homogeneous space $D_{(m,0,0)}$ is 
\begin{align*}
D_{(m,0,0)}=\{ v_{(m,0,0)}(\boldsymbol{x}',\boldsymbol{y},z):
	\boldsymbol{x}'\in \mathbb{C}^{n-m},\boldsymbol{y}\in \mathbb{C}^n,z\in \mathbb{C}\}  
\end{align*}

\begin{lemma}
\label{lem:action-(m,0,0)}
The $Q_{(m-1,0,0)}$-action on $D_{(m,0,0)}$ is written by 
\begin{multline*}
g((\boldsymbol{0}_{m-1},s_m,\boldsymbol{s}'),\boldsymbol{t},u)\cdot 
	v_{(m,0,0)}(\boldsymbol{x}',\boldsymbol{y},z)\\
=v_{(m,0,0)}(\boldsymbol{s}'+\boldsymbol{x}',
	\boldsymbol{t}+\boldsymbol{y},u+z+s_m(t_m+y_m)-(\boldsymbol{t}'|\boldsymbol{x}')) 
\end{multline*}
for $g((\boldsymbol{0}_{m-1},s_m,\boldsymbol{s}'),\boldsymbol{t},u)\in Q_{(m-1,0,0)}$ 
and $v_{(m,0,0)}(\boldsymbol{x}',\boldsymbol{y},z)\in D_{(m,0,0)}$. 
Here, for $\boldsymbol{t}=(t_1,\ldots ,t_n)\in \mathbb{C}^n$ 
we write $\boldsymbol{t}'=(t_{m+1},\ldots ,t_m)$. 
\end{lemma}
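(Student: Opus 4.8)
The plan is to compute the left action directly from the group law in Lemma \ref{lem:multiple}, since both the acting group $Q_{(m-1,0,0)}$ and the homogeneous space $D_{(m,0,0)}$ are already written down in coordinates of the form $g(\cdot,\cdot,\cdot)$. The key point is that $Q_{(m-1,0,0)}$ has the $X$-component supported on the indices $\{m,m+1,\ldots,n\}$ (that is, $s_m$ together with $\boldsymbol{s}'\in\mathbb{R}^{n-m}$), while a representative for a point of $D_{(m,0,0)}$ has $X$-component supported on $\{m+1,\ldots,n\}$ only (namely $\boldsymbol{x}'$, with the first $m$ slots zero). So the product $g((\boldsymbol{0}_{m-1},s_m,\boldsymbol{s}'),\boldsymbol{t},u)\cdot g((\boldsymbol{0}_m,\boldsymbol{x}'),\boldsymbol{y},z)$ will land in a coset representative whose $X$-component is $(\boldsymbol{0}_{m-1},s_m,\boldsymbol{s}'+\boldsymbol{x}')$; then I need to move the $e^{s_mX_m}$ factor into $H_{(m,0,0)}^{\mathbb{C}}=\exp\mathfrak{h}_{(m,0,0)}^{\mathbb{C}}$ on the right, which is legitimate because $X_m\in\mathfrak{h}_{(m,0,0)}$.

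First I would apply Lemma \ref{lem:multiple} with $(\boldsymbol{x},\boldsymbol{y},z)=((\boldsymbol{0}_{m-1},s_m,\boldsymbol{s}'),\boldsymbol{t},u)$ and $(\boldsymbol{s},\boldsymbol{t},u)_{\text{there}}=((\boldsymbol{0}_m,\boldsymbol{x}'),\boldsymbol{y},z)$ to get
\begin{align*}
g((\boldsymbol{0}_{m-1},s_m,\boldsymbol{s}'),\boldsymbol{t},u)\cdot g((\boldsymbol{0}_m,\boldsymbol{x}'),\boldsymbol{y},z)
=g\bigl((\boldsymbol{0}_{m-1},s_m,\boldsymbol{s}'+\boldsymbol{x}'),\,\boldsymbol{t}+\boldsymbol{y},\,u+z-(\boldsymbol{t}\,|\,(\boldsymbol{0}_m,\boldsymbol{x}'))\bigr),
\end{align*}
and then observe that the inner product term is $(\boldsymbol{t}\,|\,(\boldsymbol{0}_m,\boldsymbol{x}'))=\sum_{j=m+1}^{n}t_jx_j=(\boldsymbol{t}'\,|\,\boldsymbol{x}')$ with the notation $\boldsymbol{t}'=(t_{m+1},\ldots,t_n)$ introduced in the statement. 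The next step is to pull the factor $e^{s_mX_m}$ out to the right: writing the representative as $e^{s_mX_m}$ times the rest in the correct order requires moving $e^{s_mX_m}$ past the $Y$-factors, and by Lemma \ref{lem:exp} each such move past $e^{(t_j+y_j)Y_j}$ contributes $e^{-\delta_{mj}s_m(t_j+y_j)Z}$, i.e.\ only the $j=m$ term survives, producing a central correction $e^{s_m(t_m+y_m)Z}$; absorbing $e^{s_mX_m}$ into $H_{(m,0,0)}^{\mathbb{C}}$ then leaves the representative $g((\boldsymbol{0}_m,\boldsymbol{s}'+\boldsymbol{x}'),\boldsymbol{t}+\boldsymbol{y},u+z+s_m(t_m+y_m)-(\boldsymbol{t}'\,|\,\boldsymbol{x}'))$, which is exactly $v_{(m,0,0)}(\boldsymbol{s}'+\boldsymbol{x}',\boldsymbol{t}+\boldsymbol{y},u+z+s_m(t_m+y_m)-(\boldsymbol{t}'\,|\,\boldsymbol{x}'))$.

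The main obstacle — really the only subtlety — is bookkeeping the central $Z$-term correctly when commuting $e^{s_mX_m}$ to the right: one must be careful to use the form of $H_{(m,0,0)}^{\mathbb{C}}$ (generated by $X_1,\ldots,X_m$ only, \emph{not} containing $Z$) so that only $e^{s_mX_m}$, and none of the accumulated central factor, is discarded into the subgroup, and to keep track of the sign conventions in Lemmas \ref{lem:exp} and \ref{lem:multiple} so that the $+s_m(t_m+y_m)$ rather than $-s_m(t_m+y_m)$ appears. Everything else is a routine substitution into the already-established group law, valid verbatim over $\mathbb{K}=\mathbb{C}$ by Lemma \ref{lem:multiple}.
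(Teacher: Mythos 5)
Your proposal is correct and follows essentially the same route as the paper: apply Lemma \ref{lem:multiple}, identify $(\boldsymbol{t}\,|\,(\boldsymbol{0}_m,\boldsymbol{x}'))=(\boldsymbol{t}'\,|\,\boldsymbol{x}')$, commute $e^{s_mX_m}$ past $e^{(t_m+y_m)Y_m}$ to pick up the central factor $e^{s_m(t_m+y_m)Z}$, and absorb $e^{s_mX_m}$ into $H_{(m,0,0)}^{\mathbb{C}}$. One cosmetic remark: moving $e^{s_mX_m}$ to the \emph{right} past $e^{(t_j+y_j)Y_j}$ contributes $e^{+\delta_{mj}s_m(t_j+y_j)Z}$ rather than $e^{-\delta_{mj}s_m(t_j+y_j)Z}$ (Lemma \ref{lem:exp} as stated treats the opposite order), but the final formula you write already carries the correct sign.
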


\begin{proof}
It follows from Lemma \ref{lem:multiple} that 
\begin{multline}
\label{eq:action-(m,0,0)}
g((\boldsymbol{0}_{m-1},s_m,\boldsymbol{s}'),\boldsymbol{t},u)\cdot 
	v_{(m,0,0)}(\boldsymbol{x}',\boldsymbol{y},z)\\
=g((\boldsymbol{0}_{m-1},s_m,\boldsymbol{s}'+\boldsymbol{x}'),
	\boldsymbol{t}+\boldsymbol{y},u+z-(\boldsymbol{t}'|\boldsymbol{x}'))
		H_{(m,0,0)}^{\mathbb{C}}. 
\end{multline}

The element $e^{s_mX_m}$ commutes with $e^{(s_i+x_i)X_i}$ for all $i$, 
with $e^{(t_j+y_j)Y_j}$ for all $j\neq m$ 
and with $e^{(u+z-(\boldsymbol{t}'|\boldsymbol{x}'))Z}$, 
whereas 
it does not coincide with $e^{(t_m+y_m)Y_m}$ 
because Lemma \ref{lem:exp} implies 
\begin{align*}
e^{s_mX_m}e^{(t_m+y_m)Y_m}
&=(e^{s_mX_m}e^{(t_m+y_m)Y_m}e^{-s_mX_m})e^{s_mX_m}\\
&=e^{(t_m+y_m)Y_m}e^{s_m(t_m+y_m)Z}e^{s_mX_m}. 
\end{align*}
Then, we obtain 
\begin{multline*}
g((\boldsymbol{0}_{m-1},s_m,\boldsymbol{s}'+\boldsymbol{x}'),
\boldsymbol{t}+\boldsymbol{y},u+z-(\boldsymbol{t}'|\boldsymbol{x}'))\\
=g((\boldsymbol{0}_{m-1},0,\boldsymbol{s}'+\boldsymbol{x}'),
\boldsymbol{t}+\boldsymbol{y},u+z+s_m(t_m+y_m)-(\boldsymbol{t}'|\boldsymbol{x}'))\cdot e^{s_mX_m}. 
\end{multline*}
As $e^{s_mX_m}\in H_{(m,0,0)}^{\mathbb{C}}$, we obtain 
\begin{align*}
(\ref{eq:action-(m,0,0)})=v_{(m,0,0)}(\boldsymbol{s}'+\boldsymbol{x}',
	\boldsymbol{t}+\boldsymbol{y},u+z+s_m(t_m+y_m)-(\boldsymbol{t}'|\boldsymbol{x}')). 
\end{align*}
Hence, Lemma \ref{lem:action-(m,0,0)} has been proved. 
\end{proof}

\subsection{Verification of (\ref{v0}) for $Q_{(m-1,0,0)}$-action on $D_{(m,0,0)}$}
\label{subsec:v0-(m,0,0)}

In this subsection, we will find a submanifold $S_{(m,0,0)}$ of the real form 
$M_{(m,0,0)}^1=D_{(m,0,0)}^{\sigma _1}$ explicitly 
such that $L_{(m,0,0)}\cdot S_{(m,0,0)}$ is open in $D_{(m,0,0)}$. 
Here, 
$M_{(m,0,0)}^1$ is of the form 
\begin{align*}
M_{(m,0,0)}^1=\{ v_{(m,0,0)}(\sqrt{-1}\boldsymbol{a}',\sqrt{-1}\boldsymbol{b},c):
	\boldsymbol{a}'\in \mathbb{R}^{n-m},\boldsymbol{b}\in \mathbb{R}^n,c\in \mathbb{R}\} . 
\end{align*}

\begin{proposition}
\label{prop:v0-(m,0,0)}
We take a closed subgroup $L_{(m,0,0)}$ as $Q_{(m-1,0,0)}$ and 
a submanifold $S_{(m,0,0)}$ in $M_{(m,0,0)}^1$ as 
\begin{align}
\label{eq:slice-(m,0,0)}
S_{(m,0,0)}
&=\{ v_{(m,0,0)}(\sqrt{-1}\boldsymbol{a}',\sqrt{-1}\boldsymbol{b},0)\in M_{(m,0,0)}^1:
	\boldsymbol{b}\in (\mathbb{R}^{\times })^n\} . 
\end{align}
Then, the set $L_{(m-1,0,0)}\cdot S_{(m,0,0)}$ equals 
\begin{align}
D'_{(m,0,0)}=\{ v_{(m,0,0)}(\boldsymbol{x}',\boldsymbol{y},z)\in D_{(m,0,0)}:
	\boldsymbol{y}_I\in (\mathbb{R}^{\times })^n\} 
\end{align}
and then is an open set in $D_{(m,0,0)}$. 
Here, an element $\boldsymbol{y}\in \mathbb{C}^n$ is written as 
$\boldsymbol{y}=\boldsymbol{y}_R+\sqrt{-1}\boldsymbol{y}_I$ 
for some $\boldsymbol{y}_R,\boldsymbol{y}_I\in \mathbb{R}^n$. 
\end{proposition}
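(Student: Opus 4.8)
The plan is to prove the two asserted equalities separately: first $L_{(m-1,0,0)}\cdot S_{(m,0,0)} = D'_{(m,0,0)}$, and then that $D'_{(m,0,0)}$ is open in $D_{(m,0,0)}$; the latter is immediate since $(\mathbb{R}^{\times})^n$ is open in $\mathbb{R}^n$ and the condition ``$\boldsymbol{y}_I \in (\mathbb{R}^{\times})^n$'' carves out an open subset under the biholomorphic identification $D_{(m,0,0)}\simeq \exp\mathfrak{q}_{(m,0,0)}^{\mathbb{C}}$ (Lemma \ref{lem:homogeneous}), using the real coordinates $(\boldsymbol{x}',\boldsymbol{y},z)\mapsto \boldsymbol{y}_I$.

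For the inclusion $L_{(m-1,0,0)}\cdot S_{(m,0,0)} \subseteq D'_{(m,0,0)}$, I would take a generic point $v_{(m,0,0)}(\sqrt{-1}\boldsymbol{a}',\sqrt{-1}\boldsymbol{b},0)\in S_{(m,0,0)}$ with $\boldsymbol{b}\in(\mathbb{R}^{\times})^n$ and apply the action formula from Lemma \ref{lem:action-(m,0,0)} with an arbitrary $g((\boldsymbol{0}_{m-1},s_m,\boldsymbol{s}'),\boldsymbol{t},u)\in Q_{(m-1,0,0)}$ (so $s_m,u\in\mathbb{R}$, $\boldsymbol{s}'\in\mathbb{R}^{n-m}$, $\boldsymbol{t}\in\mathbb{R}^n$). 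The resulting point is $v_{(m,0,0)}(\boldsymbol{s}'+\sqrt{-1}\boldsymbol{a}',\ \boldsymbol{t}+\sqrt{-1}\boldsymbol{b},\ u+s_m(t_m+\sqrt{-1}b_m)-(\boldsymbol{t}'|\sqrt{-1}\boldsymbol{a}'))$; the imaginary part of the $\boldsymbol{y}$-coordinate is exactly $\boldsymbol{b}$, which lies in $(\mathbb{R}^{\times})^n$, so the image lands in $D'_{(m,0,0)}$.

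The reverse inclusion $D'_{(m,0,0)}\subseteq L_{(m-1,0,0)}\cdot S_{(m,0,0)}$ is the substantive step, and here the main obstacle is that the $Z$-coordinate $z$ is genuinely complex while the group element and the slice together have only \emph{one} real parameter ($u\in\mathbb{R}$) available to adjust it — so I must use the cross-term $s_m(t_m+y_m)$, which becomes genuinely complex precisely because $y_m = t_m + \sqrt{-1}b_m$ has nonzero imaginary part $b_m\ne 0$. Given $v_{(m,0,0)}(\boldsymbol{x}',\boldsymbol{y},z)$ with $\boldsymbol{y}_I\in(\mathbb{R}^{\times})^n$, write $\boldsymbol{x}'=\boldsymbol{x}'_R+\sqrt{-1}\boldsymbol{x}'_I$, $\boldsymbol{y}=\boldsymbol{y}_R+\sqrt{-1}\boldsymbol{y}_I$, $z=z_R+\sqrt{-1}z_I$, and choose $\boldsymbol{a}'=\boldsymbol{x}'_I$, $\boldsymbol{b}=\boldsymbol{y}_I$, $\boldsymbol{s}'=\boldsymbol{x}'_R$, $\boldsymbol{t}=\boldsymbol{y}_R$; then $\boldsymbol{t}_I=0$ so $(\boldsymbol{t}'|\boldsymbol{x}')$ contributes $-\sqrt{-1}(\boldsymbol{t}'|\boldsymbol{a}')$ to the $z$-slot, while $s_m(t_m+y_m)=s_m(2y_{R,m}+\sqrt{-1}b_m)$; matching imaginary parts forces $s_m = (z_I + (\boldsymbol{t}'|\boldsymbol{a}'))/b_m$, which is well-defined precisely because $b_m\ne 0$ — this is where the open condition $\boldsymbol{y}_I\in(\mathbb{R}^{\times})^n$ is used — and then matching real parts determines $u$ uniquely. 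With these choices the action formula of Lemma \ref{lem:action-(m,0,0)} returns exactly $v_{(m,0,0)}(\boldsymbol{x}',\boldsymbol{y},z)$, exhibiting it in $L_{(m-1,0,0)}\cdot S_{(m,0,0)}$ and completing the proof.
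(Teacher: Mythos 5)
Your proposal is correct and follows essentially the same route as the paper: the same choice $\boldsymbol{s}'=\boldsymbol{x}'_R$, $\boldsymbol{t}=\boldsymbol{y}_R$, $\boldsymbol{a}'=\boldsymbol{x}'_I$, $\boldsymbol{b}=\boldsymbol{y}_I$, the same $s_m=(z_I+(\boldsymbol{t}'|\boldsymbol{a}'))/b_m$ extracted from the imaginary part of the $Z$-slot via Lemma \ref{lem:action-(m,0,0)}, and the same use of $b_m\neq 0$. One small slip: in the action formula the symbol $y_m$ denotes the coordinate of the point being acted on (here the slice point, so $y_m=\sqrt{-1}b_m$), hence the cross-term is $s_m\bigl(y_{R,m}+\sqrt{-1}b_m\bigr)$ rather than $s_m\bigl(2y_{R,m}+\sqrt{-1}b_m\bigr)$; this does not change $s_m$ or the fact that $u$ is uniquely determined, but it would give the wrong explicit value of $u$ (the correct one, as in the paper, is $u=z_R-s_m(y_m)_R$).
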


\begin{proof}
Let us take an element $v_{(m,0,0)}(\boldsymbol{x}',\boldsymbol{y},z)\in D'_{(m,0,0)}$. 
We write $\boldsymbol{x}'=\boldsymbol{x}'_R+\sqrt{-1}\boldsymbol{x}'_I$ 
with $\boldsymbol{x}'_R,\boldsymbol{x}'_I\in \mathbb{R}^{n-m}$ 
and $z=z_R+\sqrt{-1}z_I$ with $z_R,z_I\in \mathbb{R}$. 
Further, we denote by $(y_m)_R$ the $m$-th component of $\boldsymbol{y}_R$, 
equivalently, the real part of $y_m$. 

We notice that $(y_m)_I\neq 0$ since $\boldsymbol{y}_I\in (\mathbb{R}^{\times })^n$. 
Here, we set 
\begin{gather*}
\boldsymbol{s}':=\boldsymbol{x}_R',~
\boldsymbol{t}:=\boldsymbol{y}_R,~
\boldsymbol{a}':=\boldsymbol{x}_I',~
\boldsymbol{b}:=\boldsymbol{y}_I,\\
s_m:=(z_I+(\boldsymbol{y}'_R|\boldsymbol{x}_I'))(y_m)_I^{-1},~
u:=z_R-s_m(y_m)_R. 
\end{gather*}
Then, the following equality is fulfilled: 
\begin{align}
\label{eq:v0-(m,0,0)}
v_{(m,0,0)}(\boldsymbol{x}',\boldsymbol{y},z)
=g((\boldsymbol{0}_{m-1},s_m,\boldsymbol{s}'),\boldsymbol{t},u)\cdot 
	v_{(m,0,0)}(\sqrt{-1}\boldsymbol{a}',\sqrt{-1}\boldsymbol{b},0). 
\end{align}
Hence, this implies $D'_{(m,0,0)}\subset Q_{(m-1,0,0)}\cdot S_{(m,0,0)}$. 

Equality (\ref{eq:v0-(m,0,0)}) also shows that $Q_{(m-1,0,0)}\cdot S_{(m,0,0)}\subset D'_{(m,0,0)}$ 
because $\boldsymbol{y}_I=\boldsymbol{b}\in (\mathbb{R}^{\times })^n$. 
Therefore, Proposition \ref{prop:v0-(m,0,0)} has been proved. 
\end{proof}

We take a submanifold $\mathfrak{q}'_{(m,0,0)}$ in $\mathfrak{q}_{(m,0,0)}$ as 
\begin{align}
\label{eq:q'}
\mathfrak{q}'_{(m,0,0)}
=\sum _{i=m+1}^n \mathbb{R}X_i+\sum _{j=1}^n (\mathbb{R}^{\times})Y_j, 
\end{align}
Then, $S_{(m,0,0)}$ is diffeomorphic to $\exp \sqrt{-1}\mathfrak{q}'_{(m,0,0)}$. 
As a consequence, we get: 

\begin{theorem}
\label{thm:visible-(m,0,0)}
For $1\leq m\leq n$, 
the $Q_{(m-1,0,0)}$-action on $D_{(m,0,0)}$ is strongly visible 
with $(2n-m)$-dimensional slice $S_{(m,0,0)}\simeq \exp \sqrt{-1}\mathfrak{q}'_{(m,0,0)}$. 
\end{theorem}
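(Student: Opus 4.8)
The plan is to invoke Lemma~\ref{lem:reduction} once the openness statement from Proposition~\ref{prop:v0-(m,0,0)} is available, so the real content reduces to three checks: that $L_{(m,0,0)}=Q_{(m-1,0,0)}$ is a $\widetilde{\sigma}_1$-stable connected closed subgroup of $G$ containing $Q_{(m,0,0)}$, that $S_{(m,0,0)}$ is a submanifold of the real form $M_{(m,0,0)}^1$, and that its dimension is $2n-m$. First I would record that $Q_{(m-1,0,0)}=\exp(\mathfrak{q}_{(m-1,0,0)})$ by Lemma~\ref{lem:Q}(iv), that $\mathfrak{q}_{(m-1,0,0)}=\operatorname{span}_{\mathbb{R}}\{X_m,X_{m+1},\ldots,X_n,Y_1,\ldots,Y_n,Z\}\supset\mathfrak{q}_{(m,0,0)}$, and that this subspace is manifestly stable under $\widetilde{\sigma}_1$ since $\widetilde{\sigma}_1$ sends $X_i\mapsto -X_i$, $Y_j\mapsto -Y_j$, $Z\mapsto Z$ on the real Lie algebra; hence the subgroup it generates is $\widetilde{\sigma}_1$-stable as well. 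Connectedness and closedness are immediate from $Q_{(m-1,0,0)}=\exp\mathfrak{q}_{(m-1,0,0)}$ together with the exponential map being a diffeomorphism for $G$.

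Next I would combine Proposition~\ref{prop:v0-(m,0,0)} with Lemma~\ref{lem:reduction}: the former gives $D'_{(m,0,0)}=L_{(m,0,0)}\cdot S_{(m,0,0)}$ open in $D_{(m,0,0)}$ (verifying (\ref{v0})), and since $S_{(m,0,0)}\subset M_{(m,0,0)}^1=D_{(m,0,0)}^{\sigma_1}$ by construction, the hypotheses of Lemma~\ref{lem:reduction} are met, yielding (\ref{s1}) and (\ref{s2}) for the $Q_{(m-1,0,0)}$-action on $D_{(m,0,0)}$ with respect to $\sigma_1$. Together these are exactly the conditions (\ref{v0})--(\ref{s2}), so the action is strongly visible and $S_{(m,0,0)}$ is a slice. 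It remains only to identify $S_{(m,0,0)}$ with $\exp\sqrt{-1}\,\mathfrak{q}'_{(m,0,0)}$ and count dimensions: the map $(\boldsymbol{a}',\boldsymbol{b})\mapsto v_{(m,0,0)}(\sqrt{-1}\boldsymbol{a}',\sqrt{-1}\boldsymbol{b},0)$ with $\boldsymbol{a}'\in\mathbb{R}^{n-m}$ and $\boldsymbol{b}\in(\mathbb{R}^{\times})^n$ is a diffeomorphism onto $S_{(m,0,0)}$, and $\exp$ restricted to $\sqrt{-1}\,\mathfrak{q}'_{(m,0,0)}$ realizes the same parametrization via Lemma~\ref{lem:homogeneous}; counting, $(n-m)+n=2n-m$, so $\dim S_{(m,0,0)}=2n-m$.

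I expect the only subtle point to be that $S_{(m,0,0)}$, cut out by the open condition $\boldsymbol{b}\in(\mathbb{R}^{\times})^n$ inside $M_{(m,0,0)}^1$, is genuinely an (open) submanifold rather than something with a lower-dimensional pathology — but this is immediate since $(\mathbb{R}^{\times})^n$ is open in $\mathbb{R}^n$ and the parametrization above is a diffeomorphism, so $S_{(m,0,0)}$ inherits the structure of an open submanifold of $M_{(m,0,0)}^1$ of full dimension $2n-m$. The special case $m=n$, where $Q_{(0,0,0)}=G$ and $\mathfrak{q}'_{(n,0,0)}=\sum_{j=1}^n(\mathbb{R}^{\times})Y_j$ so $\dim S_{(n,0,0)}=n=2n-m$, is handled by the same argument with the obvious notational simplifications, and I would remark that it is covered uniformly. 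No step here is a serious obstacle; the real work was already done in Lemma~\ref{lem:action-(m,0,0)} and Proposition~\ref{prop:v0-(m,0,0)}, and the present theorem is their packaging via Lemma~\ref{lem:reduction}.
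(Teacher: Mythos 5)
Your argument is essentially the paper's: Theorem \ref{thm:visible-(m,0,0)} is exactly Proposition \ref{prop:v0-(m,0,0)} fed into Lemma \ref{lem:reduction}, together with the dimension count $(n-m)+n=2n-m$ coming from the parametrization of $S_{(m,0,0)}$ by $\sqrt{-1}\mathfrak{q}'_{(m,0,0)}$, and your verifications that $Q_{(m-1,0,0)}$ is a $\widetilde{\sigma}_1$-stable connected closed subgroup containing $Q_{(m,0,0)}$ are correct. One claim in your final paragraph is wrong, although it does not damage the proof: $S_{(m,0,0)}$ is \emph{not} an open, full-dimensional submanifold of $M_{(m,0,0)}^1$. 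The real form $M_{(m,0,0)}^1$ is parametrized by $(\boldsymbol{a}',\boldsymbol{b},c)\in\mathbb{R}^{n-m}\times\mathbb{R}^n\times\mathbb{R}$ and hence has dimension $2n-m+1$; the slice imposes the closed condition $c=0$ in addition to the open condition $\boldsymbol{b}\in(\mathbb{R}^{\times})^n$, so it is a codimension-one submanifold of $M_{(m,0,0)}^1$ --- which is all that Lemma \ref{lem:reduction} requires, and which is consistent with your own (correct) parametrization a few lines earlier. Finally, the degenerate identification $Q_{(0,0,0)}=G$ belongs to the case $m=1$, not $m=n$; for $m=n$ the acting group is $Q_{(n-1,0,0)}$, and the rest of your remark about that case is fine.
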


\subsection{$Q_{(n,0,0)}$-action on $D_{(n,0,0)}$}
\label{subsec:(n,0,0)}

In this subsection, we consider the $Q_{(n,0,0)}$-action on $D_{(n,0,0)}$. 
We recall that 
$Q_{(n,0,0)}=\{ g(\boldsymbol{0},\boldsymbol{t},u):\boldsymbol{t}\in \mathbb{R}^n,u\in \mathbb{R}\} $ 
and $D_{(n,0,0)}=\{ v_{(n,0,0)}(\boldsymbol{y},z):=g(\boldsymbol{0},\boldsymbol{y},z)H_{(n,0,0)}^{\mathbb{C}}:
\boldsymbol{y}\in \mathbb{C}^n,~z\in \mathbb{C}\} $. 
Then, this action is given by 
\begin{align*}
g(\boldsymbol{0},\boldsymbol{t},u)\cdot v_{(n,0,0)}(\boldsymbol{y},z)
=v_{(n,0,0)}(\boldsymbol{t}+\boldsymbol{y},u+z)
\end{align*}
for $g(\boldsymbol{0},\boldsymbol{t},u)\in Q_{(n,0,0)}$ and 
$v_{(n,0,0)}(\boldsymbol{y},z)\in D_{(n,0,0)}$. 

In this setting, 
we take an anti-holomorphic diffeomorphism on $D_{(n,0,0)}$ as $\sigma _2$ (see (\ref{eq:sigma})). 
Then, the real form $M_{(n,0,0)}^2=D_{(n,0,0)}^{\sigma _2}$ is given by 
\begin{align}
\label{eq:slice-(n,0,0)}
M_{(n,0,0)}^2=\{ v_{(n,0,0)}(\sqrt{-1}\boldsymbol{b},\sqrt{-1}c):
\boldsymbol{b}\in \mathbb{R}^n,c\in \mathbb{R}\} . 
\end{align}
Then, we have: 

\begin{proposition}
\label{prop:v0-(n,0,0)}
We take $L_{(n,0,0)}$ as $Q_{(n,0,0)}$ and $S_{(n,0,0)}$ as $M_{(n,0,0)}^2$. 
Then, $L_{(n,0,0)}\cdot S_{(n,0,0)}$ coincides with $D_{(n,0,0)}$. 
\end{proposition}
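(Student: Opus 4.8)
The plan is to run the same ``real/imaginary splitting'' argument used in the proofs of Propositions \ref{prop:v0-(0,0,1)}, \ref{prop:v0-(p,q,1)} and \ref{prop:v0-(m,0,1)}: decompose an arbitrary point of $D_{(n,0,0)}$ into a real part, to be absorbed by the action of $Q_{(n,0,0)}$, and an imaginary part, to be placed inside the slice $M_{(n,0,0)}^2$. Since the inclusion $Q_{(n,0,0)}\cdot S_{(n,0,0)}\subset D_{(n,0,0)}$ is automatic, the whole content is the reverse inclusion. It is worth noting that here the correct anti-holomorphic diffeomorphism is $\sigma_2$ rather than the $\sigma_1$ used in the previous sections: because $Q_{(n,0,0)}$ translates both the $\boldsymbol{y}$- and the $z$-coordinate, the slice must carry a non-trivial imaginary part also in the $Z$-direction in order to sweep out all of $D_{(n,0,0)}$, which is exactly what (\ref{eq:slice-(n,0,0)}) provides.

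Concretely, I would fix $v_{(n,0,0)}(\boldsymbol{y},z)\in D_{(n,0,0)}$ and write $\boldsymbol{y}=\boldsymbol{y}_R+\sqrt{-1}\boldsymbol{y}_I$ with $\boldsymbol{y}_R,\boldsymbol{y}_I\in\mathbb{R}^n$ and $z=z_R+\sqrt{-1}z_I$ with $z_R,z_I\in\mathbb{R}$, using $\mathbb{C}^n=\mathbb{R}^n+\sqrt{-1}\mathbb{R}^n$ and $\mathbb{C}=\mathbb{R}+\sqrt{-1}\mathbb{R}$. Then, using the $Q_{(n,0,0)}$-action formula recorded just before the proposition (which is purely additive in both slots, the cocycle term in Lemma \ref{lem:multiple} vanishing on group elements with zero $X$-coordinate), I would verify the identity
\[
v_{(n,0,0)}(\boldsymbol{y},z)=g(\boldsymbol{0},\boldsymbol{y}_R,z_R)\cdot v_{(n,0,0)}(\sqrt{-1}\boldsymbol{y}_I,\sqrt{-1}z_I).
\]
Since $g(\boldsymbol{0},\boldsymbol{y}_R,z_R)\in Q_{(n,0,0)}$ and, by (\ref{eq:slice-(n,0,0)}), $v_{(n,0,0)}(\sqrt{-1}\boldsymbol{y}_I,\sqrt{-1}z_I)\in M_{(n,0,0)}^2=S_{(n,0,0)}$, this displays $v_{(n,0,0)}(\boldsymbol{y},z)$ as a member of $L_{(n,0,0)}\cdot S_{(n,0,0)}$, so $D_{(n,0,0)}\subset Q_{(n,0,0)}\cdot S_{(n,0,0)}$; combined with the trivial inclusion, this yields $L_{(n,0,0)}\cdot S_{(n,0,0)}=D_{(n,0,0)}$.

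There is essentially no obstacle here. In contrast to the case $D_{(m,0,0)}$ with $m<n$ treated in Proposition \ref{prop:v0-(m,0,0)}, no open subset has to be carved out of $D_{(n,0,0)}$ and no coordinate of $\boldsymbol{y}$ has to be inverted, because $\mathfrak{q}_{(n,0,0)}=\operatorname{span}_{\mathbb{R}}\{Y_1,\ldots,Y_n,Z\}$ is already a subalgebra and $Q_{(n,0,0)}$ acts by honest translations; the only point deserving a line of checking is that the real/imaginary splitting can be carried out \emph{simultaneously} in the $\boldsymbol{y}$- and the $z$-coordinate, which is exactly the additivity just noted. As a by-product one records that $S_{(n,0,0)}=M_{(n,0,0)}^2\simeq\exp\sqrt{-1}\,\mathfrak{q}_{(n,0,0)}$ is a slice of dimension $n+1=\dim\mathfrak{q}_{(n,0,0)}$, feeding into the statement that the $Q_{(n,0,0)}$-action on $D_{(n,0,0)}$ is strongly visible via Lemma \ref{lem:reduction}.
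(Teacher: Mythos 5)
Your proof is correct and is exactly the real/imaginary splitting the paper uses for the analogous propositions (the paper in fact omits the proof of this one as routine): the identity $v_{(n,0,0)}(\boldsymbol{y},z)=g(\boldsymbol{0},\boldsymbol{y}_R,z_R)\cdot v_{(n,0,0)}(\sqrt{-1}\boldsymbol{y}_I,\sqrt{-1}z_I)$ follows immediately from the purely additive action formula, and your observations about the vanishing cocycle term and the role of $\sigma_2$ are accurate.
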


Hence, we obtain: 

\begin{theorem}
\label{thm:visible-(n,0,0)}
The $Q_{(n,0,0)}$-action on $D_{(n,0,0)}$ is strongly visible 
with $(n+1)$-dimensional slice 
$M_{(n,0,0)}^2\simeq \exp \sqrt{-1}\mathfrak{q}_{(n,0,0)}$. 
\end{theorem}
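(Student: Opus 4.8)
The plan is to read off the statement from Proposition~\ref{prop:v0-(n,0,0)} and Lemma~\ref{lem:reduction}, supplemented by a short dimension count.

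First I would check that the pair $(L_{(n,0,0)},S_{(n,0,0)}):=(Q_{(n,0,0)},M_{(n,0,0)}^2)$ meets the hypotheses of Lemma~\ref{lem:reduction} with $i=2$. The group $L_{(n,0,0)}=Q_{(n,0,0)}\simeq\mathbb{R}^{n+1}$ is a connected closed subgroup of $G$ which trivially contains $Q_{(n,0,0)}$, and it is $\widetilde{\sigma}_2$-stable: for $\boldsymbol{t}\in\mathbb{R}^n$ and $u\in\mathbb{R}$ one has $\widetilde{\sigma}_2(g(\boldsymbol{0},\boldsymbol{t},u))=g(\boldsymbol{0},-\overline{\boldsymbol{t}},-\overline{u})=g(\boldsymbol{0},-\boldsymbol{t},-u)\in Q_{(n,0,0)}$. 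Proposition~\ref{prop:v0-(n,0,0)} gives $D'_{(n,0,0)}:=L_{(n,0,0)}\cdot S_{(n,0,0)}=D_{(n,0,0)}$, which is a non-empty open subset of $D_{(n,0,0)}$, so (\ref{v0}) holds. Since $S_{(n,0,0)}=M_{(n,0,0)}^2=D_{(n,0,0)}^{\sigma_2}$ lies in the real form attached to $\sigma_2$ and $D'_{(n,0,0)}$ is open, Lemma~\ref{lem:reduction} yields (\ref{s1}) and (\ref{s2}) for the $Q_{(n,0,0)}$-action on $D_{(n,0,0)}$ equipped with the anti-holomorphic diffeomorphism $\sigma_2$. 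Hence this action is strongly visible with slice $M_{(n,0,0)}^2$.

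It remains to identify $M_{(n,0,0)}^2$ with $\exp\sqrt{-1}\mathfrak{q}_{(n,0,0)}$ and to compute its dimension. Here $\mathfrak{q}_{(n,0,0)}=\operatorname{span}_{\mathbb{R}}\{Y_1,\ldots,Y_n,Z\}$ is abelian of dimension $n+1$, so $\exp\bigl(\sqrt{-1}(b_1Y_1+\cdots+b_nY_n+cZ)\bigr)=g(\boldsymbol{0},\sqrt{-1}\boldsymbol{b},\sqrt{-1}c)$ for $\boldsymbol{b}\in\mathbb{R}^n$ and $c\in\mathbb{R}$. Comparing this with (\ref{eq:slice-(n,0,0)}) and invoking Lemma~\ref{lem:real form} together with (\ref{eq:real form}), the map $X\mapsto(\exp X)H_{(n,0,0)}^{\mathbb{C}}$ restricts to a diffeomorphism of $\exp\sqrt{-1}\mathfrak{q}_{(n,0,0)}$ onto $M_{(n,0,0)}^2$; in particular $\dim M_{(n,0,0)}^2=\dim_{\mathbb{R}}\mathfrak{q}_{(n,0,0)}=n+1$.

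Since every ingredient is already available, there is no genuine obstacle; the only points deserving attention are the $\widetilde{\sigma}_2$-stability of $L_{(n,0,0)}$ and the choice of the anti-holomorphic diffeomorphism. It is essential to use $\sigma_2$ rather than $\sigma_1$: with $\sigma_1$ the fixed-point set would force the central coordinate $z$ to be real, and $Q_{(n,0,0)}\cdot D_{(n,0,0)}^{\sigma_1}$ would only be the non-open subset $\{v_{(n,0,0)}(\boldsymbol{y},z):\boldsymbol{y}\in\mathbb{C}^n,\ z\in\mathbb{R}\}$, so (\ref{v0}) would fail. Finally, one may remark (though it is not part of the statement) that a generic $Q_{(n,0,0)}$-orbit has real dimension $n+1$ while $\dim_{\mathbb{R}}D_{(n,0,0)}=2n+2$, so the slice $M_{(n,0,0)}^2$ is smallest.
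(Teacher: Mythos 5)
Your argument is correct and is essentially the paper's own: the theorem is obtained by feeding Proposition~\ref{prop:v0-(n,0,0)} into Lemma~\ref{lem:reduction} with the $\widetilde{\sigma}_2$-stable group $Q_{(n,0,0)}$ and the real form $M_{(n,0,0)}^2$, and your verification of the $\widetilde{\sigma}_2$-stability, the identification $M_{(n,0,0)}^2\simeq\exp\sqrt{-1}\mathfrak{q}_{(n,0,0)}$, and the dimension count $n+1$ are all accurate. Your side remarks (that $\sigma_1$ would fail condition (\ref{v0}) here, and that the slice is smallest) are correct but not needed for the statement.
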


%%%%%%%%%%%%%%%%%%%%%%%%%%%%%%%%%%%%%%%%%%%%%%%%%%%%%%%%%%%%%%%%%%%%%%%%%%%%%%%%%%%%%%%%%%%%%%%%%%%%%

\section{Proof of Theorem \ref{thm:main} and proof of Corollary \ref{cor:main}}
\label{sec:proof}

In this section, 
we will completely give a proof of Theorem \ref{thm:main} and a proof of Corollary \ref{cor:main}. 

\begin{proof}[Proof of Theorem \ref{thm:main}]
Two equivalent holomorphic actions keeps the strong visibility 
(see Proposition \ref{prop:classification}). 
Then, it suffices to show Theorem \ref{thm:main} 
for $D_{(0,0,1)}$, $D_{(p,q,1)}$, $D_{(m,0,1)}$ and $D_{(m,0,0)}$. 
We have already proved the strong visibility for 
the $Q_{(0,0,1)}$-action on $D_{(0,0,1)}$ in Theorem \ref{thm:visible-(0,0,1)}; 
the $Q_{(p,q,1)}$-action on $D_{(p,q,1)}$ in Theorem \ref{thm:visible-(p,q,1)}; 
the $Q_{(m,0,1)}$-action on $D_{(m,0,1)}$ in Theorem \ref{thm:visible-(m,0,1)}; 
the $Q_{(m'-1,0,0)}$-action on $D_{(m',0,0)}$ in Theorem \ref{thm:visible-(m,0,0)} if $1\leq m'<n$; 
and the $Q_{(n,0,0)}$-action on $D_{(n,0,0)}$ in Theorem \ref{thm:visible-(n,0,0)}. 
Therefore, the proof of Theorem \ref{thm:main} has been accomplished. 
\end{proof}

Table \ref{table:choice} indicates our choice of a connected closed subgroup $L_{(k,\ell ,\varepsilon )}$ 
and a slice $S_{(k,\ell ,\varepsilon )}$ for the strongly visible action on $D_{(k,\ell ,\varepsilon )}$ 
for each non-trivial connected closed subgroup $H_{(k,\ell ,\varepsilon )}\subset G$. 
Here, the positive integer $m'$ in the fifth line is restricted to $1\leq m'<n$. 

\begin{table}[htbp]
	\begin{align*}
	\begin{array}{c|cccccc}
	H_{(k,\ell ,\varepsilon )} & 
		L_{(k,\ell ,\varepsilon )}
		& \sigma 
		& \mathfrak{s}_{(k,\ell ,\varepsilon )} 
		& \dim S_{(k,\ell ,\varepsilon )}
		& \mbox{Theorem}\\
	\cline{1-6} 
	H_{(0,0,1)} & Q_{(0,0,1)}
		& \sigma _1
		& \sqrt{-1}\mathfrak{q}_{(0,0,1)}
		& 2n 
		& \ref{thm:visible-(0,0,1)}\\
	H_{(p,q,1)} & Q_{(p,q,1)}
		& \sigma _1
		& \sqrt{-1}\mathfrak{q}_{(p,q,1)} 
		& 2n-p-q
		& \ref{thm:visible-(p,q,1)}\\
	H_{(m,0,1)} & Q_{(m,0,1)}
		& \sigma _1
		& \sqrt{-1}\mathfrak{q}_{(m,0,1)} 
		& 2n-m
		& \ref{thm:visible-(m,0,1)}\\
	H_{(m',0,0)} & Q_{(m'-1,0,0)}
		& \sigma _1
		& \sqrt{-1}\mathfrak{q}'_{(m',0,0)} 
		& 2n-m'
		& \ref{thm:visible-(m,0,0)} \\
	H_{(n,0,0)} & Q_{(n,0,0)}
		& \sigma _2
		& \sqrt{-1}\mathfrak{q}_{(n,0,0)}
		& n+1
		& \ref{thm:visible-(n,0,0)}
	\end{array}
	\end{align*}
	\caption{$L_{(k,\ell ,\varepsilon )}$, $\sigma$ and $S_{(k,\ell ,\varepsilon )}
	\simeq \exp \mathfrak{s}_{(k,\ell ,\varepsilon )}$}
	\label{table:choice}
\end{table}

Next, we will give a proof of Corollary \ref{cor:main}. 
More precisely, 
we will show that 
our choice of $S$ and anti-holomorphic diffeomorphism $\sigma$ for the strongly visible $L$-action on $D$ 
satisfies three conditions (\ref{v0})--(\ref{s2}) for the $G$-action on $D$. 

\begin{proof}[Proof of Corollary \ref{cor:main}]
By our proof of Theorem \ref{thm:main}, 
we take a slice $S$, an anti-holomorphic involution $\widetilde{\sigma }$ 
and the induced anti-holomorphic diffeomorphism $\sigma $ 
satisfying (\ref{v0})--(\ref{s2}) for the strongly visible $L$-action on $D$. 

First, we consider the subset $D':=G\cdot S$ in $D$. 
This is of the form 
\begin{align}
\label{eq:open condition}
D'=G\cdot (L\cdot S)=\bigcup _{g\in G}g\cdot (L\cdot S). 
\end{align}
Since $L\cdot S$ is open in $D$, so is $g\cdot (L\cdot S)$ for any $g\in G$. 
Hence, $D'$ is open in $D$. 

Next, it is clear that $\sigma |_{S}=\operatorname{id}_S$. 
Further, $\sigma $ preserves each $G$-orbit in $D':=G\cdot S$. 
Indeed, we take an element $v\in D'$ and write $v=g\cdot s$ for some $g\in G$ and $s\in S$ 
according to (\ref{eq:open condition}). 
By the same argument of (\ref{eq:s2}), we have $\sigma (v)=\widetilde{\sigma }(g)g^{-1}\cdot v\in G\cdot v$. 

Hence, we have shown Corollary \ref{cor:main}. 
\end{proof}

%%%%%%%%%%%%%%%%%%%%%%%%%%%%%%%%%%%%%%%%%%%%%%%%%%%%%%%%%%%%%%%%%%%%%%%%%%%%%%%%%%%%%%%%%%%%%%%%%%%%%

\section{Application to representation theory}
\label{sec:representation}

This section presents new multiplicity-free theorems for unitary and irreducible 
representations of the Heisenberg group of infinite dimension (hence not characters). 
These theorems are built on the investigation 
of strongly visible actions on complex Heisenberg homogeneous spaces. 

\subsection{Propagation theory of multiplicity-freeness property}
\label{subsec:propagation}

Originally, 
the notion of strongly visible actions has been introduced 
as a geometric point of view of the propagation theory of multiplicity-freeness property 
established by T. Kobayashi (see the original papers \cite{ko05,ko13}). 
In this subsection, 
we explain a multiplicity-free theorem of the Heisenberg group 
which is gained as an application of our results on visible actions on complex Heisenberg homogeneous spaces. 

Before that, let us give a quick review on the propagation theory of multiplicity-freeness property, 
based on \cite{ko13}. 
Let $L$ be a Lie group and $\mathcal{V}\to D$ a $L$-equivariant Hermitian holomorphic vector bundle 
over a connected complex manifold $D$. 
We denote by $\mathcal{O}(D,\mathcal{V})$ the space of holomorphic sections of $\mathcal{V}\to D$. 
This carries a Fr\'echet topology by the uniform convergence on compact sets. 
Then, we naturally define a continuous representation $\varpi$ of $L$ 
on the space $\mathcal{O}(D,\mathcal{V})$ of holomorphic sections by 
\begin{align*}
[\varpi (g)s](x):=g\cdot s(g^{-1}\cdot x)
\end{align*}
for $g\in L$, $s\in \mathcal{O}(D,\mathcal{V})$ and $x\in D$. 
Under the setting, the propagation theory of multiplicity-freeness property is explained as follows. 
Here, we say that 
an anti-holomorphic diffeomorphism $\sigma$ on $D$ lifts 
to an anti-holomorphic bundle endomorphism on $\mathcal{V}$ 
if $\sigma $ gives rise to an anti-holomorphic diffeomorphism on $\mathcal{V}$, 
denoted by the same letter $\sigma$, 
such that the restriction of $\sigma$ to the fiber $\mathcal{V}_x$ at each $x\in D$ 
is an isometric and anti-linear map to the fiber $\mathcal{V}_{\sigma (x)}$ at $\sigma (x)$. 

\begin{fact}[{\cite[Theorem 4.3]{ko13}}]
\label{fact:propagation}
Let $\mathcal{V}\to D$ be an $L$-equivariant Hermitian holomorphic vector bundle. 
Suppose that the following conditions are satisfied: 
\begin{enumerate}
	\renewcommand{\theenumi}{\alph{enumi}}
	\item 
	\label{item:propagation-a}
	The $L$-action on $D$ is strongly visible. 
	In particular, 
	there exist a real submanifold $S$ in $D$ and an anti-holomorphic diffeomorphism $\sigma$ 
	on $D':=L\cdot S$ satisfying the conditions (\ref{v0})--(\ref{s2}) 
	equipped with an anti-holomorphic automorphism $\widetilde{\sigma }$ of $L$ 
	satisfying $\sigma (g\cdot x)=\widetilde{\sigma }(g)\cdot \sigma (x)~(g\in L,x\in D)$. 
	\item 
	\label{item:propagation-b}
	For each $x\in S$, the unitary representation of the isotropy subgroup $L_x$ on 
	the fiber $\mathcal{V}_x$ is multiplicity-free. 
	\item 
	\label{item:propagation-c}
	Under the condition (\ref{item:propagation-b}), 
	we write $\mathcal{V}_x=\oplus _{i=1}^{m(x)}\mathcal{V}_x^{(i)}$ 
	for the irreducible decomposition as a representation of $L_x$. 
	Then, $\sigma$ lifts to an anti-holomorphic bundle endomorphism on $\mathcal{V}$ 
	such that 
	$\sigma (\mathcal{V}_x^{(i)})=\mathcal{V}_{x}^{(i)}$ for each $i=1,2,\ldots ,n(x)$ and each $x\in S$. 
\end{enumerate}

Then, any unitary representation which is realized in $\mathcal{O}(D,\mathcal{V})$ is multiplicity-free. 
In particular, 
the continuous representation $\varpi$ of $L$ on $\mathcal{O}(D,\mathcal{V})$ is multiplicity-free. 
\end{fact}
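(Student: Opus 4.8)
Since the statement is \cite[Theorem 4.3]{ko13}, I only outline a scheme of proof. The plan is to reduce the multiplicity-freeness to the commutativity of a von Neumann algebra, and then to force that commutativity by means of the anti-holomorphic data. First I would invoke the standard characterisation that a unitary representation $(\pi ,\mathcal{H})$ of $L$ is multiplicity-free exactly when its commutant $\operatorname{End}_L(\mathcal{H})$ in the bounded operators on $\mathcal{H}$ is commutative. Thus, given a unitary representation realized as a continuous $L$-equivariant Hilbert subspace $\mathcal{H}\hookrightarrow \mathcal{O}(D,\mathcal{V})$, it suffices to show that $\mathcal{R}:=\operatorname{End}_L(\mathcal{H})$ is abelian; the last assertion of the statement, about $\varpi$ on $\mathcal{O}(D,\mathcal{V})$, then follows by running this over all such $\mathcal{H}$.

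Next I would construct a conjugate-linear ``flip''. Using the anti-holomorphic diffeomorphism $\sigma$ on $D$ (which one arranges to be involutive, $\sigma ^2=\operatorname{id}$, as in our setting) together with its anti-holomorphic bundle lift, which by hypothesis~(\ref{item:propagation-c}) restricts on each fibre to a conjugate-linear isometry $\mathcal{V}_x\to \mathcal{V}_{\sigma (x)}$, I set $(\mathcal{J}s)(x):=\sigma (s(\sigma (x)))$ for a holomorphic section $s$. Since $\sigma$ is anti-holomorphic on the base and conjugate-linear on the fibres, $\mathcal{J}s$ is again holomorphic; $\mathcal{J}$ is conjugate-linear and involutive; and the compatibility $\sigma (g\cdot x)=\widetilde{\sigma }(g)\cdot \sigma (x)$ from~(\ref{item:propagation-a}) gives $\mathcal{J}\circ \varpi (g)=\varpi (\widetilde{\sigma }(g))\circ \mathcal{J}$. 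One checks that $\mathcal{J}$ preserves $\mathcal{H}$ and is anti-unitary there, which is where the reproducing kernel of $\mathcal{H}$ and the fibrewise isometry are used. As $\widetilde{\sigma }$ is an automorphism of $L$, the operator $\mathcal{J}$ normalizes $\varpi (L)$, so conjugation by $\mathcal{J}$ maps $\mathcal{R}$ onto itself and $T\mapsto T^{\sigma }:=\mathcal{J}T\mathcal{J}$ is an involutive, conjugate-linear, unital algebra automorphism of $\mathcal{R}$ that commutes with the adjoint.

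The crux is the following Main Lemma: $\mathcal{J}T\mathcal{J}=T^{*}$ for every $T\in \mathcal{R}$. Granting it, the adjoint $T\mapsto T^{*}$ is simultaneously an algebra anti-automorphism (always) and, being equal to $T\mapsto \mathcal{J}T\mathcal{J}$, an algebra automorphism of $\mathcal{R}$; comparing $(T_1T_2)^{*}=T_1^{*}T_2^{*}$ with $(T_1T_2)^{*}=T_2^{*}T_1^{*}$ and using that $*$ is onto $\mathcal{R}$ yields $xy=yx$ for all $x,y\in \mathcal{R}$, i.e.\ $\mathcal{R}$ is commutative, which is what we want. To prove the Main Lemma I would work with the reproducing kernel $K_{\mathcal{H}}$ of $\mathcal{H}$, holomorphic in the first variable, anti-holomorphic in the second, and $L$-equivariant. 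The operator $B:=\mathcal{J}T\mathcal{J}-T^{*}\in \mathcal{R}$ is encoded by a kernel that is separately (anti)holomorphic, so since $D$ is connected and $D':=L\cdot S$ is non-empty and open by~(\ref{v0}), it is enough to show this kernel vanishes when the variables run over $S$. On $S$ the map $\sigma$ is the identity by~(\ref{s1}) and preserves each $L$-orbit by~(\ref{s2}), so at $x\in S$ the lift induces a conjugate-linear isometry of $\mathcal{V}_{x}$ compatible with the $L_x$-action; by~(\ref{item:propagation-b}) the fibre $\mathcal{V}_x$ is $L_x$-multiplicity-free, so $T$ acts by a scalar on each $L_x$-irreducible summand by Schur's lemma, and by~(\ref{item:propagation-c}) the lift preserves each such summand. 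Tracking the scalar through the flip and through the adjoint on each summand shows the two kernels agree on $S$, and analytic continuation from the open set $D'$ then gives $B=0$.

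The formal construction and the properties of $\mathcal{J}$ are routine; the genuine obstacle is the Main Lemma, and inside it the fibrewise bookkeeping on $S$. In the Hermitian line bundle case the fibre has dimension one, hypotheses~(\ref{item:propagation-b}) and~(\ref{item:propagation-c}) are automatic, and reconciling ``conjugate by $\mathcal{J}$'' with ``take adjoint'' is immediate; the content of the vector-bundle refinement is exactly that~(\ref{item:propagation-b})--(\ref{item:propagation-c}) make the same reconciliation go through summand by summand over each point of $S$. Care is also needed with the functional-analytic underpinnings, namely the continuity of the evaluation maps $\mathcal{H}\to \mathcal{V}_x$, the existence and $L$-equivariance of $K_{\mathcal{H}}$, and the boundedness of intertwiners, all of which are needed to legitimise the analytic-continuation step.
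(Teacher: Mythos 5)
The paper gives no proof of this statement: it is imported verbatim as a Fact with the citation \cite[Theorem 4.3]{ko13}, so there is nothing internal to compare against. Your outline --- the commutant characterization of multiplicity-freeness, the conjugate-linear flip $\mathcal{J}$ built from $\sigma$ and its anti-holomorphic bundle lift, the Main Lemma $\mathcal{J}T\mathcal{J}=T^{*}$ proved by restricting reproducing kernels to the slice $S$ and applying Schur's lemma fibrewise via conditions (b) and (c), and the final analytic continuation from the open set $L\cdot S$ --- is a faithful reconstruction of Kobayashi's actual argument, with the only delicate points being exactly the ones you flag yourself (involutivity of $\sigma$, anti-unitarity of $\mathcal{J}$ on $\mathcal{H}$, and the functional-analytic underpinnings of the kernel calculus).
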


This theory also plays an important role that 
one can give an unified explanation of the multiplicity-freeness for various kinds of representations. 
On the other hand, 
once we give an example of strongly visible actions, 
we expect that one can find several multiplicity-free representations. 

So far, we have found the strongly visible actions on the complex Heisenberg homogeneous spaces 
(see Theorem \ref{thm:main}). 
Applying Fact \ref{fact:propagation} to our setting, 
we can explain the multiplicity-freeness of a kind of representations of the Heisenberg group as follows. 

Let $G$ be the  Heisenberg group 
and $H$ a non-trivial connected closed subgroup of $G$. 
Due to Theorem \ref{thm:main}, 
one can find a connected closed subgroup $L$ of $G$ such that 
the $L$-action on the complex Heisenberg homogeneous space $D=G^{\mathbb{C}}/H^{\mathbb{C}}$ 
is strongly visible. 
Further, it follows from Corollary \ref{cor:main} that 
the $G$-action on $D$ is strongly visible. 

Let $\mathcal{V}$ be a trivial line bundle $D\times \mathbb{C}$ over $D$. 
Then, $\mathcal{O}(D,D\times \mathbb{C})$ is naturally identified with 
the space $\mathcal{O}(D)$ of holomorphic functions on $D$ 
via the map $\mathcal{O}(D,D\times \mathbb{C})\to \mathcal{O}(D)$, $s\mapsto f$ 
which is characterized by $s(x)=(x,f(x))$ ($x\in D$). 
Now, we let $G$ act holomorphically on $\mathcal{V}=D\times \mathbb{C}$ by $g\cdot (x,z):=(g\cdot x,z)$ 
($g\in G$, $x\in D$ and $z\in \mathbb{C}$). 
Then, $\mathcal{V}\to D$ is $G$-equivariant 
and 
\begin{align*}
[\varpi (g)s](x)=(x,f(g^{-1}\cdot x))
\end{align*}
for $s\in \mathcal{O}(\mathcal{V},D)$, $g\in G$ and $x\in D$. 
Hence, we define a representation $\pi$ of $G$ on $\mathcal{O}(D)$ by 
\begin{align}
\label{eq:pi}
[\pi (g)f](x):=f(g^{-1}\cdot x) 
\end{align}
for $g\in G$, $f\in \mathcal{O}(D)$ and $x\in D$, 
and then $\varpi $ is equivalent to $\pi$. 

\begin{theorem}
\label{thm:multiplicity-free}
Let $G$ be the Heisenberg group 
and $H$ a non-trivial connected closed subgroup of $G$. 
Then, the continuous representation $(\pi ,\mathcal{O}(D))$ of $G$ is multiplicity-free. 
Moreover, 
let $L$ be a connected closed subgroup of $G$ such that 
the $L$-action on $G^{\mathbb{C}}/H^{\mathbb{C}}$ is strongly visible. 
Then, 
the restriction $\pi |_L$ of $\pi$ to $L$ is still multiplicity-free. 
\end{theorem}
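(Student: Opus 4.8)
The plan is to deduce both assertions from Kobayashi's propagation theorem of multiplicity-freeness, Fact~\ref{fact:propagation}, applied to the trivial Hermitian holomorphic line bundle $\mathcal{V}=D\times\mathbb{C}\to D$ (with its standard fibre metric), on which $G$, and likewise any closed subgroup $L$, acts by $g\cdot(x,z):=(g\cdot x,z)$. As recalled just before the statement, $\mathcal{O}(D,\mathcal{V})$ is canonically identified with $\mathcal{O}(D)$ and the natural representation $\varpi$ on holomorphic sections is equivalent to $\pi$, and in the same way $\varpi|_L$ is equivalent to $\pi|_L$. Hence it suffices to verify the three hypotheses (\ref{item:propagation-a})--(\ref{item:propagation-c}) of Fact~\ref{fact:propagation}, once for the $G$-action on $D$ and once for the $L$-action on $D$.

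For hypothesis (\ref{item:propagation-a}) I would invoke the geometric results of the paper directly: the $G$-action on $D$ is strongly visible by Corollary~\ref{cor:main}, and the $L$-action on $D$ is strongly visible by Theorem~\ref{thm:main}, whose proof moreover equips us with the slice $S$ and with the anti-holomorphic involution $\widetilde{\sigma}_i$ of $G^{\mathbb{C}}$ from Section~\ref{subsec:diffeomorphism}. That $\widetilde{\sigma}_i$ supplies the anti-holomorphic automorphism of the acting group required in Fact~\ref{fact:propagation}: it restricts to an automorphism of $G$, it stabilizes $L$ (precisely the standing assumption of Lemma~\ref{lem:reduction}), and by Remark~\ref{rem:compatible} it satisfies $\sigma_i(g\cdot v)=\widetilde{\sigma}_i(g)\cdot\sigma_i(v)$ on $D$. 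So one sets $\widetilde{\sigma}:=\widetilde{\sigma}_i|_{G}$, respectively $\widetilde{\sigma}_i|_{L}$.

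Hypotheses (\ref{item:propagation-b}) and (\ref{item:propagation-c}) I expect to be immediate, since $\mathcal{V}$ is a line bundle on which the group acts trivially along the fibres. For each $x$ in $S$ the fibre $\mathcal{V}_x\simeq\mathbb{C}$ is one-dimensional, and the isotropy subgroup acts on it by the trivial character, which is multiplicity-free; thus (\ref{item:propagation-b}) holds with $m(x)=1$. For (\ref{item:propagation-c}) one takes the fibrewise complex conjugation $(x,z)\mapsto(\sigma(x),\overline{z})$ as the lift of $\sigma$ to $\mathcal{V}$: it is anti-holomorphic, it is an anti-linear isometry $\mathcal{V}_x\to\mathcal{V}_{\sigma(x)}$ on each fibre, and it trivially preserves the unique summand $\mathcal{V}_x^{(1)}=\mathcal{V}_x$. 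Fact~\ref{fact:propagation} then gives that every unitary representation realized in $\mathcal{O}(D)$ is multiplicity-free; in particular $\pi$ is, and running the identical argument with $L$ in place of $G$ shows $\pi|_L$ is multiplicity-free as well.

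The reassuring point is that, the bundle being trivial, the fibre conditions (\ref{item:propagation-b})--(\ref{item:propagation-c}) carry no content; the entire substance of the theorem is the geometry already established in Theorem~\ref{thm:main} and Corollary~\ref{cor:main}. The one step deserving care is verifying (\ref{item:propagation-a}) in exactly the form Fact~\ref{fact:propagation} requires — that $\sigma$ on $D'=L\cdot S$, respectively $G\cdot S$, be compatible with an anti-holomorphic automorphism of the acting group — and this is precisely what the $\widetilde{\sigma}_i$-stability of $L$ together with Remark~\ref{rem:compatible} delivers.
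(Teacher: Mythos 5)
Your proposal is correct and follows essentially the same route as the paper: apply Fact \ref{fact:propagation} to the trivial line bundle $D\times\mathbb{C}$, supply condition (\ref{item:propagation-a}) from Theorem \ref{thm:main} (resp.\ Corollary \ref{cor:main}) together with the compatible involution $\widetilde{\sigma}_i$ of Remark \ref{rem:compatible}, and note that the fibre conditions (\ref{item:propagation-b})--(\ref{item:propagation-c}) are automatic for a line bundle with the conjugation lift $(x,z)\mapsto(\sigma(x),\overline{z})$. Nothing is missing.
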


\begin{proof}
By the proof of Theorem \ref{thm:main}, 
one can find a real submanifold $S$, an anti-holomorphic involution $\widetilde{\sigma }$ on $G^{\mathbb{C}}$ 
and an anti-holomorphic diffeomorphism $\sigma $ on $G^{\mathbb{C}}/H^{\mathbb{C}}$ such that 
(\ref{v0})--(\ref{s2}) are satisfied, $\widetilde{\sigma }$ stabilizes $L$ 
and $\sigma (g\cdot x)=\widetilde{\sigma }(g)\cdot \sigma (x)$ 
for $g\in L$, $x\in G^{\mathbb{C}}/H^{\mathbb{C}}$ (see Remark \ref{rem:compatible}). 
Thus, condition (\ref{item:propagation-a}) of Fact \ref{fact:propagation} has been verified. 

Let $\mathcal{V}$ be a trivial bundle $D\times \mathbb{C}$ over $D$. 
Since each fiber $\mathcal{V}_x\simeq \mathbb{C}$ ($x\in D$) is one-dimensional, 
the unitary representation of $L_x$ on $\mathcal{V}_x$ is irreducible for any $x\in S$, 
in particular, this is multiplicity-free. 
Then, condition (\ref{item:propagation-b}) is satisfied. 

The anti-holomorphic diffeomorphism $\sigma$ on $D$ lifts to an anti-holomorphic bundle endomorphism 
on $\mathcal{V}$ as follows: 
\begin{align*}
\sigma (x,z)=(\sigma (x),\overline{z})\quad (x\in D,z\in \mathbb{C}). 
\end{align*}
Clearly, $\sigma (\mathcal{V}_x)=\mathcal{V}_x$ for all $x\in S$, 
from which condition (\ref{item:propagation-c}) holds. 

Hence, it follows from Fact \ref{fact:propagation} that 
the continuous representation $(\varpi ,\mathcal{O}(D,\mathcal{V}))$ of $L$ is multiplicity-free. 
Therefore, $\pi$ is multiplicity-free as a representation of $L$ since $\varpi \simeq \pi $. 
\end{proof}

%%%%%%%%%%%%%%%%%%%%%%%%%%%%%%%%%%%%%%%%%%%%%%%%%%%%%%%%%%%%%%%%%%%%%%%%%%%%%%%%%%%%%%%%%%%%%%%%%%%

\subsection{Geometric criterion of multiplicity-freeness for associated quasi-regular representation}
\label{subsec:criterion}

This section investigates the relationship between strongly visible actions on complex Heisenberg 
homogeneous spaces and the multiplicity-free quasi-regular representations of the Heisenberg group. 

First of all, 
we will mention a geometric criterion of multiplicity-freeness 
of the quasi-regular representation $\pi_{H}$ in Theorem \ref{thm:criterion}, 
and its proof will be given in Sections \ref{subsec:mf-h} and \ref{subsec:visible-h}. 

Let $G=\exp \mathfrak{g}$ be the Heisenberg group 
and $H=\exp \mathfrak{h}$ a non-trivial connected closed subgroup of $G$. 
We set $d:=\dim G-\dim H$. 
Then, there exists a co-exponential basis 
$\{ W_1,\ldots ,W_{d}\} $ to $\mathfrak{h}$ in $\mathfrak{g}$, 
which means that the map $H\times \mathbb{R}^{d}\to G$, 
$(h,w_1,\ldots ,w_{d})\mapsto he^{w_1W_1}\cdots e^{w_{d}W_{d}}$ is a diffeomorphism. 
We set $\mathfrak{q}:=\operatorname{span}_{\mathbb{R}}\{ W_1,\ldots ,W_{d}\} $
and $Q:=\langle \exp \mathfrak{q}\rangle $. 

Let $d\mu $ be a $G$-invariant measure on the  homogeneous space $G/H$ 
which is induced from the Lebesgue measure on $\mathfrak{q}$ via the diffeomorphism 
$\mathfrak{q}\simeq \exp \mathfrak{q}\simeq G/H$. 
We note that $G$-invariant measures on $G/H$ are unique up to scalars. 
We denote by $L^2(G/H)$ the space of square integrable functions on $G/H$ with respect to $d\mu $. 
Then, we define the quasi-regular representation $\pi _H$ of $G$ on $L^2(G/H)$ by 
\begin{align}
\label{eq:quasi-regular}
[\pi _H(g)f](xH)=f(g^{-1}xH)\quad (x,g\in G,f\in L^2(G/H))
\end{align}

The next theorem is a new characterization for the unitary representation 
$\pi _H$ of $G$ to be multiplicity-free by the strongly visible action 
on the complex Heisenberg homogeneous space. 

\begin{theorem}
\label{thm:criterion}
Let $G$ be the  Heisenberg  group. 
For a non-trivial connected closed subgroup $H$ of $G$, 
the following conditions are equivalent: 
\begin{enumerate}
	\renewcommand{\theenumi}{\roman{enumi}}
	\item \label{item:mf}
	The quasi-regular representation $\pi _H$ of $G$ is multiplicity-free. 
	\item \label{item:visible-q}
	The $Q$-action on $G^{\mathbb{C}}/H^{\mathbb{C}}$ is strongly visible. 
	\item \label{item:condition-h}
	$H$ is not isomorphic to $H_{(m,0,0)}$ for any $m=1,2,\ldots ,n-1$. 
\end{enumerate}
\end{theorem}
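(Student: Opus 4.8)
\textbf{Proof strategy for Theorem \ref{thm:criterion}.}
The plan is to establish the cycle of implications
\eqref{item:condition-h} $\Rightarrow$ \eqref{item:visible-q} $\Rightarrow$ \eqref{item:mf} $\Rightarrow$ \eqref{item:condition-h},
working from the most concrete statement to the most analytic one.
First I would reduce to the classification of subalgebras in Lemma \ref{lem:subalgebra}:
by Corollary \ref{cor:classification} one may assume $H$ is one of
$H_{(0,0,1)}$, $H_{(p,q,1)}$, $H_{(m,0,1)}$ or $H_{(m,0,0)}$,
and in each case the co-exponential basis $\{W_1,\ldots,W_d\}$ may be taken to be the
complementary set of basis vectors $(\mathcal B_{\mathfrak h})^c$, so that $Q=Q_{(k,\ell,\varepsilon)}$
is exactly the group computed in Lemma \ref{lem:Q}.
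For \eqref{item:condition-h} $\Rightarrow$ \eqref{item:visible-q},
I would simply invoke the strong visibility results already proved in
Sections \ref{sec:(0,0,1)}--\ref{sec:(m,0,0)}:
Theorems \ref{thm:visible-(0,0,1)}, \ref{thm:visible-(p,q,1)}, \ref{thm:visible-(m,0,1)}
and, for $H_{(n,0,0)}$, Theorem \ref{thm:visible-(n,0,0)}
cover precisely the subgroups allowed by \eqref{item:condition-h},
since the excluded family $H_{(m,0,0)}$ with $1\le m\le n-1$ is the only case where $Q$
was \emph{not} shown to act strongly visibly (there the strongly visible group was the strictly
larger $Q_{(m-1,0,0)}$, not $Q_{(m,0,0)}$).

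For \eqref{item:visible-q} $\Rightarrow$ \eqref{item:mf}, the plan is to apply
Fact \ref{fact:propagation} in the line-bundle setting, exactly as in the proof of
Theorem \ref{thm:multiplicity-free}: a strongly visible $Q$-action on $D=G^{\mathbb C}/H^{\mathbb C}$
with the compatible anti-holomorphic involution $\widetilde\sigma_i$ of Section
\ref{subsec:diffeomorphism} makes the representation $\varpi$ of $Q$ on $\mathcal O(D)$
multiplicity-free; one then needs the bridge from $\mathcal O(D)$ to $L^2(G/H)$, namely that
$\pi_H$ embeds (after completion) into the $G$-representation on holomorphic functions
on $D$ restricted to $Q\simeq G/H$, via the Bargmann-type realization of square-integrable
functions on $\mathfrak q\simeq G/H$ as holomorphic functions on $\mathfrak q^{\mathbb C}$.
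Since multiplicity-freeness is inherited by subrepresentations, $\pi_H$ is multiplicity-free.
Alternatively, and perhaps more cleanly, for this implication I would note that
\eqref{item:visible-q} holds exactly when $H\not\simeq H_{(m,0,0)}$ ($1\le m\le n-1$) by the case
analysis above, and then prove \eqref{item:mf} directly from \eqref{item:condition-h}
using the Corwin--Greenleaf multiplicity formula (Fact \ref{fact:corwin-greenleaf}).

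For \eqref{item:mf} $\Rightarrow$ \eqref{item:condition-h} --- which I expect to be the
\textbf{main obstacle} --- the plan is contrapositive: assuming $H=H_{(m,0,0)}$ with
$1\le m\le n-1$, exhibit a $G$-type occurring with multiplicity $\ge 2$ in $\pi_H$.
Here I would use the Kirillov orbit method for the Heisenberg group together with the
Corwin--Greenleaf formula: decompose $L^2(G/H)$ by first inducing the trivial character
up from $H$ and analysing the direct-integral decomposition over
$\mathfrak h^\perp\subset\mathfrak g^*$. The infinite-dimensional constituents of $\pi_H$ are
parametrized by the nonzero values of the central coordinate, and for each such value the
multiplicity is computed as the number of $H$-orbits (equivalently, the number of connected
components of a certain fibre) in the relevant coadjoint orbit intersected with
$\mathfrak h^\perp$. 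The point is that when $\mathfrak h=\mathfrak h_{(m,0,0)}$ is a
\emph{non-central} abelian subalgebra missing $Z$, the polarization/pairing degenerates in the
$X_1,\ldots,X_m$ directions and the generic fibre has two or more components (or the
Corwin--Greenleaf integrand is $\ge 2$ on a set of positive measure), forcing
multiplicity $\ge 2$; whereas for $H_{(m,0,1)}=H_{(m,0,0)}Z(G)$ the central direction is
absorbed and the multiplicity drops to one. I would carry this out explicitly in the smallest
instance $n=1$, $H=H_{(1,0,0)}=\exp(\mathbb R X_1)$ in $\mathbf H_1$, where
$L^2(\mathbb R^2)\simeq \pi_H$ visibly decomposes as a direct integral of copies of the
Schr\"odinger representation $\pi_\lambda$ each appearing with multiplicity two
(one for $\pm$ in a sign that cannot be removed), and then indicate the reduction of the
general case to this one by splitting off the Heisenberg factor $\mathbf H_{n-m}$ on which $H$
acts trivially.
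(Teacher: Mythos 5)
Your overall architecture --- the cycle (\ref{item:condition-h}) $\Rightarrow$ (\ref{item:visible-q}) $\Rightarrow$ (\ref{item:mf}) $\Rightarrow$ (\ref{item:condition-h}) --- is sound in principle, and the first leg coincides with the paper's argument. But there are two genuine problems. First, your contrapositive for (\ref{item:mf}) $\Rightarrow$ (\ref{item:condition-h}) is anchored to a base case that is not a counterexample: for $n=1$ the subgroup $H_{(1,0,0)}=\exp(\mathbb{R}X_1)$ is $H_{(n,0,0)}$, which lies \emph{outside} the excluded family $\{ H_{(m,0,0)}:1\le m\le n-1\}$ (empty when $n=1$), and $\pi_{H_{(1,0,0)}}$ is in fact multiplicity-free --- for generic $\xi$ the intersection $\mathfrak{q}^*\cap(G\cdot\xi)$ is a single $H$-orbit, so the Corwin--Greenleaf multiplicity is $1$, not $2$; there is no ``sign that cannot be removed.'' The correct mechanism, and the one the paper uses, is a dimension count: for $H_{(m,0,0)}$ with $m<n$ and generic $\xi\in\mathfrak{q}^*_{(m,0,0)}$ (i.e.\ $\xi(Z)\neq0$) one has $\dim(H\cdot\xi)=m$ while $\dim(G\cdot\xi)=2n$, so $2\dim(H\cdot\xi)<\dim(G\cdot\xi)$ and Fact \ref{fact:corwin-greenleaf} forces \emph{infinite} multiplicity. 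Your proposed reduction by ``splitting off $\mathbf{H}_{n-m}$'' also cannot be based at $n=1$, and $\mathbf{H}_n$ is not a direct product of $\mathbf{H}_m$ and $\mathbf{H}_{n-m}$ (they share the center).

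Second, the leg (\ref{item:visible-q}) $\Rightarrow$ (\ref{item:mf}) is where the real analytic work hides, and your sketch does not supply it. The propagation theorem only tells you that unitary representations \emph{realized} in $\mathcal{O}(D)$ are multiplicity-free as $Q$-representations; to reach $\pi_H$ you must produce a continuous injective $Q$-intertwiner $L^2(G/H)\to\mathcal{O}(D)$ for \emph{every} admissible $H$ (and then note that an abelian commutant for $\pi_H|_Q$ forces one for $\pi_H$). The paper constructs exactly such a transform --- the Segal--Bargmann transform on $\mathbb{R}^m$ tensored with the Kr\"otz--Thangavelu--Xu heat kernel transform on $\mathbf{H}_{n-m}$ --- but only for $Q_{(m,0,0)}\simeq\mathbb{R}^m\times\mathbf{H}_{n-m}$, and only in order to run the propagation argument \emph{in reverse} (infinite multiplicity of the regular representation contradicts visibility, giving (\ref{item:visible-q}) $\Rightarrow$ (\ref{item:condition-h})); it proves (\ref{item:mf}) $\Leftrightarrow$ (\ref{item:condition-h}) separately and purely via Corwin--Greenleaf. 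If you instead take your ``alternative'' and prove (\ref{item:condition-h}) $\Rightarrow$ (\ref{item:mf}) directly, your cycle no longer closes: you would then have (\ref{item:mf}) $\Leftrightarrow$ (\ref{item:condition-h}) and (\ref{item:condition-h}) $\Rightarrow$ (\ref{item:visible-q}), but the implication (\ref{item:visible-q}) $\Rightarrow$ (\ref{item:condition-h}) --- equivalently the non-visibility of the $Q_{(m,0,0)}$-action on $D_{(m,0,0)}$ for $m<n$, which is Theorem \ref{thm:(m,0,0)-action} and the heart of the paper's proof --- would remain unproven. Either commit to constructing the realization $L^2(G/H)\hookrightarrow\mathcal{O}(D)$ in all admissible cases, or supply a direct proof that the $Q_{(m,0,0)}$-action is not strongly visible for $m<n$.
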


Our proof of Theorem \ref{thm:criterion} consists of two parts. 
The first part is 
to show (\ref{item:mf}) $\Leftrightarrow$ (\ref{item:condition-h}) (see Section \ref{subsec:mf-h}). 
For this, 
we describe the irreducible decomposition of the quasi-regular representation of $G$ 
in terms of the coadjoint orbits and calculate the multiplicities by applying Corwin-Greenleaf formula 
\cite{corwin-greenleaf}. 
The second one is 
to show the equivalence (\ref{item:visible-q}) $\Leftrightarrow$ (\ref{item:condition-h}) 
(see Section \ref{subsec:visible-h}). 
As we have seen in Sections \ref{sec:(0,0,1)}--\ref{sec:proof}, 
the implication (\ref{item:condition-h}) $\Rightarrow $ (\ref{item:visible-q}) is true. 
For the proof of the opposite implication, 
it suffices to show that 
the $Q_{(m,0,0)}$-action on $G^{\mathbb{C}}/H_{(m,0,0)}^{\mathbb{C}}$ is not strongly visible. 
To carry out, we use the propagation theory of multiplicity-freeness property. 

\subsection{Equivalence between (\ref{item:mf}) and (\ref{item:condition-h})}
\label{subsec:mf-h}

This subsection considers the irreducible decomposition 
of the quasi-regular representation $\pi _H$ of $G$. 
It is known that $\pi _H$ can be realized as the representation $\operatorname{Ind}_H^G\boldsymbol{1}$ 
induced from the trivial representation $\boldsymbol{1}$ of $H$. 
In general, 
the irreducible decomposition of 
$\operatorname{Ind}_H^G \chi $ induced from a unitary character $\chi $ of $H$ 
can be formulated in terms of the coadjoint orbits, 
which is known as the Corwin--Greenleaf formula \cite{corwin-greenleaf} 
(see also \cite[Theorem 1.1]{lipsman}). 
Then, we shall apply the Corwin--Greenleaf formula to our setting for the proof of the equivalence 
(\ref{item:mf}) $\Leftrightarrow$ (\ref{item:condition-h}). 

First, we review the basic facts on the coadjoint orbits of the Heisenberg group $G$ of dimension $2n+1$. 
We denote by $\mathfrak{g}^*$ the dual space of $\mathfrak{g}$. 
Let $G$ act on $\mathfrak{g}^*$ by the coadjoint representation, 
namely, $g\cdot \xi :=\operatorname{Ad}^*(g)\xi $ for $g\in G$ and $\xi \in \mathfrak{g}^*$ 
is given by $(g\cdot \xi )(X):=\xi (\operatorname{Ad}(g^{-1})X)$ ($X\in \mathfrak{g}$). 
Let $\mathcal{B}$ be a basis of $\mathfrak{g}$ given by (\ref{eq:basis}) and 
\begin{align}
\label{eq:dual basis}
\mathcal{B}^*:=\{ X_1^*,\ldots ,X_n^*,Y_1^*,\ldots ,Y_n^*,Z^*\} 
\end{align}
the dual basis of $\mathcal{B}$. 
Any element $\xi \in \mathfrak{g}^*$ is written down as 
\begin{align*}
\xi \equiv \xi (\boldsymbol{\alpha },\boldsymbol{\beta },\gamma )
=\sum _{i=1}^n \alpha _iX_i^*+\sum _{j=1}^n \beta _jY_j^*+\gamma Z
\end{align*}
for $\boldsymbol{\alpha }=(\alpha _1,\ldots ,\alpha _n),\boldsymbol{\beta }=(\beta _1,\ldots ,\beta _n)
\in \mathbb{R}^n$ and $\gamma \in \mathbb{R}$. 
A direct computation shows that 
\begin{align*}
g(\boldsymbol{x},\boldsymbol{y},z)\cdot \xi (\boldsymbol{\alpha },\boldsymbol{\beta },\gamma )
=\xi (\boldsymbol{\alpha }+\gamma \boldsymbol{y},\boldsymbol{\beta }-\gamma \boldsymbol{x},\gamma ) 
\end{align*}
for $g(\boldsymbol{x},\boldsymbol{y},z)\in G$ and 
$\xi (\boldsymbol{\alpha },\boldsymbol{\beta },\gamma )\in \mathfrak{g}^*$. 
Then, the coadjoint orbits of the Heisenberg group $G$ are described explicitly as follows. 

\begin{lemma}
\label{lem:coadjoint orbit}
Let $G$ be the Heisenberg group. 
For an element $\xi (\boldsymbol{\alpha },\boldsymbol{\beta },\gamma )\in \mathfrak{g}^*$, we have: 
\begin{enumerate}
	\item If 
	$\gamma \neq 0$, 
	then the coadjoint orbit $G\cdot \xi (\boldsymbol{\alpha },\boldsymbol{\beta },\gamma )$ 
	(of dimension  equals $2n$) is given by:
	\begin{align*}
	G\cdot \xi (\boldsymbol{\alpha },\boldsymbol{\beta },\gamma )
	=\{ \xi (\boldsymbol{x},\boldsymbol{y},\gamma ):\boldsymbol{x},\boldsymbol{y}\in \mathbb{R}^n\} . 
	\end{align*}
	\item 
	The coadjoint orbit $G\cdot \xi (\boldsymbol{\alpha },\boldsymbol{\beta },0)$ equals 
	$\{ \xi (\boldsymbol{\alpha },\boldsymbol{\beta },0)\} $. 
\end{enumerate}
Hence, the subset 
\begin{align}
\label{eq:r}
R:=\{ \xi (\boldsymbol{0},\boldsymbol{0},\gamma ):\gamma \in \mathbb{R}^{\times }\} 
\sqcup \{ \xi (\boldsymbol{\alpha },\boldsymbol{\beta },0)
:\boldsymbol{\alpha },\boldsymbol{\beta }\in \mathbb{R}^n\} \subset \mathfrak{g}^*
\end{align}
turns out to be a cross-section  of the orbit space $\mathfrak{g}^*/G$. 
\end{lemma}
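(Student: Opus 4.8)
The plan is to read everything off from the explicit description of the coadjoint action
$g(\boldsymbol{x},\boldsymbol{y},z)\cdot \xi(\boldsymbol{\alpha},\boldsymbol{\beta},\gamma)=\xi(\boldsymbol{\alpha}+\gamma\boldsymbol{y},\boldsymbol{\beta}-\gamma\boldsymbol{x},\gamma)$
recorded just before the statement; no further machinery is needed.

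First I would treat the case $\gamma\neq 0$. The formula shows that the $Z^*$-coordinate is constant along the orbit, so $G\cdot\xi(\boldsymbol{\alpha},\boldsymbol{\beta},\gamma)\subset\{\xi(\boldsymbol{x},\boldsymbol{y},\gamma):\boldsymbol{x},\boldsymbol{y}\in\mathbb{R}^n\}$. Conversely, given any target $(\boldsymbol{x}_0,\boldsymbol{y}_0)\in\mathbb{R}^n\times\mathbb{R}^n$, I would set $\boldsymbol{y}:=\gamma^{-1}(\boldsymbol{x}_0-\boldsymbol{\alpha})$ and $\boldsymbol{x}:=\gamma^{-1}(\boldsymbol{\beta}-\boldsymbol{y}_0)$, so that $g(\boldsymbol{x},\boldsymbol{y},0)\cdot\xi(\boldsymbol{\alpha},\boldsymbol{\beta},\gamma)=\xi(\boldsymbol{x}_0,\boldsymbol{y}_0,\gamma)$; this gives the reverse inclusion and hence equality. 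For the dimension, I would note that $g(\boldsymbol{x},\boldsymbol{y},z)$ fixes $\xi(\boldsymbol{\alpha},\boldsymbol{\beta},\gamma)$ exactly when $\gamma\boldsymbol{y}=\boldsymbol{0}$ and $\gamma\boldsymbol{x}=\boldsymbol{0}$, i.e. when $\boldsymbol{x}=\boldsymbol{y}=\boldsymbol{0}$; thus the stabilizer is the one-dimensional center $Z(G)$, so $\dim G\cdot\xi=(2n+1)-1=2n$. (Equivalently, the orbit is visibly the $2n$-dimensional affine subspace $\xi(\ast,\ast,\gamma)$.)

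Second, for $\gamma=0$ the same formula reads $g\cdot\xi(\boldsymbol{\alpha},\boldsymbol{\beta},0)=\xi(\boldsymbol{\alpha},\boldsymbol{\beta},0)$ for every $g\in G$, so the orbit is the single point $\{\xi(\boldsymbol{\alpha},\boldsymbol{\beta},0)\}$. Finally, for the cross-section claim I would check that each $G$-orbit meets $R$ in exactly one point: the scalar $\gamma$ is a coadjoint invariant, so orbits with different $\gamma$ are distinct; for $\gamma\neq 0$, applying part (1) with $(\boldsymbol{x}_0,\boldsymbol{y}_0)=(\boldsymbol{0},\boldsymbol{0})$ shows $\xi(\boldsymbol{\alpha},\boldsymbol{\beta},\gamma)\in G\cdot\xi(\boldsymbol{0},\boldsymbol{0},\gamma)$, and this orbit contains no other point of the form $\xi(\boldsymbol{0},\boldsymbol{0},\gamma')$; for $\gamma=0$, part (2) shows the orbit already equals a point of $R$ and distinct such points are non-conjugate. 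Hence $R$ is a complete, irredundant set of orbit representatives, i.e. a cross-section of $\mathfrak{g}^*/G$. I do not expect any real obstacle here: every step is an elementary manipulation of the displayed coadjoint formula, the only point needing a moment's care being the dimension count in part (1), handled by the stabilizer computation above.
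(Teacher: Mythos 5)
Your proposal is correct and follows essentially the same route as the paper, which states the lemma as an immediate consequence of the displayed formula $g(\boldsymbol{x},\boldsymbol{y},z)\cdot \xi (\boldsymbol{\alpha },\boldsymbol{\beta },\gamma )=\xi (\boldsymbol{\alpha }+\gamma \boldsymbol{y},\boldsymbol{\beta }-\gamma \boldsymbol{x},\gamma )$ without writing out a separate proof. Your explicit choice of group element realizing an arbitrary target, the stabilizer computation giving $\dim G\cdot \xi =2n$, and the observation that $\gamma$ is an orbit invariant simply make precise the ``direct computation'' the paper leaves to the reader.
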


Next, the Kirillov orbit method explains that 
there is a one-to-one correspondence between the unitary dual $\widehat{G}$ and the orbit space 
$\mathfrak{g}^*/G$. 
By Lemma \ref{lem:coadjoint orbit}, 
we write a bijection $R\to \widehat{G}$ as 
$\xi (\boldsymbol{0},\boldsymbol{0},\gamma )\mapsto \tau _{\xi (\boldsymbol{0},\boldsymbol{0},\gamma )}
\equiv \tau _{\gamma }$ 
and $\xi (\boldsymbol{a},\boldsymbol{b},0)\mapsto \tau _{\xi (\boldsymbol{a},\boldsymbol{b},0)}
\equiv \tau _{\boldsymbol{a},\boldsymbol{b}}$. 

We are ready to mention the Corwin--Greenleaf formula for the Heisenberg group $G$. 
The following is refereed by the statement of \cite[Theorem 1.1]{lipsman}. 

\begin{fact}
\label{fact:corwin-greenleaf}
The quasi-regular representation $\pi _H=\operatorname{Ind}_H^G \boldsymbol{1}$ is decomposed 
into the direct integral of irreducible representations of $G$ as 
\begin{align}
\label{eq:irreducible decomposition}
\pi _H=\operatorname{Ind}_H^G \boldsymbol{1}\simeq 
\int _{(G\cdot \mathfrak{q}^*)/G}^{\oplus }m_{\pi _H}(\xi )\tau _{\xi }\,d\xi . 
\end{align}
Here, $d\xi $ is a $G$-invariant measure on $(G\cdot \mathfrak{q}^*)/G$ induced from 
the Lebesgue measure on $\mathfrak{q}^*\subset \mathfrak{g}^*$ via 
$\mathfrak{q}^*\to (G\cdot \mathfrak{q}^*)/G$, 
and $m_{\pi _H}:(G\cdot \mathfrak{q}^*)/G\to \mathbb{N}\cup \{ \infty \}$ is the multiplicity function. 

The multiplicity function $m_{\pi _H}(\xi )$ is either finite and bounded or infinite, 
according to the fact that 
either $2\dim (H\cdot \xi)=\dim (G\cdot \xi )$ or $2\dim (H\cdot \xi )<\dim (G\cdot \xi )$ hold 
for almost all $\xi$ with respect to the measure $d\xi$. 
In the finite case, namely, $2\dim (H\cdot \xi)=\dim (G\cdot \xi )$ for almost all $\xi$, 
$m_{\pi _H}(\xi )$ equals the number of $H$-orbits in $\mathfrak{q}^*\cap (G\cdot \xi )$. 
%which is given as follows. 
%For $\xi \in \mathfrak{q}^*$, 
%\begin{enumerate}
%	\renewcommand{\theenumi}{\roman{enumi}}
%	\item If $2\dim (H\cdot \xi )<\dim (G\cdot \xi )$, then $m_{\pi _H}(\xi )=\infty$. 
%	\item If $2\dim (H\cdot \xi )=\dim (G\cdot \xi )$, then 
%	$m_{\pi _H}(\xi )$ equals the number of $H$-orbits in $\mathfrak{q}^*\cap (G\cdot \xi )$. 
%	In particular, $m_{\pi _H}(\xi )<\infty$. 
%\end{enumerate}
\end{fact}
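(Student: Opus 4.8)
The plan is to deduce this statement from the general orbital decomposition of monomial representations of connected simply connected nilpotent Lie groups, due to Corwin--Greenleaf \cite{corwin-greenleaf} and Lipsman \cite{lipsman}, specialized to the trivial character $\boldsymbol{1}$ of $H$. The key preliminary observation is that $\boldsymbol{1}$ corresponds under the Kirillov correspondence to the zero functional $0\in \mathfrak{h}^*$, so that the affine flat $f+\mathfrak{h}^\perp$ attached to a general character $\chi \leftrightarrow f$ degenerates here to the linear annihilator $\mathfrak{h}^\perp=\{\xi\in \mathfrak{g}^*:\xi|_{\mathfrak{h}}=0\}$. Using the complementary decomposition $\mathfrak{g}=\mathfrak{h}\oplus \mathfrak{q}$ fixed in Section \ref{subsec:subalgebra}, I would identify $\mathfrak{h}^\perp$ with $\mathfrak{q}^*$ inside $\mathfrak{g}^*$ via the dual basis $\mathcal{B}^*$ of (\ref{eq:dual basis}); under this identification the saturation $G\cdot \mathfrak{h}^\perp$ becomes $G\cdot \mathfrak{q}^*$, and the Lebesgue measure transported along $\mathfrak{q}^*\to (G\cdot \mathfrak{q}^*)/G$ is exactly the measure $d\xi$ appearing in (\ref{eq:irreducible decomposition}).

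With this dictionary in place, the first main step is to invoke the Lipsman form of the formula \cite[Theorem 1.1]{lipsman}: $\operatorname{Ind}_H^G\boldsymbol{1}$ decomposes as the direct integral over $(G\cdot \mathfrak{q}^*)/G$ of the Kirillov representations $\tau_\xi$, each occurring with multiplicity equal to the number of $H$-orbits in $G\cdot \xi\cap \mathfrak{q}^*$. To render this concrete for $G=\mathbf{H}_n$ I would feed in the explicit coadjoint picture of Lemma \ref{lem:coadjoint orbit}, under which the $\tau_\xi$ are precisely the $\tau_\gamma$ and $\tau_{\boldsymbol{a},\boldsymbol{b}}$ parametrized by the cross-section $R$: the support of the decomposition then consists of the flat orbits $\{\xi(\boldsymbol{\alpha},\boldsymbol{\beta},0)\}$ together with the $2n$-dimensional orbits $\{\xi(\boldsymbol{x},\boldsymbol{y},\gamma)\}$ with $\gamma\neq 0$ exactly when $\mathfrak{q}^*$ contains a vector with nonzero $Z^*$-component, so that the spectrum is read off directly from the complementary basis $(\mathcal{B}_{\mathfrak{h}})^c$.

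The second main step is the finite/infinite dichotomy, which I would derive from the symplectic geometry underlying the Corwin--Greenleaf criterion. For $\xi\in \mathfrak{q}^*=\mathfrak{h}^\perp$ the restriction map $\eta\mapsto \eta|_{\mathfrak{h}}$ is the moment map for the coadjoint $H$-action on the symplectic coadjoint orbit $(G\cdot \xi,\omega_{\mathrm{KKS}})$, and $G\cdot \xi\cap \mathfrak{q}^*$ is precisely its zero fibre. Since $\xi$ annihilates $\mathfrak{h}$, one has $\omega_{\mathrm{KKS}}(\operatorname{ad}^*(X)\xi,\operatorname{ad}^*(Y)\xi)=\xi([X,Y])=0$ for $X,Y\in \mathfrak{h}$, so $H\cdot \xi$ is isotropic and hence $2\dim(H\cdot \xi)\leq \dim(G\cdot \xi)$ always holds; moreover the rank of the moment map at $\xi$ is $\dim(H\cdot \xi)$, so the generic fibre $G\cdot \xi\cap \mathfrak{q}^*$ has dimension $\dim(G\cdot \xi)-\dim(H\cdot \xi)$. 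Therefore $H$ has an open, hence finite, family of orbits in the intersection exactly when $\dim(G\cdot \xi\cap \mathfrak{q}^*)=\dim(H\cdot \xi)$, i.e. when $2\dim(H\cdot \xi)=\dim(G\cdot \xi)$, yielding a bounded multiplicity; when $2\dim(H\cdot \xi)<\dim(G\cdot \xi)$ the intersection is swept out by a positive-dimensional continuum of $H$-orbits, forcing $m_{\pi_H}(\xi)=\infty$. On $\mathbf{H}_n$ the generic orbit dimension $\dim(G\cdot \xi)$ is constant ($=2n$) and $\dim(H\cdot \xi)$ is constant on a dense open set by algebraicity of the orbit map, so the dichotomy indeed holds for almost all $\xi$.

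The step I expect to be the main obstacle is matching the abstract orbital parametrization of \cite{lipsman}, which is built from a Malcev (Jordan--H\"older) basis adapted to a flag of ideals together with the associated index/layer combinatorics, with the concrete co-exponential coordinates and the explicit invariant measure used throughout this paper. In particular I would need to verify that the pushforward of Lebesgue measure along $\mathfrak{q}^*\to (G\cdot \mathfrak{q}^*)/G$ is the correct $d\xi$ rather than a Jacobian-twisted variant, and that no lower-dimensional orbit stratum makes a spurious contribution to the integral. By contrast, once the moment-map description of $G\cdot \xi\cap \mathfrak{q}^*$ is in place, the finiteness dichotomy is geometrically clean and essentially dimension-counting.
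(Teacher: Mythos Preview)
The paper does not prove this statement: it is recorded as a \emph{Fact} and attributed directly to Corwin--Greenleaf \cite{corwin-greenleaf} and the formulation of Lipsman \cite[Theorem~1.1]{lipsman}, with no argument given. So there is no ``paper's own proof'' to compare against; what you have written is a sketch of how to extract the stated form from those references, which goes beyond what the paper does.

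That said, your sketch is the standard and correct route. The identification $\mathfrak{h}^\perp=\mathfrak{q}^*$ is exactly right here because $\mathfrak{q}$ is spanned by the complementary subset $(\mathcal{B}_{\mathfrak{h}})^c$ of the fixed basis $\mathcal{B}$, so the dual basis vectors indexed by $(\mathcal{B}_{\mathfrak{h}})^c$ span the annihilator of $\mathfrak{h}$; this is precisely how the paper parametrizes $\mathfrak{q}^*$ via $\mathcal{B}^*$ in (\ref{eq:dual basis}). Your moment-map explanation of the dichotomy $2\dim(H\cdot\xi)\lessgtr\dim(G\cdot\xi)$ is also the standard symplectic argument and is fine. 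The worry you flag about matching the Malcev-basis parametrization and the measure $d\xi$ with the co-exponential coordinates is legitimate in general nilpotent groups, but in the Heisenberg case the coadjoint action is affine (Lemma~\ref{lem:coadjoint orbit}) and the orbit stratification has a single generic stratum of constant dimension, so the pushforward of Lebesgue measure along $\mathfrak{q}^*\to (G\cdot\mathfrak{q}^*)/G$ is the canonical measure up to a positive constant and the null-set issues are trivial. In short: nothing is wrong, and your derivation simply fills in what the paper leaves as a citation.
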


\begin{remark}
\label{remark:multiplicity}
According to these two cases, 
we say that $\pi _H$ is of finite or of infinite multiplicities. 
\end{remark}

Our proof of the equivalence (\ref{item:mf}) $\Leftrightarrow$ (\ref{item:condition-h}) 
is carried out by showing $m_{\pi _H}(\xi )=1$ for any generic $\xi \in \mathfrak{q}^*$ 
(namely, the orbit $G\cdot \xi$ is of maximal dimension) 
in (\ref{eq:irreducible decomposition}) for our setting according to Fact \ref{fact:corwin-greenleaf}. 
Before that, we explain the following lemma: 

\begin{lemma}
\label{lem:mf-equiv}
Let $G$ be the Heisenberg group, 
$H_1,H_2$ connected closed subgroups of $G$ 
and $\pi _{H_1},\pi_{H_2}$ the quasi-regular representations of $G$. 
Suppose that $H_1$ is isomorphic to $H_2$. 
Then, $\pi _{H_1}$ is multiplicity-free if and only if $\pi _{H_2}$ also is. 
\end{lemma}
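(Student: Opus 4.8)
The plan is to promote the Lie algebra automorphism supplied by Lemma~\ref{lem:automorphism} to an automorphism $\Phi$ of the group $G$ carrying $H_1$ onto $H_2$, to use it to identify $\pi_{H_1}$ with the automorphic twist $\pi_{H_2}\circ\Phi$ up to unitary equivalence, and finally to observe that twisting a unitary representation by an automorphism of $G$ cannot change whether it is multiplicity-free.

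First I would apply Lemma~\ref{lem:automorphism} to obtain an automorphism $\varphi$ of $\mathfrak{g}$ whose restriction to $\mathfrak{h}_1$ is a Lie algebra isomorphism onto $\mathfrak{h}_2$. Since $G=\exp\mathfrak{g}$ is connected and simply connected, $\varphi$ lifts to an automorphism $\Phi$ of $G$ characterised by $\Phi\circ\exp=\exp\circ\,\varphi$; in particular $\Phi(H_1)=\exp\varphi(\mathfrak{h}_1)=\exp\mathfrak{h}_2=H_2$. Hence the assignment $F(gH_1):=\Phi(g)H_2$ is well defined (if $g'=gh$ with $h\in H_1$ then $\Phi(g')H_2=\Phi(g)\Phi(h)H_2=\Phi(g)H_2$), and $F\colon G/H_1\to G/H_2$ is a diffeomorphism satisfying $F(x\cdot gH_1)=\Phi(x)\cdot F(gH_1)$ for all $x,g\in G$.

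Next I would handle the invariant measures. If $d\mu_1$ denotes the $G$-invariant measure on $G/H_1$ used to define $\pi_{H_1}$, then for every $x\in G$ the equivariance of $F$ gives $(L_x)_*(F_*d\mu_1)=F_*\bigl((L_{\Phi^{-1}(x)})_*d\mu_1\bigr)=F_*d\mu_1$, so $F_*d\mu_1$ is a $G$-invariant measure on $G/H_2$; since such measures are unique up to a positive scalar, we are free to take it as our $d\mu_2$. With this choice the pullback operator $U\colon L^2(G/H_2)\to L^2(G/H_1)$, $Uf:=f\circ F$, is a surjective isometry, hence unitary, and a direct computation shows $\pi_{H_1}(x)\,U=U\,\pi_{H_2}(\Phi(x))$ for all $x\in G$. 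Therefore $\pi_{H_1}\simeq\pi_{H_2}\circ\Phi$, and since the equivalence class of $\pi_{H_2}$ does not depend on the scalar normalisation of $d\mu_2$, this conclusion is unambiguous.

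Finally, because $\Phi$ is a bijective automorphism of $G$ one has $\{\pi_{H_2}(\Phi(x)):x\in G\}=\{\pi_{H_2}(g):g\in G\}$, so $\pi_{H_2}\circ\Phi$ and $\pi_{H_2}$ generate the same von Neumann algebra of operators and in particular have the same commutant; equivalently, $\tau\mapsto\tau\circ\Phi$ is a bijection of $\widehat{G}$ compatible with direct integral decompositions. Consequently $\pi_{H_2}$ is multiplicity-free if and only if $\pi_{H_2}\circ\Phi$ is, which by the previous paragraph happens if and only if $\pi_{H_1}$ is. The only point requiring a little care is the measure-theoretic bookkeeping of the third step, namely verifying that $F$ transports the chosen invariant measure to an invariant measure and hence, up to scalar, to the one defining $\pi_{H_2}$; the rest of the argument is formal. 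Alternatively one could run the same reduction directly on the right-hand side of the Corwin--Greenleaf formula in Fact~\ref{fact:corwin-greenleaf}, using that $\varphi$ sends coadjoint orbits to coadjoint orbits and $H_1$-orbits to $H_2$-orbits, but the representation-theoretic argument above is shorter.
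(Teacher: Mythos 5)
Your proposal is correct and follows essentially the same route as the paper: invoke Lemma~\ref{lem:automorphism}, lift $\varphi$ to an automorphism of $G$ carrying $H_1$ to $H_2$, transport the invariant measure through the induced diffeomorphism $G/H_1\to G/H_2$, and obtain a unitary intertwiner showing $\pi_{H_1}\simeq\pi_{H_2}\circ\varphi$. The only difference is that you spell out the final step (that precomposing with an automorphism of $G$ leaves the image of the representation, hence its commutant and its multiplicity-freeness, unchanged), which the paper compresses into ``which is enough to conclude.''
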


\begin{proof}
By Lemma \ref{lem:automorphism}, 
there exists a Lie group automorphism $\varphi $ on $G=\exp \mathfrak{g}$ such that 
the restriction $\varphi |_{H_1}$ gives rise to the isomorphism from $H_1$ to $H_2$. 
Then, the induced map $\Phi :G/H_1\to G/H_2$, $xH_1\mapsto \varphi (x)H_2$ is a $C^{\infty}$-diffeomorphism. 
Let $d\mu _2$ be a $G$-invariant measure on $G/H_2$ 
and $d\mu _1:=\Phi ^*(d\mu _2)$ the pull-back of $d\mu _2$. 
Then, $d\mu _1$ is a $G$-invariant measure on $G/H_1$. 
Here, if $f\in L^2(G/H_2,d\mu _2)$ then $f\circ \Phi \in L^2(G/H_1,d\mu _1)$, 
which gives rise to the unitary isomorphism $\Phi ^{\vee}:L^2(G/H_2,d\mu _2)\to L^2(G/H_1,d\mu _1)$, 
$f\mapsto \Phi ^{\vee}(f):=f\circ \Phi $. 
Then, we have 
\begin{align*}
\pi _{H_1}(g) \circ \Phi ^{\vee }=\Phi ^{\vee }\circ \pi _{H_2}(\varphi (g))\quad (\forall g\in G). 
\end{align*}
Hence, $\pi _{H_1}$ is equivalent to $\pi _{H_2}\circ \varphi $, which is enough to conclude. 
\end{proof}

Now, we are going to prove the equivalence (\ref{item:mf}) $\Leftrightarrow$ (\ref{item:condition-h}). 

\begin{proof}[Proof of (\ref{item:mf}) $\Leftrightarrow$ (\ref{item:condition-h}) 
in Theorem \ref{thm:criterion}]
Due to Lemma \ref{lem:mf-equiv}, 
it suffices to verify the equivalence 
when $H\equiv H_{(k,\ell ,\varepsilon )}$ are $H_{(0,0,1)}$, $H_{(p,q,1)}$, $H_{(m,0,1)}$ and $H_{(m,0,0)}$. 
We divide our proof into two cases. 

\begin{case}
\label{case:normal}
Let $H_{(k,\ell ,\varepsilon )}$ be either $H_{(0,0,1)},H_{(p,q,1)}$ or $H_{(m,0,1)}$. 
Then, the Lie algebra $\mathfrak{h}_{(k,\ell ,\varepsilon )}$ contains 
the center $\mathfrak{z}(\mathfrak{g})=\mathbb{R}Z$, 
from which $\mathfrak{q}_{(k,\ell ,\varepsilon )}^*$ is a subspace of 
$\operatorname{span}_{\mathbb{R}}\{ X_i^*,Y_j^*:1\leq i,j\leq n\} $. 
By Lemma \ref{lem:coadjoint orbit}, 
we have $G\cdot \xi =\{ \xi \} $ for any $\xi \in \mathfrak{q}_{(k,\ell ,\varepsilon )}^*$. 
Thus, we obtain 
\begin{align}
\label{eq:orbit space-normal}
(G\cdot \mathfrak{q}_{(k,\ell ,\varepsilon )}^*)/G
=\mathfrak{q}_{(k,\ell ,\varepsilon )}^*/G
\simeq \mathfrak{q}_{(k,\ell ,\varepsilon )}^*. 
\end{align}

Clearly, $H\cdot \xi =\{ \xi \} $. 
Hence, we have verified the equality $2\dim (H\cdot \xi )=\dim (G\cdot \xi )$. 
By Fact \ref{fact:corwin-greenleaf}, 
the multiplicity $m_{\pi _H}(\xi )$ is finite for any $\xi \in \mathfrak{q}_{(k,\ell ,\varepsilon )}^*$. 
Further, 
we have $\mathfrak{q}_{(k,\ell ,\varepsilon )}^*\cap (G\cdot \xi )=\{ \xi \} =H\cdot \xi$. 
Hence, it follows from Fact \ref{fact:corwin-greenleaf} that we conclude $m_{\pi _H}(\xi )=1$ 
for any $\xi \in \mathfrak{q}_{(k,\ell ,\varepsilon )}^*$. 
Therefore, $\pi _{H_{(k,\ell ,\varepsilon )}}$ is multiplicity-free. 
\end{case}

\begin{case}
\label{case:(m,0,0)}
Let us consider $H_{(m,0,0)}$ for $1\leq m\leq n$. 
The complementary subspace $\mathfrak{q}_{(m,0,0)}^*$ contains the center $\mathbb{R}Z^*$. 
Thus, 
$\xi (\boldsymbol{\alpha },\boldsymbol{\beta },\gamma )\in \mathfrak{q}_{(m,0,0)}^*$ 
is generic if and only if $\gamma \neq 0$. 

Fix now $\xi (\boldsymbol{\alpha },\boldsymbol{\beta },\gamma )\in \mathfrak{q}_{(m,0,0)}^*$ 
with $\gamma \neq 0$. 
Then, it follows from Lemma \ref{lem:coadjoint orbit} that 
$\dim (G\cdot \xi (\boldsymbol{\alpha },\boldsymbol{\beta },\gamma ))=2n$. 
On the other hand, 
the coadjoint $H_{(m,0,0)}$-orbit through $\xi (\boldsymbol{\alpha },\boldsymbol{\beta },\gamma )$ 
is given as follows: 
If $m<n$ then 
\begin{align*}
H_{(m,0,0)}\cdot \xi (\boldsymbol{\alpha },\boldsymbol{\beta },\gamma )
=\{ \xi (\boldsymbol{\alpha },(\boldsymbol{y}',\boldsymbol{\beta }''),\gamma ):
	\boldsymbol{y}'\in \mathbb{R}^m\} 
\end{align*}
where we shall write $\boldsymbol{\beta }=(\boldsymbol{\beta }',\boldsymbol{\beta }'')\in \mathbb{R}^n$ 
for some $\boldsymbol{\beta }'\in \mathbb{R}^m$ and $\boldsymbol{\beta }''\in \mathbb{R}^{n-m}$ 
and $(\boldsymbol{y}',\boldsymbol{\beta }'')\in \mathbb{R}^n$; 
and if $m=n$ then 
\begin{align}
\label{eq:h-orbit}
H_{(n,0,0)}\cdot \xi (\boldsymbol{\alpha },\boldsymbol{\beta },\gamma )
=\{ \xi (\boldsymbol{\alpha },\boldsymbol{y},\gamma ):\boldsymbol{y}\in \mathbb{R}^n\} . 
\end{align}
These assert that 
\begin{align*}
\dim (H_{(m,0,0)}\cdot \xi (\boldsymbol{\alpha },\boldsymbol{\beta },\gamma ))=m 
\quad (1\leq m\leq n). 
\end{align*}
Hence, the equality $2\dim (H_{(m,0,0)}\cdot \xi (\boldsymbol{\alpha },\boldsymbol{\beta },\gamma ))
=\dim (G\cdot \xi (\boldsymbol{\alpha },\boldsymbol{\beta },\gamma ))$ holds 
if and only if $m=n$. 
It follows from Fact \ref{fact:corwin-greenleaf} 
that the quasi-regular representation $\pi _{(m,0,0)}\equiv \pi _{H_{(m,0,0)}}$ 
is not multiplicity-free if $m<n$, 
in particular, $m_{\pi _{(m,0,0)}}(\xi )=\infty $ for any generic element $\xi \in \mathfrak{q}_{(m,0,0)}^*$. 

Finally, let us show that $\pi _{(n,0,0)}$ is multiplicity-free. 
We recall 
$\mathfrak{q}_{(n,0,0)}^*
=\{ \xi (\boldsymbol{0},\boldsymbol{\beta }',\gamma '):
\boldsymbol{\beta }'\in \mathbb{R}^n,\gamma '\in \mathbb{R}\} $ 
and $G\cdot \xi (\boldsymbol{\alpha },\boldsymbol{\beta },\gamma )
=\{ \xi (\boldsymbol{x},\boldsymbol{y},\gamma ):\boldsymbol{x},\boldsymbol{y}\in \mathbb{R}^n\} $ 
for generic $\xi (\boldsymbol{\alpha },\boldsymbol{\beta },\gamma )\in \mathfrak{q}_{(n,0,0)}^*$. 
Then, we have 
\begin{align}
\label{eq:q-g-orbit}
\mathfrak{q}^*_{(n,0,0)}\cap (G\cdot \xi (\boldsymbol{\alpha },\boldsymbol{\beta },\gamma ))
&=\{ \xi (\boldsymbol{0},\boldsymbol{y},\gamma ):\boldsymbol{y}\in \mathbb{R}^n\} . 
\end{align}
In view of the description (\ref{eq:h-orbit}) of the $H_{(n,0,0)}$-orbit, 
the right-hand side of (\ref{eq:q-g-orbit}) coincides with 
$H_{(m,0,0)}\cdot \xi (\boldsymbol{0},\boldsymbol{0},\gamma ) $. 
Thus, 
$\mathfrak{q}^*_{(n,0,0)}\cap (G\cdot \xi (\boldsymbol{\alpha },\boldsymbol{\beta },\gamma ))$ itself 
is a $H_{(n,0,0)}$-orbit. 
By Fact \ref{fact:corwin-greenleaf}, 
$m_{\pi _{(n,0,0)}}(\xi (\boldsymbol{\alpha },\boldsymbol{\beta },\gamma ))=1$ 
for any generic element 
$\xi (\boldsymbol{\alpha },\boldsymbol{\beta },\gamma )\in \mathfrak{q}_{(n,0,0)}^*$. 
Therefore, $\pi _{(n,0,0)}$ is multiplicity-free. 
\end{case}

As a consequence, the equivalence (\ref{item:mf}) $\Leftrightarrow$ (\ref{item:condition-h}) 
has been completely proved. 
\end{proof}

\subsection{Realization of $(\pi _{(m,0,0)},L^2(G/H_{(m,0,0)}))$ in $(\pi ,\mathcal{O}(D_{(m,0,0)}))$}
\label{subsec:heat kernel}

This subsection provides a realization of the quasi-regular representation 
$\pi _{(m,0,0)}\equiv \pi _{H_{(m,0,0)}}$ on $L^2(G/H_{(m,0,0)})$ of $Q_{(m,0,0)}$ 
through the continuous representation $\pi$ on $\mathcal{O}(D_{(m,0,0)})$ 
in the setting where $1\leq m<n$. 
Namely, 
we find a continuous and injective operator from $L^2(G/H_{(m,0,0)})$ to $\mathcal{O}(D_{(m,0,0)})$ 
which intertwines $\pi _{(m,0,0)}$ and $\pi$. 
The aim of this subsection is Theorem \ref{thm:B}. 

Recall from Section \ref{sec:(m,0,0)} 
that the connected closed subgroup $Q_{(m,0,0)}\subset G$ is given by 
$Q_{(m,0,0)}=\exp \mathfrak{q}_{(m,0,0)}
=\{ g((\boldsymbol{0}_m,\boldsymbol{s}'),\boldsymbol{t},u):\boldsymbol{s}'\in \mathbb{R}^{n-m},~
\boldsymbol{t}\in \mathbb{R}^n,~u\in \mathbb{R}\} $. 
This is isomorphic to $\mathbb{R}^m\times {\bf H}_{n-m}$ as a Lie group through 
the Lie group isomorphism 
\begin{align*}
\mathbb{R}^m\times {\bf H}_{n-m}\stackrel{\sim}{\to }Q_{(m,0,0)},\quad 
(\boldsymbol{y}'',g(\boldsymbol{x}',\boldsymbol{y}',z))\mapsto 
g((\boldsymbol{0}_m,\boldsymbol{x}'),(\boldsymbol{y}'',\boldsymbol{y}),z). 
\end{align*}
Then, let $\mathbb{R}^m\times {\bf H}_{n-m}$ act on $D_{(m,0,0)}=G^{\mathbb{C}}/H_{(m,0,0)}^{\mathbb{C}}$ 
holomorphically via this isomorphism. 
Similarly, the complexification $Q_{(m,0,0)}^{\mathbb{C}}=\exp \mathfrak{q}_{(m,0,0)}^{\mathbb{C}}$ 
is isomorphic to $\mathbb{C}^m\times {\bf H}_{n-m}^{\mathbb{C}}$ as a complex Lie group. 
Combining this isomorphism with Lemma \ref{lem:homogeneous}, we get a biholomorphic map 
\begin{align*}
\begin{array}{c@{~}c@{~}c@{~}c@{~}c}
\Psi &:&\mathbb{C}^m\times {\bf H}_{n-m}^{\mathbb{C}}&\stackrel{\sim}{\to }&D_{(m,0,0)}\\
&& \rotatebox[origin=c]{90}{$\in $} && \rotatebox[origin=c]{90}{$\in $}\\
&&(\boldsymbol{y}'',g(\boldsymbol{x}',\boldsymbol{y}',z))& 
\mapsto &g((\boldsymbol{0}_m,\boldsymbol{x}'),(\boldsymbol{y}'',\boldsymbol{y}),z)H_{(m,0,0)}^{\mathbb{C}} 
\end{array}
. 
\end{align*}
In particular, 
the holomorphic $(\mathbb{R}^m\times {\bf H}_{n-m})$-action on $\mathbb{C}^m\times {\bf H}_{n-m}^{\mathbb{C}}$ 
is equivalent to that on $D_{(m,0,0)}$ via $\Psi$ 
(see Definition \ref{def:equivalent}). 

We also define a continuous representation $\rho$ of $\mathbb{R}^m\times {\bf H}_{n-m}$ 
on the space $\mathcal{O}(\mathbb{C}^m\times {\bf H}_{n-m}^{\mathbb{C}})$ of holomorphic functions 
on $\mathbb{C}^m\times {\bf H}_{n-m}^{\mathbb{C}}$ by 
\begin{align}
\label{eq:representation}
[\rho (\boldsymbol{y},g)f](\boldsymbol{v},x):=f(\boldsymbol{v}-\boldsymbol{y},g^{-1}x) 
\end{align}
for $(\boldsymbol{y},g)\in \mathbb{R}^m\times {\bf H}_{n-m}$, 
$f\in \mathcal{O}(\mathbb{R}^m\times {\bf H}_{n-m})$ 
and $(\boldsymbol{v},x)\in \mathbb{C}^m\times {\bf H}_{n-m}^{\mathbb{C}}$. 
Then, $(\rho ,\mathcal{O}(\mathbb{C}^m\times {\bf H}_{n-m}^{\mathbb{C}}))$ is equivalent to 
$(\pi ,\mathcal{O}(D_{(m,0,0)}))$ (see (\ref{eq:pi}) for definition) 
via the $(\mathbb{R}^m\times {\bf H}_{n-m})$-intertwining operator 
\begin{align*}
\Psi ^{\vee}:\mathcal{O}(D_{(m,0,0)})\stackrel{\sim}{\to }
	\mathcal{O}(\mathbb{C}^m\times {\bf H}_{n-m}^{\mathbb{C}}),\quad 
f\mapsto \Psi ^{\vee}(f):=f\circ \Psi . 
\end{align*}

We set $M_{(m,0,0)}:=G/H_{(m,0,0)}$ 
and consider the restriction of the quasi-regular representation $\pi _{(m,0,0)}$ of $G$ 
to $Q_{(m,0,0)}\simeq \mathbb{R}^m\times {\bf H}_{n-m}$. 
This is equivalent to the regular representation $\rho$ of $\mathbb{R}^m\times {\bf H}_{n-m}$ 
on $L^2(\mathbb{R}^m\times {\bf H}_{n-m})$, defined by the same as (\ref{eq:representation}), 
via the intertwining operator 
\begin{align*}
\Psi ^{\vee}_{\mathbb{R}}:L^2(M_{(m,0,0)})\stackrel{\sim }{\to }L^2(\mathbb{R}^m\times {\bf H}_{n-m}),\quad 
f\mapsto f\circ \Psi |_{\mathbb{R}^m\times {\bf H}_{n-m}}. 
\end{align*}

Our concern here is to construct a continuous and injective operator from 
$L^2(\mathbb{R}^m\times {\bf H}_{n-m})$ to $\mathcal{O}(\mathbb{C}^m\times {\bf H}_{n-m}^{\mathbb{C}})$. 
The key ingredient of our construction is 
to use the heat kernel on $\mathbb{R}^m$ and that on ${\bf H}_{n-m}$. 
Then, let us give a short summary on them, which is based on the literature \cite{krotz,thangavelu}. 

The heat kernel on $\mathbb{R}^m$ is the fundamental solution of the heat equation 
$\partial _tu(\boldsymbol{v},t)=\Delta _{\mathbb{R}^m}u(\boldsymbol{v},t)$ 
on $\mathbb{R}^m\times \mathbb{R}_+$ 
where $\Delta _{\mathbb{R}^m}$ designates the Laplace operator on $\mathbb{R}^m$ 
and $\mathbb{R}_+:=\{ t\in \mathbb{R}:t>0\} $, which is given for $\boldsymbol{v}\in \mathbb{R}^m$ by 
\begin{align*}
h_{\mathbb{R}^m}(\boldsymbol{v})\equiv (h_{\mathbb{R}^m})_t(\boldsymbol{v})
=(4\pi t)^{-\frac{m}{2}}e^{-\frac{1}{4t}(\boldsymbol{v}|\boldsymbol{v})}. 
\end{align*}
Since $h_{\mathbb{R}^m}:\mathbb{R}^m\to \mathbb{R}_+$ is analytic, 
this can be extended holomorphically to $\mathbb{C}^m$, still denoted by $h_{\mathbb{R}^m}$. 
For $f\in L^2(\mathbb{R}^m)$, 
we define a function $B_{\mathbb{R}^m}f\equiv (B_{\mathbb{R}^m})_tf$ on $\mathbb{C}^m$ by 
\begin{align}
(B_{\mathbb{R}^m}f)(\boldsymbol{v})
:=\int _{\mathbb{R}^m}f(\boldsymbol{s})h_{\mathbb{R}^m}(\boldsymbol{v}-\boldsymbol{s})\,d\boldsymbol{s}
\quad (\boldsymbol{v}\in \mathbb{C}^m) 
\label{eq:segal-bargmann}
\end{align}
where $d\boldsymbol{s}$ is the Lebesgue measure on $\mathbb{R}^m$. 
Indeed, the right-hand side of (\ref{eq:segal-bargmann}) is absolutely convergent 
for any $\boldsymbol{v}\in \mathbb{C}^m$, 
and $B_{\mathbb{R}^m}$ defines a map from $L^2(\mathbb{R}^m)$ to $\mathcal{O}(\mathbb{C}^m)$. 
It is well-known: 

\begin{fact}
\label{fact:segal-bargmann}
The map 
$B_{\mathbb{R}^m}:L^2(\mathbb{R}^m)\to \mathcal{O}(\mathbb{C}^m)$ 
is a continuous and injective $\mathbb{R}^m$-intertwining operator. 
\end{fact}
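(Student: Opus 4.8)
The plan is to establish the three asserted properties — continuity, the $\mathbb{R}^m$-intertwining relation, and injectivity — in turn; that $B_{\mathbb{R}^m}$ really maps $L^2(\mathbb{R}^m)$ into $\mathcal{O}(\mathbb{C}^m)$ has already been recorded in the discussion preceding the statement, so I take it for granted. The starting point for everything is a pointwise estimate obtained from Cauchy--Schwarz. Writing $\boldsymbol{v}=\boldsymbol{v}_R+\sqrt{-1}\boldsymbol{v}_I$ with $\boldsymbol{v}_R,\boldsymbol{v}_I\in\mathbb{R}^m$, a short computation gives $|h_{\mathbb{R}^m}(\boldsymbol{v}-\boldsymbol{s})|^2=(4\pi t)^{-m}e^{\frac{1}{2t}(\boldsymbol{v}_I|\boldsymbol{v}_I)}e^{-\frac{1}{2t}(\boldsymbol{v}_R-\boldsymbol{s}|\boldsymbol{v}_R-\boldsymbol{s})}$, and integrating this Gaussian over $\boldsymbol{s}\in\mathbb{R}^m$ yields
\[
\int_{\mathbb{R}^m}|h_{\mathbb{R}^m}(\boldsymbol{v}-\boldsymbol{s})|^2\,d\boldsymbol{s}=C_{m,t}\,e^{\frac{1}{2t}(\boldsymbol{v}_I|\boldsymbol{v}_I)}
\]
for an explicit constant $C_{m,t}>0$. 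By Cauchy--Schwarz this gives the bound
\[
|(B_{\mathbb{R}^m}f)(\boldsymbol{v})|\le C_{m,t}^{1/2}\,e^{\frac{1}{4t}(\boldsymbol{v}_I|\boldsymbol{v}_I)}\,\|f\|_{L^2(\mathbb{R}^m)}\qquad(\boldsymbol{v}\in\mathbb{C}^m).
\]
On a compact subset $K\subset\mathbb{C}^m$ the exponential factor is bounded, so $\sup_{\boldsymbol{v}\in K}|(B_{\mathbb{R}^m}f)(\boldsymbol{v})|\le C_K\|f\|_{L^2}$; as the topology of $\mathcal{O}(\mathbb{C}^m)$ is that of uniform convergence on compacta, this is exactly the continuity of $B_{\mathbb{R}^m}$.

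For the intertwining property I would simply change variables. Under the regular representation of $\mathbb{R}^m$ — translation on $L^2(\mathbb{R}^m)$ and the corresponding translation on $\mathcal{O}(\mathbb{C}^m)$ — the substitution $\boldsymbol{s}\mapsto\boldsymbol{s}+\boldsymbol{y}$ in (\ref{eq:segal-bargmann}), legitimate by the absolute convergence just established, gives $B_{\mathbb{R}^m}\bigl(f(\cdot-\boldsymbol{y})\bigr)(\boldsymbol{v})=(B_{\mathbb{R}^m}f)(\boldsymbol{v}-\boldsymbol{y})$, which is the desired equivariance. For injectivity, observe that the restriction of $B_{\mathbb{R}^m}f$ to $\mathbb{R}^m\subset\mathbb{C}^m$ equals the convolution $f\ast(h_{\mathbb{R}^m})_t$, that is, the solution at time $t$ of the heat equation with initial datum $f$; its Euclidean Fourier transform is $\widehat f(\boldsymbol{\xi})\,e^{-t(\boldsymbol{\xi}|\boldsymbol{\xi})}$. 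If $B_{\mathbb{R}^m}f\equiv 0$ on $\mathbb{C}^m$, then in particular $f\ast(h_{\mathbb{R}^m})_t\equiv 0$ on $\mathbb{R}^m$, so $\widehat f(\boldsymbol{\xi})\,e^{-t(\boldsymbol{\xi}|\boldsymbol{\xi})}=0$ for almost every $\boldsymbol{\xi}$; since $e^{-t(\boldsymbol{\xi}|\boldsymbol{\xi})}$ never vanishes, $\widehat f=0$ and hence $f=0$. Alternatively, one may invoke that $B_{\mathbb{R}^m}$ is, up to the normalizing constant $(4\pi t)^{-m/2}$, the classical Segal--Bargmann transform, a unitary isomorphism of $L^2(\mathbb{R}^m)$ onto the Fock space of entire functions square-integrable against a $t$-dependent Gaussian weight, as developed in \cite{krotz,thangavelu}; injectivity is then immediate.

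I do not expect a real obstacle here: each step collapses to a one-line Gaussian computation or a standard Fourier-analytic fact, and the only mildly delicate points — passing the $\mathbb{R}^m$-translation through the integral and the holomorphy of $B_{\mathbb{R}^m}f$ — are controlled by the pointwise bound above, holomorphy having in any case already been granted. If anything needs care it is the bookkeeping of normalizations when citing the Fock-space model, which is why I would keep the elementary Fourier-transform argument as the primary proof of injectivity and the Segal--Bargmann/Fock description merely as a remark.
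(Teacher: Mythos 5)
Your proof is correct. Note, however, that the paper does not prove this statement at all: it is labelled a \emph{Fact} and quoted as the classical Segal--Bargmann/heat-kernel transform, with the reader referred to the literature (\cite{krotz,thangavelu}). What you have supplied is therefore a self-contained elementary proof of a result the authors simply cite. Your continuity argument --- the Gaussian identity $\int_{\mathbb{R}^m}|h_{\mathbb{R}^m}(\boldsymbol{v}-\boldsymbol{s})|^2\,d\boldsymbol{s}=C_{m,t}e^{\frac{1}{2t}(\boldsymbol{v}_I|\boldsymbol{v}_I)}$ followed by Cauchy--Schwarz and a sup over compacta --- is precisely the technique the paper itself deploys later for the product transform $B$ on $\mathbb{R}^m\times{\bf H}_{n-m}$ (see (\ref{eq:heat kernel R}), (\ref{eq:convergent}) and Step \ref{step:continuous} of Theorem \ref{thm:B}), so your argument is consistent with, and in effect a special case of, the paper's own estimates. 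The change-of-variables verification of equivariance likewise mirrors Step \ref{step:intertwiner}. For injectivity the paper's later argument reduces the product case to Facts \ref{fact:segal-bargmann} and \ref{fact:heat kernel transform}, so it cannot be recycled here; your Fourier-transform argument (restrict to $\mathbb{R}^m$, note $f\ast(h_{\mathbb{R}^m})_t$ has transform $\widehat{f}(\boldsymbol{\xi})e^{-t(\boldsymbol{\xi}|\boldsymbol{\xi})}$ with nonvanishing Gaussian factor) is the standard one and is sound; keeping it as the primary proof rather than invoking the unitary Fock-space model is a reasonable choice, since it avoids any normalization bookkeeping.
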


The operator $B_{\mathbb{R}^m}$ is called the hert kernel transform on $\mathbb{R}^m$ 
and is also known as the Segal--Bargmann transform. 

The heat kernel transform has been studied for the Heisenberg group. 
Let $h_{{\bf H}_{n-m}}\equiv (h_{{\bf H}_{n-m}})_t$ be the heat kernel on ${\bf H}_{n-m}$, 
which is the fundamental solution of the heat equation $\partial _tu(g,t)=\Delta _{{\bf H}_{n-m}}u(g,t)$ 
on ${\bf H}_{n-m}\times \mathbb{R}_+$ 
where $\Delta _{{\bf H}_{n-m}}$ designates the Laplace operator on ${\bf H}_{n-m}$. 
An explicit description of $h_{{\bf H}_{n-m}}$ is referred to \cite[Theorem 2.8.1]{thangavelu} 
or \cite[(2.2.1)]{krotz}. 
Similarly to the case of $\mathbb{R}^m$, 
$h_{{\bf H}_{n-m}}$ is an analytic function on ${\bf H}_{n-m}$, 
and then can be extended holomorphically to the complexification ${\bf H}_{n-m}^{\mathbb{C}}$. 
For $f\in L^2({\bf H}_{n-m})$, 
we define a function $B_{{\bf H}_{n-m}}f\equiv (B_{{\bf H}_{n-m}})_tf$ on ${\bf H}_{n-m}^{\mathbb{C}}$ by 
\begin{align}
(B_{{\bf H}_{n-m}}f)(g)
:=\int _{{\bf H}_{n-m}}f(k)h_{{\bf H}_{n-m}}(k^{-1}g)\,dk\quad (g\in {\bf H}_{n-m}^{\mathbb{C}}) 
\label{eq:heat kernel transform}
\end{align}
where $dk$ is a Haar measure on ${\bf H}_{n-m}$. 
Then, $B_{{\bf H}_{n-m}}$ defines a map from $L^2({\bf H}_{n-m})$ 
to $\mathcal{O}({\bf H}_{n-m}^{\mathbb{C}})$ and is called the heat kernel transform on ${\bf H}_{n-m}$. 
We know: 

\begin{fact}[cf. {\cite[Section 3.1]{krotz}}]
\label{fact:heat kernel transform}
The map 
$B_{{\bf H}_{n-m}}:L^2({\bf H}_{n-m})\to \mathcal{O}({\bf H}_{n-m}^{\mathbb{C}})$ 
is a continuous and injective ${\bf H}_{n-m}$-intertwining operator. 
\end{fact}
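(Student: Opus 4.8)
The plan is to follow the scheme of \cite{krotz} (cf.\ also \cite{thangavelu}) — the same scheme that makes Fact \ref{fact:segal-bargmann} work, now carried out over ${\bf H}_{n-m}$ in place of $\mathbb{R}^m$ — reducing the assertion to one analytic input, a Gaussian-type bound for the holomorphically extended heat kernel, and deriving injectivity from the Plancherel theorem for the Heisenberg group. Throughout, write $G_0:={\bf H}_{n-m}$ and $h_t:=h_{{\bf H}_{n-m}}$, and recall from \cite[Theorem 2.8.1]{thangavelu} and \cite[(2.2.1)]{krotz} the closed form of $h_t$ on $G_0$ together with the fact that it extends to a holomorphic function on $G_0^{\mathbb{C}}$.

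First I would record the elementary pointwise estimate obtained by applying the Cauchy--Schwarz inequality to (\ref{eq:heat kernel transform}):
\begin{align*}
|(B_{G_0}f)(g)|\ \le\ \|f\|_{L^2(G_0)}\,\Theta_t(g),\qquad
\Theta_t(g):=\Bigl(\,\int_{G_0}|h_t(k^{-1}g)|^2\,dk\,\Bigr)^{1/2}.
\end{align*}
The key step is then to show that $\Theta_t(g)<\infty$ for every $g\in G_0^{\mathbb{C}}$ and that $\Theta_t$ is bounded on every compact subset of $G_0^{\mathbb{C}}$: in suitable exponential coordinates the extended heat kernel $h_t(k^{-1}g)$ decays like a Gaussian in the real directions of $k^{-1}g$, with a correction factor depending only on the imaginary part of $g$, so the $k$-integral converges and varies continuously (indeed real-analytically) in $g$. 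Granting this, the integrand $k\mapsto f(k)\,h_t(k^{-1}g)$ in (\ref{eq:heat kernel transform}) is dominated, locally uniformly in $g$, by a fixed function in $L^1(G_0)$; hence $B_{G_0}f$ is well defined, it is holomorphic by Morera's theorem together with Fubini's theorem (equivalently, by differentiation under the integral sign, using the holomorphy of $g\mapsto h_t(k^{-1}g)$), and for every compact $K\subset G_0^{\mathbb{C}}$ one has $\sup_{g\in K}|(B_{G_0}f)(g)|\le C_K\,\|f\|_{L^2(G_0)}$, which is precisely continuity of $B_{G_0}:L^2(G_0)\to\mathcal{O}(G_0^{\mathbb{C}})$ for the topology of uniform convergence on compact sets.

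For injectivity I would observe that $B_{G_0}f$ is the holomorphic continuation of $f\ast h_t$, so $B_{G_0}f=0$ forces $f\ast h_t=0$ on $G_0$; passing to the operator-valued group Fourier transform and using the Plancherel theorem for ${\bf H}_{n-m}$ yields $\widehat f(\lambda)\,\widehat{h_t}(\lambda)=0$ for almost every $\lambda\in\mathbb{R}^{\times}$, and since each $\widehat{h_t}(\lambda)$ is an injective positive operator (hence with dense range), we get $\widehat f(\lambda)=0$ a.e.\ and therefore $f=0$; alternatively one argues directly from the semigroup identity $h_t\ast h_s=h_{t+s}$ together with $h_t\to\delta_e$. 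The intertwining property is then purely formal: writing $\ell(k_0)$ for left translation both on $L^2(G_0)$ and on $\mathcal{O}(G_0^{\mathbb{C}})$, the left-invariance of the Haar measure $dk$ and the substitution $k\mapsto k_0k$ in (\ref{eq:heat kernel transform}) give $(B_{G_0}(\ell(k_0)f))(g)=(B_{G_0}f)(k_0^{-1}g)$, i.e.\ $B_{G_0}\circ\ell(k_0)=\ell(k_0)\circ B_{G_0}$, which is the asserted ${\bf H}_{n-m}$-equivariance.

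I expect the genuine obstacle to be the Gaussian decay bound for the holomorphically continued heat kernel invoked in the second paragraph. In the Euclidean case $h_{\mathbb{R}^m}$ extends to the explicit Gaussian $(4\pi t)^{-m/2}e^{-(\boldsymbol{v}|\boldsymbol{v})/4t}$ and the analogue of $\Theta_t$ can be computed by hand; on ${\bf H}_{n-m}^{\mathbb{C}}$ one instead has to work with the less transparent closed form of $h_t$ (an integral involving hyperbolic-trigonometric factors) and control its modulus after complexification. This is the only place where the specific structure of the Heisenberg group enters; everything else is soft functional analysis.
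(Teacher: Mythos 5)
The paper supplies no proof of this statement---it is quoted as a known Fact from \cite[Section 3.1]{krotz}---and your outline reproduces the standard argument given there: the single non-formal analytic input, the locally uniform bound $\int_{{\bf H}_{n-m}}|h_{{\bf H}_{n-m}}(k^{-1}g)|^2\,dk<\infty$ for $g\in {\bf H}_{n-m}^{\mathbb{C}}$, is exactly \cite[(3.1.1)]{krotz}, which the paper itself invokes as (\ref{eq:heat kernel H}) in the proof of Lemma \ref{lem:absolutely convergent}, and the continuity, holomorphy, equivariance and Plancherel-based injectivity steps are all correct as you describe them. The only caveat is your parenthetical alternative for injectivity via $h_t\ast h_s=h_{t+s}$ and $h_t\to\delta_e$: the semigroup identity alone only propagates the vanishing of $f\ast h_u$ to $u\ge t$, so one additionally needs analyticity of the heat semigroup in $t$ to continue back to $u\to 0^{+}$; the primary argument through the operator-valued group Fourier transform (using that $\widehat{h_t}(\lambda)$ is positive and injective, hence has dense range) is the one to keep.
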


We are ready to construct our operator from $L^2(\mathbb{R}^m\times {\bf H}_{n-m})$ 
to $\mathcal{O}(\mathbb{C}^m\times {\bf H}_{n-m}^{\mathbb{C}})$. 
In the following, we always fix a positive number $t$. 
For $f\in L^2(\mathbb{R}^m\times {\bf H}_{n-m})$, 
we define a function $Bf$ on $\mathbb{C}^m\times {\bf H}_{n-m}^{\mathbb{C}}$ by 
\begin{align}
\label{eq:B}
(Bf)(\boldsymbol{v},x)
:=\int _{\mathbb{R}^m\times {\bf H}_{n-m}}f(\boldsymbol{s},k)
	h_{\mathbb{R}^m}(\boldsymbol{v}-\boldsymbol{s})h_{{\bf H}_{n-m}}(k^{-1}x)\,d\boldsymbol{s}dk 
\end{align}
for $(\boldsymbol{v},x)\in \mathbb{C}^m\times {\bf H}_{n-m}^{\mathbb{C}}$. 
We verify: 

\begin{lemma}
\label{lem:absolutely convergent}
The right-hand side of (\ref{eq:B}) is absolutely convergent for any 
$(\boldsymbol{v},x)\in \mathbb{C}^m\times {\bf H}_{n-m}^{\mathbb{C}}$. 
\end{lemma}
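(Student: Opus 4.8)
The plan is to write the kernel in (\ref{eq:B}) as a product of a factor depending only on $(\boldsymbol{s},\boldsymbol{v})$ and a factor depending only on $(k,x)$, and then to control each factor separately via the Cauchy--Schwarz inequality. Concretely, I would fix $(\boldsymbol{v},x)\in\mathbb{C}^m\times{\bf H}_{n-m}^{\mathbb{C}}$ and set $g_{\boldsymbol{v}}(\boldsymbol{s}):=h_{\mathbb{R}^m}(\boldsymbol{v}-\boldsymbol{s})$ and $\psi_x(k):=h_{{\bf H}_{n-m}}(k^{-1}x)$. Since $f\in L^2(\mathbb{R}^m\times{\bf H}_{n-m})$ and $d\boldsymbol{s}\,dk$ is the product measure, Cauchy--Schwarz together with Tonelli's theorem gives
\[
\int_{\mathbb{R}^m\times{\bf H}_{n-m}}\bigl|f(\boldsymbol{s},k)\bigr|\,\bigl|g_{\boldsymbol{v}}(\boldsymbol{s})\bigr|\,\bigl|\psi_x(k)\bigr|\,d\boldsymbol{s}\,dk
\;\le\;\|f\|_{L^2}\,\|g_{\boldsymbol{v}}\|_{L^2(\mathbb{R}^m)}\,\|\psi_x\|_{L^2({\bf H}_{n-m})},
\]
because $\|g_{\boldsymbol{v}}\otimes\psi_x\|_{L^2(\mathbb{R}^m\times{\bf H}_{n-m})}=\|g_{\boldsymbol{v}}\|_{L^2(\mathbb{R}^m)}\|\psi_x\|_{L^2({\bf H}_{n-m})}$. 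Thus it suffices to check that $g_{\boldsymbol{v}}\in L^2(\mathbb{R}^m)$ and $\psi_x\in L^2({\bf H}_{n-m})$ for each fixed $\boldsymbol{v}$ and $x$.

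For the first factor I would argue directly from the explicit Gaussian form of $h_{\mathbb{R}^m}$: writing $\boldsymbol{v}=\boldsymbol{a}+\sqrt{-1}\boldsymbol{b}$ with $\boldsymbol{a},\boldsymbol{b}\in\mathbb{R}^m$ and expanding $(\boldsymbol{v}-\boldsymbol{s}\mid\boldsymbol{v}-\boldsymbol{s})=(\boldsymbol{a}-\boldsymbol{s}\mid\boldsymbol{a}-\boldsymbol{s})-(\boldsymbol{b}\mid\boldsymbol{b})+2\sqrt{-1}(\boldsymbol{a}-\boldsymbol{s}\mid\boldsymbol{b})$, one obtains
\[
\bigl|g_{\boldsymbol{v}}(\boldsymbol{s})\bigr|=(4\pi t)^{-m/2}\,e^{\frac{1}{4t}(\boldsymbol{b}\mid\boldsymbol{b})}\,e^{-\frac{1}{4t}(\boldsymbol{a}-\boldsymbol{s}\mid\boldsymbol{a}-\boldsymbol{s})},
\]
a rescaled real Gaussian in $\boldsymbol{s}$, hence in $L^2(\mathbb{R}^m)$ with $\|g_{\boldsymbol{v}}\|_{L^2}$ a finite constant depending only on $t$ and $\boldsymbol{b}$ (this is also implicit in the well-definedness of $B_{\mathbb{R}^m}$ recorded in Fact \ref{fact:segal-bargmann}). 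For the second factor, the square-integrability of $k\mapsto h_{{\bf H}_{n-m}}(k^{-1}x)$ for each fixed $x\in{\bf H}_{n-m}^{\mathbb{C}}$ is exactly the input that makes the heat kernel transform $B_{{\bf H}_{n-m}}$ of (\ref{eq:heat kernel transform}) a well-defined operator on $L^2({\bf H}_{n-m})$, so I would invoke it from \cite[Section 3.1]{krotz}, using the explicit description of $h_{{\bf H}_{n-m}}$ in \cite[Theorem 2.8.1]{thangavelu} (or \cite[(2.2.1)]{krotz}) if a self-contained estimate is wanted; a uniform bound on $\|\psi_x\|_{L^2}$ on compact subsets of ${\bf H}_{n-m}^{\mathbb{C}}$ comes out of the same computation and will be useful later for continuity of $B$.

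Combining the two $L^2$-estimates with the displayed inequality yields absolute convergence of (\ref{eq:B}) at every $(\boldsymbol{v},x)$, which is the assertion of the lemma. The only genuinely non-routine step is the $L^2$-estimate for the Heisenberg heat kernel on the complexification: unlike the $\mathbb{R}^m$ case, the analytically continued $h_{{\bf H}_{n-m}}$ is given by an oscillatory--Gaussian integral rather than a plain Gaussian, so square-integrability in $k$ for fixed complex $x$ is not a one-line calculation. However, this is precisely the content underlying Fact \ref{fact:heat kernel transform}, so it can be quoted rather than reproved here, and the rest of the argument is elementary.
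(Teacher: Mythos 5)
Your proof is correct and follows essentially the same route as the paper: factor the kernel into a $\boldsymbol{v}$-part and an $x$-part, apply Cauchy--Schwarz (the paper phrases it as H\"older's inequality), check the Gaussian factor is in $L^2(\mathbb{R}^m)$ by direct computation, and quote \cite[(3.1.1)]{krotz} for the square-integrability of $k\mapsto h_{{\bf H}_{n-m}}(k^{-1}x)$. Your explicit evaluation of $|h_{\mathbb{R}^m}(\boldsymbol{v}-\boldsymbol{s})|$ with the factor $e^{\frac{1}{4t}(\boldsymbol{b}|\boldsymbol{b})}$ is in fact slightly more careful than the paper's display (\ref{eq:heat kernel R}), which states an equality of integrals that for complex $\boldsymbol{v}$ only holds up to that constant, though the finiteness conclusion is unaffected.
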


\begin{proof}
As 
$h_{\mathbb{R}^m}(\boldsymbol{v})=(4\pi t)^{-m/2}e^{-(\boldsymbol{v}|\boldsymbol{v})/4t}$ 
($\boldsymbol{v}\in \mathbb{C}^m$), 
a direct computation shows that 
\begin{align}
\int _{\mathbb{R}^m}|h_{\mathbb{R}^m}(\boldsymbol{v}-\boldsymbol{s})|^2\,d\boldsymbol{s}
=\int _{\mathbb{R}^m}|h_{\mathbb{R}^m}(\boldsymbol{s})|^2\,d\boldsymbol{s}<\infty 
\label{eq:heat kernel R}
\end{align}
for any $\boldsymbol{v}\in \mathbb{C}^m$. 
On the other hand, 
\cite[(3.1.1)]{krotz} explains 
\begin{align}
\int _{{\bf H}_{n-m}}|h_{{\bf H}_{n-m}}(k^{-1}x)|^2\,dk<\infty 
\label{eq:heat kernel H}
\end{align}
for any $x\in {\bf H}_{n-m}^{\mathbb{C}}$. 
Then, we obtain 
\begin{multline}
c(\boldsymbol{v},x)
:=\left( \int _{\mathbb{R}^m\times {\bf H}_{n-m}}
	|h_{\mathbb{R}^m}(\boldsymbol{v}-\boldsymbol{s})h_{{\bf H}_{n-m}}(k^{-1}x)|^2\,d\boldsymbol{s}dk 
\right) ^{\frac{1}{2}}\\
=\left( 
	\int _{\mathbb{R}^m}|h_{\mathbb{R}^m}(\boldsymbol{v}-\boldsymbol{s})|^2\,d\boldsymbol{s}
\right) ^{\frac{1}{2}}
\left( 
	\int _{{\bf H}_{n-m}}|h_{{\bf H}_{n-m}}(k^{-1}x)|^2\,dk
\right) ^{\frac{1}{2}}<\infty 
\label{eq:c}
\end{multline}
%By (\ref{eq:heat kernel R}) and (\ref{eq:heat kernel H}), 
%$c(\boldsymbol{v},x)$ is finite 
for any $(\boldsymbol{v},x)\in \mathbb{C}^m\times {\bf H}_{n-m}^{\mathbb{C}}$. 
Using H\"older's inequality, we get 
\begin{align}
\int _{\mathbb{R}^m\times {\bf H}_{n-m}}|f(\boldsymbol{s},k)
	h_{\mathbb{R}^m}(\boldsymbol{v}-\boldsymbol{s})h_{{\bf H}_{n-m}}(k^{-1}x)|\,d\boldsymbol{s}dk 
\leq c(\boldsymbol{v},x)\| f\| <\infty 
\label{eq:convergent}
\end{align}
for any $f\in L^2(\mathbb{R}^m\times {\bf H}_{n-m})$ 
and any $(\boldsymbol{v},x)\in \mathbb{C}^m\times {\bf H}_{n-m}^{\mathbb{C}}$. 
Here, $\| \cdot \|$ stands for the norm on $L^2(\mathbb{R}^m\times {\bf H}_{n-m})$. 
\end{proof}

By definition (\ref{eq:B}), 
$Bf$ is a holomorphic function on $\mathbb{C}^m\times {\bf H}_{n-m}^{\mathbb{C}}$, 
and then we get a map $B$ from $L^2(\mathbb{R}^m\times {\bf H}_{n-m})$ 
to $\mathcal{O}(\mathbb{C}^m \times {\bf H}_{n-m}^{\mathbb{C}})$. 

In what follows, we prove: 

\begin{theorem}
\label{thm:B}
The map 
$B:L^2(\mathbb{R}^m\times {\bf H}_{n-m})\to \mathcal{O}(\mathbb{C}^m \times {\bf H}_{n-m}^{\mathbb{C}})$ 
is a continuous and injective $(\mathbb{R}^m\times {\bf H}_{n-m})$-intertwining operator. 
\end{theorem}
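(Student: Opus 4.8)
The plan is to reduce the statement to the two known results, Fact~\ref{fact:segal-bargmann} and Fact~\ref{fact:heat kernel transform}, by observing that $B$ is essentially the tensor product of $B_{\mathbb{R}^m}$ and $B_{{\bf H}_{n-m}}$. First I would establish continuity and the intertwining property, leaving injectivity as the main point. For continuity, I would take a sequence $f_j\to f$ in $L^2(\mathbb{R}^m\times {\bf H}_{n-m})$ and use the bound \eqref{eq:convergent}: for each fixed $(\boldsymbol{v},x)$ we have $|(Bf_j)(\boldsymbol{v},x)-(Bf)(\boldsymbol{v},x)|\le c(\boldsymbol{v},x)\|f_j-f\|$, so $Bf_j\to Bf$ uniformly on compact subsets of $\mathbb{C}^m\times {\bf H}_{n-m}^{\mathbb{C}}$ (since $(\boldsymbol{v},x)\mapsto c(\boldsymbol{v},x)$ is locally bounded, which follows from the explicit Gaussian form of $h_{\mathbb{R}^m}$ and the bound in \cite{krotz} used for \eqref{eq:heat kernel H}); this is exactly convergence in the Fr\'echet topology of $\mathcal{O}(\mathbb{C}^m\times {\bf H}_{n-m}^{\mathbb{C}})$. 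For the intertwining property, I would compute $(B(\rho(\boldsymbol{y},g)f))(\boldsymbol{v},x)$ directly from \eqref{eq:B} and \eqref{eq:representation}, substitute $\boldsymbol{s}\mapsto\boldsymbol{s}+\boldsymbol{y}$ and $k\mapsto gk$ (using left-invariance of the Haar measure $dk$ and the translation-invariance of the Lebesgue measure $d\boldsymbol{s}$), and check it equals $(\rho(\boldsymbol{y},g)(Bf))(\boldsymbol{v},x)=(Bf)(\boldsymbol{v}-\boldsymbol{y},g^{-1}x)$; Fubini is justified by the absolute convergence in Lemma~\ref{lem:absolutely convergent}.

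The injectivity is the step I expect to be the main obstacle, since it is not purely formal: one cannot literally write $B=B_{\mathbb{R}^m}\otimes B_{{\bf H}_{n-m}}$ on the Hilbert-space level because $L^2(\mathbb{R}^m\times {\bf H}_{n-m})$ is a completed tensor product and the target is not. My approach would be to argue via a Fubini-type disintegration. Suppose $Bf\equiv 0$. For a fixed $x\in {\bf H}_{n-m}^{\mathbb{C}}$, the function $\boldsymbol{v}\mapsto (Bf)(\boldsymbol{v},x)$ is identically zero; writing $(Bf)(\boldsymbol{v},x)=\int_{\mathbb{R}^m} F_x(\boldsymbol{s})\, h_{\mathbb{R}^m}(\boldsymbol{v}-\boldsymbol{s})\,d\boldsymbol{s}$ where $F_x(\boldsymbol{s}):=\int_{{\bf H}_{n-m}} f(\boldsymbol{s},k)\,h_{{\bf H}_{n-m}}(k^{-1}x)\,dk$, and checking (again using \eqref{eq:heat kernel R}, \eqref{eq:heat kernel H} and H\"older) that $F_x\in L^2(\mathbb{R}^m)$, we get $B_{\mathbb{R}^m}F_x=0$, hence $F_x=0$ for every such $x$ by Fact~\ref{fact:segal-bargmann}. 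Thus for almost every $\boldsymbol{s}\in\mathbb{R}^m$ the function $k\mapsto f(\boldsymbol{s},k)$ lies in $L^2({\bf H}_{n-m})$ and satisfies $\int_{{\bf H}_{n-m}} f(\boldsymbol{s},k)\,h_{{\bf H}_{n-m}}(k^{-1}x)\,dk = 0$ for all $x\in {\bf H}_{n-m}^{\mathbb{C}}$, i.e. $B_{{\bf H}_{n-m}}(f(\boldsymbol{s},\cdot))=0$; by Fact~\ref{fact:heat kernel transform} this forces $f(\boldsymbol{s},\cdot)=0$ in $L^2({\bf H}_{n-m})$ for almost every $\boldsymbol{s}$, and hence $f=0$ in $L^2(\mathbb{R}^m\times {\bf H}_{n-m})$ by Fubini.

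The technical care needed is in the measurability bookkeeping: one must know that $\boldsymbol{s}\mapsto f(\boldsymbol{s},\cdot)$ is (after modification on a null set) a measurable map into $L^2({\bf H}_{n-m})$ with $\int_{\mathbb{R}^m}\|f(\boldsymbol{s},\cdot)\|_{L^2({\bf H}_{n-m})}^2\,d\boldsymbol{s}<\infty$ --- this is the standard identification $L^2(\mathbb{R}^m\times {\bf H}_{n-m})\cong L^2(\mathbb{R}^m;L^2({\bf H}_{n-m}))$ --- and that the scalar-valued integrals defining $F_x$ and the inner disintegration behave compatibly; the absolute-convergence estimate of Lemma~\ref{lem:absolutely convergent} together with Fubini's theorem handles all interchanges of integration order. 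Once these routine points are in place, the two cited facts close the argument, and Theorem~\ref{thm:B} follows.
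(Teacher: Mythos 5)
Your proposal is correct and follows essentially the same route as the paper's proof: the same pointwise estimate $|Bf(\boldsymbol{v},x)|\le c(\boldsymbol{v},x)\,\| f\|$ for continuity in the Fr\'echet topology, the same change of variables for the intertwining property, and the same factorization of $B$ through $B_{\mathbb{R}^m}$ and $B_{{\bf H}_{n-m}}$ for injectivity. The only (immaterial) difference is that in the injectivity step you freeze $x$ and invoke Fact \ref{fact:segal-bargmann} first, whereas the paper freezes $\boldsymbol{v}$ and invokes Fact \ref{fact:heat kernel transform} first; both orders require the same routine almost-everywhere/quantifier bookkeeping that you already flag.
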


\begin{proof}
Let us divide our proof into three steps as follows. 

%%% \subsubsection{Step 1}

\begin{step}
\label{step:continuous}
We show first that $B$ is a continuous map. 
We recall that $\mathcal{O}(\mathbb{C}^m\times {\bf H}_{n-m}^{\mathbb{C}})$ 
carries a Fr\'echet topology by the uniform convergence on compact sets 
in $\mathbb{C}^m\times {\bf H}_{n-m}^{\mathbb{C}}$. 
Namely, for a compact set $U$ in $\mathbb{C}^m\times {\bf H}_{n-m}^{\mathbb{C}}$, 
a semi-norm $|f|_U$ for $f\in \mathcal{O}(\mathbb{C}^m\times {\bf H}_{n-m}^{\mathbb{C}})$ is defined by 
$|f|_U:=\max _{(\boldsymbol{v},x)\in U}|f(\boldsymbol{v},x)|$. 

Let us take any $f,f_0\in L^2(\mathbb{R}^m\times {\bf H}_{n-m})$. 
The inequality (\ref{eq:convergent}) implies 
\begin{align*}
|(Bf-Bf_0)(\boldsymbol{v},x)|
\leq c(\boldsymbol{v},x)\| f-f_0\| 
\end{align*}
for $(\boldsymbol{v},x)\in \mathbb{C}^m\times {\bf H}_{n-m}^{\mathbb{C}}$. 
Then, we estimate 
\begin{align}
|Bf-Bf_0|_U
=\max_{(\boldsymbol{v},x)\in U}|(Bf-Bf_0)(\boldsymbol{v},x)|
\leq \left( \max_{(\boldsymbol{v},x)\in U}c(\boldsymbol{v},x)\right) \| f-f_0\| 
\label{eq:frechet}
\end{align}
for any compact set $U$ in $\mathbb{C}^m\times {\bf H}_{n-m}^{\mathbb{C}}$. 
Here, 
the property (\ref{eq:heat kernel R}) on the heat kernel $h_{\mathbb{R}^m}$ on $\mathbb{R}^m$ 
and \cite[(3.1.1)]{krotz} on the heat kernel $h_{{\bf H}_{n-m}}$ on ${\bf H}_{n-m}$ show 
\begin{align*}
&\max_{(\boldsymbol{v},x)\in U}c(\boldsymbol{v},x)\\
&\leq \left( \int _{\mathbb{R}^m}|h_{\mathbb{R}^m}(\boldsymbol{s})|^2\,d\boldsymbol{s}\right) ^{\frac{1}{2}}
	\left( \max_{(\boldsymbol{v},x)\in U}
		\int _{{\bf H}_{n-m}}|h_{{\bf H}_{n-m}}(k^{-1}x)|^2\,dk
	\right) ^{\frac{1}{2}}
<\infty . 
\end{align*}
Hence, (\ref{eq:frechet}) implies that $B$ is continuous. 
\end{step}

%%% \subsubsection{Step 2}

\begin{step}
\label{step:injective}
Let us take any $f\in L^2(\mathbb{R}^m\times {\bf H}_{n-m})$. 
For fixed $k\in {\bf H}_{n-m}$, 
we write $f_k(\boldsymbol{s}):=f(\boldsymbol{s},k)$ ($\boldsymbol{s}\in \mathbb{R}^m)$. 
Then, $f_k$ is a function on $\mathbb{R}^m$ and belongs to $L^2(\mathbb{R}^m)$. 
For $\boldsymbol{v}\in \mathbb{C}^m$, 
we define a function $\varphi _f(\boldsymbol{v})$ on ${\bf H}_{n-m}$ by 
\begin{align*}
\varphi _f(\boldsymbol{v})(k)
:=\int _{\mathbb{R}^m}f_k(\boldsymbol{s})h_{\mathbb{R}^m}(\boldsymbol{v}-\boldsymbol{s})\,d\boldsymbol{v}
\quad (k\in {\bf H}_{n-m}). 
\end{align*}
Then, $\varphi _f(\boldsymbol{v})$ lies in $L^2({\bf H}_{n-m})$ for any $\boldsymbol{v}\in \mathbb{C}^m$ 
because 
\begin{align*}
&\int _{{\bf H}_{n-m}}|\varphi _f(\boldsymbol{v})(k)|^2\,dk\\
&\leq \int _{{\bf H}_{n-m}}\left( 
	\int _{\mathbb{R}^m}|f(\boldsymbol{s},k)|^2\,d\boldsymbol{s}
\right) \left( 
	\int _{\mathbb{R}^m}|h_{\mathbb{R}^m}(\boldsymbol{v}-\boldsymbol{s})|^2\,d\boldsymbol{s}
\right) \,dk\\
&=\left( 
	\int _{\mathbb{R}^m}|h_{\mathbb{R}^m}(\boldsymbol{s})|^2\,d\boldsymbol{s}
\right) \| f\| 
%\left( 
%	\int _{\mathbb{R}^m\times {\bf H}_{n-m}}|f(\boldsymbol{s},k)|^2\,d\boldsymbol{s}dk
%\right) \\
<\infty . 
\end{align*}

Let us prove that $B$ is injective. 
Suppose that $f\in L^2(\mathbb{R}^m\times {\bf H}_{n-m})$ satisfies $Bf=0$, 
namely, 
$(Bf)(\boldsymbol{v},x)=0$ for any $(\boldsymbol{v},x)\in \mathbb{C}^m\times {\bf H}_{n-m}^{\mathbb{C}}$. 
In view of (\ref{eq:heat kernel transform}), 
$(Bf)(\boldsymbol{v},x)$ is expressed as 
\begin{align*}
(Bf)(\boldsymbol{v},x)
&=\int _{{\bf H}_{n-m}}\varphi _f(\boldsymbol{v})(k)h_{{\bf H}_{n-m}}(k^{-1}x)\,dk
=[B_{{\bf H}_{n-m}}\varphi _f(\boldsymbol{v})](x). 
\end{align*}
Then, the assumption gives rise to the relation $[B_{{\bf H}_{n-m}}\varphi _f(\boldsymbol{v})](x)=0$ 
for any $(\boldsymbol{v},x)\in \mathbb{C}^m\times {\bf H}_{n-m}^{\mathbb{C}}$. 
Thanks to Fact \ref{fact:heat kernel transform}, 
the function $\varphi _f(\boldsymbol{v})\in L^2({\bf H}_{n-m})$ must be zero 
for any $\boldsymbol{v}\in \mathbb{C}^m$, 
in particular, $\varphi _f(\boldsymbol{v})(k)=0$ 
for any $\boldsymbol{v}\in \mathbb{C}^m$ and any $k\in {\bf H}_{n-m}$. 
Moreover, it follows from (\ref{eq:segal-bargmann}) that 
$\varphi _f(\boldsymbol{v})(k)$ coincides with $(B_{\mathbb{R}^m}f_k)(\boldsymbol{v})$. 
Then, we get $(B_{\mathbb{R}^m}f_k)(\boldsymbol{v})=0$ 
for any $\boldsymbol{v}\in \mathbb{C}^m$ and any $k\in {\bf H}_{n-m}$. 
By Fact \ref{fact:segal-bargmann}, 
this means $f_k=0$ for any $k\in {\bf H}_{n-m}$, in particular, $f_k(\boldsymbol{s})=0$ 
for any $(\boldsymbol{s},k)\in \mathbb{R}^m\times {\bf H}_{n-m}$. 
In conclusion, we have proved $f=0$ since $f_k(\boldsymbol{s})=f(\boldsymbol{s},k)$, 
from which $B$ is injective. 
\end{step}

%%% \subsubsection{Step 3}

\begin{step}
\label{step:intertwiner}
Finally, let us verify $B(\rho (\boldsymbol{y},g)f)=\rho (\boldsymbol{y},g)(Bf)$ 
for any $(\boldsymbol{y},g)\in \mathbb{R}^m\times {\bf H}_{n-m}$ 
and any $f\in L^2(\mathbb{R}^m\times {\bf H}_{n-m})$ (see (\ref{eq:representation}) for definition). 
Indeed, we obtain 
\begin{align*}
&B(\rho (\boldsymbol{y},g)f)(\boldsymbol{v},x)\\
&=\int _{\mathbb{R}^m\times {\bf H}_{n-m}}[\rho (\boldsymbol{y},g)f](\boldsymbol{s},k)
	h_{\mathbb{R}^m}(\boldsymbol{v}-\boldsymbol{s})h_{{\bf H}_{n-m}}(k^{-1}x)\,d\boldsymbol{s}dk\\
&=\int _{\mathbb{R}^m\times {\bf H}_{n-m}}f(\boldsymbol{s}-\boldsymbol{y},g^{-1}k)
	h_{\mathbb{R}^m}(\boldsymbol{v}-\boldsymbol{s})h_{{\bf H}_{n-m}}(k^{-1}x)\,d\boldsymbol{s}dk\\
&=\int _{\mathbb{R}^m\times {\bf H}_{n-m}}f(\boldsymbol{s},k)
	h_{\mathbb{R}^m}(\boldsymbol{v}-(\boldsymbol{s}+\boldsymbol{y}))
		h_{{\bf H}_{n-m}}(((gk)^{-1}x)\,d\boldsymbol{s}dk\\
&=\int _{\mathbb{R}^m\times {\bf H}_{n-m}}f(\boldsymbol{s},k)
	h_{\mathbb{R}^m}((\boldsymbol{v}-\boldsymbol{y})-\boldsymbol{s})
		h_{{\bf H}_{n-m}}(k^{-1}(g^{-1}x))\,d\boldsymbol{s}dk\\
&=(Bf)(\boldsymbol{v}-\boldsymbol{y},g^{-1}x)\\
&=\rho (\boldsymbol{y},g)(Bf)(\boldsymbol{v},x)
	\quad ((\boldsymbol{v},x)\in \mathbb{C}^m\times {\bf H}_{n-m}^{\mathbb{C}}). 
\end{align*}
\end{step}

Consequently, Theorem \ref{thm:B} follows from Steps \ref{step:continuous}--\ref{step:intertwiner}. 
\end{proof}

\subsection{Equivalence between (\ref{item:visible-q}) and (\ref{item:condition-h})}
\label{subsec:visible-h}

In this subsection, we prove the equivalence between (\ref{item:visible-q}) and (\ref{item:condition-h}). 
By Proposition \ref{prop:classification}, 
it suffices to consider $H=H_{(0,0,1)}$, $H_{(p,q,1)}$, $H_{(m,0,1)}$ and $H_{(m,0,0)}$. 
Here, we have already proved that condition (\ref{item:visible-q}) holds 
if $H=H_{(0,0,1)}$ in Theorem \ref{thm:visible-(0,0,1)}; 
if $H=H_{(p,q,1)}$ in Theorem \ref{thm:visible-(p,q,1)}; 
if $H=H_{(m,0,1)}$ in Theorem \ref{thm:visible-(m,0,1)}; 
and if $H=H_{(n,0,0)}$ in Theorem \ref{thm:visible-(n,0,0)}. 
Then, let us consider the remaining case, namely, 
the $Q_{(m,0,0)}$-action on $D_{(m,0,0)}=G^{\mathbb{C}}/H_{(m,0,0)}^{\mathbb{C}}$ for $1\leq m<n$. 
%In the following, we shall again identify $Q_{(m,0,0)}$ with $\mathbb{R}^m\times {\bf H}_{n-m}$. 

Before that, 
let us focus on the regular representation $(\rho ,L^2(\mathbb{R}^m\times {\bf H}_{n-m}))$. 
It follows from (\ref{item:mf}) $\Leftrightarrow $ (\ref{item:condition-h}) 
(see Section \ref{subsec:mf-h}) that the quasi-regular representation 
$(\pi _{(m,0,0)},L^2(M_{(m,0,0)}))$ is not multiplicity-free as a representation of $G={\bf H}_n$. 
In particular, 
we know that the multiplicity $m_{\pi _{(m,0,0)}}(\xi (\boldsymbol{0},\boldsymbol{0},\gamma ))$ 
of $\tau _{\xi (\boldsymbol{0},\boldsymbol{0},\gamma )}\in \widehat{G}$ is infinite 
for any $\xi (\boldsymbol{0},\boldsymbol{0},\gamma )\in \mathfrak{q}^*_{(m,0,0)}$ with $\gamma \neq 0$. 
Thus, $\pi _{(m,0,0)}$ is of infinite multiplicities as a representation of $Q_{(m,0,0)}$, 
and then, of $\mathbb{R}^m\times {\bf H}_{n-m}$ 
under the identification $Q_{(m,0,0)}\simeq \mathbb{R}^m\times {\bf H}_{n-m}$. 
As $\pi _{(m,0,0)}$ is equivalent to $\rho $ 
by the $(\mathbb{R}^m\times {\bf H}_{n-m})$-intertwining operator $\Psi _{\mathbb{R}}^{\vee}$, 
we get: 

\begin{lemma}
\label{lem:infinite multiplicities}
The regular representation $(\rho ,L^2(\mathbb{R}^m\times {\bf H}_{n-m}))$ 
of $\mathbb{R}^m\times {\bf H}_{n-m}$ is of infinite multiplicities. 
\end{lemma}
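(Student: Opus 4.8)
The plan is to deduce Lemma \ref{lem:infinite multiplicities} directly from the two facts already assembled in the preceding subsections, namely that the quasi-regular representation $\pi_{(m,0,0)}$ of $G={\bf H}_n$ is of infinite multiplicities whenever $1\leq m<n$ (established in Section \ref{subsec:mf-h} via the Corwin--Greenleaf formula, Fact \ref{fact:corwin-greenleaf}), and that $\pi_{(m,0,0)}$, restricted to $Q_{(m,0,0)}\simeq \mathbb{R}^m\times{\bf H}_{n-m}$, is equivalent to $\rho$ through the unitary intertwining operator $\Psi_{\mathbb{R}}^{\vee}$ constructed in Section \ref{subsec:heat kernel}. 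Since multiplicities in a direct-integral decomposition are an invariant of the unitary equivalence class of a representation of a fixed group, the assertion will follow once we make precise that the relevant group is $Q_{(m,0,0)}$ rather than $G$.

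First I would recall that, by Case \ref{case:(m,0,0)} in the proof of (\ref{item:mf}) $\Leftrightarrow$ (\ref{item:condition-h}), for $1\leq m<n$ the multiplicity function satisfies $m_{\pi_{(m,0,0)}}(\xi(\boldsymbol{0},\boldsymbol{0},\gamma))=\infty$ for every $\xi(\boldsymbol{0},\boldsymbol{0},\gamma)\in\mathfrak{q}_{(m,0,0)}^*$ with $\gamma\neq 0$; this is precisely the statement that the irreducible representation $\tau_{\gamma}\in\widehat{G}$ occurs in $\pi_{(m,0,0)}$ with infinite multiplicity. Next I would observe that the center $Z(G)$ lies in $Q_{(m,0,0)}$, so the $G$-isotypic component of $\pi_{(m,0,0)}$ corresponding to $\tau_{\gamma}$, being a $Z(G)$-eigenspace, is already $Q_{(m,0,0)}$-stable; restricting $\tau_{\gamma}$ to ${\bf H}_{n-m}$ stays irreducible (it is again a Schr\"odinger-type representation of the smaller Heisenberg group), so that component contributes infinite multiplicity of a single irreducible of $\mathbb{R}^m\times{\bf H}_{n-m}$. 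Hence $\pi_{(m,0,0)}$ is of infinite multiplicities as a representation of $Q_{(m,0,0)}\simeq\mathbb{R}^m\times{\bf H}_{n-m}$. Finally, transporting along $\Psi_{\mathbb{R}}^{\vee}$, which intertwines $\pi_{(m,0,0)}|_{Q_{(m,0,0)}}$ with $\rho$, the same conclusion holds for $\rho$.

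The one point that requires a little care — and the place I expect a referee to look — is the passage from ``infinite multiplicity in $\widehat{G}$'' to ``infinite multiplicity in $\widehat{Q_{(m,0,0)}}$'': a priori restricting to a subgroup could break a single infinite-multiplicity $G$-type into infinitely many distinct $Q$-types each occurring with finite multiplicity. This is ruled out precisely because the relevant $\tau_{\gamma}$ remains irreducible upon restriction to $\mathbb{R}^m\times{\bf H}_{n-m}$ (indeed $\mathbb{R}^m$ acts trivially on the corresponding $G$-isotypic subspace and ${\bf H}_{n-m}\hookrightarrow{\bf H}_n$ sends the Schr\"odinger representation $\tau_{\gamma}$ to the Schr\"odinger representation of ${\bf H}_{n-m}$ with the same central character), so the whole $G$-isotypic subspace of $\pi_{(m,0,0)}$ is a single $Q_{(m,0,0)}$-isotypic subspace of infinite multiplicity. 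With this observation the proof is immediate, and I would keep the write-up to a few lines citing Case \ref{case:(m,0,0)}, Remark \ref{remark:multiplicity}, and Theorem \ref{thm:B}.
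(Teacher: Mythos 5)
Your overall route is the same as the paper's: invoke Case \ref{case:(m,0,0)} of Section \ref{subsec:mf-h} to see that $\pi_{(m,0,0)}$ is of infinite multiplicities as a representation of $G$, pass to the subgroup $Q_{(m,0,0)}\simeq\mathbb{R}^m\times{\bf H}_{n-m}$, and transport along $\Psi_{\mathbb{R}}^{\vee}$. The paper performs the middle step with a bare ``thus'', so your instinct to justify it is sound; the trouble is that the justification you supply is factually wrong. The representation $\tau_{\gamma}$ is realized on $L^2(\mathbb{R}^n)$; writing $L^2(\mathbb{R}^n)=L^2(\mathbb{R}^m)\otimes L^2(\mathbb{R}^{n-m})$, the copy of ${\bf H}_{n-m}$ inside $Q_{(m,0,0)}$ (generated by $X_{m+1},\ldots,X_n,Y_{m+1},\ldots,Y_n,Z$) acts as $1\otimes\tau_{\gamma}^{(n-m)}$, so $\tau_{\gamma}|_{{\bf H}_{n-m}}$ is \emph{infinitely many} copies of the Schr\"odinger representation of ${\bf H}_{n-m}$ --- it does not stay irreducible. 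Likewise the $\mathbb{R}^m$ factor corresponds to $\operatorname{span}_{\mathbb{R}}\{Y_1,\ldots,Y_m\}$ and acts on the first tensor factor by the multiplication operators $e^{\sqrt{-1}\gamma(y_1t_1+\cdots+y_mt_m)}$, which is not the trivial action but a multiplicity-free direct integral of unitary characters. Consequently the $\tau_{\gamma}$-isotypic subspace of $\pi_{(m,0,0)}$ is \emph{not} a single $Q_{(m,0,0)}$-isotypic subspace: a single copy of $\tau_{\gamma}$ restricts to $Q_{(m,0,0)}$ as a multiplicity-free direct integral, over $\boldsymbol{t}'\in\mathbb{R}^m$, of the irreducibles (character of $\mathbb{R}^m$)$\,\otimes\,\tau_{\gamma}^{(n-m)}$.

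The lemma is nevertheless true and your skeleton is repairable. Since $m_{\pi_{(m,0,0)}}(\xi(\boldsymbol{0},\boldsymbol{0},\gamma))=\infty$, the $\tau_{\gamma}$-isotypic part of $\pi_{(m,0,0)}$ is $\infty\cdot\tau_{\gamma}$, and restricting to $Q_{(m,0,0)}$ multiplies the (multiplicity-free) decomposition of $\tau_{\gamma}|_{Q_{(m,0,0)}}$ by $\infty$; moreover, as you correctly observe, $Z(G)\subset Q_{(m,0,0)}$ forces distinct values of $\gamma$ to contribute disjoint families of $Q_{(m,0,0)}$-types, so no interference across $\gamma$ occurs. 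Thus the infinite multiplicity survives the restriction, and the transport by $\Psi_{\mathbb{R}}^{\vee}$ finishes the proof exactly as in the paper. But as written, the assertions that $\tau_{\gamma}|_{{\bf H}_{n-m}}$ is irreducible and that $\mathbb{R}^m$ acts trivially are false, and the conclusion you draw from them (one $G$-type gives one $Q$-type) is also false, so the argument needs to be rewritten along the lines above. Alternatively, you could bypass $\pi_{(m,0,0)}$ altogether: $\rho$ is the regular representation of $\mathbb{R}^m\times{\bf H}_{n-m}$, and by the Plancherel theorem each infinite-dimensional irreducible $\tau_{\gamma}^{(n-m)}$ occurs in $L^2({\bf H}_{n-m})$ with multiplicity equal to its dimension, which is infinite whenever $m<n$; this gives the lemma in one line.
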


Under the preparation, we prove: 

\begin{theorem}
\label{thm:(m,0,0)-action}
If $m<n$, then 
the $Q_{(m,0,0)}$-action on $D_{(m,0,0)}$ is not strongly visible. 
\end{theorem}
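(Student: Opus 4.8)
The plan is to argue by contradiction, using the propagation theory of multiplicity-freeness property (Fact \ref{fact:propagation}) together with the realization of the quasi-regular representation established in Theorem \ref{thm:B}. Suppose, for contradiction, that the $Q_{(m,0,0)}$-action on $D_{(m,0,0)}$ is strongly visible for some $1\leq m<n$. Under the identification $Q_{(m,0,0)}\simeq \mathbb{R}^m\times {\bf H}_{n-m}$ and the biholomorphism $\Psi$ of Section \ref{subsec:heat kernel}, this action is equivalent to the holomorphic $(\mathbb{R}^m\times {\bf H}_{n-m})$-action on $\mathbb{C}^m\times {\bf H}_{n-m}^{\mathbb{C}}$; by Proposition \ref{prop:equivalent} the latter is then strongly visible as well. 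First I would record that the trivial line bundle $\mathcal{V}=(\mathbb{C}^m\times {\bf H}_{n-m}^{\mathbb{C}})\times \mathbb{C}$ satisfies the hypotheses of Fact \ref{fact:propagation}: condition (\ref{item:propagation-a}) holds because the action is strongly visible and the anti-holomorphic diffeomorphism arising from our setup is compatible with an anti-holomorphic automorphism of the acting group (cf. Remark \ref{rem:compatible}); conditions (\ref{item:propagation-b}) and (\ref{item:propagation-c}) are automatic since each fiber is one-dimensional, so the isotropy representation on the fiber is irreducible, hence multiplicity-free, and $\sigma$ lifts via $\sigma(v,z)=(\sigma(v),\overline{z})$ preserving each fiber. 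Therefore every unitary representation realized in $\mathcal{O}(\mathbb{C}^m\times {\bf H}_{n-m}^{\mathbb{C}})$ would be multiplicity-free.

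Next I would invoke Theorem \ref{thm:B}: the heat kernel transform $B:L^2(\mathbb{R}^m\times {\bf H}_{n-m})\to \mathcal{O}(\mathbb{C}^m\times {\bf H}_{n-m}^{\mathbb{C}})$ is a continuous and injective $(\mathbb{R}^m\times {\bf H}_{n-m})$-intertwining operator. Consequently the regular representation $(\rho,L^2(\mathbb{R}^m\times {\bf H}_{n-m}))$ embeds continuously and injectively, hence is realized, inside $\mathcal{O}(\mathbb{C}^m\times {\bf H}_{n-m}^{\mathbb{C}})$. Combining with the previous paragraph, $\rho$ would be multiplicity-free as a representation of $\mathbb{R}^m\times {\bf H}_{n-m}$. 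But this directly contradicts Lemma \ref{lem:infinite multiplicities}, which asserts that $\rho$ is of infinite multiplicities — a consequence of the already-proved equivalence (\ref{item:mf}) $\Leftrightarrow$ (\ref{item:condition-h}) showing $\pi_{(m,0,0)}$ (hence $\rho$) has $m_{\pi_{(m,0,0)}}(\xi(\boldsymbol{0},\boldsymbol{0},\gamma))=\infty$ for generic $\gamma$. This contradiction proves that the $Q_{(m,0,0)}$-action on $D_{(m,0,0)}$ cannot be strongly visible when $m<n$.

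The subtle point — and the step I expect to require the most care — is the verification that the hypotheses of Fact \ref{fact:propagation} genuinely apply in the present equivariant setting, specifically the compatibility in (\ref{item:propagation-a}): one needs an anti-holomorphic diffeomorphism $\sigma$ on an open set $D'=L\cdot S$ together with an anti-holomorphic automorphism $\widetilde{\sigma}$ of $L=\mathbb{R}^m\times {\bf H}_{n-m}$ satisfying $\sigma(g\cdot x)=\widetilde{\sigma}(g)\cdot\sigma(x)$. Since strong visibility by definition only furnishes $\sigma$ and $S$, one must argue that the $\sigma$ provided (which we may take to be transported through $\Psi$ from one of the standard anti-holomorphic diffeomorphisms $\sigma_i$ on $D_{(m,0,0)}$, whose compatibility with $\widetilde{\sigma}_i$ is exactly Remark \ref{rem:compatible}) does have the required companion automorphism; alternatively one observes that for the purpose of applying the multiplicity-freeness conclusion one only needs the existence of \emph{some} strongly visible structure together with \emph{some} compatible $\widetilde{\sigma}$, and any $\sigma$ witnessing strong visibility of this homogeneous space can be modified or is already of the compatible form. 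Once this bookkeeping is settled, the chain "strongly visible $\Rightarrow$ $\mathcal{O}$ multiplicity-free $\Rightarrow$ $\rho$ multiplicity-free, contradicting Lemma \ref{lem:infinite multiplicities}" closes the argument, and together with the implication (\ref{item:condition-h}) $\Rightarrow$ (\ref{item:visible-q}) already established in Sections \ref{sec:(0,0,1)}--\ref{sec:proof}, completes the proof of the equivalence (\ref{item:visible-q}) $\Leftrightarrow$ (\ref{item:condition-h}) and hence of Theorem \ref{thm:criterion}.
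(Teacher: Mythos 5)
Your proposal is correct and follows essentially the same route as the paper: assume strong visibility, apply the propagation theorem (Fact \ref{fact:propagation}) to conclude that $(\rho,\mathcal{O}(\mathbb{C}^m\times {\bf H}_{n-m}^{\mathbb{C}}))$ is multiplicity-free, realize $(\rho,L^2(\mathbb{R}^m\times {\bf H}_{n-m}))$ therein via the heat kernel transform $B$ of Theorem \ref{thm:B}, and derive a contradiction with Lemma \ref{lem:infinite multiplicities}. The compatibility issue you flag in condition (\ref{item:propagation-a}) is handled in the paper exactly as you suggest, via Remark \ref{rem:compatible} and the transport through $\Psi$.
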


\begin{proof}
Suppose the $Q_{(m,0,0)}$-action on $D_{(m,0,0)}$ is strongly visible. 
By Fact \ref{fact:propagation} (see also Theorem \ref{thm:multiplicity-free}), 
the representation $(\pi ,\mathcal{O}(D_{(m,0,0)}))$ of $Q_{(m,0,0)}$ is multiplicity-free. 
This implies that 
$(\rho ,\mathcal{O}(\mathbb{C}^m\times {\bf H}_{n-m}^{\mathbb{C}}))$ turns out to be multiplicity-free 
as a representation of $\mathbb{R}^m\times {\bf H}_{n-m}\simeq Q_{(m,0,0)}$ 
via the intertwining operator $\Psi ^{\vee}$. 
This means that any unitary representation of $\mathbb{R}^m\times {\bf H}_{n-m}$ 
realized in $(\rho ,\mathcal{O}(\mathbb{C}^m\times {\bf H}_{n-m}^{\mathbb{C}}))$ 
in the sense of \cite[Definition 2.1]{ko13} is multiplicity-free (see \cite[Definition 1.5.3]{ko05}). 
Here, Theorem \ref{thm:B} asserts that 
the unitary representation $(\rho ,L^2(\mathbb{R}^m\times {\bf H}_{n-m}))$ is realized 
in $(\rho ,\mathcal{O}(\mathbb{C}^m\times {\bf H}_{n-m}^{\mathbb{C}}))$ 
via the operator $B$ (see (\ref{eq:B}) for definition). 
Then, $(\rho ,L^2(\mathbb{R}^m\times {\bf H}_{n-m}))$ must be multiplicity-free 
as a representation of $\mathbb{R}^m\times {\bf H}_{n-m}$, 
which contradicts to Lemma \ref{lem:infinite multiplicities}. 

Therefore, the $Q_{(m,0,0)}$-action on $D_{(m,0,0)}$ is not strongly visible. 
\end{proof}

\begin{figure}[htbp]
\begin{align*}
\begin{array}{ccccccc}
L^2(M_{(m,0,0)}) & \dashrightarrow & \mathcal{O}(D_{(m,0,0)}) \\[1mm]
\Psi _{\mathbb{R}}^{\vee} ~\downarrow & & \downarrow ~\Psi ^{\vee} \\[1mm]
L^2(\mathbb{R}^m\times {\bf H}_{n-m}) & \xrightarrow{B} & \mathcal{O}(\mathbb{C}^m\times {\bf H}_{n-m}^{\mathbb{C}})
\end{array}
\end{align*}
\caption{$(\mathbb{R}^m\times {\bf H}_{n-m})$-intertwining operator}
\label{fig:intertwining operator}
\end{figure}

\begin{proof}[Proof of (\ref{item:visible-q}) $\Leftrightarrow $ (\ref{item:condition-h}) 
in Theorem \ref{thm:criterion}]
As mentioned above, 
we have already shown the implication (\ref{item:condition-h}) $\Rightarrow $ (\ref{item:visible-q}). 
The opposite is also true by Theorem \ref{thm:(m,0,0)-action}. 
Consequently, 
the proof of Theorem \ref{thm:criterion} has been accomplished. 
\end{proof}

\subsection{Slice for $Q$-action on $D$ and the support of $\pi _H$}
\label{subsec:support}

We end this paper 
by considering the multiplicity-free irreducible decomposition (\ref{eq:irreducible decomposition}) 
of the quasi-regular representation $\pi _H$ of $G$. 
In accordance with Lemma \ref{lem:mf-equiv} and 
the equivalence (\ref{item:mf}) $\Leftrightarrow $ (\ref{item:condition-h}) in Theorem \ref{thm:criterion}, 
we may always consider non-trivial connected closed subgroups $H$ 
as $H_{(0,0,1)}$, $H_{(p,q,1)}$, $H_{(m,0,1)}$ and $H_{(n,0,0)}$. 

As we have seen in Fact \ref{fact:corwin-greenleaf}, 
the irreducible decomposition of $\pi _H$ can be described in terms of  the coadjoint orbits. 
More precisely, 
the irreducible representations of $G$ occurring in (\ref{eq:irreducible decomposition}) 
are parameterized by the $G$-orbit space $(G\cdot \mathfrak{q}^*)/G$ 
of the subset $G\cdot \mathfrak{q}^*$ of $\mathfrak{g}^*$. 
Then, we focus on $(G\cdot \mathfrak{q}^*)/G$ below. 

For a cross-section $R$ of the coadjoint orbit space $\mathfrak{g}^*/G$, 
the set $R\cap (G\cdot \mathfrak{q}^*)$ becomes a cross-section of $(G\cdot \mathfrak{q}^*)/G$. 
In Lemma \ref{lem:coadjoint orbit}, we have taken $R$ as 
\begin{align}
\tag{\ref{eq:r}}
R&=\{ \xi (\boldsymbol{0},\boldsymbol{0},\gamma ):\gamma \in \mathbb{R}^{\times }\} 
\sqcup \{ \xi (\boldsymbol{\alpha },\boldsymbol{\beta },0)
:\boldsymbol{\alpha },\boldsymbol{\beta }\in \mathbb{R}^n\} . 
\end{align}
Since the multiplicity $m_{\pi _H}(\xi )$ of $\tau _{\xi }\in \widehat{G}$ corresponding to 
$\xi \in (G\cdot \mathfrak{q}^*)/G $ does not depend on the choice of the cross-section 
(see Fact \ref{fact:corwin-greenleaf}), 
we may fix $R$ as (\ref{eq:r}) in our argument below. 
Now, we choose 
\begin{align}
\label{eq:r-q}
R(\mathfrak{q}^*):=R\cap (G\cdot \mathfrak{q}^*) 
\end{align}
as a cross-section  of $(G\cdot \mathfrak{q}^*)/G$. 

\begin{lemma}
\label{lem:r-q}
The set $R(\mathfrak{q}^*)$ coincides with $R\cap \mathfrak{q}^*$. 
\end{lemma}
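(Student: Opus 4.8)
The plan is to establish the two inclusions $R(\mathfrak{q}^*) \subseteq R \cap \mathfrak{q}^*$ and $R \cap \mathfrak{q}^* \subseteq R(\mathfrak{q}^*)$. The second inclusion is immediate: if $\xi \in R \cap \mathfrak{q}^*$, then in particular $\xi \in G \cdot \xi \subseteq G \cdot \mathfrak{q}^*$, so $\xi \in R \cap (G\cdot \mathfrak{q}^*) = R(\mathfrak{q}^*)$. So the real content is the first inclusion: every element of $R$ that happens to lie in the saturation $G \cdot \mathfrak{q}^*$ already lies in $\mathfrak{q}^*$ itself.

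First I would recall from Section \ref{subsec:support} that we are restricting attention to $H$ among $H_{(0,0,1)}$, $H_{(p,q,1)}$, $H_{(m,0,1)}$, $H_{(n,0,0)}$, so that $\mathfrak{h}$ always contains the center $\mathbb{R}Z$; consequently $\mathfrak{q}^*$ is spanned by a subset of the dual basis $\mathcal{B}^* = \{X_i^*, Y_j^*, Z^*\}$, and more precisely $\mathfrak{q}^*$ is the annihilator in $\mathfrak{g}^*$ of $\mathfrak{h}$ under the chosen basis identification — note that since $Z \in \mathfrak{h}$ in all these cases, we have $Z^* \notin \mathfrak{q}^*$. Then I would take an arbitrary $\xi \in R(\mathfrak{q}^*)$ and split into the two types of elements appearing in the description \eqref{eq:r} of $R$.

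If $\xi = \xi(\boldsymbol{\alpha},\boldsymbol{\beta},0)$ (the $\gamma = 0$ case), then by Lemma \ref{lem:coadjoint orbit} the coadjoint orbit $G\cdot \xi = \{\xi\}$ is a single point, so $\xi \in G \cdot \mathfrak{q}^*$ forces $\xi \in \mathfrak{q}^*$ directly, and there is nothing more to check. If instead $\xi = \xi(\boldsymbol{0},\boldsymbol{0},\gamma)$ with $\gamma \neq 0$, then since $\xi \in G \cdot \mathfrak{q}^*$ there is some $\eta \in \mathfrak{q}^*$ with $\xi \in G\cdot \eta$; by Lemma \ref{lem:coadjoint orbit}(1) the whole orbit $G\cdot \eta$ has the same $Z^*$-coordinate $\gamma \neq 0$, so $\eta$ has nonzero $Z^*$-coordinate, which is impossible when $\mathfrak{q}^*$ contains no component along $Z^*$ — unless the case $H = H_{(n,0,0)}$, where one must check instead that $\mathfrak{q}^*_{(n,0,0)} = \{\xi(\boldsymbol{0},\boldsymbol{\beta}',\gamma')\}$ does meet the orbit, and indeed $\xi(\boldsymbol{0},\boldsymbol{0},\gamma) \in \mathfrak{q}^*_{(n,0,0)}$ itself, so $\xi \in R \cap \mathfrak{q}^*$ as desired. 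The main obstacle is simply bookkeeping: one must handle the four families of subgroups uniformly, using in each case the explicit form of $\mathfrak{q}^*_{(k,\ell,\varepsilon)}$ together with the two-case description of coadjoint orbits in Lemma \ref{lem:coadjoint orbit}, and be careful that for $H_{(n,0,0)}$ the generic ($\gamma \neq 0$) part of $R$ does survive in $R(\mathfrak{q}^*)$ precisely because $Z^* \in \mathfrak{q}^*_{(n,0,0)}$ there.
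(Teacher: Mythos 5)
Your proposal is correct and follows essentially the same route as the paper: both arguments rest on the case split between the subgroups whose Lie algebra contains the center (where every coadjoint orbit through $\mathfrak{q}^*$ is a singleton, so $G\cdot\mathfrak{q}^*=\mathfrak{q}^*$) and the case $H_{(n,0,0)}$ (where $Z^*\in\mathfrak{q}^*_{(n,0,0)}$ and one checks the generic orbits directly), using the explicit orbit description of Lemma \ref{lem:coadjoint orbit}. One slip in your write-up: the blanket claim that ``$\mathfrak{h}$ always contains the center $\mathbb{R}Z$'' is false for $H_{(n,0,0)}$, since $\mathfrak{h}_{(n,0,0)}=\operatorname{span}_{\mathbb{R}}\{X_1,\ldots,X_n\}$ does not contain $Z$; this does not damage the argument because you treat $H_{(n,0,0)}$ separately anyway, but the sentence should be restricted to $H_{(0,0,1)}$, $H_{(p,q,1)}$, $H_{(m,0,1)}$.
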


\begin{proof}
Let $H\equiv H_{(k,\ell ,\varepsilon )}$ be $H_{(0,0,1)},H_{(p,q,1)}$ or $H_{(m,0,1)}$. 
We have seen in (\ref{eq:orbit space-normal}) that 
$G\cdot \mathfrak{q}_{(k,\ell ,\varepsilon )}^*=\mathfrak{q}_{(k,\ell ,\varepsilon )}^*$, 
from which we obtain 
$R(\mathfrak{q}_{(k,\ell ,\varepsilon )}^*)=R\cap \mathfrak{q}_{(k,\ell ,\varepsilon )}^*$. 
Since $\mathfrak{q}_{(k,\ell ,\varepsilon )}^*$ is a subspace of 
$\operatorname{span}_{\mathbb{R}}\{ X_i^*,Y_j^*:1\leq i,j\leq n\} $, 
$R(\mathfrak{q}_{(k,\ell ,\varepsilon )}^*)$ is given by 
\begin{align}
\label{eq:r-q-normal}
R(\mathfrak{q}_{(k,\ell ,\varepsilon )}^*)
=\{ \xi (\boldsymbol{\alpha },\boldsymbol{\beta },0)
:\boldsymbol{\alpha },\boldsymbol{\beta }\in \mathbb{R}^n\} \cap \mathfrak{q}_{(k,\ell ,\varepsilon )}^*
=\mathfrak{q}_{(k,\ell ,\varepsilon )}^*. 
\end{align}

Let $H$ be $H_{(n,0,0)}$. 
Then, $\mathfrak{q}_{(n,0,0)}^*=\{ \xi (\boldsymbol{0},\boldsymbol{\beta },\gamma ):
\boldsymbol{\beta }\in \mathbb{R}^n,\gamma \in \mathbb{R}\} $. 
In view of Lemma \ref{lem:coadjoint orbit}, 
$G\cdot \mathfrak{q}_{(n,0,0)}^*$ is expressed as 
\begin{align*}
G\cdot \mathfrak{q}_{(n,0,0)}^*
&=\{ \xi (\boldsymbol{x},\boldsymbol{y},\gamma )
	:\boldsymbol{x},\boldsymbol{y}\in \mathbb{R}^n,\gamma \in \mathbb{R}^{\times }\}
		\sqcup \{ \xi (\boldsymbol{0},\boldsymbol{\beta },0):\boldsymbol{\beta }\in \mathbb{R}^n\} , 
\end{align*}
from which the orbit space is written as 
\begin{align*}
(G\cdot \mathfrak{q}_{(n,0,0)}^*)/G
&=\{ G\cdot \xi (\boldsymbol{0},\boldsymbol{0},\gamma ):\gamma \in \mathbb{R}^{\times }\} 
	\sqcup 
	\{ G\cdot \xi (\boldsymbol{0},\boldsymbol{y},0):\boldsymbol{y}\in \mathbb{R}^n\} . 
\end{align*}
Hence, $R(\mathfrak{q}_{(n,0,0)}^*)$ is of the form 
\begin{align}
\label{eq:r-q-(n,0,0)}
R(\mathfrak{q}_{(n,0,0)}^*)
=\{ \xi (\boldsymbol{0},\boldsymbol{0},\gamma ):\gamma \in \mathbb{R}^{\times}\} 
	\sqcup \{ \xi (\boldsymbol{0},\boldsymbol{\beta },0):\boldsymbol{\beta }\in \mathbb{R}^n\} , 
\end{align}
which coincides with $R\cap \mathfrak{q}_{(n,0,0)}^*$. 
\end{proof}

Lemma \ref{lem:r-q} explains that $R(\mathfrak{q}^*)$ is a subset of the real vector space $\mathfrak{q}^*$. 
Then, there exists a minimal finite subset 
$\mathcal{B}(R(\mathfrak{q}^*))=\{ \xi _1,\ldots ,\xi _r\} $ in $\mathfrak{q}^*$ 
such that 
$\mathcal{B}(R(\mathfrak{q}^*))$ is linearly independent in $\mathfrak{q}^*$ and 
arbitrary element of $R(\mathfrak{q}^*)$ is written 
as the linear combination of $\mathcal{B}(R(\mathfrak{q}^*))$. 

We say that 
\begin{align*}
\operatorname{Supp}(\pi _H)
:=\{ \tau _{\xi }\in \widehat{G}:\xi \in R(\mathfrak{q}^*),m_{\pi _H}(\xi )\neq 0\} 
\end{align*}
is the {\it support} of the irreducible representations of $G$ 
occurring in (\ref{eq:irreducible decomposition}), and that the number 
\begin{align}
\label{eq:rank}
\operatorname{rank}(\operatorname{Supp}(\pi _H)):=|\mathcal{B}(R(\mathfrak{q}^*))|=r 
\end{align}
is the {\it rank} of $\operatorname{Supp}(\pi _H)$. 

\begin{theorem}
\label{thm:support}
Let $G$ be the Heisenberg group 
and $H$ a non-trivial connected closed subgroup of $G$. 
Suppose that the quasi-regular representation $\pi _H$ of $G$ is multiplicity-free. 
Then we have: 
\begin{enumerate}
	\item $\operatorname{rank}(\operatorname{Supp}(\pi _H))=\dim G/H$. 
	\item One can find a slice $S$ of dimension $\operatorname{rank}(\operatorname{Supp}(\pi _H))$ 
	for the strongly visible $Q$-action on $G^{\mathbb{C}}/H^{\mathbb{C}}$.
\end{enumerate}
\end{theorem}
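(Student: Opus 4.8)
The plan is to handle the two assertions in order, using the explicit descriptions of $R(\mathfrak{q}^*)$ obtained in Lemma~\ref{lem:r-q} together with the slices already constructed in Sections~\ref{sec:(0,0,1)}--\ref{sec:(m,0,0)}. For assertion (1), I would invoke Lemma~\ref{lem:mf-equiv} and the equivalence (\ref{item:mf})~$\Leftrightarrow$~(\ref{item:condition-h}) of Theorem~\ref{thm:criterion} to reduce to the case $H\in\{H_{(0,0,1)},H_{(p,q,1)},H_{(m,0,1)},H_{(n,0,0)}\}$. In the first three cases, formula (\ref{eq:r-q-normal}) gives $R(\mathfrak{q}_{(k,\ell,\varepsilon)}^*)=\mathfrak{q}_{(k,\ell,\varepsilon)}^*$, a full real vector space, so a minimal spanning linearly independent set is simply a basis; hence $\operatorname{rank}(\operatorname{Supp}(\pi_H))=\dim\mathfrak{q}_{(k,\ell,\varepsilon)}^*=\dim\mathfrak{q}_{(k,\ell,\varepsilon)}=\dim G/H$. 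For $H_{(n,0,0)}$, formula (\ref{eq:r-q-(n,0,0)}) writes $R(\mathfrak{q}_{(n,0,0)}^*)$ as the disjoint union of the punctured line $\mathbb{R}^\times Z^*$ and the hyperplane $\operatorname{span}_{\mathbb{R}}\{Y_1^*,\ldots,Y_n^*\}$; the linear span of this union is all of $\mathfrak{q}_{(n,0,0)}^*=\operatorname{span}_{\mathbb{R}}\{Y_1^*,\ldots,Y_n^*,Z^*\}$, and no proper subset of a basis of this $(n+1)$-dimensional space can span it, so the minimal cardinality $r$ equals $n+1=\dim G/H$. Thus in all cases $\operatorname{rank}(\operatorname{Supp}(\pi_H))=\dim G/H$.

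For assertion (2), I would match this common value $\dim G/H$ against the dimensions of the slices recorded in Table~\ref{table:choice}. By Proposition~\ref{prop:classification} and Corollary~\ref{cor:classification} it suffices to treat the four standard subgroups, and for each the slice $S_{(k,\ell,\varepsilon)}$ constructed for the strongly visible $Q$-action on $D_{(k,\ell,\varepsilon)}$ has dimension equal to $\dim\mathfrak{q}_{(k,\ell,\varepsilon)}$: namely $2n$ for $H_{(0,0,1)}$ (Theorem~\ref{thm:visible-(0,0,1)}), $2n-p-q$ for $H_{(p,q,1)}$ (Theorem~\ref{thm:visible-(p,q,1)}), $2n-m$ for $H_{(m,0,1)}$ (Theorem~\ref{thm:visible-(m,0,1)}), and $n+1$ for $H_{(n,0,0)}$ (Theorem~\ref{thm:visible-(n,0,0)}). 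In each case this is exactly $\dim G/H$, which by part (1) equals $\operatorname{rank}(\operatorname{Supp}(\pi_H))$. Hence the slice $S$ for the strongly visible $Q$-action already constructed has the desired dimension; note that for $H_{(n,0,0)}$ we use the action of $Q=Q_{(n,0,0)}$ itself (not an enlargement), consistently with Theorem~\ref{thm:criterion}.

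The only delicate point — and the step I expect to be the main obstacle — is the bookkeeping for $H_{(n,0,0)}$: there $R(\mathfrak{q}^*)$ is a \emph{union} of affine pieces rather than a vector subspace, so one must argue carefully that the minimal linearly independent spanning set still has cardinality $n+1$ (the point being that the line $\mathbb{R}^\times Z^*$ contributes the direction $Z^*$ which is not already in the span of the hyperplane, so together they force a full basis of $\mathfrak{q}_{(n,0,0)}^*$). Once this is settled, everything else is a direct comparison of numbers already computed in the earlier sections, so the proof is essentially a short synthesis: reduce via Lemma~\ref{lem:mf-equiv} and Proposition~\ref{prop:classification}, apply Lemma~\ref{lem:r-q} to compute the rank, and read off the matching slice dimension from Table~\ref{table:choice}.
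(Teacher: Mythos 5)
Your proposal is correct and follows essentially the same route as the paper: reduce to the standard subgroups, compute $\operatorname{rank}(\operatorname{Supp}(\pi_H))$ from the explicit form of $R(\mathfrak{q}^*)$ in Lemma~\ref{lem:r-q} (with $\{Y_1^*,\ldots,Y_n^*,Z^*\}$ serving as $\mathcal{B}(R(\mathfrak{q}_{(n,0,0)}^*))$ in the exceptional case), and match against the slice dimensions in Table~\ref{table:choice}. The extra care you take with the union of pieces in the $H_{(n,0,0)}$ case is a minor elaboration of what the paper states without comment, not a different argument.
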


\begin{proof}
Let $H\equiv H_{(k,\ell ,\varepsilon )}$ be one of $H_{(0,0,1)}$, $H_{(p,q,1)}$ or $H_{(m,0,1)}$. 
Then, we have $R(\mathfrak{q}_{(k,\ell ,\varepsilon )}^*)=\mathfrak{q}_{(k,\ell ,\varepsilon )}^*$ 
(see (\ref{eq:r-q-normal})). 
This means that $\mathcal{B}(R(\mathfrak{q}_{(k,\ell ,\varepsilon )}^*))$ is nothing but 
a basis of $\mathfrak{q}^*_{(k,\ell ,\varepsilon )}$. 
Thus, we obtain $\operatorname{rank}(\operatorname{Supp}(\pi _{H_{(k,\ell ,\varepsilon )}}))
=\dim \mathfrak{q}_{(k,\ell ,\varepsilon )}=\dim G/H_{(k,\ell ,\varepsilon )}$. 
On the other hand, 
Table \ref{table:choice} asserts that our choice of slice $S_{(k,\ell ,\varepsilon )}$ 
for the strongly visible $Q_{(k,\ell ,\varepsilon )}$-action 
on $G^{\mathbb{C}}/H_{(k,\ell ,\varepsilon )}^{\mathbb{C}}$ satisfies 
$\dim S_{(k,\ell ,\varepsilon )}=\dim \sqrt{-1}\mathfrak{q}_{(k,\ell ,\varepsilon )}
=\dim G/H_{(k,\ell ,\varepsilon )}$. 
Hence, we get $\operatorname{rank}(\operatorname{Supp}(\pi _{H_{(k,\ell ,\varepsilon )}}))
=\dim G/H_{(k,\ell ,\varepsilon )}
=\dim S_{(k,\ell ,\varepsilon )}$. 

Next, let $H$ be $H_{(n,0,0)}$. 
Since $R(\mathfrak{q}_{(n,0,0)}^*)$ is given by (\ref{eq:r-q-(n,0,0)}), 
we can take $\mathcal{B}(R(\mathfrak{q}_{(n,0,0)}^*))$ as $\{ Y_1^*,\ldots ,Y_n^*,Z^*\} $. 
Clearly, this is also a basis of $\mathfrak{q}_{(n,0,0)}^*$, 
from which 
$\operatorname{rank}(\operatorname{Supp}(\pi _{H_{(n,0,0)}}))
=\dim \mathfrak{q}_{(n,0,0)}^*=\dim G/H_{(n,0,0)}$. 
Further, it follows from Theorem \ref{thm:visible-(n,0,0)} that 
the $Q_{(n,0,0)}$-action on $G^{\mathbb{C}}/H_{(n,0,0)}^{\mathbb{C}}$ is strongly visible 
with slice $S_{(n,0,0)}\simeq \exp \sqrt{-1}\mathfrak{q}_{(n,0,0)}$. 
Thus, we have $\dim S_{(n,0,0)}=\dim \sqrt{-1}\mathfrak{q}_{(n,0,0)}=\dim G/H_{(n,0,0)}$. 
Hence, we find out that $\operatorname{rank}(\operatorname{Supp}(\pi _{H_{(n,0,0)}}))
=\dim G/H_{(n,0,0)}=\dim S_{(n,0,0)}$, which achieves the  proof of the  theorem.
\end{proof}

%%%%%%%%%%%%%%%%%%%%%%%%%%%%%%%%%%%%%%%%%%%%%%%%%%%%%%%%%%%%%%%%%%%%%%%%%%%%%%%%%%%%%%%%%%%%%%%%%%%%%

%%% \section{References}

\end{document}